\newlength{\defbaselineskip}
\theoremstyle{plain}
\newtheorem{theorem}{Theorem}
\newtheorem{lemma}[theorem]{Lemma}
\newtheorem{prop}[theorem]{Proposition}
\newtheorem{corollary}[theorem]{Corollary}
\theoremstyle{definition}
\newtheorem{example}[theorem]{Example}
\newtheorem{remark}[theorem]{Remark}
\newcommand{\boks}[1]{\ytableausetup{boxsize = #1 cm}}
\newcommand{\y}[1]{\ydiagram{#1}}
\newcommand{\yt}[1]{\ytableaushort{#1}}
\newcommand{\blk}{\underline{\hspace*{1em}}}
\newcommand{\x}{\mathbf{x}}
\newcommand{\Q}{\mathbb{Q}}
\renewcommand{\S}{\mathfrak{S}}
\newcommand{\Par}{\mathrm{Par}}
\newcommand{\Com}{\mathrm{Com}}
\newcommand{\PCom}{\mathrm{PCom}}
\newcommand{\sqf}{\mathrm{sqf}}
\newcommand{\dya}{\mathrm{dyad}}
\newcommand{\Typ}{\mathrm{Typ}}
\newcommand{\mcM}{\mathcal{M}}
\newcommand{\acts}{\circlearrowright}
\newcommand{\rot}[1]{\text{\begin{rotate}{90}$#1$\end{rotate}}}
\DeclareMathOperator{\pt}{\mathcal{PT}}
\renewcommand{\ast}{^*}
\DeclareMathOperator{\sbt}{SBT}
\DeclareMathOperator{\wbt}{WBT}
\DeclareMathOperator{\rbt}{RBT}
\DeclareMathOperator{\psbt}{PSBT}
\DeclareMathOperator{\pwbt}{PWBT}
\DeclareMathOperator{\prbt}{PRBT}
\newcommand{\mcI}{\sigma}
\newcommand{\T}{\mathcal{T}}
\newcommand{\U}{\mathcal{U}}
\renewcommand{\L}{\mathcal{L}}
\DeclareMathOperator{\cycC}{cycC}
\DeclareMathOperator{\cycP}{cycP}
\DeclareMathOperator{\dg}{dg}
\DeclareMathOperator{\sgn}{sgn}
\DeclareMathOperator{\psgn}{psgn}
\DeclareMathOperator{\sort}{sort}
\DeclareMathOperator{\psort}{psort}
\DeclareMathOperator{\Sym}{Sym}
\DeclareMathOperator{\sym}{Sym}
\DeclareMathOperator{\PSym}{PSym}
\DeclareMathOperator{\psym}{PSym}
\DeclareMathOperator{\wt}{wt}
\DeclareMathOperator{\CS}{CS}
\let\bbmatrix\bordermatrix
\patchcmd{\bbmatrix}{8.75}{8.75}{}{}
\patchcmd{\bbmatrix}{\left(}{\left[}{}{}
\patchcmd{\bbmatrix}{\right)}{\right]}{}{}
\newlength{\cellsize}
\newcommand\tableau[1]{
\vcenter{
\let\\=\cr
\baselineskip=-16000pt
\lineskiplimit=16000pt
\lineskip=0pt
\halign{&\tableaucell{##}\cr#1\crcr}}}
\newcommand{\tableaucell}[1]{{%
\def \arg{#1}\def \void{}%
\ifx \void \arg
\vbox to \cellsize{\vfil \hrule width \cellsize height 0pt}%
\else
\unitlength=\cellsize
\begin{picture}(1,1)
\put(0,0){\makebox(1,1){$#1$}}
\put(0,0){\line(1,0){1}}
\put(0,1){\line(1,0){1}}
\put(0,0){\line(0,1){1}}
\put(1,0){\line(0,1){1}}
\end{picture}%
\fi}}
\begin{document}

\title[Transition Matrices between $H$, $E$, $E^+$, $P$]{Transition Matrices between Plethystic Bases of Polysymmetric Functions via Bijective Methods}

\author{Aditya Khanna}
\address{Dept. of Mathematics, Virginia Tech, Blacksburg, VA 24061-0123}
\email{adityakhanna@vt.edu}
 
\begin{abstract}
Many identities involving symmetric functions can be proved through bijective manipulations of tableaux. In this paper, we prove identities and expansions involving polysymmetric functions through bijections and sign-reversing involutions.
In their paper titled ``\textit{Polysymmetric functions and motivic measures of configuration spaces}", Asvin G and Andrew O'Desky introduced the algebra of polysymmetric functions (PSym) which can be defined as the tensor product of copies of the symmetric functions algebra (Sym) where the $i$th tensor factor is scaled by $i$. On one hand, we can obtain bases of this algebra by taking tensor products of the bases of Sym. On the other hand, the Asvin G and Andrew O'Desky paper introduces non-pure tensor bases families $H$, $E$, $E^+$, and $P$ that we call \textit{plethystic bases}. In this paper, we present combinatorial interpretations of the entries of the transition matrices between all twelve pairs of distinct plethystic bases. We also provide new interpretations for six OEIS sequences that turn up in this context.
\end{abstract}

\maketitle

\noindent\textbf{Keywords:} symmetric functions; polysymmetric functions;
 transition matrices; brick tabloids; sign-reversing involutions.
 
\noindent\textbf{2020 MSC Subject Classifications:}  
 05A19; 05A17; 15A09; 05E05. 

\section{Introduction}
The algebra $\Sym$ of symmetric functions is a celebrated and well-studied object in combinatorics. The algebraic flavor of symmetric functions is often complemented with the combinatorics of partitions.
In their paper \cite{polysymm}, Asvin G and Andrew O'Desky introduce a generalization of symmetric functions called \textit{polysymmetric functions} and a generalization of partitions called \textit{types}. The algebra of polysymmetric functions denoted $\PSym$ has several analogous properties to $Sym$ which are explored in \cite{polysymm}. We first recall the notions related to partitions and symmetric functions and then review types and polysymmetric functions by analogy. 
\subsection{Review of Symmetric Functions} For this paper, we assume basic familiarity with symmetric functions and their combinatorics; some excellent introductions to these topics are \cite{macd, loehr-comb, sagan, stanley}. 
We briefly recall some terminology and introduce notation that will be relevant throughout the paper. A \textit{composition of $n$} is a tuple $\alpha = (\alpha_1, \alpha_2,\ldots, \alpha_k)$ of positive integers such that $\sum_{i=1}^k \alpha_i = n$. We call $n$ the \textit{size of $\alpha$} and denote it by $|\alpha|$. The number of entries in $\alpha$ is called the \textit{length of $\alpha$} and is denoted by $\ell(\alpha)$. The composition $\alpha = (3,4,3)$ has size $|\alpha| = 10$ and length $\ell(\alpha) = 3$.

We denote the set of all compositions of size $n$ by $\Com(n)$ and define $\Com(0) = \{\varnothing\}$ where $\varnothing$ denotes the empty composition. Furthermore, $\Com(n) = \varnothing$ whenever $n$ is not a non-negative integer. A \textit{composition} is an element of $\Com:=\bigcup_{n\geq 0} \Com(n)$.
We visualize a composition by constructing left-justified rows consisting of $\alpha_i$ boxes in the $i$th row from the top and denote this visualization by $\dg(\alpha)$. If $\alpha = (3,4,3,1)$, then \boks{0.2}$\dg(\alpha) = \y{3,4,3,1}$.
A \textit{partition of $n$} is a composition of $n$ where the entries occur in weakly decreasing order, that is, $\alpha_1 \geq \alpha_2 \geq \ldots \geq \alpha_k$. We denote the set of partitions of $n$ by $\Par(n)$ and the set of all partitions by $\Par$. Define $\sort: \Com(n) \to \Par(n)$ by arranging the entries of a composition in weakly decreasing order. For instance, $\sort((3,4,3,1,2)) = (4,3,3,2,1)$.

Consider a family of indeterminates $x_1, x_2, \ldots$ and let $f(x_1, x_2, \ldots) \in\Q[[x_1, x_2, \ldots]]$ be a formal power series with rational coefficients. We denote by $\S_n$ the symmetric group on $n$ letters and by $\S_\infty$ the group of permutations that permute finitely many natural numbers. For $\sigma\in \S_\infty$, define the action $\sigma\cdot f$ by sending each $x_i$ to $x_{\sigma(i)}$ for all $i \in \{1,2, \ldots\}$. We say that $f$ is a \textit{symmetric function of degree $n$} if $f$ is homogeneous of degree $n$ and $\sigma\cdot f = f$ for all $\sigma \in \S_\infty$. The function \[f(x_1, x_2, \ldots) = x_1^2x_2x_3 + x_1x_2^2x_3 + x_1x_2x_3^2 + x_1^2 x_2x_4 + \ldots \] is a symmetric function of degree 4 where the rest of the terms are obtained by permuting the indices in all possible ways.
The set of symmetric functions of degree $n$ is denoted by $\sym(n)$ and we define $\sym := \bigoplus_{n\geq 0} \sym(n)$. The set $\sym$ can be assigned a $\mathbb{Q}$-algebra structure as the addition, multiplication and $\Q$-scaling of symmetric functions is still symmetric. Define $h_n$ to be the sum over all monomials in $x_1, x_2, \ldots$ of degree $n$; $e_n$ to be the sum of monomials in $x_1, x_2, \ldots$ of degree $n$ such that each indeterminate $x_i$ has exponent at most 1; and $p_n$ to be the sum $x_1^n + x_2^n + \ldots$. For instance, we have
 \begin{center}
 $h_3 = x_1^3 + x_1^2x_2 + x_1x_2x_3 + x_2^3 + x_2^2x_1 + \ldots$\\
 $e_3 =  x_1x_2x_3 +  x_2x_3x_4 +  x_1x_2x_4 +  x_1x_3x_4 + \ldots$\\
 $p_3 = x_1^3 + x_2^3 + x_3^3 + x_4^3 + \ldots$
 \end{center}
 
 It is well-known (see \cite[Sec I.2]{macd} and \cite[Thm. 9.75, 9.78]{loehr-comb}) that each of $\{h_n\}_{n\geq 1}$, $\{e_n\}_{n\geq 1}$ and $\{h_n\}_{n\geq 1}$ is an algebraically independent system of generators for the $\mathbb{Q}$-algebra $\sym$. For $\lambda\in \Par$, define $f_\lambda = f_{\lambda_1}f_{\lambda_2}\ldots f_{\ell(\lambda)}$, and it can be shown \cite[Thm. 9.66, 9.71, 9.79]{loehr-comb} that $\{h_\lambda\}_{\lambda\in \Par(n)}$, $\{e_\lambda\}_{\lambda\in \Par(n)}$ and $\{p_\lambda\}_{\lambda\in \Par(n)}$ are all linear bases of $\sym(n)$. Two other bases of note are the \textit{monomial basis}, $\{m_\lambda\}_{\lambda\in \Par(n)}$, and the Schur basis, $\{s_\lambda\}_{\lambda\in \Par(n)}$, whose definitions can be found in \cite{macd}. Let $f$ and $g$ be two bases of $\Sym$. The coefficients of $g_\lambda$ in the $g$-expansion of $f_\mu$ can be recorded in column $\mu$ and row $\mu$ of the \textit{transition matrix from $f$ to $g$} denoted $\mcM(f,g)$. The entries of $\mcM(f,g)$ for $f,g\in \{h,e,p\}$ can be computed using some beautiful combinatorics \cite{eg-rem} involving objects called \textit{brick tabloids}, which we review in Section \ref{ssec:bricktabs}. Transition matrices for other pairs of bases in $\Sym$ have been studied in \cite{BnRemmel}. Transition matrices arising from other generalizations of $\Sym$ such as $\text{NSym}$ (algebra of non-commuative symmetric functions) or $\text{QSym}$ (algebra of quasisymmetric functions) have been studied in \cite{HuangHecke, SchurLift, noncomm} combinatorially.
 
In this paper, we extend the concept of a brick tabloid and provide combinatorial interpretations for the transition matrix entries between the bases of {polysymmetric functions}.
\subsection{Polycompositions and types} A \textit{polycomposition of $n\geq 0$} is an ordered list $\delta = (\alpha^{(1)}, \alpha^{(2)}, \ldots)$ of (possibly empty) compositions such that $|\delta|:=\sum_{i=1}^\infty i|\alpha^{(i)}| = n$. We denote $\delta$ using the formal expression (not to be confused with exponentiation) \[\delta = \left(\alpha^{(1)}_1, \alpha^{(1)}_2, \ldots\right)^1 \left(\alpha^{(2)}_1, \alpha^{(2)}_2, \ldots\right)^2 \left(\alpha^{(3)}_1, \alpha^{(3)}_2, \ldots\right)^3\ldots.\] 
Here, each $\alpha^{(i)}_j$ for $ i,j\geq 1$ is called a \textit{degree} while the superscripts are called \textit{multiplicities}. For instance, $\delta = (3,1,2,2)^1 (1,2,1)^2 (1,5)^4$ is a polycomposition of  $1\cdot 8 + 2\cdot 4 + 4\cdot 6 = 40$. For a positive integer $r$, denote by $\delta^r$ the polycomposition
\[\left(\alpha^{(1)}_1, \alpha^{(1)}_2, \ldots\right)^{r} \left(\alpha^{(2)}_1, \alpha^{(2)}_2, \ldots\right)^{2r} \left(\alpha^{(3)}_1, \alpha^{(3)}_2, \ldots\right)^{3r}\ldots.\] 
We use the notation $\delta|^i$ to denote the composition formed by the degrees with multiplicity $i$, that is, $\delta|^i = \alpha^{(i)}$. The \textit{length $\ell(\delta)$} of a polycomposition $\delta$ is $\ell(\delta):=\sum\limits_{i\geq 1} \ell(\delta|^i)$. For our previous example $\ell(\delta) = 4 + 3 + 2 = 9$.
Denote the set of polycompositions of $n$ by $\PCom(n)$ and the set of all polycompositions by $\PCom$.
A \textit{type} \footnote{We prefer the verbiage \textit{polypartitions} in analogy with the ``poly-" prefix for symmetric functions but for consistency with the literature we will stick to using the term ``types".} $\tau$ of $n\geq 0$ is a polycomposition such that $\tau|^i\in \Par$ for all $i\geq 1$. Denote the set of types of $n$ by $\Typ(n)$ and the set of all types by $\Typ$. The map $\psort: \PCom(n) \to \Typ(n)$ is defined by $\psort(\delta) =  \tau$ where $\tau|^i = \sort(\delta|^i)$ for all $i\geq 1$. For our previous example, $\psort(\delta) = (3,2,2,1)^1 (2,1,1)^2 (5,1)^4$.
 
A \textit{block} is a polycomposition with exactly one degree and one multiplicity. A block with degree $d$ and multiplicity $r$ is denoted by $d^r$, where we omit the parentheses. Through this perspective, a polycomposition is a sequence of blocks with weakly increasing multiplicities, and we write $d^r\in \delta$ when a block $d^r$ appears in the polycomposition $\delta$. Similarly, a type can be thought of as a multiset of blocks. Continuing with our previous example, we can write $\delta$ in block form as $\delta = 3^11^12^12^11^22^21^21^45^4$. Also, this means that $\ell(\delta)$ is the number of blocks in $\delta$ counted with repetitions. Whenever a polycomposition or a type is written as a sequence of blocks $d_1^{r_1}\ldots d_k^{r_k}$, we will assume $r_1\leq r_2\leq \ldots \leq r_k$.

The \textit{tensor diagram}\footnote{Note that in \cite{KLpsym}, each $d^r\in \tau$ corresponds to a row of length $r$ in the $d$th tensor factor of the tensor diagram of a type $\tau$. In that paper, the notation for a tensor diagram is $\dg(\tau)$ and as our interpretation of a block is different, we use a different notation.} of a polycomposition $\delta$, denoted $\dg^\otimes(\delta)$, is a formal tensor product of $\dg(\delta|^{i})$ for $i\geq 1$. The tensor diagram for our running example $\delta = (3,1,2,2)^1 (1,2,1)^2 (1,5)^4$ is
\[
\y{3,1,2,2} \otimes \y{1,2,1} \otimes \varnothing \otimes \y{1,5}
\]
\subsection{Polysymmetric functions}
Consider the set of doubly-indexed indeterminates $\{x_{i,j}\}_{i,j\geq 1}$ where each $x_{i,j}$ has degree $i$. For a monomial $f$, define $\exp_{i,j}(f)$ to be the exponent of $x_{i,j}$ in $f$. We say \textit{$f$ is a monomial of degree $d$} if $\deg(f):=\sum_{i,j\geq 1} i\exp_{i,j}(f) = d$. For instance, the monomial $x_{2,1}^4 x_{2,4}^3 x_{5,1}^1$ has degree $2\cdot 4 + 2\cdot 3 + 5\cdot 1 = 19$.

A \textit{polysymmetric function} $F$ is a formal power series of bounded degree in $x_{i*} = \{x_{i,j}\}_{i,j\geq 1}$ which is a symmetric function in each variable set $\{x_{ij}\}_{j\geq 1}$. The graded $\Q$-algebra of polysymmetric functions is denoted by $\psym$ and the sub-algebra spanned by polynomials of degree $n$ (the $n$th grading) by $\psym(n)$. The linear bases of the $\Q$-vector space $\psym(n)$ are indexed by types of $n$ \cite[Thm. 3.1]{polysymm}. One way to construct a linear basis of $\psym$ is by taking a tensor product of copies of a symmetric function basis $\{f_\lambda:\lambda\in \Par\}$ which produces a \textit{pure-tensor basis} of $\psym$. For instance, $h_{(3,1)}\otimes h_{(2,1)} \otimes 1\otimes h_{(2,2)} = h_{(3,1)}(x_{1*}) h_{(2,1)}(x_{2*}) h_{(2,2)}(x_{4*})$ is a basis element in the $h^\otimes$ basis of $\psym(26)$.
Let $\sym^{(i)}$ be an isomorphic copy of $\sym$ where each variable has degree scaled by $i$. We can express $\psym$ as $\bigotimes_{i\geq 1} \sym^{(i)}$.
In \cite{polysymm}, the authors define four families of non-pure-tensor bases $\{H_\tau: \tau \in \Typ\}$, $\{E_\tau: \tau \in \Typ\}$, $\{E^+_\tau: \tau \in \Typ\}$ and $\{P_\tau: \tau \in \Typ\}$. These are generalizations of the symmetric base and in our exposition, we call $H$, $E$, $E^+$ and $P$ \textit{plethystic bases}. We now review their definitions.

Here and later, we use the term ``monomial" to mean a monomial in the indeterminates $\{x_{i,j}: i,j\geq 1\}$, and we may omit the comma between the indices for convenience. We call a monomial \textit{square-free} if $\exp_{i,j}(f)\leq 1$ for all $i,j\geq 1$. Define $\sgn(f) = \prod_{i,j\geq 1}(-1)^{\exp_{i,j}(f)}$  where the total exponent of $-1$ is the number of indeterminates in the monomial $f$. Define $H_d = \sum f$ where the sum is over monomials $f$ with degree $d$. Define $E^+_d = \sum f$ and $E_d = \sum \sgn(f) f$, where both sums are over square-free monomials of degree $d$. Define $P_d = \sum\limits_{k\mid d}\sum\limits_{j\geq 1} kx_{kj}^{d/k}$ where the outer sum is indexed by positive divisors of $d$. 

Let $x_{**}$ denote the variable set $\{x_{i,j}\}_{i,j\geq 1}$. For a block $d^r$ and $F \in \{H, E, E^+, P\}$, define $F_{d^r} = F_d(x^{r}_{**})$, that is, $F_{d^r}$ is obtained by replacing all indeterminates $x_{ij}$ in $F_d$ by $x_{i,j}^r$. This action is called the \textit{Addams operation} and explains the nomenclature of plethystic bases as $f(x_*^r) = f[p_r](x_{*})$ for $f\in \sym$. Let $\delta = d_1^{r_1} d_2^{r_2} \ldots d_k^{r_k}$ be a polycomposition expressed as a sequence of blocks with weakly increasing multiplicities. Define $F_{\delta} = \prod_{i = 1}^k F_{d_i^{r_i}}$ which is a product over all blocks $d^r$ that appear in $\delta$ counted with multiplicity.
\begin{example}
To illustrate the above definitions, \begin{align*}
H_3 &= x_{31} + x_{21}x_{11} + x_{11}^3
 + x_{11}^2x_{12} + x_{11}x_{12}x_{13}+\ldots\\
E^+_3 &= x_{31} + x_{21}x_{11} + x_{11}x_{12}x_{13} + \ldots\\
E_3 &= -x_{31} + x_{21}x_{11} - x_{11}x_{12}x_{13} + \ldots\\
P_{6} &= 6x_{61} + 3x_{31}^2 + 2x_{21}^3 + x_{11}^6  + \ldots.
\end{align*}
To obtain the rest of the terms, we permute the second index of each of the leading monomials above.
To go from $H_3$ to $H_7$, we raise each indeterminate to the power 7 to obtain
\[
H_{3^7} = x_{31}^7 + x_{21}^7x_{11}^7 + x_{11}^{21}
 + x_{11}^{14}x_{12}^7 + x_{11}^7x_{12}^7x_{13}^7+ \ldots.
\]
An example of $H_\delta$ for a general $\delta$ is $H_{(3,2)^1(2,1)^3} = H_3H_2H_{2^3}H_{1^3}$.
 \end{example}
The combinatorial interpretations for the expansions of the plethystic bases $H$, $E$, $E^+$, and $P$ in terms of the pure-tensor bases formed by $m$, $s$ and $p$ can be found in \cite{KLpsym}. In this paper, we provide combinatorial interpretations of the expansion coefficients between all 12 pairs of distinct plethystic bases. For $F,G\in \{H,P, E, E^+\}$, it is sufficient to find the $G$-expansion of $F_d$, as that can be extended to find the $G$-expansions of $F_{d^r}$ and $F_\tau$ for type $\tau$, as follows. For $d\geq 0$, the $G$-expansion of $F_d$ indexed by the spanning set of polycompositions of $d$ is combinatorially more tractable than the $G$-expansion indexed by types of $d$. Once we have found $F_d = \sum_{\delta \in \PCom(d)} c_{\delta} G_{\delta}$ for certain coefficients $c_\delta \in \Q$, then we can collect the terms indexed by $\delta$ with $\psort(\delta) = \tau$ and obtain the $G$-expansion of $F_d$ indexed by types. Furthermore, $F_{d^r} = \sum_{\delta \in \PCom(d)} c_{\delta} G_{\delta^r}$ and for $\sigma\in \Typ(n)$, we find $F_\sigma = \prod_{d^r\in \sigma} F_{d^r} = \sum_{\tau\in \Typ(n)} c_{\tau}^\sigma G_\tau$. The coefficients $c_{\tau}^\sigma$ can be interpreted as a sum over signed, weighted tilings of the boxes of $\dg^\otimes(\sigma)$ according to rules dictated by the specific choices of $F$ and $G$. We discuss these combinatorial interpretations in detail in Section \ref{sec:polybrick}.
\begin{remark}
Polysymmetric functions were first introduced in \cite{polysymm} in a geometric context while studying the cohomology of the variety of geometrically irreducible hypersurfaces of degree $d$ in projective $n$-space. The representation theory of the uniform block permutation (UBP) algebra was independently studied in \cite{alg-ubp}, where polysymmetric functions appear when studying the Frobenius characteristic map for the UBP algebra. Just as partitions of $n$ index the representations of $\S_n$, the representations of the UBP algebra, $\mathcal{U}_n$, are indexed by types of size $n$ which we call $V_\tau$ for some $\tau \in \Typ$. The symmetric function associated to the character of the restriction of $V_\tau$ to $\Sym(k)$ is another presentation of the pure-tensor basis arising from the Schur functions. The reader may refer to \cite{alg-ubp} for a discussion on the representation theory of UBP algebras and refer to Remark 5 in \cite{KLpsym} for a brief description of the connection to $\psym$.
\end{remark}

\subsection{Structure of this Paper}
In Section \ref{sec:comb-models}, we introduce \textit{bar tableaux}, which serve as combinatorial models for the monomial expansions of certain polysymmetric functions.
In subsequent sections, for each pair $F,G\in \{H,E,E^+, P\}$, we present a $G$-expansion (indexed by polycompositions) of $F_d$ for $d\geq 0$. We prove these expansions using bijections and sign-reversing involutions on bar tableaux. 
In Section \ref{sec:HEP}, we find the expansions between $H$, $E$, and $P$ where the only multiplicity that appears in the indexing polycompositions is 1. We also prove some recursion results involving these three bases. In Section \ref{sec:E+in}, we present $H$, $E$, and $P$ expansions of $E^+$ where the expansions are indexed by polycompositions. The polycompositions that index the terms of the $H$, $E$ and $P$ expansions have multiplicities at most 2. We also prove a formula for $E^+_d$ in terms of $H$ and $E$, which can be used to find the $H$, $E$, and $P$ expansions of $E^+$ by appealing to the expansions found in Section \ref{sec:HEP}. In Section \ref{sec:inE+}, we present $E^+$-expansions of $H$, $E$, and $P$, and the polycompositions that index the terms in the expansion have multiplicities which are powers of 2. Assuming familiarity with the combinatorial models introduced in Section \ref{sec:comb-models}, it is possible to understand the proofs of $G$-expansions of $F_d$ for $d\geq 0$ independent of each other, except in some rarer cases where a map builds upon a map defined in a previous proof. In Section \ref{sec:polybrick}, we use the $G$-expansion of $F_d$ to find the $G$-expansion of $F_\sigma$ for $\sigma\in \Typ$ in terms of new objects called \textit{polybrick tabloids}. In Section \ref{sec:oeis}, we discuss the relationship of our results to six OEIS \cite{oeis} entries. In particular, the number of types which index the non-zero terms in the $G$-expansion of $F_d$ can be counted using these OEIS entries:
\begin{itemize}
\item $E$-expansion of $E^+$: \texttt{A024786}
\item $H$-expansion of $E^+$: \texttt{A025065}
\item $P$-expansion of $E^+$: \texttt{A002513}
\item $E^+$-expansion of $H$: \texttt{A018819} 
\item $E^+$-expansion of $E$: \texttt{A092119}
\item $E^+$-expansion of $P$: \texttt{A305841}
\end{itemize}
\subsection{Summary of results}
We present a brief summary of our main definitions and propositions for easy reference. Recall that $\PCom(n)$ is the set of polycompositions of size $n$. We define some notable subsets of $\PCom(n)$:
\begin{table}[H]
\renewcommand{\arraystretch}{1.2}
\begin{tabular}{|c|p{10cm}|p{3cm}|}
\hline
\textbf{Notation} & \textbf{Description} (set of $\delta\in \PCom(n)$ such that...) & \textbf{Example}\\
\hline
$\PCom_\sqf(n)$ & { $\delta$ has the unique multiplicity 1}.& $(3,1,2,1)^1$\\
\hline
 $\PCom_P(n)$ & $\delta$ is of the form $\alpha^1\beta^2$ with either $\alpha, \beta$ are possibly empty. & $(3,1)^1(1,2,1)^2$\\
\hline
$\PCom_E(n)$ & $\delta$ is of the form $\alpha^1(b)^2$ with $b\in \mathbb{Z}_{\geq 0}$. & $(3,1)^1(4)^2$\\
\hline
$\PCom_H(n)$ & $\delta$ is of the form $(a)^1\beta^2$ with $a\in \mathbb{Z}_{\geq 0}$. & $(2)^1(1,2,2)^2$\\
\hline
$\PCom_\dya(n)$ & all multiplicities in $\delta$ are powers of 2. & $(1,1)^1(1)^2(3,1)^8$\\
\hline
$\PCom'_\dya(n)$ & all multiplicities in $\delta$ are powers of 2 and $\ell(\delta|^i) \leq 1$ for all $i\geq 1$. & $(1)^1(3)^2(2)^4(1)^8$\\
\hline
$\PCom\ast_\dya(n)$ & $\delta$ has a unique multiplicity and that must be a power of 2. & $(1,2,2,1)^8$\\
\hline
\end{tabular}
\end{table}

Let $\delta = d_1^{r_1}d_2^{r_2}\ldots d_k^{r_k}$ with $r_1\leq r_2\leq \ldots \leq r_k$.
Recall that $\ell(\delta) = k$ is the number of blocks in $\delta$. Define $L(\delta) = d_kr_k$ to be the size of the last block.

Let $\lambda$ be a partition. For $i\geq 1$, define $m_i(\lambda)$ to be the number of times part $i$ appears in $\lambda$. Then define $z_\lambda = \prod_i i^{m_i(\lambda)} m_i(\lambda)!$ and define the polysymmetric analog for a type $\tau$ by $z^\otimes_\tau = \prod_{d\geq 1} z_{\tau|^d}$.

For a positive integer $d$, we have the following expansions:
\begin{itemize}
 \item $H_d = \sum\limits_{\delta \in \PCom_{\sqf}(d)} (-1)^{\ell(\delta)}E_\delta$ (Proposition \ref{prop:H-E,E-H})
 \item $E_d = \sum\limits_{\delta \in \PCom_{\sqf}(d)} (-1)^{\ell(\delta)}H_\delta$ (Proposition \ref{prop:H-E,E-H})
 \item $P_d = \sum\limits_{\delta \in \PCom_{\sqf}(d)} (-1)^{\ell(\delta)-1} L(\delta) H_\delta$ (Proposition \ref{prop:P-H,P-E})
\item $P_d = \sum\limits_{\delta  \in \PCom_{\sqf}(d)} (-1)^{\ell(\delta)} L(\delta) E_\delta$ (Proposition \ref{prop:P-H,P-E})
\item $H_d = \sum\limits_{\delta \in \PCom_{\sqf}(d)} \dfrac{P_\delta}{Z_\delta}$ (Proposition \ref{prop:H-P})
\item $E_d = \sum\limits_{\delta \in \PCom_{\sqf}(d)} (-1)^{\ell(\delta)}\dfrac{P_\delta}{Z_\delta}$ (Proposition \ref{prop:E-P})
\item  $E^+_d = \sum\limits_{\delta = \alpha^1(b)^2 \in \PCom_E(d)} (-1)^{\ell(\alpha)} E_{\delta}$ (Proposition \ref{prop:E+-HEP})
\item $   E^+_d = \sum\limits_{\delta = (a)^1\beta^2 \in \PCom_H(d)} (-1)^{\ell(\beta)} H_{\delta}$ (Proposition \ref{prop:E+-HEP})
\item $E^+_d  = \sum\limits_{\delta = \alpha^1 \beta^2 \in \PCom_P(d)} (-1)^{\ell(\beta)} \dfrac{1}{Z_\alpha Z_\beta}P_{\delta}$ (Proposition \ref{prop:E+-HEP})
\item  $H_d = \sum\limits_{\delta\in \PCom_\dya'(d)} E^+_\delta$ (Proposition \ref{prop:HinE+})
\item $E_d = \sum\limits_{\delta\in \PCom_\dya(d)} (-1)^{\ell(\delta)}E_\delta^+$ (Proposition \ref{prop:EinE+})
\item $P_d=\sum\limits_{\delta\in \PCom_\dya\ast(d)} (-1)^{\ell(\delta)-1} L(\delta) E_\delta^+$ (Proposition \ref{prop:PinE+})
\end{itemize}

\begin{remark}
A preprint by David Martinez \cite{dm-psym} uploaded at the same time as this preprint independently finds the expansions between the plethystic bases. Their proofs use generating function methods while ours are entirely combinatorial. 
\end{remark}

\section{Combinatorial models for monomial expansions of the plethystic bases}\label{sec:comb-models}

In this section, we provide combinatorial descriptions for the monomials that appear in the bases $H$, $E^+$, $E$ and $P$, and introduce notions that we will use throughout the paper. We will manipulate these combinatorial descriptions to prove the expansions between the bases in later sections.
\boks{0.42}
\subsection{Bar tableaux} Hereafter, a \textit{bar} refers to a row within a partition diagram where all cells in the row are labeled with the same natural number. The number of cells in the bar is called its \textit{length} or \textit{size}. We call two bars \textit{identical} if they have the same length (number of cells) and the same label.\boks{0.35} For instance, $\yt{44}$ and $\yt{44}$ are identical bars in the example below.
 For a partition $\lambda$, a \textit{weak bar tableau (WBT) of shape $\lambda$} is a labeling of the cells of $\dg(\lambda)$ with natural numbers such that each row is a bar and the labels increase weakly within parts of the same size. We denote the set of weak bar tableaux of shape $\lambda$ by $\wbt(\lambda)$ and define the set of weak bar tableaux of size $d$ by $\wbt[d] = \bigcup_{\lambda \in \Par(d)} \wbt(\lambda)$. \boks{0.42}The following tableau is an element of $\wbt((4,4,4,3,3,2,2))\subset \wbt[22]$:
 \boks{0.42}
 \[
 T = \yt{2222,2222,2222,111,222,44,44}
 \]
 
With each bar of length $i$ and label $j$, we associate the variable $x_{ij}$. For any $T\in \wbt[d]$, define $|T|$ to be the number of cells in $T$ and define $\ell(T)$ to be the number of rows in $T$. With each $T\in \wbt[d]$, we associate the monomial $\x_T = \prod_{i,j\geq 1} x_{ij}^{b_{ij}(T)}$ where $b_{ij}(T)$ counts the number of bars of length $i$ and label $j$ in $T$. The monomial for $T$ shown above is $\x_T = x_{42}^3 x_{31}x_{32}x_{24}^2$.
The subset $\sbt(\lambda)\subset \wbt(\lambda)$ is the set of \textit{strict bar tableaux (SBT) of shape $\lambda$} such that any $T\in \sbt(\lambda)$ has labels that increase strictly within parts of the same size. Also define the set of \textit{strict bar tableaux of size $d$} by $\sbt[d] = \bigcup_{\lambda \in \Par(d)} \sbt(\lambda)$.  With each $T\in \sbt(\lambda)$, we associate the sign $\sgn(T) = (-1)^{\ell(T)}$. The following is an example of an SBT $T$ of size 14 with the associated monomial $\x_T = x_{42}x_{43}x_{25}x_{27}x_{11}x_{13}$ and $\sgn(T) = (-1)^6 = 1$:
 \[
 T = \yt{2222,3333,55,77,1,3}
 \]
 A \textit{rectangular bar tableau (RBT) of size $d$} is a WBT where all bars are identical. We denote the set of RBTs of size $d$ by $\rbt[d]$. A \textit{marked RBT of size $d$} is an RBT of size $d$ where one of the cells in the top row is marked (which we signify with an asterisk ($\ast$)). The set of marked RBTs of size $d$ is denoted by $\rbt\ast[d]$. For example,
 \[
 \rbt^*[4] = \left\{\yt{{a^*},a,a,a}, \yt{{a\ast}a,aa}, \yt{a{a\ast},aa}, \yt{{a\ast}aaa}, \yt{a{a\ast}aa}, \yt{aa{a\ast}a}, \yt{aaa{a\ast}}: a\geq 1\right\}.
 \]

\begin{lemma}\label{lem:btinterps} 
For $d\geq 0$, 
\begin{enumerate}
\item $H_d = \sum\limits_{T\in \wbt[d]} \x_T$.
\item $E^+_d = \sum\limits_{T\in \sbt[d]} \x_T$.
\item $E_d = \sum\limits_{T\in \sbt[d]} \sgn(T)\x_T$.
\item $P_d = \sum\limits_{T\in \rbt\ast[d]} \x_T$.
\end{enumerate}
\end{lemma}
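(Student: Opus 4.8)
The plan is to prove all four identities by exhibiting, in each case, a weight-preserving (and, where relevant, sign-preserving) bijection between the monomials summed in the definition of the plethystic function and the bar tableaux of the appropriate class. The unifying observation is that a bar tableau --- being a partition diagram all of whose rows are bars --- is completely determined by the multiset of its bars: the rows of a partition have weakly decreasing lengths, and within a fixed length a WBT requires the labels to increase weakly (an SBT, strictly). Hence there is a canonical way to assemble any multiset of bars into a tableau: sort the bars first by length (decreasing), then, among equal lengths, by label (increasing). This produces a bijection between multisets of bars and weak bar tableaux, which restricts to a bijection between sets of bars (no repeated bar) and strict bar tableaux.

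For part (1), I would send a monomial $f$ of degree $d$ to the multiset of bars containing, for each $(i,j)$, exactly $\exp_{ij}(f)$ copies of the bar of length $i$ and label $j$; assembled canonically this yields a unique $T\in\wbt$ with $b_{ij}(T)=\exp_{ij}(f)$, so $\x_T=f$ and $|T|=\sum_{i,j}i\exp_{ij}(f)=\deg(f)=d$, i.e.\ $T\in\wbt[d]$. This is a bijection onto $\wbt[d]$ with inverse $T\mapsto\x_T$, which proves (1) since $H_d$ is by definition the sum of all monomials of degree $d$. For part (2), I would restrict this bijection: $f$ is square-free exactly when $\exp_{ij}(f)\le 1$ for all $i,j$, i.e.\ exactly when the associated multiset of bars is a set, which (by the canonical assembly, since bars of distinct lengths are automatically distinct) happens exactly when the labels increase strictly within each fixed bar length, i.e.\ $T\in\sbt[d]$. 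Part (3) then follows from (2) by tracking signs: for a square-free $f$ with associated SBT $T$, the number of variables dividing $f$ equals the number of bars of $T$, namely $\ell(T)$, so $\sgn(f)=(-1)^{\ell(T)}=\sgn(T)$.

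For part (4), I would argue that an RBT of size $d$ is specified by a single repeated bar, say of length $i$ and label $j$ occurring $m$ times with $im=d$; equivalently by a divisor $i\mid d$ (forcing $m=d/i$) together with a label $j\ge 1$, and its monomial is $x_{ij}^{d/i}$. Marking one of the $i$ cells in the top row multiplies the number of such tableaux by $i$ while leaving the monomial unchanged, so
\[
\sum_{T\in\rbt\ast[d]}\x_T \;=\; \sum_{i\mid d}\sum_{j\ge 1} i\,x_{ij}^{d/i} \;=\; P_d
\]
by the definition of $P_d$. (The degenerate case $d=0$ is handled separately by the convention that the only size-$0$ bar tableau is empty, with monomial $1$.)

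I expect the only point needing genuine care --- rather than being outright hard --- is a clean justification of the canonical-assembly bijection: that every multiset of positive integers arises as the multiset of row lengths of a unique partition, and that the weak (resp.\ strict) label condition pins down the tableau uniquely once the multiset (resp.\ set) of bars is fixed. Once that is established, parts (1)--(3) are immediate, and (4) is a short divisor-counting argument.
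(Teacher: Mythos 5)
Your proposal is correct and follows essentially the same route as the paper's proof: a weight-preserving bijection between degree-$d$ monomials and weak bar tableaux obtained by canonically assembling the multiset of bars, restricted to square-free monomials for the SBT cases with the sign tracked via the number of bars, and a divisor-counting argument for $P_d$ where the coefficient $k$ of $x_{kj}^{d/k}$ is matched to the $k$ choices of marked cell in the top row. The only difference is one of emphasis — you spell out the canonical-assembly step that the paper treats as routine — so no further comparison is needed.
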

\begin{proof}
When $d = 0$, the sets indexing all four sums on the right hand side are $\{\varnothing\}$, and we obtain $H_0 = E_0 = E^+_0 = P_0 = \x_{\varnothing} = 1$ which agrees with the definitions of the bases. We now assume $d>0$.
To prove (1), it suffices to show that the map $T\mapsto \x_T$ from $\wbt[d]$ to the set of monomials (in $x_{ij}$) of degree $d$ is a bijection. We start with a WBT $T$. The monomial $\x_T = \prod_{i,j} x_{ij}^{b_{ij}}$ has degree $\sum_{i,j} i b_{ij}$ which is exactly the number of cells in $T$, that is, $d$. This shows that each WBT $T$ of size $d$ has a unique degree $d$ monomial associated with it. To prove the bijection, we start with a monomial $\prod\limits_{i,j\geq 1}x_{ij}^{b_{ij}}$ of degree $d$ and construct the associated WBT that contains $b_{ij}$ bars of length $i$ and label $j$ such that the sizes of bars weakly decrease as we go down and the labels within bars of the same size  weakly increase. The proof of (2) and (3) proceeds the same as the proof of (1), but here the strict increase in labels corresponds to each $x_{ij}$ appearing at most once, thus making the monomial square-free. For (4), we notice that an RBT of size $d$, in which all bars are identical, must have $d/k$ bars of length $k$ for some divisor $k$ of $d$. If the bars have label $j$, then this RBT contributes the monomial $x_{kj}^{d/k}$. The coefficient $k$ of $x_{kj}^{d/k}$ in $P_d$ corresponds to the $k$ different marked RBTs generated by marking the $k$ cells in the top row.
\end{proof}

\subsection{Poly bar tableaux} So far, we have only provided combinatorial models for $F_\delta$ where $\delta = (d)^1$ is a composition with only one block with multiplicity 1. We now discuss the combinatorial model for $F_\delta$ where $\delta\in \PCom$.

Let $\delta$ be a polycomposition of size $n$. A \textit{polyWBT of shape $\delta$} is an ordered array of WBTs $\T = (T_{ij})^{i\geq 1}_{ 1\leq j \leq \ell(\delta|^i)}$ where $T_{ij}\in \wbt[(\delta|^i)_j]$. To visualize this, we place WBTs in horizontally stacked layers with indices increasing downwards, where the indices are values of $r$ for which $\delta|^r$ is non-empty. Then in layer $r$ we place an ordered tuple of WBTs $(T_{r1}, T_{r2}, \ldots, T_{r\ell(\delta|^r)})$. We denote the set of polyWBTs of shape $\delta$ by $\pwbt(\delta)$. The following is an element of $\pwbt((3,1,2,2)^1 (1,2,1)^2 (1,5)^4)$:
\ytableausetup{aligntableaux = top}
\begin{align*}
1\quad & \yt{11,3} \quad \yt{2} \quad \yt{3,3} \quad \yt{22}\\
2\quad & \yt{3} \quad \yt{11} \quad \yt{3}\\
4\quad & \yt{1}\quad \yt{22,33,1}
\end{align*}
If the only layer is 1, then we omit the indexing for the layers.

\ytableausetup{aligntableaux = center}
For $\T\in \pwbt(\delta)$, we define the monomial $\x_{\T} = \prod\limits_{i\geq 1}\prod\limits_{1\leq j \leq \ell(\delta|^i)} \x_{T_{i,j}}^i$. The above example has the associated monomial \[(x_{21}x_{13})(x_{12})(x_{13}^2)(x_{22})(x_{13})^2(x_{21})^2(x_{13})^2(x_{11})^4(x_{22}x_{23}x_{11})^4 = x_{21}^3x_{22}^5x_{23}^4 x_{11}^8 x_{12}x_{13}^7.\] 
Define the set $\psbt(\delta)$ of \textit{polySBTs of shape $\delta$} and the set $\prbt(\delta)$ of \textit{polyRBTs of shape $\delta$} similarly. The number of tableaux that appear in $\T$ is denoted by $\ell(\T)$ which is equal to $\ell(\delta)$. Let $B(\T) = \sum_{i,j\geq 1} \ell(T_{i,j})$ be the total number of bars (with repetitions) that appear in $\T$. For $\T\in \psbt(\delta)$, define $\psgn(\T) := \prod\limits_{\substack{i\geq 1\\ 1\leq j \leq \ell(\delta|^i)}}\sgn(T_{ij})=(-1)^{B(\T)}$. A third measure of size is $|\T|$, the total number of cells in $\T$, which is not the same as $|\delta|$.
\begin{lemma} \label{lem:pbtinterps}
For a polycomposition $\delta$,
\begin{enumerate}
\item $H_\delta = \sum\limits_{\T\in \pwbt(\delta)} \x_{\T}$.
\item $E^+_\delta = \sum\limits_{\T\in \psbt(\delta)} \x_{\T}$.
\item $E_\delta = \sum\limits_{\T\in \psbt(\delta)} \psgn(\T)\x_{\T}$.
\end{enumerate}
\end{lemma}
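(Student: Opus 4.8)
The plan is to reduce Lemma~\ref{lem:pbtinterps} to the single-block case established in Lemma~\ref{lem:btinterps} by analyzing the Addams operation and the multiplicativity of $F_\delta$ over blocks. Write $\delta = d_1^{r_1}d_2^{r_2}\ldots d_k^{r_k}$ with $r_1 \leq \cdots \leq r_k$. By definition $F_\delta = \prod_{i=1}^k F_{d_i^{r_i}}$, so the first step is to understand a single factor $F_{d^r} = F_d(x_{**}^r)$, the polysymmetric function obtained from $F_d$ by replacing each $x_{ij}$ with $x_{ij}^r$. For $H$, Lemma~\ref{lem:btinterps}(1) gives $H_d = \sum_{T \in \wbt[d]} \x_T$, where $\x_T = \prod x_{ij}^{b_{ij}(T)}$; applying the substitution yields $H_{d^r} = \sum_{T \in \wbt[d]} \prod x_{ij}^{r\,b_{ij}(T)} = \sum_{T \in \wbt[d]} \x_T^r$. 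This is exactly $\sum_{T \in \wbt[(\delta|^i)_j]} \x_T^i$ when $d^r$ is the $(i,j)$-th block, which matches the contribution of the single WBT-slot $T_{ij}$ in the definition of $\x_{\T}$. The same computation works verbatim for $E^+$ (summing $\x_T^r$ over $T \in \sbt[d]$) and for $E$ (summing $\psgn$-weighted $\x_T^r$, noting $\sgn(T)$ is unaffected by the substitution since it depends only on the number of bars).

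The second step is to multiply these block factors together and match the product with the sum over polyWBTs. Since $F_\delta = \prod_{d_i^{r_i} \in \delta} F_{d_i^{r_i}}$ and each $F_{d_i^{r_i}}$ is itself a sum over bar tableaux of size $d_i$ (with the layer-$r_i$ weighting), expanding the product distributes over all tuples of choices, one bar tableau per block. A tuple of bar tableaux, one for each block $d_i^{r_i}$, is precisely a polyWBT (resp.\ polySBT) of shape $\delta$: the tableaux assigned to blocks with multiplicity $r$ form the layer-$r$ tuple $(T_{r1}, \ldots, T_{r\ell(\delta|^r)})$, and the ordering within a layer matches the ordering of the blocks (here one should fix the convention that blocks with equal multiplicity are listed in the order their degrees appear in $\delta|^r$, consistent with how $\pwbt(\delta)$ is indexed). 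Under this correspondence the product of the monomials $\x_{T_{ij}}^i$ over all blocks is exactly $\x_{\T}$, and the product of signs $\prod \sgn(T_{ij}) = (-1)^{B(\T)} = \psgn(\T)$ by definition. Hence $H_\delta = \sum_{\T \in \pwbt(\delta)} \x_{\T}$, and likewise for $E^+$ and $E$.

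The only genuinely delicate point — and the step I expect to require the most care — is the bookkeeping of \emph{which} bar tableau gets attached to \emph{which} block when several blocks share a multiplicity, i.e.\ verifying that "tuples of bar tableaux indexed by the blocks of $\delta$" and "elements of $\pwbt(\delta)$" are the same objects rather than merely being in bijection up to reordering. This is purely notational: one must check that the definition of a polyWBT as an array $(T_{ij})$ with $T_{ij} \in \wbt[(\delta|^i)_j]$ reads off the degrees $(\delta|^i)_j$ in the same order that the product $\prod_{i=1}^k F_{d_i^{r_i}}$ enumerates blocks of multiplicity $i$, and that the edge cases (empty compositions $\delta|^i = \varnothing$, hence no tableaux in layer $i$; and $d = 0$ giving the empty tableau, already handled in Lemma~\ref{lem:btinterps}) are consistent. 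Once this identification is pinned down, the three statements follow simultaneously from the single-block case with no further computation; the base case $\delta = \varnothing$ gives $F_\varnothing = 1 = \x_{\varnothing}$ as in Lemma~\ref{lem:btinterps}.
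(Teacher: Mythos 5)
Your proposal is correct and follows essentially the same route as the paper: reduce to the single-block case via the Addams operation ($F_{d^r} = \sum \x_T^r$), then expand the product over blocks and identify tuples of bar tableaux with polyWBTs, with the sign bookkeeping $\prod\sgn(T_{ij}) = (-1)^{B(\T)} = \psgn(\T)$ handling the $E$ case. The paper's proof is the same argument stated for $H$ only, with the other cases declared similar; your explicit attention to the ordering convention within a layer is a reasonable elaboration of a point the paper handles implicitly.
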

\begin{proof}
We prove (1), and the other statements can be proved similarly. We find that $H_{d^r} = \sum_{T\in \wbt[d]} \x_T^r$ as $H_{d^r}(x_{**}) = H_d(x_{**}^r)$. Let $\delta =(d_1^{r_1}, d_2^{r_2}, \ldots, d_k^{r_k})$ be a polycomposition. For each $i$, we choose $T_i \in \wbt[d_i]$ and we construct a PWBT $\T$ by placing each $T_i$ in layer $r_i$ such that each $T_j$ is placed to the right of $T_k$ whenever $j > k$ in the same layer. We obtain $\x_{\T} = \prod_i^k \x_{T_i}^{r_i}$ for this choice. This shows that the monomials in the expansion of $H_\delta$ can be found in the sum $\sum_{\T\in \pwbt(\delta)} \x_{\T}$ . Conversely, if we have a PWBT $\T = (T_{i,j})_{i,j \geq 1}$ where $T_{i,j}$ is the (possibly empty) $j$th PWBT from the left contained in the $i$th layer, then we can write $\x_{\T} = \prod_{i,j\geq 1} \x_{T_{i,j}}^i$. In this case, each $x_{T_{i,j}}^i$ occurs as a monomial in $H_{|T_{i,j}|^i}$, and thus $\x_{\T}$ occurs as a monomial in $H_\delta$.
\end{proof}

We order all the occurrences of bars in a PWBT using a \textit{scanning order} wherein we visit the layers 1, 2, 3 and so on in order; going from left to right through tableaux within each layer, and going from the top bar to the bottom bar within each tableau. The bars listed in the scanning order in the above example are
\[
\yt{11}\to \yt{3}\to \yt{2}\to \yt{3}\to \yt{3}\to \yt{22} \to \yt{3} \to \yt{11} \to \yt{3} \to \yt{1} \to \yt{22} \to \yt{33} \to \yt{1}.
\]
For bar tableaux $T$ and $T'$ in some PWBT $\T$, we say that \textit{$T'$ occurs after $T$} in the scanning order if the topmost bar in $T'$ appears after the lowermost bar in $T$ in the scanning order. In our example, $\yt{11}$ (second WBT in layer 2) appears after the tableaux $\yt{11,3}$ (first WBT in layer 1).

\subsection{Marked poly bar tableaux} A \textit{marked polyWBT (marked PWBT)} $\T\ast$ is constructed from a PWBT $\T$ by marking one cell in the last tableau in the scanning order. Suppose $T_1, T_2, \ldots T_k, T_{k+1}$ are WBTs ordered according to their occurrence in the scanning order for $\T$ and let $T\ast$ be obtained from $ T_{k+1}$ by marking one cell. We write $\T\ast = (T_1, \ldots, T_k, T^*)$.
We denote the set of marked PWBTs of shape $\delta$ by $\pwbt\ast(\delta)$. The following is an element of $\pwbt\ast((2,6,3,6)^4)$:
\[
\T\ast = 4\quad \yt{1,3}\quad \yt{22,33,1,1} \quad \yt{44,55}\quad \yt{1{1\ast}1,22,1}.
\]
We associate the monomial $\x_{\T}$ with $\T\ast$ which is the same monomial as the one for the corresponding unmarked PWBT.
Similarly, we define the notion of a \textit{marked polySBT of shape $\delta$} and denote the corresponding set by $\psbt\ast(\delta)$.  If the marked tableau $T\ast$ occurs in layer $r$, then we define $\wt\ast(\T) = r$. The set  $\prbt\ast(\delta)$ of \textit{marked rectangular bar tableaux of shape $\delta$}, is defined slightly differently; a marked PRBT $\T\in \prbt\ast(\delta)$ is a tuple of marked RBTs, that is, it is a tuple of rectangular bar tableaux where each tableau has one cell marked in its top row and the size of the $j$th marked RBT from left in layer $i$ is $(\delta|^i)_j$. The following is an element of $\prbt\ast((2,6)^1(4,1,4)^2)$:

\begin{align*}
    1 \quad &\yt{{4\ast},4}\quad \yt{4{4\ast},44,44}\\
    2\quad  & \yt{11{1\ast}1} \quad \yt{{2\ast}} \quad \yt{2{2\ast},22}
\end{align*}

 For $\delta\in \PCom(n)$, let $r$ be the largest multiplicity of $\delta$ and $\alpha:= \delta|^r$. Then define $L(\delta) = r\alpha_{\ell(\alpha)}$. The quantity $L(\delta)$ is the size of the last block that appears in $\delta$. If $\delta = (2,1)^2(5,1,4)^5$, then $L(\delta) = 4\cdot 5  = 20$.
 \begin{lemma}\label{lem:markedinterps}
 For a polycomposition $\delta$,
 \begin{enumerate}
 \item $L(\delta)H_\delta = \sum\limits_{\T\in \pwbt\ast(\delta)} \wt\ast(\T)\x_{\T}$.
\item $L(\delta)E^+_\delta = \sum\limits_{\T\in \psbt\ast(\delta)} \wt\ast(\T)\x_{\T}$.
\item $L(\delta)E_\delta = \sum\limits_{\T\in \psbt\ast(\delta)} \psgn(\T)\wt\ast(\T)\x_{\T}$
 \item $P_\delta = \sum\limits_{\T\in \prbt\ast(\delta)} \x_{\T}$.
 \end{enumerate}
 \end{lemma}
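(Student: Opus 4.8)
The plan is to deduce each of (1)--(3) from the matching clause of Lemma~\ref{lem:pbtinterps} by a one-line overcounting argument, and to deduce (4) from Lemma~\ref{lem:btinterps}(4) together with the multiplicativity $P_\delta=\prod_{d^r\in\delta}P_{d^r}$.

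Write $\delta=d_1^{r_1}d_2^{r_2}\cdots d_k^{r_k}$ with $r_1\le\cdots\le r_k$, so that $L(\delta)=d_kr_k$; for (1)--(3) take $\delta\neq\varnothing$, the remaining degenerate cases being immediate from the $d=0$ clauses of the two lemmas. The first step is to locate the last tableau in the scanning order of a polyWBT or polySBT of shape $\delta$: because the scanning order runs through the layers $1,2,3,\dots$ in turn and then left to right within a layer, this tableau sits in the largest layer $r_k$ and in the rightmost slot of that layer, so it is a WBT (resp.\ SBT) of size $d_k$. Two consequences follow at once: $\wt\ast(\T)=r_k$ for \emph{every} $\T$ in $\pwbt\ast(\delta)$ and in $\psbt\ast(\delta)$; and the map that erases the mark is $d_k$-to-one onto $\pwbt(\delta)$ (resp.\ $\psbt(\delta)$), the fibre over $\T_0$ being the choice of one of the $d_k$ cells of its last tableau. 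Since marking a cell alters neither $\x_\T$ nor the bar count $B(\T)$ (hence not $\psgn(\T)$), summing fibrewise gives
\[
\sum_{\T\in\pwbt\ast(\delta)}\wt\ast(\T)\,\x_\T=\sum_{\T_0\in\pwbt(\delta)}d_kr_k\,\x_{\T_0}=L(\delta)H_\delta
\]
by Lemma~\ref{lem:pbtinterps}(1), and the same computation with $\psbt$ and Lemma~\ref{lem:pbtinterps}(2),(3) gives (2) and (3), the factor $\psgn(\T_0)$ simply riding along in (3).

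For (4), I would start from $P_{d^r}(x_{**})=P_d(x_{**}^r)$, which with Lemma~\ref{lem:btinterps}(4) yields $P_{d^r}=\sum_{T\in\rbt\ast[d]}\x_T^{r}$. Expanding the product of these,
\[
P_\delta=\prod_{m=1}^{k}P_{d_m^{r_m}}=\prod_{m=1}^{k}\,\sum_{T_m\in\rbt\ast[d_m]}\x_{T_m}^{r_m}=\sum_{(T_1,\dots,T_k)}\;\prod_{m=1}^{k}\x_{T_m}^{r_m},
\]
the outer sum running over all tuples with $T_m\in\rbt\ast[d_m]$. Placing $T_m$ in layer $r_m$, in the left-to-right order inherited from the block order of $\delta$, is a bijection from these tuples onto $\prbt\ast(\delta)$ (here one uses that a marked PRBT carries a marked cell in the top row of \emph{every} one of its constituent RBTs), and under it $\prod_m\x_{T_m}^{r_m}=\x_\T$. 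This proves (4).

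I do not expect a genuine obstacle: the content is bookkeeping. The single point requiring care is that the marking conventions differ between the two groups of statements. In (1)--(3) only the one last tableau is marked, and this is exactly what produces the scalar $d_k$ and the weight $r_k$, i.e.\ the factor $L(\delta)$; in (4) every RBT is marked, and this is exactly what absorbs the divisor coefficients $k$ occurring in $P_d=\sum_{k\mid d}\sum_{j\geq1}kx_{kj}^{d/k}$. A secondary point to state explicitly is that the last tableau in the scanning order lies in the top layer $r_k$, so that $\wt\ast$ is constant on $\pwbt\ast(\delta)$ and on $\psbt\ast(\delta)$ and pulls cleanly out of the sum.
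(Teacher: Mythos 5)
Your proof is correct and follows essentially the same route as the paper: for (1)--(3) the paper likewise observes that the marked tableau must lie in the top layer $r_k$ (so $\wt\ast(\T)=r_k$) and that there are $d_k$ choices of marked cell, which together account for $L(\delta)$; for (4) the paper simply invokes the multiplicativity argument of Lemma~\ref{lem:pbtinterps}, which you have written out explicitly.
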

 \begin{proof}
 Let $r$ be the largest multiplicity of $\delta$ and $\delta|^r = \alpha$. Then, the marked tableau $T\ast$ must be in layer $r$ and thus $\wt\ast(\T) = r$. In (1), (2), and (3), there are exactly $|T\ast| = \alpha_{\ell(\alpha)}$ ways of choosing the marked cell. Multiplying this by $\wt\ast(\T)$ accounts for the quantity $L(\delta)$ on the left hand side. For (4), the proof proceeds similarly to the proof of Lemma \ref{lem:pbtinterps}.
 \end{proof}

\section{Transition Matrices Between $H$, $E$, and $P$}\label{sec:HEP}

In this section, we prove the $H$, $E$, and $P$ expansions of $H_d$, $E_d$, and $P_d$ for $d\geq 0$. In Section \ref{ssec:HEP-rec}, we prove recursions relating $H_d$, $E_d$ and $P_d$ that are reminiscent of the recursions for Sym (Remark \ref{rem:sym-rec}). The bijections and sign-reversing involutions in Section \ref{ssec:HEP-rec} serve as a warm up for the proofs of propositions in later sections. In Section \ref{ssec:H-E}, we prove the expansions between $H$ and $E$ using the {stack-or-slash} operation. In Section \ref{ssec:P-HE}, marked polybar tableaux make their appearance. In Sections \ref{ssec:H-P} and \ref{ssec:E-P}, we study the cycles of permutations and utilize a technique described in \cite[Sec 7.2]{KLlocal}.
\subsection{Recursions among $H$, $E$, and $P$}\label{ssec:HEP-rec}
We first prove bijectively a formula stated without proof in \cite{polysymm} which relates $H_d$ and $E_d$. 
\begin{prop}[\cite{polysymm}, Remark 10]\label{prop:HE-rec}
For $d\geq 0$, we have $\sum_{k = 0}^d H_kE_{d -k} = \begin{cases} 0& \text{if }d > 0\\ 1 & \text{if } d = 0\end{cases}$.
\end{prop}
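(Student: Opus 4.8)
The plan is to exhibit a sign-reversing involution on the disjoint union $\bigsqcup_{k=0}^d \pwbt[k]\times\psbt[d-k]$ — equivalently, on pairs $(S,T)$ where $S\in\wbt[k]$ is a weak bar tableau and $T\in\sbt[d-k]$ is a strict bar tableau, with the sign of such a pair being $\sgn(T)=(-1)^{\ell(T)}$. By Lemma \ref{lem:btinterps}, the generating function of these pairs weighted by $\x_S\x_T$ and signed by $\sgn(T)$ is exactly $\sum_{k=0}^d H_kE_{d-k}$, so a fixed-point-free sign-reversing involution preserving the monomial $\x_S\x_T$ will force the whole sum to vanish when $d>0$; when $d=0$ the only pair is $(\varnothing,\varnothing)$, giving $1$.

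The key step is to move a single bar between $S$ and $T$ while flipping a sign. First I would fix a total order on all bars (by length, then by label — the same data that governs the increasing conditions in the definitions of WBT and SBT). Given $(S,T)$ with $d>0$, at least one of $S,T$ is nonempty, so there is a well-defined smallest bar $\beta$ occurring among all bars of $S$ together with all bars of $T$. The involution is: if $\beta$ occurs in $S$, remove one copy of $\beta$ from $S$ and insert $\beta$ into $T$ as a new row; if $\beta$ occurs in $T$ (it can occur at most once there, since $T$ is strict), remove it from $T$ and insert a copy into $S$. In either direction one checks that the image is again a legal pair: inserting $\beta$ into $T$ is legal precisely because $\beta$ is strictly smaller than every bar already in $T$, so no equal-length equal-or-larger-label collision occurs and strictness is preserved (and the resulting shape, after sorting rows by length, is still a partition); removing $\beta$ from a strict tableau and putting a copy into the weak tableau $S$ is always legal. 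This map is clearly an involution (the distinguished bar $\beta$ is the same before and after, since we only ever touched the globally smallest bar), it changes $\ell(T)$ by exactly one and hence reverses $\sgn$, and it leaves $\x_S\x_T$ unchanged because we merely transferred one variable $x_{|\beta|,\mathrm{label}(\beta)}$ between the two factors.

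The main obstacle I anticipate is the bookkeeping needed to confirm legality in the subtle direction — that inserting the minimal bar $\beta$ into the strict tableau $T$ really yields an element of $\sbt[d-k+|\beta|]$, i.e. that after reordering rows weakly by length the labels still strictly increase within each block of equal length. This works because $\beta$ is minimal in the combined multiset of bars, so in particular it is $\le$ every bar of $T$ of its own length; minimality in the chosen order forces it to be the unique smallest-labelled bar of that length, so it can be placed as the first such row without violating strictness. I would also need to handle the edge cases where one of $S$, $T$ is empty (then $\beta$ comes from the nonempty one and the construction still applies) and to note that when $d=0$ both are forced empty, isolating the surviving term $1$. Everything else is the routine verification that the two directions are mutually inverse and monomial-preserving.
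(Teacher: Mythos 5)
Your overall strategy --- a monomial-preserving, sign-reversing involution on $\bigcup_{k=0}^d \wbt[k]\times\sbt[d-k]$ that transfers a single distinguished bar between the weak and the strict tableau --- is the same as the paper's, which moves the top row of $T$ or of $U$ depending on a comparison of their top rows. Your selection rule (the globally minimal bar in the combined multiset) is a legitimate variant. However, as written your map has a genuine gap: the two cases ``$\beta$ occurs in $S$'' and ``$\beta$ occurs in $T$'' are not mutually exclusive, and in the overlapping case the move you justify is illegal. Concretely, take $d=2$, $k=1$, with $S$ and $T$ each consisting of a single bar of length $1$ and label $1$. The minimal bar $\beta$ occurs in both; your stated justification that ``$\beta$ is strictly smaller than every bar already in $T$'' is false here, and moving $\beta$ from $S$ into $T$ produces two identical rows in $T$, which is not a strict bar tableau. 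So either the map is ill-defined (ambiguous case selection) or it leaves the target set.

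The fix is small but must be made explicit: when $\beta$ occurs in $T$ (whether or not it also occurs in $S$), move it from $T$ to $S$; only when $\beta$ occurs in $S$ but not in $T$ do you move it from $S$ to $T$. With this priority the strictness argument goes through (in the $S\to T$ direction, $\beta$ is then genuinely absent from $T$, so by minimality every bar of $T$ of the same length has a strictly larger label), and the map is an involution because removing $\beta$ from the strict tableau leaves no further copy of $\beta$ there. This tie-breaking is precisely what the paper's case (3) accomplishes: when the top rows of $T$ and $U$ are identical, the ``weakly larger label'' clause routes the move from the strict tableau $U$ into the weak tableau $T$. You should also note (as you partly do) that the empty-tableau cases are subsumed once the priority is fixed.
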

\begin{proof}
For $d = 0$, the left side is $\x_{\varnothing}\x_{\varnothing} =1$. Let $d>0$.
By using the results of Lemma \ref{lem:btinterps}, we only have to show
\[
\sum\limits_{k= 0}^d \sum\limits_{\substack{T\in \wbt[k]\\ U\in \sbt[d-k]}}\sgn(U)\x_T \x_U = 0.
\]
We define an involution $\psi$ on the set $\bigcup_{k=0}^d\wbt[k]\times \sbt[d-k]$ which acts on $(T,U)$ as follows: 

\begin{enumerate}
\item As $d>0$, at least one of $T$ or $U$ must be non-empty. If $U = \varnothing$ but $T\neq \varnothing$, then define $T'$ by removing the top row of $T$ and let $U'$ be the top row of $T$. If $T=\varnothing$ but $U\neq \varnothing$, the let $T'$ be the top row of $U$ and $U'$ be obtained from $U$ by removing the top row.
\item If the top row of $T$ is {strictly} larger in length than the top row of $U$, or has the same length but a {strictly} smaller label, then obtain $T'$ from $T$ by removing the top row and obtain $U'$ by inserting the top row of $T$ above the top row of $U$. 
\item If the top row of $T$ is strictly smaller in length than the top row of $U$, or has the same length but a {weakly} larger label, then obtain $T'$ by inserting the top row of $U$ above the top row of $T$ and define $U'$ by removing the top row from $U$.
\end{enumerate}
Let $\psi(T,U) = (T',U')$. Then $\x_{T}\x_U = - \x_{T'}\x_{U'}$ which allows us to cancel pairs of monomials. To see that $\psi$ is an involution consider the output $(T',U')$ obtained from $(T, U)$ in case (2). The top row of $U'$ must be the top row of $T$. As $T$ is a WBT, the top row of $T'$ must have a smaller length, or the same length but a weakly larger label, than the top row of $U'$. This means $\psi(T',U')$ is handled by case (3) and returns $(\T,\U)$. The other cases can be verified similarly.
 The following example illustrates the action of $\psi$: 
\[
\left(\:\yt{222,222,33,5}, \yt{333,1} \:\right) \xmapsto{\psi} \left(\:\yt{222,33,5}, \yt{222,333,1}\:\right)\qedhere
\]
\end{proof}
\begin{prop}
For $d\geq 0$, $dH_d = \sum_{i=1}^d H_{d-i}P_i$.
\end{prop}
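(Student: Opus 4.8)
I want to prove $dH_d = \sum_{i=1}^d H_{d-i}P_i$ by a weight-preserving bijection on bar tableaux, mirroring the classical proof of $nh_n = \sum_{i=1}^n h_{n-i}p_i$. By Lemma~\ref{lem:btinterps}, the right-hand side counts $\sum_i \sum_{T\in\wbt[d-i],\,U\in\rbt\ast[i]}\x_T\x_U$. The left-hand side, via Lemma~\ref{lem:markedinterps}(1) with $\delta=(d)^1$ (so $L(\delta)=d$), counts $\sum_{\T\in\pwbt\ast((d)^1)}\wt\ast(\T)\x_\T = \sum_{T\ast\in\wbt\ast[d]}\x_T$, since the only layer is $1$ and $\wt\ast\equiv 1$. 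So I need a bijection between marked weak bar tableaux of size $d$ (a single WBT with one cell of its top row marked) and pairs $(T,U)$ with $T\in\wbt[d-i]$, $U\in\rbt\ast[i]$ — a marked rectangular bar tableau of size $i$ — that preserves the associated monomial. Note both sides have monomial weight equal to the full product: marking contributes nothing to $\x$.

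\textbf{The map.} Given a marked WBT $M$ of size $d$, let $b$ be the bar containing the marked cell; say $b$ has length $k$ and label $j$. In a WBT the bars of a given length $k$ appear consecutively and their labels weakly increase, so the bars of length $k$ with label $j$ form a block; the marked cell lies in the top row of that block (this is the condition ``the marked cell is in the top row of the whole tableau'' — wait, I must be careful: in $\wbt\ast[d]$ the marked cell is in the \emph{top row of $M$}, which is the longest bar). Let me instead set it up as: strip off the maximal ``rectangular block'' $U$ consisting of \emph{all} bars of $M$ that are identical to the top bar (same length and same label), carry the mark with it to get $U\ast\in\rbt\ast[i]$ where $i=|U|$, and let $T$ be the remaining WBT of size $d-i$. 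Conversely, given $(T,U\ast)$ with $U$ rectangular of common bar-shape $(k,j)$, I glue all bars of $U$ on top of $T$; for this to land back in $\wbt[d]$ I need $(k,j)$ to be ``at least as large'' as the top bar of $T$ in the WBT order. This is exactly the kind of boundary bookkeeping handled in the proof of Proposition~\ref{prop:HE-rec} via a stack-or-slash style case split.

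\textbf{The obstacle and the fix.} The genuine difficulty — the same one present classically — is that not every pair $(T,U\ast)$ can be glued: if the top bar of $T$ is strictly longer than, or equal in length but with smaller label than, the bar-shape of $U$, gluing violates weak decrease of lengths / weak increase of labels. The resolution is the standard one: I do not attempt a literal bijection, but instead a \emph{sign-reversing involution argument is not needed} because all coefficients are positive; rather, the correct statement is that on the ``bad'' pairs I perform a secondary matching. Concretely, when $U$'s bar $(k,j)$ cannot sit atop $T$, peel the maximal run of bars of $T$ that are also of shape $(k,j)$ (there may be some, since equality is allowed in a WBT), merge them into $U$ to get a larger rectangular marked tableau, repeat — this terminates and shows the fibers match. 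Equivalently, and more cleanly, I will argue by a direct counting/recursion: group marked WBTs of size $d$ by the shape $(k,j)$ of the identical-bar-block at the top and its multiplicity $m\ge 1$; such a tableau contributes $k$ choices of marked cell and the rest is an arbitrary WBT on $d-mk$ cells whose bars are all ``$\le(k,j)$''. Summing the geometric-type series over $m\ge1$ and over $(k,j)$, the ``$\le(k,j)$'' constraints telescope and reproduce exactly $\sum_i H_{d-i}P_i$, where the factor $k$ from the marked cell becomes the coefficient $k$ in $P_k = \sum_{k\mid i}\sum_j k x_{kj}^{i/k}$. I expect the bijective write-up to be cleanest if I first handle $\delta=(d)^1$ as a special case of the marked-polybar machinery and then run the stack-or-slash argument of Proposition~\ref{prop:HE-rec} essentially verbatim, with the single RBT factor playing the role of $U$; the main thing to verify carefully is that the involution/matching on the ``cannot glue'' boundary is well-defined and fixed-point-free modulo the legitimate pairs.
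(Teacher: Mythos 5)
There is a genuine gap: your map is cut in the wrong place, and the ``obstacle'' you then spend most of the proposal trying to repair is an artifact of that wrong choice rather than a real feature of the problem. Two concrete problems with ``strip off \emph{all} bars identical to the top bar and carry the mark with it.'' First, in $\wbt\ast[d]$ the marked cell is an arbitrary cell of the tableau (that is how the factor $d$ arises), not a cell of the top row; if the mark sits in a lower bar, ``carrying the mark'' into your rectangular block $U$ is undefined. Second, because you always remove the \emph{maximal} identical block, your map never produces a pair $(T,U\ast)$ in which $U$ has strictly fewer bars than the full block of that shape in the original tableau, so it cannot be surjective onto $\bigcup_i \wbt[d-i]\times\rbt\ast[i]$; relatedly, it forgets which row the mark was in, which is exactly the data needed to match the $d$ markings on the left with the $\sum_s k_s m_s$ pairs on the right. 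Your fallback ``telescoping'' count only attributes $k$ markings (those in the top bar) to each tableau and again loses the marks in lower rows.

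The missing idea is that the \emph{mark itself} should determine the cut. Given a marked WBT, let $B$ be the bar containing the marked cell and let $V$ be the marked RBT formed by $B$ together with all bars identical to $B$ lying \emph{weakly below} $B$ (the mark of $B$ becomes the top-row mark of $V$); let $U$ be what remains. If the maximal identical block containing $B$ has $M$ bars of length $k$, its $Mk$ cells yield exactly the pairs in which $V$ has $t$ bars for each $1\le t\le M$ and each of the $k$ column positions of the mark, which is precisely what $H_{d-i}P_i$ demands. The inverse glues $V$ immediately \emph{below} the lowest bar of $U$ identical to $V$'s bars (or into the unique admissible position if none exists) --- note this always succeeds, so there are no ``bad pairs,'' no secondary matching, and no involution on a boundary; the stack-or-slash machinery of Proposition~\ref{prop:HE-rec} is not needed here. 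Your reduction of the left side via Lemma~\ref{lem:markedinterps} with $\delta=(d)^1$ and the identification of the right side via Lemma~\ref{lem:btinterps} are both correct; it is only the bijection itself that needs to be replaced as above.
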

\begin{proof}
When $d = 0$, the sum on the right hand side is empty and thus both sides of the equality are 0. Let $d>0$ and define $\wbt\ast[d]$ to be the set of weak bar tableaux with $d$ cells where we mark one cell.
Using the results of Lemma \ref{lem:btinterps}, we have to show
\[
\sum\limits_{T\in \wbt\ast[d]} \x_{T} = \sum\limits_{i=1}^d \sum\limits_{\substack{U\in \wbt[d-i]\\ V\in \rbt\ast[i]}} \x_U\x_V.
\]
 We describe a bijection $\varphi$ that maps an element $(U,V)$ of the set $\bigcup_{i=1}^d \wbt[d-i]\times \rbt\ast[i]$ to an element of $\wbt\ast[d]$ such that $\x_U \x_V = \x_{\varphi(U,V)}$. If $U$ contains a bar identical to the bars in $V$, then obtain $\varphi(U,V)$ by inserting $V$ below the lowest bar in $U$ identical to the bars in $V$, while preserving the location of the marked cell. If $U$ contains no bar identical to $V$ then obtain $\varphi(U,V)$ by inserting $V$ in a unique place in $V$ such that $\varphi(U,V)$ is a WBT. We now describe $\varphi^{-1}$. Let $T\in \wbt\ast[d]$ and let $B$ the marked bar in $T$. Define $\varphi^{-1}(T) = (U',V')$ such that $V'$ is the marked RBT formed by the bars identical to $B$ lying weakly below $B$, while $U'$ is obtained from $T$ by removing $V'$. This bijection is illustrated in the examples below:
\[
 \left( \yt{111,222,3,3},\yt{22{2\ast},222}\right) \xmapsto{\varphi}\yt{111,222,22{2\ast},222,3,3} \qquad  \left( \yt{111,3,3},\yt{4{4\ast},44}\right) \xmapsto{\varphi}\yt{111,4{4\ast},44,3,3}\qedhere
\]
\end{proof}

\begin{prop}
For $d\geq 0$, $dE_d = -\sum_{i=1}^d E_{d-i}P_i$.
\end{prop}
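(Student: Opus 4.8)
The plan is to follow the template of the two preceding propositions, replacing weak bar tableaux by strict ones, with the twist that the naive analogue of the bijection $\varphi$ now destroys strictness, so part of the argument must become a sign-reversing involution rather than a bijection. First, using Lemma \ref{lem:btinterps} and introducing the set $\sbt\ast[d]$ of strict bar tableaux of size $d$ with one marked cell (whose sign and monomial are those of the underlying tableau), I would rewrite the claim, for $d>0$, as
\[
\sum_{T\in\sbt\ast[d]}\sgn(T)\x_T \;+\; \sum_{i=1}^{d}\sum_{\substack{U\in\sbt[d-i]\\ V\in\rbt\ast[i]}}\sgn(U)\x_U\x_V \;=\;0,
\]
since marking one of the $d$ cells of an SBT gives $dE_d=\sum_{T\in\sbt\ast[d]}\sgn(T)\x_T$, while $\sum_{i=1}^d E_{d-i}P_i$ is the second sum by Lemma \ref{lem:btinterps}; the case $d=0$ is the empty identity. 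It then suffices to build a fixed-point-free involution $\Psi$ on $X:=\sbt\ast[d]\,\sqcup\,\bigsqcup_{i=1}^{d}\bigl(\sbt[d-i]\times\rbt\ast[i]\bigr)$ that preserves the associated monomial and reverses the sign (namely $(-1)^{\ell(T)}$ on $\sbt\ast[d]$ and $(-1)^{\ell(U)}$ on the pairs).

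I would define $\Psi$ in three cases. (i) Given $T\in\sbt\ast[d]$ with marked bar $B$ (length $\ell$, label $j$), set $\Psi(T)=(T\setminus B,\,B)$, viewing $B$ as a single-bar marked RBT in $\rbt\ast[\ell]$; since $T$ is strict, $B$ is the only bar of its length and label, so $T\setminus B$ contains no bar identical to $B$, and $\ell(T\setminus B)=\ell(T)-1$. (ii) Given a pair $(U,V)$ where $V$ is a single bar of length $\ell$ and label $j$ and $U$ has no bar of length $\ell$, label $j$, let $\Psi(U,V)$ be the marked strict bar tableau obtained by inserting $V$ into $U$ in the unique position keeping strictness, carrying the mark; this is inverse to (i), and again changes $\ell(\cdot)$ by one. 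So (i)--(ii) pair $\sbt\ast[d]$ with exactly the pairs described in (ii), preserving the monomial and reversing the sign. (iii) On the remaining pairs --- $V$ has at least two (necessarily identical) bars, or $V$ is a single bar of length $\ell$, label $j$ with $x_{\ell j}\mid\x_U$ --- I would let $\Psi$ move one bar of length $\ell$, label $j$ between $U$ and $V$: if $x_{\ell j}\mid\x_U$, delete that bar from $U$ and append it to $V$; if $x_{\ell j}\nmid\x_U$ (which forces $V$ to have at least two bars), remove one bar from $V$ and insert a bar of length $\ell$, label $j$ into $U$ in the unique strict position. Moving the mark of $V$ along with its top bar, one checks that $\Psi$ is an involution on this residual set, has no fixed points, preserves $\x_U\x_V$, and flips the parity of $\ell(U)$.

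The hard part will be the bookkeeping in case (iii): I need to confirm that the residual set is closed under the move and splits exactly into the two subclasses $\{x_{\ell j}\mid\x_U\}$ and $\{x_{\ell j}\nmid\x_U,\ V\text{ has }\ge 2\text{ bars}\}$ which $\Psi$ interchanges (hence no fixed points), that "the unique strict position" really is unique and yields a valid SBT, and that the mark of $V$ (always in its top bar) transports consistently as bars are added to or removed from $V$. These are precisely the points where a bijection in the style of $\varphi$ would fail, so they need care, though none is deep; I would double-check the whole construction on $d=2$. I would also remark that the identity follows non-combinatorially from Proposition \ref{prop:HE-rec} and the preceding proposition, which encode $H(t)E(t)=1$ and $H'(t)=H(t)P(t)$ and so give $E'(t)=-E(t)P(t)$, but a direct bijective proof keeps the section self-contained.
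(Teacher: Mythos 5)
Your proposal is correct and matches the paper's argument in substance: the paper defines the same bar-transfer move (shift a bar identical to the bars of $V$ between $U$ and $V$) as a sign-reversing involution on $\bigcup_i \sbt[d-i]\times\rbt\ast[i]$ whose fixed points ($V$ a single marked bar with no identical bar in $U$) biject with $\sbt\ast[d]$, which is exactly your cases (i)--(iii) repackaged as one fixed-point-free involution on the disjoint union. The bookkeeping you flag in case (iii) goes through just as you describe, with insertion/removal at the bottom of $V$ so the mark in the top row is undisturbed.
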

\begin{proof}
When $d = 0$, the sum on the right hand side is empty and thus both sides of the equality are 0. Let $d>0$, and define $\sbt\ast[d]$ to be the set of strict bar tableaux with $d$ cells where one cell is marked.
Using the results of Lemma \ref{lem:btinterps}, we rewrite the above in terms of monomials as
\[
\sum\limits_{T\in \sbt\ast[d]}(-1)^{\ell(T)} \x_T = \sum\limits_{i=1}^d \sum\limits_{\substack{U\in \sbt[d-i]\\ V\in \rbt\ast[i]}} (-1)^{\ell(U)+1} \x_U \x_V.
\]
We define a sign-reversing involution $\rho$ on the set $\bigcup_{i=1}^d \sbt[d-i]\times \rbt\ast[i]$ such that the fixed point set is in bijection with $\sbt\ast[d]$. For a fixed point $(U_0, V_0)$, let the marked SBT in bijection with $\rho(U_0, V_0)$ be $Z$, then we have $(-1)^{\ell(U_0)+1} \x_{U_0}\x_{V_0} = (-1)^{\ell(Z)}\x_{Z}$. Denote $\rho(U,V)$ by $(U',V')$. As $i> 0$, the marked RBT $V$ is non-empty. Let $B$ be the bottom bar of $V$. If $U$ contains a bar $B'$ identical to $B$, then define $U'$ by removing $B'$ from $U$, and define $V'$ by inserting $B'$ in $V$ below $B$. On the other hand, if $U=\varnothing$ or if $U$ contains no bars identical to $B$ and $\ell(V) > 1$, obtain $U'$ by inserting $B$ in $U$ such that $U'$ is an SBT and $V'$ by removing $B$ from $V$. In both these cases, we have $(-1)^{\ell(U')}\x_{U'}\x_{V'} = - (-1)^{\ell(U)+1}\x_{U}\x_{V}$. The following example illustrates the action of the involution $\psi$:

\[
\left(\yt{111,222,33},\yt{2{2\ast}2}\right) \xmapsto{\psi} \left(\yt{111,33},\yt{2{2\ast}2,222}\right) 
\]

In the remaining case, we have that $V$ is a single marked bar and $U$ does not contain a bar identical to $V$. Construct the corresponding $T\in \sbt\ast[d]$ by inserting $V$ in $U$ in the unique location such that $U$ is an SBT and we have $\x_{T} = \x_U \x_V$. As $\ell(T) =  \ell(U) + 1$, the monomials appear with the correct sign. The following example illustrates the correspondence:
\[
\left(\yt{222,333,44,5},\yt{2{2\ast}}\right) \leftrightarrow \yt{222,333,2{2\ast},44,5}
\]
\end{proof}
\begin{remark}\label{rem:sym-rec}
The recursions discussed above are quite similar to the corresponding recursions among symmetric polynomials. We have $\sum\limits_{i=0}^d (-1)^{i}h_{d-i}e_{i} =  \begin{cases} 0& \text{if }d > 0\\ 1 & \text{if } d = 0\end{cases}$ \cite[Thm 9.81]{loehr-comb}, $\sum_{i=1}^{d} h_{d-i} p_{i} = dh_d$ \cite[Thm 9.88]{loehr-comb}, and $\sum\limits_{i=1}^{d} (-1)^{i-1}e_{d-i} p_{i} = de_d$ \cite[Thm 9.89]{loehr-comb}. The combinatorial proofs of these recursions can be found in these citations.

\end{remark}

\subsection{$H$ and $E$}\label{ssec:H-E}
We prove the expansion in this section using a sign-reversing involution which we call the {stack-or-slash operation}. 

\boks{0.35}
We first describe the \textit{weak stack-or-slash operation}. For bars $B$ and $B'$, we call $(B,B')$ a \textit{pair} in $\T$ if $B'$ appears immediately after $B$ in the scanning order, and $B$ and $B'$ occur in the same layer. A pair $(B,B')$ called an \textit{identical pair} if $B$ and $B'$ are identical (same size and same label). We can extend this definition to say that $(T,T')$ is a \textit{pair of tableaux} if $T'$ occurs immediately after $T$ in the same layer of $\T$.
We consider pairs because when they occur in $\T$ in a certain arrangement, we can rearrange them such that the newly obtained object $\T'$ has exactly one tableau more or less than $\T$. We will see that this leads to a sign-reversing involution on polyWBTs. We say that
\begin{enumerate}
\item the pair of bars $(B,B')$ satisfies the \textit{decreasing parts condition} if $B$ contains strictly more cells than $B'$.
\item the pair of bars $(B,B')$ satisfies the \textit{weakly increasing labels condition} if $B$ and $B'$ have the same number of cells but the label of $B'$ is weakly greater than the label of $B$.
\end{enumerate}
We say a pair $(B,B')$ is a \textit{(weak) first instance}\footnote{We will drop the adjective ``weak'' when it is clear from context.} if $B$ is the first bar in the scanning order such that $(B,B')$ satisfies (1) or (2).  Let a PWBT $\T$ contain the bars $B$ and $B'$ such that $(B,B')$ is the first instance.
Define the output $\T'$ of the \textit{weak stack-or-slash} operation on $\T$ as follows.
\begin{itemize}
\item \textbf{Slash}: If $B$ and $B'$ occur within the same WBT $T$ in $\T$, then let $T'$ be the WBT containing the top rows of $T$ up to and including the bar $B$. Let $T''$ be the WBT formed by the rows of $T$ below and including $B'$. Let $\T'$ be obtained from $\T$ by removing $T$ and placing in its position the pair $(T',T'')$. In the following example, the pair $(B = \yt{111},B' = \yt{22})$ in layer 1 of $\T$ is the first instance and it satisfies the decreasing parts condition.
\[
\T = \left( \yt{222} \quad \yt{111,22,22} \quad \yt{33}\right) \to \T' = \left( \yt{222} \quad \yt{111} \quad \yt{22,22} \quad \yt{33}\right)
\]
\item \textbf{Stack}: If $B$ and $B'$ are in different tableaux, say $T$ and $T'$ respectively, then it must be the case that $B$ is the bottom row of $T$ and $B'$ is the top row of $T'$. Define $T''$ to be the WBT whose top rows form the subtableau $T$ while the rest of the  rows form the subtableau $T'$. Obtain $\T'$ from $\T$ by removing the pair $(T,T')$ and replacing it with $T''$. In the following example, we apply the weak stack-or-slash operation. Then $(B = \yt{33} ,B' = \yt{33})$ (in layer 1 of $\T$) is the first instance and satisfies the weakly increasing labels condition. 
\[
\T = \left(\yt{33} \quad \yt{33,44} \quad \yt{55,2}\right) \to \T' = \left(\yt{33,33,44} \quad \yt{55,2}\right).
\]

\end{itemize}

We define the \textit{strict stack-or-slash operation} acting on PSBTs as a slight modification of the weak stack-or-slash operation. We say that
\begin{enumerate}
\item[(3)] the pair $(B,B')$ satisfies the \textit{strictly increasing labels condition} if $B$ and $B'$ have the same number of cells but the label of $B'$ is strictly greater than the label of $B$.
\end{enumerate}

The pair $(B,B')$ is a \textit{(strict) first instance} if $B$ is the first bar in the scanning order that satisfies the conditions (1) and (3). The action on the bars remains the same as in the weak stack-or-slash operation.
For the example below, $(B = \yt{33},B' = \yt{44})$ is the strict first instance (in layer 1 ) and satisfies the strictly increasing labels condition. This yields the following output:
\[
\T = \yt{33} \quad \yt{33,44,2} \quad \yt{55,2} \to \T' = \yt{33}\quad \yt{33}\quad \yt{44,2} \quad \yt{55,2}
\]

Note that the weak stack-or-slash operation sends PWBTs to PWBTs and the strict variation sends PSBTs to PSBTs. One can also check that both variations of the stack-or-slash operation are involutions as they preserve the pair that is the first instance. For $\delta\in \PCom$, if $\T\in \pwbt(\delta)$ contains no first instance, then $\T$ is a fixed point of the operation. Explicitly, the fixed points under the weak stack-or-slash operation are PWBTs $\T = (T_1, \ldots, T_k)$ where each $T_i$ is a bar, the sizes of $T_i$ increase weakly as $i$ increases, and the labels between $T_i$s of the same size decrease strictly. Similarly, the fixed points under the strict stack-or-slash operation are PSBTs $\T = (T_1, \ldots, T_k)$ where each $T_i$ is a bar, the sizes of $T_i$ increase weakly with $i$, and the labels between $T_i$s of the same size decrease weakly.

In the following example, $\T$ is a fixed point under the weak stack-or-slash operation and $\U$ is a fixed point under the strict stack-or-slash operation.
\[
\T = \yt{4} \quad \yt{2} \quad \yt{22} \quad \yt{11} \quad \yt{7777} \quad \yt{33333}
\]
\[
\U = \yt{4} \quad \yt{2} \quad \yt{22} \quad \yt{22} \quad \yt{22} \quad \yt{3333}
\]

We call a polycomposition $\delta$ of $n$ \textit{square-free} if all multiplicities are equal to 1. Each square-free polycomposition $\delta$ can be written as $(\alpha)^1$ for some $\alpha\in \Com(n)$. We denote the set of square-free polycompositions of $n$ by $\PCom_{\sqf}(n)$.

\begin{prop}\label{prop:H-E,E-H}
 For $d\geq 0$, 
 \begin{enumerate}
 \item $H_d = \sum_{\delta \in \PCom_{\sqf}(d)} (-1)^{\ell(\delta)}E_\delta$.
 \item $E_d = \sum_{\delta \in \PCom_{\sqf}(d)} (-1)^{\ell(\delta)}H_\delta$.
 \end{enumerate}
\end{prop}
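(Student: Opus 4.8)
The plan is to prove both identities simultaneously using the combinatorial models from Lemma~\ref{lem:btinterps} and Lemma~\ref{lem:pbtinterps}, together with the weak and strict stack-or-slash operations just introduced. For part~(1), I would substitute $H_d = \sum_{T\in\wbt[d]}\x_T$ on the left and, using $E_\delta = \sum_{\T\in\psbt(\delta)}\psgn(\T)\x_\T$, rewrite the right-hand side as a signed sum over the set $\bigsqcup_{\delta\in\PCom_\sqf(d)}\psbt(\delta)$. Since a square-free polycomposition is just $(\alpha)^1$ for $\alpha\in\Com(d)$, a polySBT of shape $(\alpha)^1$ is simply an ordered tuple $\T=(T_1,\dots,T_k)$ of SBTs sitting in layer~$1$ with $|T_i|=\alpha_i$; the sign is $(-1)^{\ell(\delta)}\psgn(\T) = (-1)^{k}(-1)^{B(\T)}$. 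I would then apply the strict stack-or-slash operation to this index set. It is a sign-reversing involution because each stack move changes the number of tableaux $k$ by $+1$ and the number of bars $B(\T)$ is unchanged, while each slash move changes $k$ by $+1$ and again leaves $B(\T)$ fixed; hence the total sign $(-1)^{k+B(\T)}$ flips, and $\x_\T$ is preserved. So all non-fixed terms cancel in pairs.

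The next step is to identify the fixed points and check they match $H_d$. As noted right before the proposition, the fixed points of the strict stack-or-slash operation are PSBTs $\T=(T_1,\dots,T_k)$ in which every $T_i$ is a single bar, the bar sizes weakly increase with $i$, and among bars of equal size the labels weakly decrease. At such a fixed point $\ell(T_i)=1$ for all $i$, so $B(\T)=k$ and the sign is $(-1)^{k+k}=+1$; thus the fixed points contribute $\sum_\T \x_\T$ with all plus signs. It remains to see that the map sending such a fixed point $\T$ to the single bar tableau obtained by stacking $T_1,\dots,T_k$ from bottom to top — that is, reading the bars in reverse scanning order and stacking them into one diagram — is a bijection onto $\wbt[d]$ preserving $\x_\T$. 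Indeed, given $T\in\wbt[d]$ with rows (bars) $R_1,\dots,R_m$ from top to bottom, its rows have weakly decreasing sizes and, within a fixed size, weakly increasing labels from top to bottom; reversing the order gives exactly a fixed-point tuple, and this is clearly inverse to stacking. Since $\x_\T=\prod_i \x_{T_i}$ and $\x_T=\prod_i\x_{R_i}$, the monomials agree. This proves~(1).

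For part~(2), I would run the mirror-image argument: expand $E_d=\sum_{T\in\sbt[d]}\sgn(T)\x_T=\sum_{T\in\sbt[d]}(-1)^{\ell(T)}\x_T$, expand the right-hand side using $H_\delta=\sum_{\T\in\pwbt(\delta)}\x_\T$ over $\bigsqcup_{\delta\in\PCom_\sqf(d)}\pwbt(\delta)$ with the sign $(-1)^{\ell(\delta)}=(-1)^k$, and apply the \emph{weak} stack-or-slash operation, which is again a sign-reversing involution since each move changes $k$ by one while preserving $\x_\T$ (here there is no $B(\T)$ factor on the right, so the sign $(-1)^k$ flips directly). The fixed points of the weak operation are tuples of single bars with weakly increasing sizes and, within equal sizes, \emph{strictly} decreasing labels; reversing scanning order and stacking gives exactly the rows of a strict bar tableau $T\in\sbt[d]$, with $\ell(T)=k$ and hence sign $(-1)^k$ — matching the left-hand side. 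I expect the main obstacle to be purely bookkeeping: verifying carefully that stack-or-slash is genuinely an involution on the disjoint-union index set (the key point, already flagged in the text, being that the operation preserves the identity of the first-instance pair, so applying it twice returns the original tableau) and that the stack-vs-slash dichotomy together with the layer-$1$ restriction exactly accounts for the change in block count $k$. There is no deep idea beyond assembling the pieces already developed; the care lies in matching signs at the fixed points and confirming the reverse-scan-and-stack bijection with $\wbt[d]$ (resp.\ $\sbt[d]$).
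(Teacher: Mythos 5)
Your proposal is correct and follows essentially the same route as the paper: expand both sides via Lemmas \ref{lem:btinterps} and \ref{lem:pbtinterps}, apply the strict (resp.\ weak) stack-or-slash operation as a sign-reversing involution, and identify the fixed points with $\wbt[d]$ (resp.\ $\sbt[d]$) by reverse-scan stacking, with the sign at fixed points matching because $B(\T)=\ell(\T)$ there. The only quibble is the remark that a stack move changes $k$ by $+1$ — it decreases $k$ by one while a slash increases it — but since all that matters is that $k$ changes by exactly one, the parity argument is unaffected.
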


\begin{proof}
We first prove (1). By using Lemmas \ref{lem:btinterps} and \ref{lem:pbtinterps}, we have to show
\[
\sum\limits_{T\in \wbt[d]} \x_T= \sum\limits_{\delta\in \PCom_{\sqf(d)}} \sum\limits_{\T\in \psbt(\delta)}(-1)^{\ell(\T)} \psgn(\T) \x_{\T}.
\]
We define $\sigma:= \mcI_E^H$ to be a map on $\bigcup_{\delta\in \PCom_{\sqf}(d) } \psbt(\delta)$ such that $\mcI(\T)$ is the output of the strict stack-or-slash operation on $\T$. This makes $\mcI$ an involution on $\bigcup_{\delta \in \PCom_{\sqf}(n)} \psbt(\delta)$ such that $\mcI(\T)$ has one tableau more or less than $\T$. This means that for $\T$ that is not a fixed point of $\mcI$, we have $(-1)^{\ell(\T)}\psgn(\T)\x_{T} = -(-1)^{\ell(\mcI(\T))}\psgn(\sigma(\T))\x_{\mcI(\T)}$. The fixed points under $\mcI$ are PSBTs $\T = (B_1, \ldots, B_{\ell(\delta)})$ where $B_i$s are bars, the sizes of $B_i$s increase weakly with index $1\leq i \leq \ell(\delta)$ and the labels decrease weakly between bars of the same size. These fixed points are in bijection with WBTs of size $d$ where the $i$th row from top of the WBT corresponding to $\T$ is identical to the bar $B_{\ell(\T)-i+1}$. This can be informally seen as reading the bars of $\T$ from right to left and constructing the rows of the weak bar tableau from top to bottom. The monomials $\x_{\T}$ corresponding to the fixed points $\T$ appear on the right hand side with the sign 1 as the total number of bars in $\T$, $B(\T)$, is equal to $\ell(\T)$ and thus $(-1)^{\ell(\T)}\psgn(\T) = (-1)^{\ell(\T) + B(\T)} = 1$.

The proof of (2) requires us to show
\[
\sum\limits_{T\in \sbt[d]} \sgn(T)\x_T = \sum\limits_{\delta\in \PCom_{\sqf}(d)} \sum\limits_{\T\in \pwbt(\delta)}(-1)^{\ell(\T)} \x_{\T}.
\]
We proceed as in the proof of (1) by defining $\sigma' = \mcI_H^E$ to be the weak stack-or-slash operation. For PWBTs $\T$ that are not fixed points, $(-1)^{\ell(\T)}\x_{T} = -(-1)^{\ell(\mcI'\T))}\x_{\mcI'(\T)}$ holds which allows us to pair up and cancel monomials with opposite signs. The fixed points under this map are PWBTs $\T = (B_1, \ldots, B_{\ell(\delta)})$ where $B_i$s are bars, the sizes of $B_i$s increase weakly and the labels decrease strictly between bars of the same size. Each fixed PWBT $\T$ corresponds to an SBT $T$ of size $d$ by considering $B_{\ell(\T)-i+1}$ as the $i$th row from top of $T$. Similar to the construction in (1), reading the bars of the (fixed point) PWBT from right to left corresponds to reading the SBT from top to bottom. As the number of bars in the fixed point PWBT $\T$ is the same as the number of rows of the corresponding SBT $T$, we have $\ell(T) = B(\T)$, and thus the signs on both sides match.
\end{proof}
\begin{example}
\boks{0.36}
In the $E$-expansion of $H_{10}$, an example of $\sigma^H_E$ acting on PSBTs contributing to the monomial $x_{32}x_{33}x_{22}^2$ is
\[
\yt{222} \quad \yt{333,22} \quad \yt{22} \xmapsto{\sigma^H_E} \yt{222,333,22} \quad \yt{22}
\]
An example of a fixed PSBT under $\sigma^H_E$ mapping to its associated WBT for $d = 10$ is 
\[
\yt{3} \quad\yt{1} \quad \yt{22} \quad \yt{333} \quad \yt{333}  \mapsto \yt{333,333,22,1,3}
\]
\end{example}
\begin{remark}\label{rem:omega}
In the case of symmetric functions, we have $h_d = \sum_{\alpha\in \Com(d)} (-1)^{d-\ell(\alpha)} e_\alpha$ and $e_d = \sum_{\alpha\in \Com(d)} (-1)^{d-\ell(\alpha)} h_\alpha$ (see \cite[Prop 4.3]{gelfand}, \cite[Eq 17]{KLlocal}). This allows us to deduce that the algebra homomorphism $\omega$ on $\Sym$ which maps $h_d$ to $e_d$ is an involution. The analogous formulas in the polysymmetric case allows us to show that algebra homomorphism $\Omega$ on $\PSym$ which maps the algebraically independent generators $H_{d^r}$ to $E_{d^r}$ is an involution.  This map is defined in Proposition 3.2 in \cite{polysymm} but the proof is based on generating functions. By using the definition of $H_{d^r}$ and $E_{d^r}$, we get $H_{d^r} = \sum_{\delta \in \PCom_{\sqf}(n)} (-1)^{\ell(\delta)}E_{\delta^r}$ and $E_{d^r} = \sum_{\delta \in \PCom_{\sqf}(n)} (-1)^{\ell(\delta)}H_{\delta^r}$. We use the fact that $\PSym$ is a $\Q$-algebra generated by the algebraically independent set $\{H_{d^r}:d,r\geq 1\}$ \cite[Thm 3.1]{polysymm}. 

We have $\Omega(E_{d^r}) = \Omega(\sum_{\alpha\in \Com(d)}(-1)^{ \ell(\alpha)} H_{(\alpha)^r})$. As $\Omega$ is an algebra homomorphism, we can write $\Omega(E_{d^r}) = \sum_{\alpha\in \Com(d} (-1)^{\ell(\alpha)} E_{(\alpha)^r} = H_{d^r}$. This shows that $\Omega\circ \Omega$ is the identity on $\PSym$ and thus $\Omega$ is an algebra involution.
\end{remark}

\subsection{$P$ in $H$ and $E$}\label{ssec:P-HE}
We now present the $H$ and $E$ expansions of $P_d$ which do not utilize the stack-or-slash operation. In the stack-or-slash operation, the first instance decides which bar is affected by our involution. As we will see, the objects that appear in the $H$ and $E$ expansions of $P$ are marked bar polytableaux and the bar on which the involution acts is dictated by the marking.

\begin{prop}\label{prop:P-H,P-E}
For $d\geq 0$, 
\begin{enumerate}
\item $P_d = \sum_{\delta \in \PCom_{\sqf}(d)} (-1)^{\ell(\delta)-1} L(\delta) H_\delta$.
\item $P_d = \sum_{\delta  \in \PCom_{\sqf}(d)} (-1)^{\ell(\delta)} L(\delta) E_\delta$.
\end{enumerate}
\end{prop}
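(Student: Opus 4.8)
The plan is to prove both identities by a monomial‑preserving, sign‑reversing involution, in the spirit of the stack‑or‑slash arguments of Section~\ref{ssec:H-E}, but with the \emph{marked} bar playing the role that the first instance played there — the principle announced at the start of this subsection. For part~(1) I would first rewrite both sides in terms of bar tableaux: by Lemma~\ref{lem:btinterps}(4) the left-hand side is $\sum_{T\in\rbt\ast[d]}\x_T$, while by Lemma~\ref{lem:markedinterps}(1) applied to a square-free $\delta$ — where the unique multiplicity $1$ forces $\wt\ast(\T)=1$ — the right-hand side is $\sum_{\delta\in\PCom_{\sqf}(d)}(-1)^{\ell(\delta)-1}\sum_{\T\in\pwbt\ast(\delta)}\x_{\T}$. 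So it suffices to construct an involution $\varrho$ on $X:=\bigcup_{\delta\in\PCom_{\sqf}(d)}\pwbt\ast(\delta)$ that fixes $\x_{\T}$, changes the number of tableaux by exactly one on non-fixed points (hence reverses $(-1)^{\ell(\delta)-1}$), and whose fixed-point set is in sign-preserving bijection with $\rbt\ast[d]$; since a marked RBT regarded as a marked polyWBT of shape $(d)^1$ has $\ell(\delta)=1$, every fixed point automatically carries sign $+1$, which matches.

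The involution itself is a ``marked stack-or-slash'': given $\T\ast=(T_1,\dots,T_k,T^*)$ with marked bar $B^*$ occurring as the $j$th row of $T^*$, one slashes or stacks at the boundary adjacent to $B^*$. If $j\ge 2$, slash $T^*$ just above $B^*$, inserting the rows strictly above $B^*$ as a new unmarked tableau in front of the still-marked tail; the inverse move, applicable when $j=1$ and the bottom row of $T_k$ may legally sit above $B^*$ within one weak bar tableau, absorbs $T_k$ back above $B^*$. These two moves already form a complete involution on the part of $X$ supporting a single-variable monomial $x_{\ell a}^{\,d/\ell}$, where $X$ is indexed by compositions $(n_1,\dots,n_k)$ of $N:=d/\ell$ together with a marked cell in the last block: the surviving fixed points $(n_1)=(N)$ with the mark among the top $\ell$ cells exactly realize the coefficient $\ell$ of $x_{\ell a}^{\,N}$ in $P_d$, i.e.\ the elementary identity $\sum (-1)^{k-1}n_k=1$ over all compositions $(n_1,\dots,n_k)$ of $N$. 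The remaining case — the mark is the top row of $T^*$ and either $k=0$ or the bottom row of $T_k$ is too small to sit above $B^*$ — is handled by a stack-or-slash on the \emph{unmarked} rows (those of $T_1,\dots,T_k$ together with the rows of $T^*$ below $B^*$), reordered so the marked row stays at the front of the last tableau; its fixed points should be exactly the $\T\ast$ with $k=0$ in which $T^*$ is a stack of copies of $B^*$, i.e.\ the marked RBTs with the mark in the top row.

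Part~(2) would be carried out identically, with $\psbt\ast$ replacing $\pwbt\ast$ and the strict stack-or-slash replacing the weak one. Here Lemma~\ref{lem:markedinterps}(3) contributes an extra factor $\psgn(\T)=(-1)^{B(\T)}$, and since every move preserves the total number of rows $B(\T)$, reversing the parity of $\ell(\delta)$ also reverses that of $\ell(\delta)+B(\T)$. Because strict bar tableaux never repeat a row inside one tableau, the fixed points here are the marked PSBTs that are $N$ separate single-row tableaux all equal to one bar $B$ with a marked cell in the last one; reading these rows right-to-left as the rows (top to bottom) of a single tableau, exactly as in the proof of Proposition~\ref{prop:H-E,E-H}, gives the bijection with $\rbt\ast[d]$, and $\ell(\delta)=B(\T)=N$ makes the sign $+1$.

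The step I expect to be the real obstacle is nailing down the third (residual) case of $\varrho$ and verifying that the whole map is a well-defined involution: the stack-or-slash rule on the unmarked rows must be chosen so that the marked cell never leaves the last tableau, so that it dovetails with the slash-above/absorb-above pair, and — most delicately — so that its only fixed points are honest marked RBTs, with nothing left over (in particular the repeated rows of the marked tableau must be shielded from being slashed while non-RBT configurations are still forced to move, and ties in size/label between $B^*$ and a neighbouring tableau must cancel correctly). As a cross-check I would also record the short inductive argument: starting from the recursions $dH_d=\sum_{i=1}^d H_{d-i}P_i$ and $dE_d=-\sum_{i=1}^d E_{d-i}P_i$ of Section~\ref{ssec:HEP-rec}, isolating the $i=0$ term (equivalently the $\delta=(d)^1$ term on the right-hand sides to be proved) and re-indexing the remaining terms by prepending a first part to the polycompositions that index $P_i$ reproduces the claimed sums verbatim, using $L((\,b_1,\dots)\,^1)=b_{\text{last}}$ and $\ell((\,b_1,\dots)\,^1)=(\text{length})$.
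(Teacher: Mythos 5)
Your reduction to monomials via Lemmas~\ref{lem:btinterps}(4) and \ref{lem:markedinterps}, and the overall plan --- a sign-reversing involution on marked poly bar tableaux whose fixed points are exactly the marked RBTs --- is the paper's strategy. The gap is in the involution itself, and it is precisely the one you flagged: the residual case does not close up. Your merge move absorbs $T^*$ into $T_k$ only when the bottom row of $T_k$ can sit \emph{directly above} $B^*$, and the fallback stack-or-slash on the unmarked rows then fixes too many objects. Concretely, take $d=4$ and $\T=\left(\:\yt{1}\:,\:\yt{{1\ast}11}\:\right)\in\pwbt\ast((1,3)^1)$: the mark is in the top row of $T^*$ and the bar $\yt{1}$ cannot legally sit above the length-$3$ bar $B^*$, so you land in the residual case; the unmarked rows consist of the single bar $\yt{1}$, which admits no first instance and is therefore fixed. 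This object is not a marked RBT and carries sign $(-1)^{2-1}=-1$; likewise $\left(\:\yt{{1\ast}11,1}\:\right)$ is fixed with sign $+1$. Altogether your map leaves six fixed points supporting the monomial $x_{11}x_{31}$ (three of each sign) instead of zero. The fact that they happen to cancel in the sum is just the truth of the identity; the involution argument itself does not establish it. The same defect occurs verbatim in part~(2).

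The paper avoids this by letting the marked bar, rather than adjacency at a tableau boundary, dictate both moves: the split peels off of $T^*$ exactly the marked RBT formed by $B^*$ and the bars identical to it lying weakly below it (not everything above $B^*$), and the merge inserts that entire marked RBT into $T_k$ at the unique legal position (below the lowest bar of $T_k$ identical to $B^*$, or wherever the WBT condition forces it). In particular the merge \emph{does} apply to $\left(\:\yt{1}\:,\:\yt{{1\ast}11}\:\right)$, sending it to $\left(\:\yt{{1\ast}11,1}\:\right)$, so no residual case is needed and the only fixed points are the honest marked RBTs; part~(2) requires a further adjustment (locating the leftmost $T_j$ interacting with a bar identical to $B^*$) because identical bars cannot coexist inside one SBT. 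Separately, the inductive argument you offer as a cross-check --- solving $dH_d=\sum_{i=1}^d H_{d-i}P_i$ for $P_d$, applying the inductive hypothesis to $P_i$ for $i<d$, and prepending the part $d-i$ to the indexing polycompositions (which preserves $L$ and increments $\ell$, flipping the sign correctly) --- is in fact a complete and correct proof of both parts given the recursions of Section~\ref{ssec:HEP-rec}, so if the involution cannot be repaired, that argument stands on its own.
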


\begin{proof}[Proof of Proposition \ref{prop:P-H,P-E} (1)]
Using the expansions (1) and (4) from Lemma \ref{lem:markedinterps}, we have to show 
\[
\sum\limits_{\T\in \rbt\ast[d]} \x_{\T} = 
 \sum\limits_{\substack{\delta\in \PCom_{\sqf}(d)\\ \T\in \pwbt\ast(\delta)}} (-1)^{\ell(\T)-1} \wt\ast(\T)\x_{\T} =  \sum\limits_{\substack{\delta\in \PCom_{\sqf}(d)\\ \T\in \pwbt\ast(\delta)}} (-1)^{\ell(\T)-1}\x_{\T}. \] As the polycompositions are square-free, the only multiplicity that appears is 1. In particular, the marked bar appears in layer 1, and thus $\wt\ast(\T) = 1$. We define an involution $\mcI^P_H$ on the set $\bigcup_{\delta \in \PCom_{\sqf}(d)} \pwbt\ast(\delta)$ such that $\mcI^P_H(\T)$ has one diagram more or less than $\T$, or $\T$ is a fixed point of $\sigma^P_H$. Let $\T = (T_1, \ldots, T_k, T\ast)$ and let $B\ast$ in $T^*$ be the bar containing the marked cell.
 \boks{0.38}
 \begin{itemize}
 \item If not all bars in $T\ast$ are identical to $B\ast$, or if $T\ast$ has all identical bars but $B\ast$ is not in the top row, then define $T'$ to be the tableau formed by all bars in $\T\ast$ identical to $B\ast$ weakly below $B\ast$, and let $T$ be obtained from $T^*$ by removing $T'$. Note that $T'$ contains the marked cell in the same column as $B\ast$ in the top row. Define $\mcI^P_H(\T) = (T_1, \ldots, T_k, T, T')$. The following three examples illustrate this map:
 \[
        \text{Example 1: } \quad \yt{11,22,1} \quad \yt{111,33,{3^*}3, 33,44} \xmapsto{\mcI^P_H} \quad \yt{11,22,1} \quad \yt{111, 33,44} \quad \yt{{3^*}3,33}
        \]
         \[
        \text{Example 2: } \quad \yt{11,22,1} \quad \yt{{3^*}3, 33,44,1,1} \xmapsto{\mcI^P_H} \quad \yt{11,22,1} \quad \yt{44,1,1} \quad \yt{{3^*}3,33}
        \]
        \[
        \text{Example 3: }\quad  \yt{11,22,1} \quad \yt{222,222,2{2^*}2,222,222} \xmapsto{\mcI^P_H} \quad \yt{11,22,1} \quad \yt{222,222}\quad \yt{2{2^*}2,222,222} 
        \]
 \item Suppose all bars in $T\ast$ are identical to $B\ast$ and $B\ast$ is in the top row. If $\ell(\T) > 1$, then we obtain a WBT $T$ by inserting $T\ast$ in $T_k$ immediately below the lowest bar identical to $B\ast$. If such a lowest bar does not exist, we insert $T\ast$ in $T_k$ in a unique position such that $T$ is a WBT.
  In this case, define $\mcI^P_H(\T) = (T_1, \ldots, T_{k-1}, T)$. The examples for this case can be constructed by considering the reverse direction in the above three examples. 
 \end{itemize}
 The only remaining case is when all bars in $T\ast$ are identical to $B\ast$, $B\ast$ is in the top row and $T\ast$ is the only RBT in $\T$, that is, $\ell(\T) = 1$. In this case, define $\sigma^P_H(\T) = \T$.
 
For non-fixed points $\T$
the length $\ell(\T)$ changes by exactly 1, but we have $\x_{\sigma^P_H(T)} = \x_\T$ which allows us to cancel the monomials arising from $\T$ and $\sigma^P_H(\T)$. Each fixed point is a PWBT $\T = (T\ast)$ such that $T\ast$ is a bar tableau containing all identical bars with the marked cell in the top row. This is exactly the set $\rbt\ast[d]$. The sign of $\x_{\T}$ for fixed points $\T$ is $(-1)^{1-1} = 1$ as needed.
\end{proof}

The proof for the $E$-expansion for $P_d$ is similar in flavor, but the fixed points look quite different.
\begin{proof}[Proof of Proposition \ref{prop:P-H,P-E}(2)]\label{pf:PE}
Using expansions (3) and (4) from Lemma \ref{lem:markedinterps}, we need to show
\[
\sum\limits_{\T\in \rbt\ast[d]} \x_{\T} = 
 \sum\limits_{\substack{\delta\in \PCom_{\sqf}(d)\\ \T\in \psbt\ast(\delta)}} (-1)^{\ell(\T)} \wt\ast(\T)\psgn(\T)\x_{\T} =  \sum\limits_{\substack{\delta\in \PCom_{\sqf}(d)\\ \T\in \pwbt\ast(\delta)}} (-1)^{\ell(\T)}\psgn(\T)\x_{\T}. \]
We have $\wt\ast(\T) = 1$ as layers below 1 are empty. We define a sign-reversing involution $\mcI^P_E$ on the set $\bigcup_{\delta\in \PCom_{\sqf}(d)} \psbt\ast(\delta)$ acting on an element $\T = (T_1, \ldots, T_k, T_{k+1})$ where $T_{k+1}$ contains the bar $B\ast$ with a marked cell. Suppose $j\in \{1, \ldots, k+1\}$ be the smallest index such that either of the following conditions is satisfied: (i) $T_j$ has more than one row and contains a bar identical\footnote{Recall that $B$ is identical to $B'$ if $B$ and $B'$ have the same length and label. It does not matter if one or both of $B$ and $B'$ contain a  marked cell.} to $B\ast$, or (ii) $j>1$ and $T_j$ is a bar identical to $B\ast$ with $T_{j-1}$ not containing a bar identical to $B\ast$. If (i) is satisfied, then define $T'$ to be a bar identical to $B\ast$ and $T$ to be the SBT obtained from $T_j$ by removing $T'$. As $T$ is an SBT it does not contain a bar identical to $B\ast$. Let $\mcI^P_E(\T) = (T_1, \ldots, T_{j-1}, T, T', T_{j+1}, \ldots, T_{k}, T_{k+1})$. If (ii) is satisfied, then let $T$ be the SBT obtained by inserting $T_j$ in $T_{j-1}$, and define $\mcI^P_E(\T) = (T_1, \ldots, T_{j-2}, T, T_{j+1}, \ldots, T_k, T_{k+1})$. Such an insertion is always possible for $j>1$, as otherwise we must have a bar identical to $B\ast$ in $T_{j-1}$, which would satisfy case (i) and $j$ would not be the smallest index satisfying our conditions. If the bar containing the marked cell is inserted or removed in these operations, then we preserve the position of the marked cell. In Example (i) below, condition (i) is satisfied with $j = 2$, while in Example (ii), condition (ii) is satisfied with $j = 4$:
\[
        \text{Example (i): } \quad \yt{111} \quad \yt{111,33,44} \quad \yt{1{1\ast}1, 222} \xmapsto{\mcI^P_E} \quad \yt{111} \quad \yt{33,44} \quad \yt{111} \quad \yt{1{1\ast}1, 222}
        \]
        \[
        \text{Example (ii): }\quad  \yt{111} \quad \yt{111} \quad \yt{333} \quad \yt{1{1\ast}1} \xmapsto{\mcI^P_E}\quad  \yt{111} \quad \yt{111} \quad \yt{1{1\ast}1,333} 
        \]
 Note that these operations change the number of SBTs in $\T$, that is $\ell(\T)$ by 1, but the number of bars in $\T$ and $\mcI^P_E(\T)$ are the same which means $\psgn(\T) = \psgn(\mcI^P_E(\T))$. This implies $(-1)^{\ell(\T)}\psgn(\T)\x_{\T} = - (-1)^{\ell(\mcI^P_E(\T))} \psgn(\mcI^P_E(\T))\x_{\mcI^P_E(\T)}$, which allows us to pair these terms and cancel them. If the conditions (i) and (ii) do not hold for a $\T$, then we define $\sigma^P_E(\T) = \T$.
 For the marked PSBTs $\T = (T_1, \ldots, T_{k+1})$ that are fixed under $\sigma^P_E$, we must have that all $T_i$ for $1\leq i \leq k+1$ are bars identical to $B\ast$. We map these to marked RBTs of size $d$ where the $i$th row from top is $T_{\ell(\T) -i +1}$. As a fixed point $\T$ contains $\ell(\T)$ single bars, we have $\psgn(\T) = (-1)^{\ell(\T)}$. Multiplying this by the sign $(-1)^{\ell(\T)}$ gives 1 which is the coefficient on the left hand side. The following example shows the mapping of a fixed point marked PSBT to a marked RBT:
 \boks{0.35}
 \[
 \yt{111}\quad \yt{111}\quad \yt{11{1\ast}} \mapsto \yt{11{1\ast},111,111}
 \]
\end{proof}
\begin{corollary}
For $\delta \in \PCom$, we have $\Omega(P_\delta) = (-1)^{\ell(\delta)}P_\delta$.
\end{corollary}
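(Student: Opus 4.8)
The plan is to reduce the statement to the single-block case $P_{d^r}$ and then invoke multiplicativity of $\Omega$. First I would use the compatibility, noted in the introduction, between the $G$-expansions of $F_d$ and of $F_{d^r}$: applying the Addams operation $x_{ij}\mapsto x_{ij}^r$ to the identity of Proposition \ref{prop:P-H,P-E}(1) sends $P_d$ to $P_{d^r}$ and each $H_\delta$ to $H_{\delta^r}$, while the scalar $L(\delta)$ and the exponent $\ell(\delta)=\ell(\delta^r)$ are untouched, so that
\[
P_{d^r}=\sum_{\delta\in\PCom_{\sqf}(d)}(-1)^{\ell(\delta)-1}L(\delta)H_{\delta^r}.
\]
Likewise, Proposition \ref{prop:P-H,P-E}(2) yields
\[
P_{d^r}=\sum_{\delta\in\PCom_{\sqf}(d)}(-1)^{\ell(\delta)}L(\delta)E_{\delta^r}.
\]

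Next I would apply the algebra homomorphism $\Omega$ of Remark \ref{rem:omega}. For $\delta=(\alpha)^1\in\PCom_{\sqf}(d)$ we have $\delta^r=(\alpha)^r$ and $H_{\delta^r}=\prod_j H_{\alpha_j^r}$, so since $\Omega(H_{d^r})=E_{d^r}$ and $\Omega$ is multiplicative, $\Omega(H_{\delta^r})=\prod_j E_{\alpha_j^r}=E_{\delta^r}$. Applying $\Omega$ termwise to the first displayed identity gives $\Omega(P_{d^r})=\sum_{\delta\in\PCom_{\sqf}(d)}(-1)^{\ell(\delta)-1}L(\delta)E_{\delta^r}$, which is exactly $-1$ times the second displayed identity; hence $\Omega(P_{d^r})=-P_{d^r}$ for all $d,r\ge 1$.

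Finally, for a general polycomposition $\delta=d_1^{r_1}\cdots d_k^{r_k}\in\PCom$ I would write $P_\delta=\prod_{i=1}^k P_{d_i^{r_i}}$ and use that $\Omega$ is an algebra homomorphism:
\[
\Omega(P_\delta)=\prod_{i=1}^k\Omega\bigl(P_{d_i^{r_i}}\bigr)=\prod_{i=1}^k\bigl(-P_{d_i^{r_i}}\bigr)=(-1)^{k}P_\delta=(-1)^{\ell(\delta)}P_\delta,
\]
using $k=\ell(\delta)$; the empty polycomposition is the trivial case $\Omega(1)=1$. This is not so much a genuine obstacle as a bookkeeping point: the one thing to verify carefully is that the Addams operation transports the $P_d$-expansions to $P_{d^r}$-expansions in the expected way, i.e.\ $H_\delta\mapsto H_{\delta^r}$, $E_\delta\mapsto E_{\delta^r}$, with $L(\delta)$ and $\ell(\delta)$ invariant — precisely the extension principle recorded in the introductory discussion of how $G$-expansions of $F_d$ extend to $F_{d^r}$ and $F_\tau$.
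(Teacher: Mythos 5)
Your proposal is correct and follows essentially the same route as the paper: transport the $H$- and $E$-expansions of $P_d$ from Proposition \ref{prop:P-H,P-E} to $P_{d^r}$ via the Addams operation, apply $\Omega$ using $\Omega(H_{\delta^r})=E_{\delta^r}$ from Remark \ref{rem:omega} to get $\Omega(P_{d^r})=-P_{d^r}$, and then use multiplicativity of $\Omega$ over the blocks of $\delta$. Your write-up is simply a more detailed version of the paper's argument, with the bookkeeping about $L(\delta)$ and $\ell(\delta)$ being invariant under $\delta\mapsto\delta^r$ made explicit.
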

\begin{proof}
From the expansions in Proposition \ref{prop:P-H,P-E}, we deduce $\Omega(P_d) = -P_d$ because $\Omega(H_\delta) =E_\delta$ as shown in Remark \ref{rem:omega}. We also have $P_{d^r} = \sum_{\delta \in \PCom_{\sqf}(d)} (-1)^{\ell(\delta)-1} L(\delta) H_{\delta^r}$, which yields $\Omega(P_{d^r}) = - P_{d^r}$.
By using the fact that $\Omega$ is an algebra isomorphism, we get $\Omega(P_\delta) = \prod_{d^r\in \delta} \Omega(P_{d^r}) = (-1)^{\ell(\delta)} P_\delta$.
\end{proof}

\subsection{$H$ in $P$}\label{ssec:H-P}
The proofs of the expansion in this section and the next section involve operations on cycles of permutations. Every permutation $\pi\in \S_n$ can be decomposed as a product of disjoint cycles $\pi = C_k C_{k-1} \ldots C_1$ with $\ell(C_i)$ being the number of entries in $C_i$. Any reordering of the cycles as well as any cyclic shift of the elements within each cycle preserves the permutation. If we write a cycle as $C = (c_1, c_2, \ldots, c_l)$, then the notation $C(i)$ stands for the entry $c_i$. We say a permutation is in \textit{decreasing cycle form} if $\ell(C_k) \geq \ell(C_{k-1}) \geq \ldots \geq \ell(C_1)$, each cycle begins with its minimum element, and the minimum elements of cycles of the same size are in decreasing order. Define $\cycP(\pi) = (\ell(C_k), \ell(C_{k-1}), \ldots, \ell(C_1))\in \Par(n)$. We say a permutation $\pi =C_k'C_{k-1}'\ldots C_1'$ is in \textit{canonical form}\footnote{In the literature, this form is related to the \textit{standard form} as mentioned in Example 13 of \cite{bergeron}. The standard form, however, places cycles with smaller entries to the left while we place the cycles with smaller entries to the right.} if the first entry in each cycle is the minimum entry in that cycle and we order the cycles such that the minimum element of $C'_{i}$ is smaller than the minimum element of $C'_{i+1}$ for all $1\leq i \leq k-1$. 
With a permutation $\pi = C_k'C_{k-1}'\ldots C_1'$ in canonical form, we associate the composition $\cycC(\pi) =(\ell(C_k'),\ell(C_{k-1}'),\ldots, \ell(C_1'))$. For instance, the permutation in decreasing cycle form $\pi = (183)(57)(26)(4)\in S_8$ has $\cycP(\pi) = (3,2,2,1)$ and its canonical form is $(57)(4)(26)(183)$ with $\cycC(\pi) = (2,1,2,3)$. For $\lambda\in \Par(n)$, define $z_\lambda = \prod\limits_{i\geq 1} i^{m_i(\lambda)}m_i(\lambda)!$ where $m_i(\lambda)$ is the number of times part $i$ appears in $\lambda$. It is well-known (see \cite[Thm. 7.115]{loehr-comb}) that $n!/z_\lambda$ counts the number of $\pi \in \S_n$ with $\cycP(\pi) = \lambda$. For a composition $\alpha = (\alpha_1, \ldots, \alpha_l)\in \Com(n)$, define 
\[
Z_\alpha = (\alpha_1)(\alpha_1 + \alpha_2)(\alpha_1 + \alpha_2 + \alpha_3) \ldots (\alpha_1 + \alpha_2 + \ldots + \alpha_l).
\]
Let $K_\alpha$ be the set of $\pi\in \S_n$ with $\cycC(\pi) = \alpha$.
\begin{lemma}[Lemma 19 in \cite{KLlocal}]\label{lem:z-harmonic}\noindent
\begin{enumerate}
\item For a composition $\alpha$ of $n$, $|K_\alpha| = n!/Z_\alpha$. 
\item For a partition $\lambda$ of $n$,   $n!/z_\lambda = \sum_{\alpha: \sort(\alpha) = \lambda} |K_\alpha| $. In other words, $z_\lambda$ is the harmonic mean of $Z_\alpha$ where $\alpha$ ranges over compositions that sort to $\lambda$.
\end{enumerate}
\end{lemma}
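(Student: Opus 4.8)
The plan is to establish part (1) by a direct recursion on permutations written in canonical form, and then to deduce part (2) essentially for free from part (1) together with the cited count of permutations by cycle type.

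For part (1) I would induct on the number of parts $l$ of $\alpha$ (equivalently on $n$), peeling off the cycle containing the letter $1$. When $\alpha$ is empty (so $n=0$) both sides equal $1$, which is the base case. For the inductive step, write $\alpha=(\alpha_1,\dots,\alpha_l)$ and consider $\pi\in K_\alpha$ in canonical form $\pi=C_l'C_{l-1}'\cdots C_1'$, so that $\ell(C_j')=\alpha_{l-j+1}$ and $\min C_1'<\min C_2'<\cdots<\min C_l'$; in particular $C_1'$ is the cycle through $1$ and has length $\alpha_l$. The structural point is that deleting $C_1'$ leaves a permutation of $S:=\{1,\dots,n\}\setminus C_1'$ whose canonical form is exactly $C_l'\cdots C_2'$ — the surviving cycle minima are still increasing and each cycle still begins with its minimum — so, after the order-preserving relabeling $S\to\{1,\dots,n-\alpha_l\}$, it becomes an element of $K_{\alpha'}$ with $\alpha':=(\alpha_1,\dots,\alpha_{l-1})$, a composition of $n-\alpha_l$. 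Conversely, every $\pi\in K_\alpha$ is reconstructed from a choice of the $\alpha_l-1$ elements of $C_1'$ other than $1$, a cyclic arrangement of $C_1'$ starting with $1$, and an element of $K_{\alpha'}$. Hence
\[
|K_\alpha| = \binom{n-1}{\alpha_l-1}(\alpha_l-1)!\,|K_{\alpha'}| = \frac{(n-1)!}{(n-\alpha_l)!}\,|K_{\alpha'}|.
\]
Since $Z_\alpha = Z_{\alpha'}\cdot(\alpha_1+\cdots+\alpha_l) = n\,Z_{\alpha'}$, the inductive hypothesis $|K_{\alpha'}|=(n-\alpha_l)!/Z_{\alpha'}$ makes the right-hand side collapse to $(n-1)!/Z_{\alpha'}=n!/Z_\alpha$. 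Equivalently, one can unroll the recursion into the telescoping product $\prod_j (n-t_{j-1}-1)!/(n-t_j)!$ with $t_j=\alpha_l+\cdots+\alpha_{l-j+1}$, and simplify using $n-t_j=\alpha_1+\cdots+\alpha_{l-j}$.

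Part (2) is then immediate: $\cycP(\pi)$ and $\cycC(\pi)$ both record the multiset of cycle lengths of $\pi$, so $\cycP(\pi)=\sort(\cycC(\pi))$, and therefore $\{\pi\in\S_n:\cycP(\pi)=\lambda\}$ is the disjoint union of the sets $K_\alpha$ over compositions $\alpha$ with $\sort(\alpha)=\lambda$. Comparing cardinalities with the cited identity $|\{\pi:\cycP(\pi)=\lambda\}|=n!/z_\lambda$ gives $n!/z_\lambda=\sum_{\alpha:\sort(\alpha)=\lambda}|K_\alpha|$, and substituting part (1) rewrites this as $1/z_\lambda=\sum_{\alpha:\sort(\alpha)=\lambda}1/Z_\alpha$, which is the harmonic-mean reformulation.

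The only genuinely delicate step is the structural claim underlying the recursion in part (1): that the canonical-form conventions (cycle minima strictly increasing; each cycle beginning with its minimum) survive deletion of the cycle containing $1$ and the subsequent relabeling, and — crucially for the bookkeeping — that it is precisely the \emph{last} part $\alpha_l$ of $\alpha$ that is removed at each stage. This last point is exactly what produces the factorization $Z_\alpha=n\,Z_{\alpha'}$ (the factor $n$ being the full prefix sum $\alpha_1+\cdots+\alpha_l$), and it is the reason the induction closes cleanly; removing any other cycle would leave a composition whose $Z$-statistic does not factor so conveniently.
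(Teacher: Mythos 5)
Your proof is correct and takes essentially the same approach as the paper: both arguments rest on the observation that in canonical form the cycle through $1$ is the rightmost cycle and has length $\alpha_l$, so that choosing and arranging its remaining $\alpha_l-1$ elements contributes a factor $(n-1)!/(n-\alpha_l)!$ and leaves a smaller instance $K_{\alpha'}$; your induction with $Z_\alpha = n\,Z_{\alpha'}$ is just the packaged form of the paper's unrolled telescoping product. Part (2) is identical to the paper's argument.
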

\begin{proof}
To prove (1), we first fix a composition $\alpha = (\alpha_1, \ldots, \alpha_l)\in \Com(n)$. To construct a permutation $\pi = C_lC_{l-1}\ldots C_1\in K_\alpha$, we proceed as follows: we construct the cycle $C_1$ of length $\alpha_l$ by first defining $C_1(1)$ as the smallest unused entry which is $1$. We now have $\alpha_l - 1$ spots in $C_1$ to fill using $n-1$ entries in $\{2,3,\ldots, n\}$. We can fill the $\alpha_l - 1$ spots $C_1(2), \ldots ,C_1(\alpha_l)$ in $(n-1) \cdot (n-2) \cdot \ldots \cdot (n - \alpha_{l}+1)$ ways. We proceed to fill the cycle $C_{2}$ of length $\alpha_{l-1}$. We choose $C_{2}(1)$ to be the smallest entry not yet used to fill $C_1$. We already used $\alpha_l$ numbers to fill $C_1$ and another number to fill the first spot of $C_{2}$, which leaves us with filling the $\alpha_{l-1}-1$ spots in $C_2$ from $n-\alpha_l - 1$ entries chosen without repetition and in an order. So, the $\alpha_{l-1}-1$ spots of $C_{2}$ can be filled in $(n - \alpha_l-1)\cdot (n - \alpha_l-2) \cdot \ldots \cdot (n - \alpha_l - \alpha_{l-1}+1)$ ways. In general, we can fill the cycle $C_{l+1-i}$ in 
$
w_i := (n-\sum_{j=i}^{l-1} \alpha_i -1)(n-\sum_{j=i}^{l-1} \alpha_i -2) \ldots (n-\sum_{j=i}^{l} \alpha_i +1)
$
ways. In the product $\prod_{i=1}^l w_i$, we find the missing factors are of the form $(\alpha_1 + \alpha_2 + \ldots + \alpha_i)$. Thus we find
\[
\prod\limits_{i=1}^l  (\alpha_1  + \alpha_2 + \ldots + \alpha_{i}) w_i  = n!
\]
which gives us $|K_\alpha| = \prod_{i=1}^l w_i =  n!/Z_\alpha$.

To prove (2), we observe that any permutation $\pi$ with $\cycC(\pi) = \alpha$ has $\cycP(\pi) = \sort(\alpha)$ which can be seen by reordering the cycles of $\pi$ in decreasing order of length. Furthermore, any $\rho\in S_n$ with $\cycP(\rho) = \lambda$ can be expressed in canonical notation by reordering its cycles and it must be that $\cycC(\rho)$ is a rearrangement of $\lambda$. This shows that there is a bijection between $\{\pi \in S_n : \cycP(\pi) = \lambda\}$ and the disjoint union $\bigcup_{\alpha: \sort(\alpha) = \lambda} K_\alpha$. By using the result from (1), we have the statement (2).
\end{proof}
\begin{example}
We construct the permutation $\pi = C_3C_2C_1 = (4,7,5)(2,6)(1,8,3,9)\in \S_9$ with $\cycC(\pi) = (3,2,4)$ to illustrate the process described above. We start with $C_1 = (1,\blk,\blk,\blk)$. This leaves us with 8 possible options for the second spot, $C_1(2)$. We choose to fill it with $C_1(2)= 8$. Then, the third spot $C_1(3)$ has 7 options which we fill with 3, and finally we have 6 options to fill $C_1(4)$ , which we do so with a 9. This shows that there are $8\cdot 7\cdot 6$ ways to construct $C_1$, one of which is $(1,3,8,9)$. For $C_2$, we start with $(2,\blk)$ as 2 is the smallest unused entry. The remaining spot can be filled in 4 ways, which we do with 6 giving us $C_2 = (2,6)$. For $C_3$, we start with the smallest available value, 4 as 3 was used in $C_1$. To construct $C_3$ we begin with $(4,\blk,\blk)$ and the remaining spots can be filled in $2\cdot 1$ ways. Note that the number of choices are independent of the specific values we chose and so $|K_\alpha| = (8\cdot 7 \cdot 6) \cdot (4) \cdot (2\cdot 1) = 8!/(3\cdot 5 \cdot 9)$.
\end{example}
Define $\CS_n$, the set of \textit{choice sequences of length $n$}, to be tuples of positive integers of the form $\textbf{c}:=(c_n, c_{n-1}, \ldots, c_1)$ satisfying $1\leq c_i \leq i$ for all $i$. It is routine to verify that $|\CS_n| = n!$ and we will employ this equality in the proof of Propositions \ref{prop:H-P} and \ref{prop:E-P}. Furthermore, for any set $S$ and natural number $c$, define $c\acts S$ to be the $c$\,th smallest entry of $S$, and extend this definition to sub-sequences $(c_i, c_{i-1}, \ldots, c_j)$ of choice sequences as follows: define $a_i$ to be $c_i\acts S$, and let $S'$ be obtained from $S$ by removing $a_i$. Then, define $a_{i-1}=c_{i-1}\acts S'$ and let $S''$ be obtained from $S'$ by removing $a_{i-1}$. Continue performing this action and finally obtain $(a_i, a_{i-1}, \ldots a_j) := (c_i, c_{i-1}, \ldots, c_j) \acts S$. For instance, $ 3\acts \{1,2,4,7,9\} =4$ and $(3,1,2)\acts \{1,2,4,7,9\} = (4,1,7)$. The idea of using choice sequences in the next proof is inspired by \cite[Sec 7.2]{KLlocal}.
  
For a square-free polycomposition $\delta = (\alpha)^1$, define $Z_\delta = Z_\alpha$ and $K_\delta = K_\alpha$.

\begin{prop}\label{prop:H-P}
For $d\geq 0$, $\displaystyle{H_d = \sum\limits_{\delta \in \PCom_{\sqf}(d)} \dfrac{P_\delta}{Z_\delta}}$.
\end{prop}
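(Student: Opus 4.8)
The plan is to prove this by translating the identity into a statement about the cycle structure of permutations, in the spirit of \cite[Sec.~7.2]{KLlocal}. First I would reduce the statement. By Lemma~\ref{lem:z-harmonic}(1) a square-free polycomposition $\delta=(\alpha)^1$ satisfies $1/Z_\delta=|K_\delta|/d!$, and since $\cycC(\pi)\in\Com(d)$ for every $\pi\in\S_d$, the assignment $\pi\mapsto\bigl((\cycC\pi)^1,\pi\bigr)$ is a bijection from $\S_d$ onto $\{(\delta,\pi):\delta\in\PCom_\sqf(d),\ \pi\in K_\delta\}$; therefore
\[
\sum_{\delta\in\PCom_\sqf(d)}\frac{P_\delta}{Z_\delta}
=\frac{1}{d!}\sum_{\delta\in\PCom_\sqf(d)}|K_\delta|\,P_\delta
=\frac{1}{d!}\sum_{\pi\in\S_d}P_{(\cycC\pi)^1},
\]
so it suffices to prove the integral identity $d!\,H_d=\sum_{\pi\in\S_d}P_{(\cycC\pi)^1}$.

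Next I would unpack both sides with the combinatorial models. By Lemma~\ref{lem:btinterps}(1), $H_d=\sum_{T\in\wbt[d]}\x_T$, and by Lemma~\ref{lem:markedinterps}(4), $P_{(\cycC\pi)^1}=\sum_{\T\in\prbt\ast((\cycC\pi)^1)}\x_\T$, where an element of $\prbt\ast((\cycC\pi)^1)$ is a tuple of marked RBTs, one of size $\ell(C)$ for each cycle $C$ of $\pi$. Writing $d!=|\CS_d|$, the identity to prove becomes the monomial-wise equality
\[
\sum_{\mathbf c\in\CS_d}\ \sum_{T\in\wbt[d]}\x_T\ =\ \sum_{\pi\in\S_d}\ \sum_{\T\in\prbt\ast((\cycC\pi)^1)}\x_\T,
\]
which I would establish by a monomial-preserving bijection $\Phi\colon\CS_d\times\wbt[d]\to\bigsqcup_{\pi\in\S_d}\prbt\ast((\cycC\pi)^1)$. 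Equivalently, for each degree-$d$ monomial $\x$, the set of pairs $(\pi,\T)$ with $\x_\T=\x$ should have exactly $d!$ elements.

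To construct $\Phi$, let $R_1,R_2,\dots,R_m$ be the bars of $T$ in scanning order and let $\mathbf c=(c_d,\dots,c_1)$. I would build $\pi$ one cycle at a time and attach a marked RBT to each cycle: the bars of successive cycles are allotted by consuming $R_1,R_2,\dots$ from the front, each cycle receiving a block of consecutive identical bars (so its bar-size and label come from $T$), while the number of bars in that block, the least element of the cycle (always the smallest unused element of $\{1,\dots,d\}$, exactly as in the construction proving Lemma~\ref{lem:z-harmonic}), the cyclic order of the cycle's remaining elements (drawn from the still-unused elements via the operation $\acts$), and the position of the marked cell are dictated, in turn, by the successive entries of $\mathbf c$. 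The inverse map sends $(\pi,\T)$ to the pair whose WBT is the unique one (Lemma~\ref{lem:btinterps}(1)) with bar-multiset the union of all bars in $\T$, and whose choice sequence re-encodes the non-forced data by reading the cycles of $\pi$ and their RBTs in the order they were built — an order recoverable from $\T$ because the RBTs' bars reassemble to the sorted list $R_1,\dots,R_m$ and the ``least unused element'' rule fixes the relative order of cycles carrying the same bar-type.

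The main obstacle is precisely the construction and verification of $\Phi$. One must check that the data consumed along the way really assemble into a genuine element of $\CS_d$ (equivalently, that $\#\{(\pi,\T):\x_\T=\x\}=d!$ for every monomial $\x$), that $\Phi$ lands in $\prbt\ast((\cycC\pi)^1)$ for the permutation it produces, that it preserves the associated monomial, and that the two maps are mutually inverse. Reversibility is the subtle point: from $(\pi,\T)$ one must recover both which bars of $T$ were put into each RBT and the order in which the cycles were created, and it is the fixed scanning order on the bars of $T$ together with the least-element rule for cycle leaders that make this possible. Granting $\Phi$, summing $\x_T=\x_{\Phi(\mathbf c,T)}$ over $\CS_d\times\wbt[d]$ gives $d!\,H_d=\sum_{\pi\in\S_d}P_{(\cycC\pi)^1}$, which with the first step completes the proof.
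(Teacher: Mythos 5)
Your overall strategy coincides with the paper's: reduce to the integral identity $d!\,H_d=\sum_{\pi\in\S_d}P_{(\cycC\pi)^1}$ via Lemma \ref{lem:z-harmonic}(1) and $|\CS_d|=d!$, then exhibit a monomial-preserving bijection from $\CS_d\times\wbt[d]$ to pairs (permutation, marked PRBT). The reduction step is fine. The gap is in the one place you flag as the crux: your allocation rule for the bijection does not work. By decreeing that successive cycles consume the bars $R_1,R_2,\dots$ of $T$ \emph{from the front} in scanning order, you force the RBT attached to the cycle containing $1$ (the first cycle created under the least-unused-element rule, hence the one in a fixed position of the canonical form) to always be built from $R_1$. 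Take $d=4$ and the monomial $x_{2,1}x_{2,2}$, so $T$ is the unique WBT with a label-$1$ bar of length $2$ above a label-$2$ bar of length $2$. Every RBT must consist of identical bars, so each target pair $(\pi,\T)$ has $\cycC(\pi)=(2,2)$ and $\T$ is an ordered pair of single marked bars; there are $|K_{(2,2)}|\cdot 2\cdot 4=3\cdot 8=24=4!$ such pairs, half of which attach the label-$2$ bar to the cycle containing $1$. Your map never produces those $12$ pairs, so it cannot be a bijection from the $24$-element fiber of the domain. The bookkeeping of choice-sequence entries fails for the same reason: you want to spend separate entries on the block size (range $M_1$, the number of identical bars at the front), the cyclic order, and the marked column (range $|R_1|$), but the entries $c_d,c_{d-1},\dots$ have ranges $d,d-1,\dots$, and $M_1$ and $|R_1|$ are not of that form; you also end up consuming $|R_1|k_1+1$ entries for a block of $|R_1|k_1$ cells.

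The repair is exactly the paper's construction: number \emph{all} cells of the current tableau in reading order and let the single entry $c_d$ (range $d$ = number of cells) select one cell to mark. That one choice simultaneously determines the bar type (not necessarily the first in scanning order), the number of identical copies peeled off (the marked bar together with all identical bars weakly below it), and the marked column; the next $\delta_1-1$ entries, with ranges $d-1,\dots,d-\delta_1+1$, fill the non-minimal slots of the cycle via $\acts$, and the recursion on the remaining $(d-\delta_1)$-cell tableau with the remaining length-$(d-\delta_1)$ choice sequence closes up cleanly. With that modification your argument becomes the paper's proof; the inverse then recovers $c_j$ at each cycle-leader step as the reading-order index of the marked cell after reinserting the corresponding RBT, rather than by assuming the bars were consumed from the front.
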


\begin{proof}
Multiplying both sides by $d!$ and then using the monomial expansions in Lemmas \ref{lem:pbtinterps} and \ref{lem:markedinterps}(4), we must show
\[
\sum\limits_{T\in \wbt[d]} |\CS_d| \x_T = \sum\limits_{\delta \in \PCom_{\sqf}(d)} |K_\delta| \sum\limits_{\T\in \prbt\ast(\delta)} \x_{\T}.
\]
As there are no signs involved, we can prove the above statement by establishing a monomial weight-preserving bijection between $\CS_d\times\wbt[d]$ and $\bigcup_{\delta \in \PCom_{\sqf}(d)} K_\delta \times \prbt\ast(\delta)$. We first explicitly construct the bijection $\phi:\CS_d\times\wbt[d] \to \bigcup_{\delta \in \PCom_{\sqf}(d)} K_\delta \times \prbt\ast(\delta)$. Suppose the input is of the form $(\textbf{c} = (c_d, \ldots, c_1), T)$ where $\textbf{c}$ is a choice sequence and $T$ is a WBT.

We number the cells of the WBT $T$ in reading order, that is, we label the cells within a bar from left to right using consecutive natural numbers, and we label the bars in scanning order with the condition that the first element in each bar is the smallest unused element upto that point. The following is an example of cell numbering in reading order:
\[
\yt{12345,67,89}
\]
of the WBT $T = \yt{22222,11,33}$. The reader can concretely understand the description of $\phi$ below by following along with Example \ref{ex:H-P}.
\begin{itemize}
\item Let $T^{(1)} = T$ and $S^{(1)} = \{1, \ldots, d\}$. We mark the cell numbered $c_d$ in $T$ with $*$. Define $T_1$ to be the marked RBT formed by the bar $B$ containing the marked cell and bars below $B$ identical to $B$. Denote $\delta_1 = |T_1|$ and construct the cycle $C_1$ of length $\delta_1$ as $C_1 = (1,c_{d-1}, \ldots, c_{d - \delta_1 + 1})\acts S^{(1)}$. Note the $C_1$ starts with a 1.
\item We now remove $T_1$ from $T$ to obtain $T^{(2)}$ and obtain $S^{(2)}$ from $S^{(1)}$ by removing the entries in $C_1$. We number the cells of $\dg(T^{(2)})$ in reading order and mark the cell numbered $c_{d-\delta_1}$. Define $T_{2}$ to be the RBT formed by the bar containing the marked cell and all bars below it which are identical to the bar containing the marked cell. Let $\delta_2 = |T_2|$ and construct the cycle $C_2 = (1, c_{d-\delta_1  - 1}, c_{d - \delta_1 -2}, \ldots, c_{d - \delta_1 - \delta_2 + 1})\acts S^{(2)}$. Note that in this case, and for the rest of the cycles, the first value of the cycle is the smallest available value.
\item Continue this process to construct cycles $C_3, \ldots, C_k$ and PRBTs $T_3, \ldots, T_k$ for some $k >0$. Define $\pi =C_k\ldots C_1$ which is a permutation such that $\cycC(\pi) = (\delta_k, \ldots, \delta_1)$. Also, define $\T = (T_k, \ldots, T_1)$. Then, the output of $\phi$ for $(\textbf{c},T)$ is $\phi(\textbf{c}, T) = (\pi, \T)$.
\end{itemize}

To obtain $\phi^{-1}$, we start with $(\pi = C_k\ldots C_1,\T = (T_k, \ldots ,T_1))$. Suppose $\pi\in K_\alpha$ for some $\alpha = (\alpha_1, \ldots, \alpha_k)\in \Com(d)$.  Define
\begin{footnotesize} \[\textbf{a}(\pi) = \Bigg(C_k(\alpha_1), C_k(\alpha_1 - 1),\ldots, C_k(1), C_{k-1}(\alpha_{2}), C_{k-1}(\alpha_2 - 1), \ldots, C_{k-1}(1), \ldots, C_1(\alpha_k), C_1(\alpha_k-1),\ldots, C_1(1)\Bigg).\] 
\end{footnotesize}
So $\textbf{a}(\pi) =(a_1, \ldots, a_d)$ are the entries where we read the cycles of $\pi$ from left to right while reading right to left within a cycle. For $\pi = (56)(247)(138)$, we have $\textbf{a}(\pi) = (6,5,7,4,2,8,3,1)$.

We define $S^{(0)} = \varnothing$ and $S^{(j)} \subset \{1, \ldots, d\}$ to be the set obtained by inserting $a_j$ in $S^{(j-1)}$. Define $T^{(0)} = \varnothing$ be the empty WBT.
\begin{enumerate}
\item Suppose $a_j \neq C_k(1)$ for any $k$. If $a_j$ is the $s$th smallest element inserted in $S^{(j)}$, then let $c_j = s$ and $T^{(j)}= T^{(j-1)}$.
\item Suppose $a_j = C_r(1)$ for some $1\leq r \leq k$. Obtain $T^{(j)}_{*}$ by inserting $T_r$ in $T^{(j-1)}$ such that $T^{(j)}$ is a marked WBT. We number the cells of $T^{(j)}_{*}$ in reading order as in the description of $\phi$, and if we denote the number in the marked cell by $s$, then $c_j = s$. Define $T^{(j)}$ by removing the marking from $T^{(j)}_{*}$.
\end{enumerate}
We define the output of $(\textbf{c},T)= \phi^{-1}(\pi,\T)$ such that $\textbf{c} = (c_d, c_{d-1}, \ldots, c_1)$ and $T = T^{(d)}$.

\end{proof}
\begin{example}\label{ex:H-P}
Let $d = 14$. We start with the choice sequence $\textbf{c} = (10,3,9,3,8,1,2,7,5,3,3,1,1,1)$ and the WBT
\boks{0.35}
\begin{small}
\[
T = \yt{333,333,11,22,22,1,1}
\]
\end{small}
We construct the output $(\pi,\T)$ under the map $\phi$ as described above. We assign the following numbering to the cells of $T$ and mark the cell numbered $c_{14}=10$ as that is the first element of $\textbf{c}$.
\boks{0.37}
\begin{small}
\[
\yt{123,456,78,9{*(lightgray)10},{11}{12},{13},{14}}
\]
\end{small}
\boks{0.42}
There is one identical bar in $T$ below the marked bar and so $T_1 = \yt{2{2\ast},22}$. As the newly created RBT contains 4 cells, we create a cycle $C_1$ of length 4 starting at the smallest available value, 1. We fill the rest of the entries as follows:
\begin{align*}
(1,\blk,\blk,\blk) &\text{ via } 1\acts \{\underline{1},2,3,4,5,6,7,8,9,10,11,12,13,14\}\\
(1,4,\blk,\blk) &\text{ via } (c_{13}= 3) \acts \{2,3,\underline{4},5,6,7,8,9,10,11,12,13,14\}\\
(1,4,11,\blk) &\text{ via } (c_{12}=9)\acts \{2,3,5,6,7,8,9,10,\underline{11},12,13,14\}\\
(1,4,11,5) &\text{ via } (c_{11} = 3)\acts \{2,3,\underline{5},6,7,8,9,10,12,13,14\}
\end{align*}
We remove $T_1$ from $T$ to obtain 
\[
T^{(2)} = \yt{333,333,11,1,1}
\] 
whose cells we number as follows and mark the cell numbered $c_{10}=8$. 
\begin{small}
\boks{0.35}
\[\yt{123,456,7{*(lightgray)8},9,{10}}
\]
\end{small}
\boks{0.35}
As there are no bars identical to the row containing the marked cell, we get $T_2 = \yt{1{1\ast}}$. So, we construct cycle $C_2$ of length 2 as follows:
\begin{align*}
(2,\blk) &\text{ via } 1 \acts \{\underline{2},3,6,7,8,9,10,12,13,14\}\\
(2,3) &\text{ via } (c_9 = 1) \acts \{\underline{3},6,7,8,9,10,12,13,14\}
\end{align*}
Now, we remove $T_3$ from $T^{(2)}$ to find $$T^{(3)} = \yt{333,333,1,1} $$ which has the following numbering of cells wherein we mark the cell numbered $c_8 = 2$.
\[
\yt{1{*(lightgray)2}3,456,7,8}
\]
This gives us $T_3 = \yt{3{3\ast}3,333}$ and we construct a cycle $C_3$ of length 6 as follows:
\begin{align*}
(6,\blk,\blk,\blk,\blk,\blk) &\text{ via } 1 \acts \{\underline{6},7,8,9,10,12,13,14\}\\
(6,14,\blk,\blk,\blk,\blk) &\text{ via } (c_7 = 7) \acts \{7,8,9,10,12,13,\underline{14}\}\\
(6,14,12,\blk,\blk,\blk) &\text{ via } (c_6 = 5) \acts \{7,8,9,10,\underline{12},13\}\\
(6,14,12,9,\blk,\blk) &\text{ via } (c_5 = 3) \acts \{7,8,\underline{9},10,13\}\\
(6,14,12,9,10,\blk) &\text{ via } (c_4 = 3) \acts \{7,8,\underline{10},13\}\\
(6,14,12,9,10,7) &\text{ via } (c_3 = 1) \acts \{\underline{7},8,13\}\\
\end{align*}
Finally, we are left with $T^{(1)} = \yt{1,1}$ with the numbering $\yt{{*(lightgray)1},2}$. We mark the cell corresponding to $c_2 = 1$ to obtain $T_4 = \yt{{1\ast},1}$. We create the cycle $C_4$ of length 2 as follows:
\begin{align*}
(8,\blk) &\text{ via } 1 \acts \{\underline{8},13\}\\
(8,13) &\text{ via } (c_1 = 1) \acts \{\underline{13}\}
\end{align*}
Thus, we have $\phi(\textbf{c},T) = (\pi,\T)$ with $\pi = C_4C_3C_2C_1 = (8,13)(6,14,12,9,10,7)(2,3)(1,4,11,5)$ and 
\[
\T =(T_4,T_3,T_2,T_1)=\yt{{1\ast},1}\quad \yt{3{3\ast}3,333} \quad \yt{1{1\ast}}\quad \yt{2{2\ast},22}
\]

We perform $\phi^{-1}$ on the above output. We start with $S^{(0)} = \varnothing$ and $T = \varnothing$, and
\[
\begin{array}{cccccccccccccc}
a_1 & a_2 & a_3 & a_4 & a_5 & a_6 & a_7 & a_8 & a_9 & a_{10} & a_{11} & a_{12} & a_{13} &a_{14}\\
13 & 8 & 7 & 10 & 9 & 12 & 14 & 6 & 3 & 2 & 5 & 11 & 4 & 1.
\end{array}
\]
As $a_1 = 13$, we have $c_1 = 1$, $T^{(1)} = \varnothing$ and $S^{(1)} = \{13\}$. Then we insert $a_2 = 8 = C_4(1)$ which gives us $S^{(2)} = \{8,13\}$. We update our WBT to get $T^{(2)} = \yt{1,1}$ with the marked cell in position 1 giving us $c_2 = 1$. We find
\begin{align*}
c_3 = 1 &\text{ as } S^{(3)} = \{\underline{7},8,13\}\\
c_4 = 3 &\text{ as } S^{(4)} = \{7,8,\underline{10},13\}\\
c_5 = 3 &\text{ as } S^{(5)} = \{7,8,\underline{9},10,13\}\\
c_6 = 5 &\text{ as } S^{(6)} = \{7,8,9,10,\underline{12},13\}\\
c_7 = 7 &\text{ as } S^{(7)} = \{7,8,9,{10},12,13,\underline{14}\}
\end{align*}
and $T^{(i)} = T^{(2)}$ for $3\leq i \leq 7$.
We have $a_8 = 6 = C_2(1)$. So, we update our WBT to get $T^{(8)}= \yt{333,333,1,1}$ and as the marked cell is in position 2, we get $c_8 = 2$. Also, $S^{(8)} = \{6,7,8,9,10,12,13\}$. We insert $a_9 = 3$ to get $c_9 = 1$ as $S^{(9)} = \{\underline{3},6,7,8,9,{10},12,13,14\}$. We still have $T^{(9)} = T^{(8)}$. For $a_{10} = 2$, we update our WBT to obtain $$T^{(10)} =\yt{333,333,11,1,1},$$ and as the marked cell ends up in position 8, we get $c_{10} = 8$. We have $S^{(10)} = \{2,3,6,7,8,9,{10},12,13,14\}$. For the rest of the choice sequence, we find
\begin{align*}
c_{11} = 3 &\text{ as } S^{(11)} = \{2,3,\underline{5},6,7,8,9,{10},12,13,14\}\\
c_{12} = 9 &\text{ as } S^{(12)} = \{2,3,5,6,7,8,9,{10},\underline{11},13,14\}\\
c_{13} = 3 &\text{ as } S^{(13)} = \ \{2,3,\underline{4},5,6,7,8,9,{10},11,13,14\}
\end{align*}
with $T^{(13)} = T^{(12)} = T^{(11)} = T^{(10)}$. We get $T^{(14)} = T$ where the marked cell is in position 10, so $c_{14}= 10$. We see that this recovers our input $(\textbf{c},T)$.
\end{example}

\subsection{$E$ in $P$}\label{ssec:E-P}
Before we prove the $P$-expansion of $E_d$, we introduce some more notation. Let $\delta$ be a square-free polycomposition. Let $(\pi,\T)\in K_\delta \times \prbt\ast(\delta)$ be such that $\pi = C_1\ldots C_k$ is in canonical notation and $\T = (T_1, \ldots, T_k)$. If $T_i$ has $r$ rows and $d$ columns, then we write the cycle $C_i$ of length $|T_i| = rd$ with vertical lines delimiting $r$ sets of $d$ consecutive elements. For instance, if $\T=  \yt{2{2\ast},22,22} \:\: \yt{33{3\ast},333}$ then we can associate $C_1 = (3,4|6,10|7,11)$ with $T_1$ and $C_2 = (1,5,2|12,9,8)$ with $T_2$.
\begin{prop}\label{prop:E-P}
For $d\geq 0$,  $\displaystyle{E_d = \sum_{\delta \in \PCom_{\sqf}(d)} (-1)^{\ell(\delta)}\dfrac{P_\delta}{Z_\delta}}$.
\end{prop}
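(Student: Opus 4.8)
I would first reduce the statement to an identity among monomials (the case $d=0$ being trivial). Multiplying both sides by $d!$ and using Lemma~\ref{lem:z-harmonic}(1) in the form $d!/Z_\delta=|K_\delta|$, Lemma~\ref{lem:btinterps}(3), Lemma~\ref{lem:markedinterps}(4), together with $|\CS_d|=d!$ and $\sgn(T)=(-1)^{\ell(T)}$, it suffices to prove
\[
\sum_{T\in\sbt[d]}|\CS_d|\,(-1)^{\ell(T)}\x_T\;=\;\sum_{\delta\in\PCom_{\sqf}(d)}(-1)^{\ell(\delta)}\sum_{(\pi,\T)\in K_\delta\times\prbt\ast(\delta)}\x_{\T}.
\]
The plan is to reuse the monomial-preserving bijection $\phi$ of Proposition~\ref{prop:H-P} on the part of the right-hand side where the signs already agree, and to cancel everything else by a sign-reversing involution.

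The bijection $\phi$ behaves especially simply on strict tableaux. If $T\in\sbt[d]$ then $T$ contains no two identical bars, so at each stage of the construction of $\phi(\mathbf{c},T)$ the rectangular tableau peeled off consists of a single bar; hence $\phi(\mathbf{c},T)=(\pi,\T)$ has exactly $\ell(T)$ tableaux, all single bars and pairwise distinct (each remaining sub-tableau is again strict). Since the number of tableaux of $\T$ equals the number of cycles of $\pi$, which is $\ell(\delta)$, the sign $(-1)^{\ell(T)}$ on the left matches the sign $(-1)^{\ell(\delta)}$ on the right. Moreover $\phi$ restricts to a bijection from $\CS_d\times\sbt[d]$ onto the set of pairs $(\pi,\T)\in\bigcup_\delta K_\delta\times\prbt\ast(\delta)$ for which $\T$ is a sequence of pairwise distinct single bars: given such a pair, $\phi^{-1}$ inserts pairwise distinct single bars and so rebuilds a strict tableau. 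These pairs therefore contribute exactly the left-hand side.

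It remains to show that the other pairs cancel, i.e. that $\sum_{(\pi,\T)}(-1)^{\ell(\delta)}\x_{\T}=0$, where the sum runs over all $(\pi,\T)\in\bigcup_{\delta\in\PCom_{\sqf}(d)}K_\delta\times\prbt\ast(\delta)$ for which $\T$ is not a sequence of pairwise distinct single bars. On this set I would build a sign-reversing involution that toggles one row of a rectangular block. Writing each cycle $C_i$ in the block-delimited form introduced before the statement (so that the $r$ rows of $T_i$ correspond to $r$ consecutive blocks of $C_i$), the involution locates, in a fixed priority order on the pairs $(\pi,\T)$, the first place where one can either peel the top bar off a tableau with $r\ge 2$ rows --- creating a new single-bar tableau and splitting its cycle into the first block and the remaining $r-1$ blocks --- or stack a single bar onto an adjacent tableau with the same underlying bar --- merging the two cycles into one block-delimited cycle. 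Each move changes the number of tableaux (equivalently, cycles) by exactly one, so it reverses $(-1)^{\ell(\delta)}$; it preserves $\x_{\T}$ because it only rearranges identical bars; and every pair in the set admits such a move, so there are no fixed points.

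The main obstacle is making this involution precise and checking that it is genuinely an involution: the cycles must be re-normalized after a split or merge so that the permutation remains in canonical form and lies in $K_{\delta'}$ for the new square-free polycomposition $\delta'$, which dictates both where the new cycle is inserted in the tuple and the priority order used to select the move; and the marked cells --- one in the top row of each tableau of a marked PRBT --- must be transported so that peeling and stacking are mutually inverse on marks as well as on shapes. The block-delimited cycle notation set up just before the statement is exactly the bookkeeping device for this. (Alternatively, the identity follows at once, though non-bijectively, by applying the algebra involution $\Omega$ of Remark~\ref{rem:omega} to $H_d=\sum_{\delta\in\PCom_{\sqf}(d)}P_\delta/Z_\delta$ and using $\Omega(H_d)=E_d$ together with $\Omega(P_\delta)=(-1)^{\ell(\delta)}P_\delta$; the point of the argument above is to obtain it combinatorially.)
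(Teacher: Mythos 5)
Your reduction to the monomial identity, and your observation that the bijection $\phi$ of Proposition~\ref{prop:H-P} restricts to a sign-compatible bijection between $\CS_d\times\sbt[d]$ and the pairs $(\pi,\T)$ in which $\T$ consists of pairwise distinct single bars, are exactly the first half of the paper's proof and are correct. The paper likewise treats those pairs as the fixed-point set and cancels everything else by a sign-reversing involution $\psi$ on $\bigcup_{\delta}K_\delta\times\prbt\ast(\delta)$.

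The gap is that the involution itself --- the entire second half of the argument --- is left as a sketch, and the sketch as stated does not match what is actually needed. You propose to peel a single top bar off a tableau with $r\ge 2$ rows, or to stack a single bar onto an adjacent tableau, splitting or merging one block of the delimited cycle at a time. The paper's involution is structurally different: it locates the \emph{last} bar $B$ in scanning order that has an identical mate, sets $m$ to be the minimum element $C_h(1)$ of the cycle attached to the rightmost \emph{other} tableau containing a bar identical to $B$ (or $m=\infty$ if none exists), and then splits $C_i$ at the position $jd+c$ of the smallest entry exceeding $C_i(1)$, less than $m$, and lying past the first delimiter --- which in general peels off \emph{several} rows at once, with the new marked cell placed in column $c$ and the split-off cycle cyclically shifted to begin at that entry. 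The reverse move appends the whole tableau $T_h$ below $T_i$ and concatenates a cyclic shift of $C_h$ determined by the column of the marked cell of $T_h$. These choices are not bookkeeping details: they are what guarantee that the split point is recoverable from the merged object, that the resulting permutation lies in $K_{\delta'}$ for the new square-free $\delta'$, and that the case analysis (split when such a $jd+c$ exists, merge when it does not) is exhaustive with no spurious fixed points among the non-distinct pairs. A one-row-at-a-time toggle with an unspecified priority order has none of these properties established, and you correctly identify this as the main obstacle without overcoming it. Your parenthetical alternative --- applying $\Omega$ to Proposition~\ref{prop:H-P} using $\Omega(H_d)=E_d$ and $\Omega(P_\delta)=(-1)^{\ell(\delta)}P_\delta$ --- is valid and non-circular given Remark~\ref{rem:omega} and the corollary to Proposition~\ref{prop:P-H,P-E}, but as you note it is not the combinatorial proof the paper is after.
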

\begin{proof}
Multiplying the equality on both sides by $d!$ and expressing in terms of monomials using Lemmas \ref{lem:btinterps} and \ref{lem:markedinterps} gives:
\[
\sum\limits_{T\in \sbt[d]} |\CS_d|\sgn(T) \x_T = \sum\limits_{\delta \in \PCom_{\sqf}(d)} |K_\delta| \sum\limits_{\T\in \prbt\ast(\delta)} (-1)^{\ell(\T)} \x_{\T}.
\]
We define an involution $\psi$ on the set $\bigcup_{\delta \in \PCom_{\sqf}(d)} K_\delta \times \prbt\ast(\delta)$. For $\pi = C_1\ldots C_k \in K_\delta$ and $\T = (T_1, \ldots, T_k)\in \prbt\ast(\delta)$, if $\psi(\pi, \T) = (\pi',\T')\neq (\pi,\T)$, then $\T'$ has one diagram more or less than $\T$. Furthermore, the fixed points under $\psi$ should produce signed monomials that correspond to the the monomials on the left hand side and each monomial $\x_T$ for $T\in \sbt[d]$ should appear with a multiplicity $d!$. The fixed points under $\psi$ are $(\pi, \T)$ where $\T = (T_1,\ldots, T_k)$ and all $T_i$s are single, distinct bars. We define a bijection of such fixed points to $\CS_d\times \sbt[d]$ by restricting $\phi$, as defined the proof of Proposition \ref{prop:H-P}. We note that when $\T$ is a PRBT that contains distinct bars, then $T$ in $ (\textbf{c}, T) = \phi^{-1}(\pi, \T)$ contains distinct bars, which makes $T$ an SBT. The number of tableaux in $\T$ is equal to the number of bars in $T$, that is, $\ell(\T) = \ell(T)$. This means $\sgn(T) = (-1)^{\ell(T)} = (-1)^{\ell(\T)}$ and this gives us the correct sign for the monomials appearing on the left hand side.

We now define $\psi(\pi,\T)$ for non-fixed points $(\pi,\T)$. By our previous discussion, $\T$ must contain a $T$ which has more than one row, or if every $T$ in $\T$ is a single bar, then there must exist two bars in $\T$ which are identical. 
Let $B$ in $\T$ be the last bar in the scanning order for which an identical bar exists. Suppose $B$ is contained in $T_i$ and $\dg(T_i)$ contains $r$ rows and $d$ columns. If no other $T$ in $\T$ contains a bar identical to $B$, then set $m = \infty$. On the other hand, if $T_h$ is the rightmost (not including $T_i$) marked RBT containing bars identical to $B$, then set $m = C_h(1)$, the minimum element of the cycle $C_h$. 

\begin{itemize}
\item Let $m\leq \infty$ and $\ell(T_i) > 1$. Suppose there exists a $jd +c$ for $1\leq j \leq r-1$ and $1 \leq c \leq d$ such that $C_i(jd+c)$ is the smallest entry larger than $C_i(1)$ and less than $m$, occurring after the first vertical line in $C_i$. Such a value $jd+c$ always exists when $m = \infty$. Define $T$ to be the marked RBT obtained from $T_i$ by removing all rows {strictly} below the $j$th row and let $T'$ be the marked RBT formed by the removed rows with the cell in the $c$th column marked. Split the cycle $C_i$ as $D = (C_i(1), \ldots, C_i(jd))$ and $\hat{D} = (C_i(jd+1),\ldots, C_i(jd+c),\ldots, C_i(dr))$. Define $D'$ to be the cyclic shift of $\hat{D}$ such that $C_i(jd+c)$ is the first entry of $D'$.
Let \[\pi' = C_1 \ldots C_{p-1} \, D' \, C_p \ldots C_{i-1}\, D\, C_{i+1} \ldots C_{k}\] be expressed in canonical notation for some position $p$. We construct \[\T' = (T_1, \ldots, T_{p-1}, T', T_p, \ldots, T_{i-1}, T, T_{i+1}, \ldots, T_{k}).\] 
In the following example, we have $i = 3$, $h=1$ and $d = 3$. We have $m = C_1(1) = 8$. We find that 4 is the smallest value between $C_3(1)=1$ and $m = 8$ occurring after the first vertical line in $C_3$. We have $j = 1$ and $c = 2$, as 4 occurs after the first ($j = 1$) vertical bar in the second position ($c = 2$).
\begin{align*}\T &=  \yt{3{3^*}3} \quad \yt{{1\ast}1,11}\quad  \yt{33{3^*},333,333}\\
\pi &= (8,16,10)(2,6|5,12)(1,13,3|15,4,14|9,11,7)
\end{align*}
We remove the second and third row of $T_3$ and also remove the last six entries of $C_3$. We place the rows and entries as follows after cyclically shifting such that 4 is the minimum element in the cycle. As $c = 2$, we mark the second cell in the top row of the PRBT in the second position.
\begin{align*}\T' &=  \yt{3{3^*}3} \quad \yt{3{3\ast}3,333}\quad \yt{{1\ast}1,11}\quad  \yt{33{3^*}}\\
\pi' &= (8,16,10)(4,14,9|11,7,15)(2,6|5,12)(1,13,3)
\end{align*}
In the following example, $m = \infty$, $i = 3$ and $d = 3$. We have $j = 1$ and $c = 2$.

\begin{align*}\T &=   \yt{{1\ast}1,11}\quad  \yt{33{3^*},333,333}\\
\pi &= (2,6|5,12)(1,13,3|10,4,8|9,11,7)
\end{align*}
\begin{align*}\T' &=  \yt{3{3\ast}3,333}\quad \yt{{1\ast}1,11}\quad  \yt{33{3^*}}\\
\pi' &= (4,8,9|11,7,10)(2,6|5,12)(1,13,3)
\end{align*}

\item Let $m <\infty$. Suppose there is no value of $1\leq j \leq r-1$ and $1 \leq c \leq d$ such that $C_i(jd+c)$ is the smallest entry larger than $C_i(1)$ and less than $m$, occurring after the first vertical line in $C_i$. This includes the case where $T_i$ is a single bar, that is $\ell(T_i)=1$. Let $T$ be the marked RBT formed by appending the bars of $T_h$ below $T_i$ and removing the marking from $T_h$ while preserving the location of the marked cell in $T_i$. Furthermore, define $\hat{C}_h$ to be the cyclic shift of  $C_h$ such that $C_h(1)$ is in the position equal to the index of the column of the marked cell in $T_h$. Let $D = (C_i(1), \ldots, C_i(dr), \hat{C}_h(1), \ldots \hat{C}_h(|T_h|))$ which is the result of concatenating the cycles $C_i$ and $\hat{C}_h$. We then define $\pi' = C_1\ldots C_{h-1} C_{h+1} \ldots C_{i-1} \,D\, C_{i+1} \ldots C_k$ and $\T' = (T_1, \ldots, T_{h-1}, T_{h+1}, \ldots T_{i-1}, T, T_{i+1}, \ldots, T_k)$. 

In the example,
\begin{align*}
\T &= \yt{1{1^*},11} \quad \yt{3{3^*}3,333}\quad \yt{33{3^*}}  \quad \yt{{4\ast}44}\\
\pi &= (6,12|11,9)(5,10,14|16,8,13)(2,4,7)(1,15,3)
\end{align*} 
we have $i = 3$ as $T_3 = \yt{33{3^*}}$ contains the last bar $B =\yt{33{3\ast}}$ in the scanning order such that there exist bars identical to $B$ in $\T$. We have $h = 2$ as $T_2$ is the rightmost marked RBT (not including $T_3$) which contains the bar identical to $B$. This gives us $m = C_2(1) = 5$ and $C_3(1) = 2$. Note that $\ell(C_3) = 3$, so for $C_3(jd+c)$ to exist, $j$ should be zero, and so we have no value of $jd + c$ with $j\geq 1$, $d = 3$, and $1\leq c\leq 3$. We place the bars of $T_2$ below $T_3$ and we cyclically shift $C_2$ to make 5 the second entry as the marked cell is in the second column, and concatenate it to $C_3$.
The output $\psi(\pi, \T) = (\pi', \T')$ is
\begin{align*}\T' &= \yt{1{1^*},11} \quad \yt{33{3^*},3{3}3,333}  \quad \yt{{4\ast}44}\\
\pi' &= (6,12|11,9)(2,4,7|13,5,10|14,16,8)(1,15,3)
\end{align*}
In the example, 
\begin{align*}\T &=  \yt{3{3^*}3}\quad \yt{33{3\ast}} \quad \yt{1{1^*},11} \quad \yt{3{3\ast}3,333}\\
\pi &= (9,15,10)(4,14,8)(2,6|16,13)(1,12,3|11,7,5)
\end{align*}
$i = 4$, $h = 2$ and $d = 3$. We observe that $T_4$ is not a bar and there also does not exist $j\geq 1$ satisfying $1 =C_4(1) < jd + c < C_2(1) = 4$. Note that $C_4(3)= 3$ lies between $1$ and $4$ but does not lie past the first vertical line.

The output $\psi(\pi,\T) = (\pi',\T')$ is found by placing the bars of $T_2$ under $T_4$ with the markings of $T_4$ removed, and cyclically shifting $C_2$ until 4 is in the third position and appending this cyclic shift to $C_4$. Thus we have 
\begin{align*}\T' &=  \yt{3{3^*}3} \quad \yt{1{1^*},11} \quad \yt{3{3\ast}3,333,333}\\
\pi' &= (9,15,10)(2,6|16,13)(1,12,3|11,7,5|14,8,4).
\end{align*}
\end{itemize}
\end{proof}


\section{Expansions of $E^+$ in $E$, $H$, and $P$}\label{sec:E+in}
In Section \ref{ssec:HU-rec}, we combinatorially prove a recursion between $H$ and $E^+$ described in \cite{polysymm}. In Section \ref{ssec:HUE-form}, we present and prove a formula that expresses $E^+$ in terms of $H$ and $E$. We then use the results of the previous sections to find the $H$, $E$, and $P$ expansions of $E^+$. We also provide explicit bijective proofs for all the expansions in Sections \ref{ssec:U-E}, \ref{ssec:U-H}, and \ref{ssec:U-P}.
\subsection{Recursion between $H$ and $E^+$}\label{ssec:HU-rec}
In \cite{polysymm}, the authors present a recursion involving $H$ and $E^+$ without proof. We provide a combinatorial proof of the recursion as it serves as a warm-up for the techniques used in the next section. The notation $d^r$ denotes a block with degree $d$ and multiplicity $r$ and is not to be confused with exponentiation. Recall $H_{d^r} = H_d(\x^r_{**})$. 
\begin{prop}[\cite{polysymm}, Remark 10]\label{prop:HU-rec}
For $d\geq 0$, $H_d = \sum\limits_{k = 0}^d H_{k^2} E^+_{d-2k}$ where $H_i =  E_i = 0$ for $i<0$.
\end{prop}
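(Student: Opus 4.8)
The plan is to prove the recursion $H_d = \sum_{k=0}^d H_{k^2} E^+_{d-2k}$ by a sign-reversing involution on bar tableaux, in the same spirit as the proofs in Section~\ref{ssec:HEP-rec}. Using Lemma~\ref{lem:btinterps}, the right-hand side, after recalling that $H_{k^2} = H_k(\x_{**}^2) = \sum_{U \in \wbt[k]} \x_U^2$, becomes a sum over pairs $(U, V)$ with $U \in \wbt[k]$ and $V \in \sbt[d-2k]$ of the signed monomial $\sgn(V)\,\x_U^2\,\x_V$. So it suffices to exhibit a bijection between $\wbt[d]$ and the set of such pairs that is monomial-weight-preserving after accounting for signs; equivalently, since $E_i$ was mentioned but the statement really only involves $H$ and $E^+$ (all signs on the right are $+1$ because $E^+$ has a positive monomial expansion), I first double-check that the intended identity is the \emph{unsigned} one, $H_d = \sum_k H_{k^2} E^+_{d-2k}$, in which case the target is a genuine bijection
\[
\Phi:\ \wbt[d]\ \longrightarrow\ \bigcup_{k=0}^{\lfloor d/2\rfloor}\ \wbt[k]\times \sbt[d-2k], \qquad \x_T = \x_U^2\,\x_V.
\]

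The key idea is that a WBT $T$ decomposes uniquely according to which bars are \emph{repeated}. Reading the rows of $T$ from top to bottom (they already come grouped by size, weakly decreasing, with labels weakly increasing within a size), I would scan for \textbf{identical consecutive pairs} of bars: whenever a bar $B$ is immediately followed by an identical bar $B'$, these two copies are ``spent'' together. More precisely, for each (size, label) pair $(i,j)$ occurring $b_{ij}$ times in $T$, write $b_{ij} = 2q_{ij} + \epsilon_{ij}$ with $\epsilon_{ij}\in\{0,1\}$; form $U$ by taking $q_{ij}$ copies of the bar of size $i$, label $j$ (arranged into a legitimate WBT by the canonical ordering), and form $V$ by taking the leftover $\epsilon_{ij}$ single copies (which, having multiplicity at most $1$ each, assemble into a legitimate SBT). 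Then $\x_U^2 \x_V = \prod_{i,j} x_{ij}^{2q_{ij}+\epsilon_{ij}} = \x_T$, and $|U| = k := \sum i q_{ij}$, $|V| = d - 2k$, as required. The inverse map is equally transparent: from $(U,V)$ reconstruct $b_{ij}(T) = 2b_{ij}(U) + b_{ij}(V)$ and reassemble the unique WBT with those bar multiplicities (the ``merge'' of $U$, a second copy of $U$, and $V$, laid out in canonical WBT order). Because every step is determined by the underlying multiset of bars, $\Phi$ and $\Phi^{-1}$ are mutually inverse, and they visibly preserve the associated monomial.

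The main obstacle, and the step deserving the most care in the writeup, is verifying that the two pieces $U$ and $V$ are \emph{genuinely} a WBT and an SBT respectively, and that no information is lost — i.e. that knowing $(U,V)$ and only that data recovers $T$. This hinges on the parity decomposition $b_{ij} = 2q_{ij} + \epsilon_{ij}$ being unambiguous, which it is, and on the fact that $\wbt[k]$, $\sbt[m]$ are themselves determined purely by bar-multiplicity data (with no extra ordering choices), which is exactly what Lemma~\ref{lem:btinterps}(1),(2) established. A secondary subtlety is the boundary case $d - 2k < 0$: the summand $H_{k^2}E^+_{d-2k}$ vanishes there, matching the convention $E_i = 0$ for $i<0$ flagged in the statement, and the bijection never produces such a pair since $|V| = d - 2\lvert U\rvert \ge 0$ automatically. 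I would close with a small worked example, e.g. taking a WBT of size $d$ whose bar multiset has some repeated and some unrepeated bars, displaying the resulting $(U,V)$, to make the combinatorics transparent, exactly as the surrounding propositions do.
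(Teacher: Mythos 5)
Your proposal is correct and is essentially the paper's own proof: the parity decomposition $b_{ij}=2q_{ij}+\epsilon_{ij}$ is exactly the paper's pairing of consecutive identical bars (one copy of each pair into $U$, the unpaired leftovers into $V$), with the same inverse given by doubling $U$ and merging with $V$ into canonical WBT order. Your arithmetic phrasing in terms of bar multiplicities, together with the observation that WBTs and SBTs are determined by their multiplicity data, is a clean and equivalent way to present the same bijection.
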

\begin{proof}
By using the results of Lemma \ref{lem:btinterps}, we must prove the following statement for monomials:
\[
    \sum\limits_{T\in \wbt[d]} \x_T = 
    \sum\limits_{k=0}^d \sum\limits_{U\in \wbt[k]}\sum\limits_{V \in \sbt[d-2k]} \x_U^2 \x_{V}
\]
We do so by producing a bijection from $\wbt[d]$ to $\bigcup_{k\geq 0}\wbt[k]\times \sbt[d-2k]$ defined by $T\mapsto (F(T), G(T))$ such that $\x_{T} = \x_{F(T)}^2\x_{G(T)}$. Scan down the rows of $T$ starting at the top row and make disjoint pairs consisting of consecutive identical bars. Define the bars in $F(T)$ to be one copy from each pair, and let $G(T)$ be the set of unpaired bars (of which there can be at most 1 each). To define the inverse bijection, we start with $(U,V)\in \wbt[k]\times \sbt[d-2k]$ and create a WBT $U'$ containing two copies of each bar in $U$, and we define $(F^{-1}\times G^{-1}) (U,V)$ to be the WBT constructed by interleaving the parts of $U'$ and $V$ such that going from top to bottom, the bars weakly decrease in size and the labels increase weakly within bars of the same size.
\end{proof}
\begin{example}
In the example 
\boks{0.35}
    \[
    T = \yt{3333,3333,3333,222,11,11,11,11} \xmapsto{F\times G} \left( \yt{3333,11,11}, \yt{3333,222}\right),
    \]
we pair the first two bars of $T$ and obtain the first row of $F(T)$ by inserting one copy of $\yt{3333}$. The third row does not have a matching bar and thus becomes the top row of $G(T)$. The fourth bar does not have a matching bar either and becomes the second row of $G(T)$. The fifth and sixth bars form a pair, and so do the seventh and the eighth bars. One copy from each of the pairs form the second and third row of $F(T)$ respectively. The monomial associated with both $T$ and $(F(T),G(T))$ is $\x_T = x_{43}^3 x_{32}x_{21}^4 = (x_{43}x_{21}^2)^2 x_{43}x_{32} = \x_{F(T)}^2\x_{G(T)}$.
\end{example}
\subsection{Expansions in $E^+$ using a formula}\label{ssec:HUE-form}
In the spirit of the recursion just proved, we prove the following proposition bijectively.
\begin{prop}
For $d\geq 0$, $E_d^+ = \sum\limits_{k=0}^d H_{d-2k}E_{k^2}$ where $H_i, E_i = 0$ for $i<0$.
\end{prop}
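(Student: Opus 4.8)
The plan is to mirror the bijective argument used for Proposition~\ref{prop:HU-rec} but with the roles reshuffled so that the ``doubled'' part becomes strict rather than weak, and so that a sign enters. By Lemma~\ref{lem:btinterps}, the claimed identity $E^+_d = \sum_{k=0}^d H_{d-2k}E_{k^2}$ translates into the monomial identity
\[
\sum_{T\in \sbt[d]} \x_T
=
\sum_{k=0}^{d}\ \sum_{\substack{U\in \wbt[d-2k]\\ V\in \sbt[k]}} \sgn(V)\, \x_U\, \x_V^2 .
\]
(Here $E_{k^2}=E_k(\x_{**}^2)=\sum_{V\in\sbt[k]}\sgn(V)\x_V^2$, using the Addams operation together with Lemma~\ref{lem:btinterps}(3).) So it suffices to construct a map $\Phi$ from $\bigcup_{k\ge0}\wbt[d-2k]\times\sbt[k]$ to $\sbt[d]$ which is $(\sgn(V))$--to--$(+)$ sign-preserving on monomials, i.e.\ $\x_{\Phi(U,V)}=\x_U\x_V^2$ with $\sgn\bigl(\Phi(U,V)\bigr)=\sgn(V)$, together with its inverse; then cancellation of signs on the right-hand side leaves exactly the SBT sum on the left.

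The core combinatorial move: given $T\in\sbt[d]$, scan its rows from the top and greedily pair up \emph{consecutive identical bars}. Because $T$ is strict within each bar-size class, a size-$i$ class of $T$ looks like bars with strictly increasing labels $j_1<j_2<\cdots$; pairing consecutive ones in this list leaves at most one bar of each label but, crucially, can leave \emph{many} unpaired bars of a given size (one per label). Set $V$ to consist of one copy from each matched pair and $U$ to consist of all unpaired bars. Then $U$ is automatically a WBT (weak increase of labels among same-size bars survives), $V$ is an SBT (the chosen representatives still have strictly increasing labels within a size class — this is the reason strictness of $T$ is needed), and $\x_T=\x_U\x_V^2$. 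The sign bookkeeping is $\ell(T)=2\ell(V)+\ell(U)$, hence $\sgn(T)=(-1)^{\ell(T)}$ is not literally what we want on the nose — but note the \emph{left} side carries no sign, so what we actually need is that each $T\in\sbt[d]$ is hit exactly once and with total coefficient $\sum \sgn(V)=+1$; I expect the pairing to be canonical (unique), so $T$ has a single preimage $(U,V)$ and we just need $\sgn(V)=+1$ for it, which follows since in the canonical pairing $V$ has, within each size class, labels that are an initial segment of distinct values and... — this is the point to be careful about, see below.

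For the inverse, start with $(U,V)\in\wbt[d-2k]\times\sbt[k]$, form $U'$ by replacing each bar of $U$ with two identical copies, and interleave the bars of $U'$ and $V$ so that sizes weakly decrease top-to-bottom and, within a fixed size, labels increase — placing, among bars of equal size and equal label, the two $U'$-copies adjacent and putting $V$'s (strictly labelled) bars in their sorted slots. The resulting tableau is strict within each size class precisely because $V$ was strict and the doubled $U$-bars are inserted as adjacent identical pairs that the greedy pairing will re-pair; one checks $\Phi^{-1}\circ\Phi=\mathrm{id}$ and $\Phi\circ\Phi^{-1}=\mathrm{id}$ by tracking how the greedy top-down pairing interacts with the sorted interleaving, exactly as in the proof of Proposition~\ref{prop:HU-rec}.

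\textbf{Main obstacle.} The delicate step is verifying that the greedy pairing produces a genuine SBT $V$ (not merely a WBT) and that consequently every $T\in\sbt[d]$ receives total weight $+1$ rather than $-1$ from the right-hand sum. Concretely: within a size-$i$ class of $T$ the labels are strictly increasing, say $j_1<\dots<j_m$; pairing $(j_1,j_2),(j_3,j_4),\dots$ leaves $V$-labels $j_1<j_3<j_5<\dots$ (strict, good) and $U$-labels from the leftover (also fine, and weakly increasing as required for a WBT). If $m$ is odd the last bar is unpaired and goes to $U$. The only real content is checking there is \emph{no ambiguity} in which bars get paired — i.e.\ that ``scan from the top, pair each bar with the next identical one if it is not already paired'' is well-defined and matches the interleaving inverse; this is a finite, local check per size class and is where I would spend the writing effort, but it presents no conceptual difficulty beyond what Proposition~\ref{prop:HU-rec} already handles.
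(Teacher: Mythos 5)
There is a genuine gap, and it is structural rather than a matter of detail. The right-hand side, rewritten via Lemma \ref{lem:btinterps}, is a \emph{signed} sum over the set $\bigcup_{k}\wbt[d-2k]\times\sbt[k]$, which is strictly larger than $\sbt[d]$ and contains pairs whose monomials do not even occur on the left. For instance, with $d=2$ the monomial $x_{11}^2$ arises from $(U,V)$ with $U$ consisting of two identical bars and $V=\varnothing$ (sign $+1$) and again from $U=\varnothing$, $V$ a single bar of length $1$ (sign $-1$); these two terms must cancel against each other, and no term $x_{11}^2$ survives to match anything in $\sum_{T\in\sbt[d]}\x_T$. Consequently no monomial-weight-preserving bijection $\Phi:\bigcup_k\wbt[d-2k]\times\sbt[k]\to\sbt[d]$ can exist. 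What the identity requires is a \emph{sign-reversing involution} on the set of pairs $(U,V)$ whose fixed points are exactly $(U_0,\varnothing)$ with $U_0$ an SBT; your write-up asserts that ``cancellation of signs on the right-hand side leaves exactly the SBT sum'' but never constructs the involution that effects this cancellation, and that construction is the entire content of the proof. (The paper's involution $F'$ does this by locating the first repeated bar $B$ in $U$, comparing it with the top bar of $V$, and transferring either an identical pair out of $U$ into $V$ or the top bar of $V$ into $U$ as two copies, flipping $\sgn(V)$ each time.)

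Your ``core combinatorial move'' is also internally inconsistent. In an SBT, two bars of the same size necessarily carry distinct labels, so there are no identical bars to pair: the greedy pairing applied to $T\in\sbt[d]$ always returns $V=\varnothing$ and $U=T$, which only accounts for the fixed-point terms. The variant you actually describe --- pairing same-size bars with distinct labels $j_1<j_2$ and retaining only the $j_1$-copy in $V$ --- breaks weight preservation, since $\x_V^2$ then contributes $x_{i,j_1}^2$ where $T$ contributed $x_{i,j_1}x_{i,j_2}$, so $\x_T\neq\x_U\x_V^2$. The analogy with Proposition \ref{prop:HU-rec} fails precisely because there the domain is $\wbt[d]$, where identical bars genuinely occur and the pairing is a bijection with no signs to cancel; here the signs force an involution argument instead.
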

\begin{proof}
By using the results of Lemma \ref{lem:btinterps} and recalling $E_{k^2} = E(\x_{**}^2)$, we reinterpret the formula as an identity on monomials:
\[
\sum\limits_{T\in \sbt[d]} \x_T = \sum\limits_{k=0}^d \sum\limits_{\substack{U\in \wbt[d-2k]\\ V\in \sbt[k]}} \sgn(V)\x_U \x_V^2.
\]
We define an involution $F'$ on $\bigcup_{k=0}^d \wbt[d-2k]\times \sbt[k]$ mapping $(U,V)$ to $(U',V')$ which preserves the monomial weight but negates the sign, giving us $\sgn(V')\x_{U'}\x_{V'}^2 =  -\sgn(V)\x_{U}\x_{V}^2$ for non-fixed points $(U,V)$. The only fixed points of $F'$ are $(U_0,V_0)$ where $U_0$ does not contain any repeated bars and $V_0 = \varnothing$. We can map these fixed points to the set of SBTs of size $d$ by $(U_0,V_0) \mapsto U_0$. As $\ell(V_0) = 0$, the monomials $\x_{U_0}\x_{V_0}^2$ corresponding to the fixed points appear with the sign $\sgn(\varnothing) = 1$. 
Now we define the action of $F'$ on non-fixed points $(U,V)$.

\begin{enumerate}
\item Suppose $V$ is non-empty and $U$ does not contain any repeated bars. Then define $U'$ by inserting two copies of the top row of $V$ in $U$ such that the insertion makes $U'$ a WBT. Let $V'$ be obtained from $V$ by removing the top row of $V$.

 \item Let $U$ contain repeated bars. Start by scanning down the rows of $U$, and let $B$ be the first bar such that the bar $C$ immediately below it is identical to it. Let $A$ be the top bar of $V$ if $V\neq \varnothing$. 
\begin{enumerate}
\item Suppose (i) $V= \varnothing$ or (ii) $V\neq \varnothing$, and $B$ has a larger length than $A$, or $B$ has the same length but a strictly smaller label than $A$. Obtain $U'$ by removing $B$ and $C$ from $U$, and obtain $V'$ by inserting $B$ above the top row of $V$. We see that $\ell(V') = \ell(V)+1$ and thus $\sgn(V') = -\sgn(V)$.
\item If $B$ and $A$ do not satisfy the conditions in (a), then obtain $V'$ by removing $A$ from $V$ and obtain $U'$ from $U$, by inserting two copies of $A$ in $U$ which must insert above $B$.
\end{enumerate}
\end{enumerate}

\end{proof}
\begin{example}
In the following example, the first bar in $U$ with an identical bar below it is $\yt{111}$ in the second row. We see that it has the same length as the top row of $V$ but a smaller label. Thus we remove these two bars from $U$ and insert one copy as the top row in $V'$:
    \[
    \left(U = \yt{4444,111,111,22,22}, V = \yt{222,333} \right) \xmapsto{F'}  \left(U'= \yt{4444,22,22}, V'=\yt{111,222,333} \right) 
    \]
Apply $F'$ to the just computed output $(U',V')$. In $U'$, the bar $\yt{22}$ in the second row is the first bar in an identical pair, but it has a smaller length than the top row of $V'$. So we remove the top row of $V'$ and insert two copies of it in $U'$, which gives us $(U,V)$.
\end{example}

To present the $H$, $E$, and $P$ expansions of $E^+$, we define some special subsets of polycompositions.
Let $\PCom_P(d)$ be the set of polycompositions of $d$ of the form $\alpha^1\beta^2$ where $\alpha$ or $\beta$ are (possibly empty) compositions. Define $\PCom_E(d)$ to be the set of polycompositions of $d$ of the form $\alpha^1(b)^2$ where $\alpha$ is a (possibly empty) composition and $b$ is a non-negative integer. Define $\PCom_H(d)$ to be the set of polycompositions of $d$ of the form $(a)^1\beta^2$ where $\beta$ is a (possibly empty) composition and $a$ is a non-negative integer.
\begin{prop}\label{prop:E+-HEP}
For $d\geq 0$,
    \begin{enumerate}
        \item  $E^+_d = \sum\limits_{\delta = \alpha^1(b)^2 \in \PCom_E(d)} (-1)^{\ell(\alpha)} E_{\delta},$
        \item $   E^+_d = \sum\limits_{\delta = (a)^1\beta^2 \in \PCom_H(d)} (-1)^{\ell(\beta)} H_{\delta},$ and 
           \item $E^+_d  = \sum\limits_{\delta = \alpha^1 \beta^2 \in \PCom_P(d)} (-1)^{\ell(\beta)} \dfrac{1}{Z_\alpha Z_\beta}P_{\delta}$
    \end{enumerate}
\end{prop}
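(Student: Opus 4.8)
\emph{Proof sketch.} The plan is to deduce all three expansions directly from the formula $E^+_d=\sum_{k=0}^{d}H_{d-2k}E_{k^2}$ established above, by substituting into it the $H$-, $E$-, and $P$-expansions of $H_m$ and $E_k$ proved in Section~\ref{sec:HEP} and reorganizing the resulting finite double sums. The structural input that makes this work is that the Addams operation $F\mapsto F(\x^2_{**})$ is an algebra homomorphism sending $F_{(\gamma)^1}$ to $F_{(\gamma)^2}$ while leaving the scalars $(-1)^{\ell(\gamma)}$ and $Z_\gamma$ untouched; applying it to Proposition~\ref{prop:H-E,E-H}(2) and to Proposition~\ref{prop:E-P} (after reindexing $\PCom_{\sqf}$ by compositions) gives the doubled-multiplicity identities $E_{k^2}=\sum_{\beta\in\Com(k)}(-1)^{\ell(\beta)}H_{(\beta)^2}$ and $E_{k^2}=\sum_{\beta\in\Com(k)}(-1)^{\ell(\beta)}Z_\beta^{-1}P_{(\beta)^2}$.

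For part~(1) I would substitute $H_{d-2k}=\sum_{\alpha\in\Com(d-2k)}(-1)^{\ell(\alpha)}E_{(\alpha)^1}$ from Proposition~\ref{prop:H-E,E-H}(1) and leave the factor $E_{k^2}$ as is. Since $E$ is multiplicative over blocks, $E_{(\alpha)^1}E_{k^2}=E_{\alpha^1(k)^2}$, so the double sum over pairs $(k,\alpha)$ with $0\le k\le d$, $\alpha\in\Com(d-2k)$, is exactly a single sum over polycompositions $\delta=\alpha^1(b)^2\in\PCom_E(d)$ via the bijection $(k,\alpha)\leftrightarrow\delta$ with $b=k$ and $\alpha=\delta|^1$ (a bijection because $\delta$ recovers $\delta|^1$ and $\delta|^2=(b)$, and the constraint $|\alpha|+2b=d$ is equivalent to $|\alpha|=d-2k$). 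The sign $(-1)^{\ell(\alpha)}$ is the claimed one; the terms with $k=0$ account for the $b=0$ (empty multiplicity-$2$) part, and $d-2k<0$ contributes nothing since $\Com$ of a negative integer is empty. Part~(2) runs the same way with the roles reversed: keep $H_{(d-2k)^1}$, substitute the doubled identity $E_{k^2}=\sum_{\beta}(-1)^{\ell(\beta)}H_{(\beta)^2}$, use $H_{(d-2k)^1}H_{(\beta)^2}=H_{(d-2k)^1\beta^2}$, and recognize the double sum as a sum over $\delta=(a)^1\beta^2\in\PCom_H(d)$ with $a=d-2k$, $\beta=\delta|^2$.

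For part~(3) one substitutes both factors in terms of $P$: $H_{d-2k}=\sum_{\alpha\in\Com(d-2k)}Z_\alpha^{-1}P_{(\alpha)^1}$ (Proposition~\ref{prop:H-P}) and $E_{k^2}=\sum_{\beta\in\Com(k)}(-1)^{\ell(\beta)}Z_\beta^{-1}P_{(\beta)^2}$ (the doubled Proposition~\ref{prop:E-P}). Multiplying and using $P_{(\alpha)^1}P_{(\beta)^2}=P_{\alpha^1\beta^2}$ turns the product into $\sum_{k}\sum_{\alpha,\beta}(-1)^{\ell(\beta)}(Z_\alpha Z_\beta)^{-1}P_{\alpha^1\beta^2}$, which by the same reindexing is $\sum_{\delta=\alpha^1\beta^2\in\PCom_P(d)}(-1)^{\ell(\beta)}(Z_\alpha Z_\beta)^{-1}P_\delta$. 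The single point that deserves explicit checking --- and the closest thing to an obstacle --- is the behaviour of the Addams operation in the $P$-setting: one must confirm that raising every variable to the power $2$ sends $P_{(\beta)^1}=\prod_i P_{\beta_i}$ to $\prod_i P_{\beta_i}(\x^2_{**})=\prod_i P_{\beta_i^2}=P_{(\beta)^2}$ whereas the coefficient $Z_\beta$, being a number determined by the composition $\beta$ alone, is unchanged, so the multiplicity-$2$ block really does contribute the weight $Z_\beta^{-1}$ formed from $\beta$ itself. Beyond that, the whole argument is bookkeeping: reorganizing finite double sums, checking the index-set bijection, and handling the boundary cases $k=0$ and $d-2k<0$.
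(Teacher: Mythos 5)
Your proposal is correct and follows essentially the same route as the paper: both start from the identity $E^+_d=\sum_{k=0}^{d}H_{d-2k}E_{k^2}$, substitute the $E$-, $H$-, and $P$-expansions of $H_m$ and $E_k$ from Section~\ref{sec:HEP} (using the Addams operation to double the multiplicities in the $E_{k^2}$ factor, in particular $E_{n^2}=\sum_{\delta\in\PCom_{\sqf}(n)}(-1)^{\ell(\delta)}P_{\delta^2}/Z_\delta$ for part~(3)), and reindex the resulting double sums by polycompositions $\alpha^1\beta^2$. Your explicit attention to the bijection of index sets and to the invariance of the scalars $(-1)^{\ell(\gamma)}$ and $Z_\gamma$ under the Addams operation is slightly more detailed than the paper's two-line argument, but the substance is identical.
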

\begin{proof}
Recall from Proposition \ref{prop:H-E,E-H} that $H_n = \sum_{\alpha^1\in \PCom_{\sqf}(n)} (-1)^{\ell(\alpha)}E_{\alpha^1}$. We substitute this into $E_d^+ = \sum\limits_{k=0}^d H_{d-2k}E_{k^2}$ to obtain
\[
E_d^+ = \sum\limits_{b=0}^d \sum\limits_{\alpha^1 \in \PCom_{\sqf}(d-2b)} (-1)^{\ell(\alpha)}E_{\alpha^1} E_{b^2}
\]
which proves (1). Similarly, using $E_n = \sum_{\beta^1\in \PCom_{\sqf}(n)} (-1)^{\ell(\beta)}H_{\beta^1}$ gives
\[
E_d^+ = \sum\limits_{k=0}^d \sum\limits_{\beta^1\in \PCom_{\sqf}(k)} (-1)^{\ell(\beta)}H_{d-2k} E_{\beta^2}
\]
which proves (2). The expansions $H_n = \sum\limits_{\delta \in \PCom_{\sqf}(n)} \dfrac{P_\delta}{Z_\delta}$ and
$E_n = \sum\limits_{\delta \in \PCom_{\sqf}(n)} (-1)^{\ell(\delta)}\dfrac{P_\delta}{Z_\delta}$ from Propositions \ref{prop:H-P} and \ref{prop:E-P} give us (3), noting that $E_{n^2} =  \sum\limits_{\delta \in \PCom_{\sqf}(n)} (-1)^{\ell(\delta)}\dfrac{P_{\delta^2}}{Z_\delta}$.
\end{proof}

\subsection{$E$-expansion of $E^+$}\label{ssec:U-E}
We present an involution proof of the $E$-expansion of $E^+$. Recall that bar tableaux appear in layers indexed by natural numbers and we omit empty layers. For $\T\in \wbt[d]$, if a WBT $T$ occurs in layer $r$ of $\T$, then $T$ contributes the factor $\x_T^r$ to $\x_{\T}$. Unlike the expansions in the previous section, we now have a new layer to work with and the main idea is that each bar in layer 2 counts twice. Our involutions involve transferring two copies of a bar from layer 2 to layer 1 and one copy of a bar in an identical pair of bars (when they exist) from layer 1 to layer 2. We show \[E^+_d = \sum_{\delta =\alpha^1 (b)^2\in \PCom_{E}(d)} (-1)^{\ell(\alpha)} E_{\delta}.\]

\begin{proof}[Involution proof of Proposition \ref{prop:E+-HEP} (1)]
Using the results from Lemmas \ref{lem:btinterps} and \ref{lem:pbtinterps}, we can rewrite the statement in terms of monomials as
\[
\sum\limits_{T\in \sbt[d]} \x_T = \sum\limits_{\delta\in \PCom_E(d)} \sum\limits_{\T\in \psbt(\delta)} (-1)^{k(\T)} \psgn(\T) \x_{\T}. 
\]
where $k(\T)$ is the number of non-empty SBTs in layer 1 of $\T$.  
We denote an element $\T\in \psbt(\delta)$ for $\delta\in \PCom_E(d)$ by $\T =(T_{11}, \ldots, T_{1k}, T_2)$ where (possibly empty) $T_{1i}$ appear in layer 1 and $T_2$ is the sole (possibly empty) SBT in layer 2.
We define an involution $\sigma:= \sigma^{E^+}_E$ on the set $ \bigcup_{\delta \in \PCom_E(d)} \psbt(\delta)$ as follows:
\begin{enumerate}
\item If $d=0$, then the right hand sum is over $\{\varnothing\}$. We have $k(\varnothing) = 0$, $\psgn(\varnothing) = 1$ and $\x_{\varnothing} = 1$, which matches the left hand side monomial $\x_{\varnothing} = 1$ where $\varnothing$ is the empty SBT.
\item Let $\psi(\T)$ be the output obtained by applying the {strict} stack-or-slash operation (cf. Section \ref{ssec:H-E}) on layer 1 which changes the number of diagrams by 1 but preserves the number of bars. For $\T$ not fixed under $\psi$, define $\sigma(\T) = \psi(\T)$.  We have the relation \[
(-1)^{k(\sigma(\T))} \psgn(\sigma(\T))\x_{\sigma(\T)} = - (-1)^{k(\T)} \psgn(\T)\x_{\T}.\]
\item If $\psi(\T)=\T$, then all $T_{1,i}$ in layer 1 must be bars with lengths weakly increasing with $i$ and weakly decreasing labels between bars of the same length.
We first consider the case where there exists a rightmost identical pair $(T_{1,i},T_{1,i+1})$ in layer 1. 
\begin{enumerate}
\item If $T_{1,i}$ has a larger length than the top row of $T_2$, or if it has the same length but a strictly smaller label, then obtain $\sigma(\T)$ by removing $T_{1,i}$ and $T_{1,i+1}$ from layer 1 and inserting a bar identical to $T_{1,i}$ above the top row of $T_2$. This condition also covers the case when layer 1 is non-empty but layer 2 is empty as the top row of $T_2$ has size zero.
\item If $T_{1,i}$ has a smaller length than the top row of $T_2$, or has the same length but a weakly larger label, then obtain $\sigma(\T)$ by removing the top row from $T_2$ and inserting two copies $(T,U)$ of this top row in layer 1 in a unique position such that it preserves the weakly increasing length and weakly decreasing labels between bars of the same length condition. If there exists a bar identical to $T$ in layer 1, then insert $T$ to the right of such bar. To see that $(T,U)$ is the rightmost identical pair in layer 1 of $\sigma(T)$, we compare $T$ with $T_{1,i}$. According to the assumption $T_{i,1}$ either has a smaller length than $T$, or has the same length but a larger label. In both these cases, $T_{1,i}$ lies to the left of $T$ and as $T_{i,1}$ was part of the rightmost identical pair in $\T$, $(T,U)$ is the rightmost identical pair in $\sigma(\T)$.
\end{enumerate}
In both the above cases, the number of diagrams in layer 1 changes by 2, which means $k(\sigma(\T)) = k(\T) \pm 2$ but the number of bars changes by 1, so $\psgn(\sigma(\T)) = - \psgn(\T)$. This shows that $(-1)^{k(\sigma(\T))} \psgn(\sigma(\T))\x_{\sigma(\T)} = - (-1)^{k(\T)} \psgn(\T)\x_{\T}$.
\item Now, suppose $\psi(\T) = \T$ and layer 1 does not contain an identical pair. In other words, all bars in layer 1 are distinct. 
\begin{enumerate}
\item If $T_2\neq \varnothing$, then obtain $\sigma(\T)$ by removing the top row of $T_2$ and inserting two copies of this top row in layer 1 while preserving the weakly increasing length and weakly decreasing labels between bars of the same length condition.
\item If $T_2=\varnothing$, then $\sigma(\T) = \T$. We can associate with such a fixed point the SBT $T$ where the $i$th row from the top of $T$ is the bar $T_{1,k+1-i}$. Notice that the number of diagrams in layer 1, namely $k(\T)$, is equal to the number of bars $\ell(\T)$ and so $(-1)^{k(\T)+\ell(\T)} = 1$ which shows that each monomial $\x_{\T} = \x_T$ for a fixed point $\T$ appears with coefficient 1, which agrees with the coefficient on the left hand side.
\end{enumerate}
\end{enumerate}
 
\end{proof}
\begin{example}
In the following example, we have a PSBT of shape $(1,1,1,2,2,2,2,3)^1(4)^2$ which contains $(\yt{11},\yt{11})$ as its rightmost identical pair in layer 1:
\begin{align*}
    1 \quad &\yt{3} \quad \yt{3} \quad \yt{1} \quad \yt{11}\quad \yt{11}\quad \yt{11} \quad \yt{11} \quad \yt{222} \\
    2\quad  & \yt{22,1,2}
\end{align*}
Under the action of $\sigma^{E^+}_E$, we remove the rightmost pair $(\yt{11},\yt{11})$ and insert $\yt{11}$ as the top row of the tableau in layer 2 as it has the same length as $\yt{22}$ but a smaller label. This gives us the output
\begin{align*}
1 \quad &\yt{3}\quad \yt{3} \quad \yt{1} \quad \yt{11}\quad \yt{11}\quad \yt{222} \\
2\quad & \yt{11,22,1,2}
 \end{align*}
\end{example}

\subsection{$H$-expansion of $E^+$}\label{ssec:U-H}

We prove by involution the result \[E^+_d = \sum_{\delta =(a)^1 \beta^2 \in \PCom_H(d)} (-1)^{\ell(\beta)} H_{\delta}.\]

\begin{proof}[Involution proof of Proposition \ref{prop:E+-HEP} (2)]
Using the results from Lemmas \ref{lem:btinterps} and \ref{lem:pbtinterps}, we can rewrite the statement in terms of monomials as
\[
\sum\limits_{T\in \sbt[d]} \x_T = \sum\limits_{\delta \in \PCom_H(d)} \sum\limits_{\T\in \pwbt(\delta)} (-1)^{k'(\T)} \x_{\T}. 
\] 
where $k'(\T)$ is the number of diagrams in layer 2 of $\T$.
We denote the PWBT $\T = (T_1, T_{2,1}, \ldots, T_{2,k})$, where $T_1$ appears in layer 1 and $T_{2,i}$ appears in layer 2 for $1\leq i\leq k$.
We recall the bijection $(F,G):\wbt[n]\to \bigcup_{k\geq 0}\wbt[k]\times \sbt[n-2k]$ from the proof of Proposition \ref{prop:HU-rec} which takes $T$ as an input and outputs $(F(T), G(T))$ briefly: find non-overlapping identical pairs $(B,B')$ in $T$ and for each such pair, create a bar identical to $B$ in $F(T)$. The bars in $T$ which are not a part of any identical pair form $G(T)$. We define an involution $\sigma = \sigma^{E^+}_E$ on $\bigcup_{\delta \in \PCom_H(d)}\pwbt(\delta)$ as follows:
\begin{itemize}
\item If $T_1$ contains an identical pair, then obtain $\sigma(\T)$ from $\T$ by removing $T_1$ from layer 1, replacing it with the SBT $G(T_1)$ and placing $F(T_1)$ as the new leftmost WBT in layer 2.
\item If $T_1$ does not contain an identical pair and layer 2 is non-empty, then define $T$ to be the unique WBT such that $G(T) = T_1$ and $F(T) = T_{2,1}$. Then obtain $\sigma(\T)$ from $\T$ by removing $T_1$ from layer 1, removing $T_{2,1}$ from layer 2, and placing $T$ in layer 1.
\end{itemize}
In both the above cases, the number of diagrams in layer 2 changes by 1, which means we have the relation $(-1)^{k'(\T)}\x_{\T} = - (-1)^{k'(\sigma(\T))}\x_{\sigma(\T)}$. The fixed points under $\sigma$ are those $\T$ for which $T_1$ does not contain an identical pair and layer 2 is empty. For such fixed $\T$, we have the unique map $\T \to T_1$ and $T_1$ is an SBT of size $d$. As $k'(\T) = 0$, we obtain monomials corresponding to the fixed points matching the monomials on the left.
\end{proof}
\boks{0.38}
\begin{example}
For $\T$ shown below, we have two identical pairs consisting of $\yt{22}$ and $\yt{2}$ in $T_1$. 
 \begin{align*}
    1\quad & T_1 = \yt{22,22,33,2,2}\\
    2 \quad& T_{2,1}=\yt{333} \quad T_{2,2}=\yt{111,77} \quad T_{2,3}=\yt{1,1,1}.
\end{align*}
So, we find $F(T_1) = \yt{22,2}$ and $G(T) = \yt{33}$, which gives us $\sigma(\T)$ shown here:
    \begin{align*}
    1\quad & \yt{33}\\
    2 \quad& \yt{22,2}\quad\yt{333} \quad \yt{111,77} \quad \yt{1,1,1}.
\end{align*}
\end{example}

\subsection{$P$-expansion of $E^+$}\label{ssec:U-P}

For a composition $\alpha$ and set $S$ with $|\alpha|$ elements, define $K_\alpha^{S}$ to be the set of permutations $\pi$ in $K_\alpha$ where the occurrence of $i$ in any cycle of $\pi$ is replaced with the $i$th smallest element of $S$. For instance, if $\alpha = (3,2,3)$, $\pi = (4,7,5)(2,6)(1,8,3)$ and $S = \{3, 4, 6, 8, 11, 12, 14, 50\}$, then the corresponding element in $K_\alpha^S$ is $(8,14,11)(4,12)(3,50,6)$. Similarly, for the symmetric group $\S_n$ and any $n$-element subset $T$, define $\S_n^T$ to be the set of permutations where we replace the occurrence of $i$ in each cycle with the $i$th smallest element of $T$. If $T$ is a marked RBT with $d$ columns and $r$ rows and $C$ is a cycle in $\S_{dr}$, then we associate $r$ sequences of length $d$ with bars in $T$ as follows: to the top row of $T$, associate $(C(1), \ldots C(d))$, to the second row $(C(d+1), \ldots, C(2d))$, and so on, associating to the $r$th row $(C((r-1)d + 1),\ldots C(rd))$. We denote the sequence associated in this manner with $B$ by $s_B = s_B(C)$. We omit the mention of the corresponding cycle in this notation as that would be clear from context. Call the first element of the sequence associated to $B$, the \textit{cyc-index of $B$}, and define the cyc-index of a marked RBT to be the cyc-index of its topmost bar. 

We present a proof of the expansion \[E^+_d  = \sum_{\delta = \alpha^1 (\beta)^2 \in \PCom_P(d)} (-1)^{\ell(\beta)} \frac{1}{Z_\alpha Z_\beta}P_{\delta}.\]
\begin{proof}[Involution proof of Proposition \ref{prop:E+-HEP} (3)]
By using the results from lemmas \ref{lem:btinterps} and \ref{lem:pbtinterps}, we have to prove the following equality involving monomials:
\[
\sum\limits_{T\in \sbt[d]} \x_T = \sum\limits_{\substack{\alpha,\beta\in \Com\\ |\alpha|+2|\beta| = d}} (-1)^{\ell(\beta)}\dfrac{1}{Z_\alpha Z_\beta} \sum\limits_{\T\in \prbt\ast(\alpha^1\beta^2)} \x_T.
\]
By performing some multiplications and divisions, we rewrite the equation above as
\[
\sum\limits_{T\in \sbt[d]} d!\cdot \x_T = \sum\limits_{\substack{\alpha,\beta\in \Com\\ |\alpha|+2|\beta| = d}} (-1)^{\ell(\beta)}\dfrac{d!}{|\alpha|!\cdot  |\beta|! \cdot |\beta|!} \cdot  \dfrac{|\alpha|!}{Z_\alpha} \cdot \dfrac{|\beta|!}{ Z_\beta} \cdot |\beta|! \sum\limits_{\T\in \prbt\ast(\alpha^1\beta^2)} \x_T \: (*).
\]
Let $(S_\alpha$, $S_\beta$, $S_\gamma)$ be an ordered set partition of $\{1, \ldots, d\}$ which means $S_\alpha\cup S_\beta \cup S_\gamma = \{1, \ldots, d\}$ and $S_\alpha, S_\beta, S_\gamma$ are pairwise disjoint. For $a, b, d\geq 0$ with $a + 2b = d$, denote by $\mathcal{S}(a,b)$ the set of ordered set partitions $(S_\alpha$, $S_\beta$, $S_\gamma)$ of $\{1, \ldots, d\}$ satisfying $|S_\alpha| = a$, $|S_\beta| = |S_\gamma| = b$. The choice of such sets is given by the multinomial coefficient $\frac{d!}{|\alpha|!\cdot  |\beta|! \cdot |\beta|!}$ appearing in (*). The monomials on the right hand side of (*) arise from the set,
\[
\] 
For a fixed $\alpha,\beta$, let $a = |\alpha|$ and $b = |\beta|$, then write any $(\pi,\rho,\sigma, (\T,\U)) \in K_\alpha^{S_\alpha} \times K_\beta^{S_\beta} \times \S_{|\beta|}^{S_\gamma} \times \prbt\ast(\alpha^1\beta^2)$ explicitly as:
\[
\Bigg( C_1\ldots C_a,D_1\ldots D_b,\sigma(1)\ldots \sigma(b), (T_1, \ldots, T_a, U_1, \ldots, U_b)\Bigg) \qquad (**)
\]
where $T_i$ for $1\leq i\leq k$ are marked RBTs in layer 1 while $U_i$ for $1\leq i\leq l$ are marked RBTs in layer 2. Here $\pi = C_1\ldots C_a$ and $\rho = D_1\ldots D_b$ are in canonical notation. We write the permutation $\sigma$ in one-line notation where $\sigma(i)$ is the denotes the letter $i$ maps to under $\sigma$. In the following example, we choose $a = 14$ and $b = 7$ with $\alpha = (6,2,6)$ and $\beta = (4,3)$. We choose $S_\alpha = \{4,5,6,8,11,15,17,18,19,21,22,24,25,28\}$, $S_\beta = \{1,7,10,14,16,20,27\}$ and $S_\gamma = \{2,3,9,12,13,23,26\}$.
 We visualize the object (**) as follows, keeping in mind that $\T$ is in layer 1 and $\U$ is in layer 2:
\begin{footnotesize}
\[\left.
\begin{array}{cccc}
\let\quad\thinspace
\T= \yt{3{3^*}3,333} & \yt{{4\ast},4} &\yt{{2\ast}2,22,22}\\[0.5cm]
\pi = (8,18,28|25,22,11) &(6,21) &(4,15|17,24|5,19)
\end{array}\right| 
\left.\begin{array}{cc}
\let\quad\thinspace
\U= \yt{{2\ast},2,2} & \yt{3{3\ast},33} \\[0.5cm]
\rho = (7|16|10) &(1,27|20,14)
\end{array}\right|
\sigma = 9,2,13,23,26,12,3
\]
\end{footnotesize}

We define a sign-reversing involution $\Psi$ on the set $\mathcal{S}[d]$. Denote the output $\Psi(\pi,\rho,\sigma, (\T,\U))$ by $ (\pi',\rho',\sigma',(\T',\U'))$.
\begin{enumerate}
\item If layer 2 contains at least two identical bars, then define $\pi' = \pi$, $\sigma' = \sigma$, and $\T' = \T$. On the other hand, let $(\rho',\U') = \psi(\rho, \U)$ where $\psi$ is the involution defined in the proof of Proposition \ref{prop:E-P}. The output of the above example is:
\begin{footnotesize}
\[\left.
\begin{array}{cccc}
\let\quad\thinspace
\T'= \yt{3{3^*}3,333} & \yt{{4\ast},4} &\yt{{2\ast}2,22,22}\\[0.5cm]
\pi' = (8,18,28|25,22,11) &(6,21) &(4,15|17,24|5,19)
\end{array}\right| 
\left.\begin{array}{ccc}
\let\quad\thinspace
\U'= \yt{3{3\ast}} & \yt{{2\ast},2,2} & \yt{3{3\ast}} \\[0.5cm]
\rho' =(14,20) & (7|16|10) &(1,27)
\end{array}\right|
\sigma' = 9,2,13,23,26,12,3
\]
\end{footnotesize}
\item Now, suppose that $\U$ does not contain any identical bars. These are the fixed points under $\psi$ and consist of $(\pi,\rho, \sigma,(\T,\U))$ where each $U_i\in \U$ is a bar and all $U_i$ are distinct. 
\begin{enumerate}
\item Suppose there does not exist a bar in layer 1 with a matching bar. This means $\T$ consists of marked RBTs which are all single bars and are distinct.
\begin{enumerate}
\item Suppose $\U = \varnothing$. The restriction of the bijection $\phi^{-1}$ (as defined in the proof of (1) in Theorem \ref{thm:H-P,E-P}) in the case with unique bars gives us exactly the set $\CS_d\times \sbt[d]$. This gives us the desired monomials on the left hand side of (*).
\item If $\U$ is not empty, then let $A:= U_1$ be the leftmost bar in $\U$.  Define $\U'$ by removing $A$ from $\U$ and let $\rho'$ be obtained from $\rho$ by removing $D_1$. Define $s = (\sigma(1), \ldots, \sigma(|A|))$ and let $\sigma'$ be obtained by removing the entries of $s$ from $\sigma$. 
\begin{enumerate}
\item Suppose the cyc-index of $A$ is smaller than all entries in $s$. Define $A'$ to be the RBT consisting of two copies of $A$ with the marked cell in the same column as $A$. Let $D'$ be obtained by appending $s$ to $D_1$ on the right. Let $\pi'$ be the permutation obtained by rearranging $C_1\ldots C_a D'$ in canonical notation. Let $\T'$ be obtained from $\T$ after rearranging the RBTs of $\T$ according to $\pi'$ and inserting $A'$ in the position corresponding to the cycle $D'$. In the following example, $s = (11,9)$ all of whose entries are larger than 6, the cyc-index of $A = \yt{{2\ast}2}$. We obtain $D' = (6,12|11,9)$.

\begin{minipage}{0.8\textwidth}
\begin{footnotesize}
\[\left.
\begin{array}{cccc}
\let\quad\thinspace
\T= \yt{{1\ast}1} & \yt{3{3\ast}33} &\yt{{3\ast}}\\[0.5cm]
\pi = (7,13)&(4,10,5,8)&(1)
\end{array}\right| 
\left.\begin{array}{cc}
\let\quad\thinspace
\U= \yt{{2\ast}2} & \yt{{1\ast}}\\[0.5cm]
\rho = (6,12)&(2)
\end{array}\right|
\sigma = (11,9,3)
\]
\end{footnotesize}
\end{minipage}

The output obtained is

\begin{minipage}{0.8\textwidth}
\begin{footnotesize}
\[\left.
\begin{array}{cccc}
\let\quad\thinspace
\T'= \yt{{2\ast}2} & \yt{{2\ast}2,22} & \yt{3{3\ast}33} &\yt{{3\ast}}\\[0.5cm]
\pi' = (7,13)& (6,12|11,9)&(4,10,5,8)&(1)
\end{array}\right| 
\left.\begin{array}{cc}
\let\quad\thinspace
\U'= \ \yt{{1\ast}}\\[0.5cm]
\rho' = (2)
\end{array}\right|
\sigma' = (3)
\]
\end{footnotesize}
\end{minipage}

\item Suppose the cyc-index of $A$ is larger some entry in $s$. Let $A'$ be a bar identical to $A$ with marking in the same column and let $A''$ be a bar identical to $A$ with a marked cell in the same position as the minimum element of $s$. Let $\hat{s}$ be the cyclic shift of $s$ such that the minimum element is the first element. Let $\pi'$ be obtained by expressing $C_1\ldots C_a D_1  \hat{s}$ in canonical notation. Obtain $\T'$ by rearranging RBTs of $\T$ according to $\pi'$ and inserting $A'$ in the position corresponding to $D_1$ and $A''$ in the position corresponding to $\hat{s}$. In the following example $s = (6,3,11)$ and the cyc-index of $A = \yt{{2\ast}22}$ is 4. 

\begin{minipage}{0.8\textwidth}
\begin{footnotesize}
\[\left.
\begin{array}{cccc}
\let\quad\thinspace
\T= \yt{{1\ast}1} & \yt{3{3\ast}} \\[0.5cm]
\pi = (5,8)&(1,10)
\end{array}\right| 
\left.\begin{array}{cc}
\let\quad\thinspace
\U= \yt{{2\ast}22} & \yt{{1\ast}}\\[0.5cm]
\rho = (4,12,9)&(2)
\end{array}\right|
\sigma = (6,11,3,7)
\]
\end{footnotesize}
\end{minipage}

We obtain the output:

\begin{minipage}{0.8\textwidth}
\begin{footnotesize}
\[\left.
\begin{array}{cccc}
\let\quad\thinspace
\T'= \yt{{1\ast}1} & \yt{{2\ast}22} & \yt{22{2\ast}} & \yt{3{3\ast}} \\[0.5cm]
\pi' = (5,8)&  (4,12,9)& (3,6,11) &(1,10)
\end{array}\right| 
\left.\begin{array}{cc}
\let\quad\thinspace
\U'=  \yt{{1\ast}}\\[0.5cm]
\rho' =(2)
\end{array}\right|
\sigma' = (7)
\]
\end{footnotesize}
\end{minipage}

\end{enumerate}

\end{enumerate}
\item Suppose there exists a bar in layer 1 of $\T$ with a bar identical to it in layer 1 but not in layer 2. Let $B$ the first such bar in scanning order. Suppose $B$ is contained in $T_i$ and the first identical bar $B'$ after $B$ in the scanning order is contained in $T_j$ which may or may not be equal to $T_i$. If $T_i\neq T_j$, then it must be that $T_i = B$. In the following example, $a = 11$ and $b = 3$ with $\alpha = (2,2,3,4)$ and $\beta = (1,2)$. We find that $T_2 = B= \yt{3{3\ast}}$ is the first bar in the scanning order containing an identical bar in layer 1 but not layer 2. This makes $B'$ the top bar in $T_4$. Notice that $\yt{{2\ast}}$ contains an identical bar in layer 1 {and} in layer 2, and is thus not chosen as $B$.

\begin{minipage}{0.8\textwidth}
\begin{footnotesize}
\[\left.
\begin{array}{cccc}
\let\quad\thinspace
\T= \yt{{2\ast},2} & \yt{3{3\ast}} &\yt{4{4\ast}4} &\yt{{3\ast}3,33}\\[0.5cm]
\pi = (9|14)&(8,10)&(5,15,6)&(4,17|16,11)
\end{array}\right| 
\left.\begin{array}{cc}
\let\quad\thinspace
\U= \yt{{2\ast}} & \yt{{1\ast}1}\\[0.5cm]
\rho = (7)&(2,12)
\end{array}\right|
\sigma = (13,1,3)
\]
\end{footnotesize}
\end{minipage}
\begin{enumerate}
\item Suppose $U=\varnothing$ or if $U\neq \varnothing$ then the cyc-index of $B$ is larger than the cyc-index of $U_1$ which is $D_1(1)$. Obtain $\rho'$ by adding $s_B$ as the leftmost cycle to $\rho$ and $\U'$ by inserting a bar identical to $B$ with the marked cell in the position as the leftmost bar in layer 2. 
\begin{enumerate}
\item Suppose $T_i = T_j$. Obtain the permutation $C_1'\ldots C_k'$ from $C_i$ as follows: remove the entries of $s_B$ and $s_{B'}$ from $C_i$ to obtain $\hat{C}_1$ and divide $\hat{C}_1$ into $|T_i|/|B| - 2$ equal groups of size $|B|$ from left to right. If the minimum entry in $\hat{C}_1$ is in position greater than  $m|B|$ and smaller than $(m+1)|B|$ for $m\geq 0$, then let $\hat{C}_2$ be the first $m|B|$ entries of $\hat{C}_1$ and let $C_1'$ be the rest of the entries. Repeat this process on $\hat{C}_2$ obtain $C_2'$ and so on. 
Let $T_l'$ for $1\leq l\leq k$ be an RBT with $\ell(C_l')/|B|$ bars identical to $B$ where the top bar has a marked cell in the column corresponding to the minimum element of $C_l'$. Obtain $\pi'$ from $\pi$ by expressing $C_1\ldots C_{i-1} C_1'\ldots C_k'\ldots C_{i+1} \ldots C_a$ in canonical notation. Obtain $\T'$ from $\T$ by removing $T_i$, and reordering the RBTs $T_1, \ldots, T_{i-1}, T_1',\ldots, T_k', T_{i+1}, \ldots, T_a$ in accordance with $\pi'$.
Define $\hat{s}$ as the cyclic shift of $s_{B'}$ which results in the minimum entry of $\hat{s}$ being in the same position as the marked cell of $B'$. Define $\sigma' =  \hat{s} \sigma$ in one-line notation. In the following example, $B = \yt{3{3\ast}3}$ is the top row of $T_2$ and has a cyc-index 3 which is larger than the cyc-index, 2, of $U_1$. Here $s_B = (3,14,7)$ and $s_{B'} = (18,24,9)$. We find $C_1' = (23,4,16)$, $C_2' = (6,10,5,19,11,22)$ and $C_3' = (15,12,20)$.

\begin{minipage}{0.8\textwidth}
\begin{footnotesize}
\[\left.
\begin{array}{cccc}
\let\quad\thinspace
\T= \yt{{2\ast},2} & \yt{3{3\ast}3,333,333,333,333,333}\\[0.7cm]
\pi = (8|17)&(3,14,7|18,24,9|15,12,20|6,10,5|19,11,22|23,4,16)
\end{array}\right| 
\left.\begin{array}{cc}
\let\quad\thinspace
\U= \yt{{2\ast},2}\\[0.5cm]
\rho = (2|21)
\end{array}\right|
\sigma = (13,1)
\]
\end{footnotesize}
\end{minipage}

The output obtained is

\begin{minipage}{0.8\textwidth}
\begin{footnotesize}
\[\left.
\begin{array}{cccc}
\let\quad\thinspace
\T'=  \yt{3{3\ast}3} &\yt{{2\ast},2}  & \yt{33{3\ast},333} & \yt{3{3\ast}3}\\[0.7cm]
\pi' = (12,20,15)& (8|17)& (5,19,11|22,6,10) &(4,16,23)
\end{array}\right| \]\[
\left.\begin{array}{cc}
\let\quad\thinspace
\U'= \yt{3{3\ast}3}&\yt{{2\ast},2}\\[0.5cm]
\rho' = (3,14,7)&(2|21)
\end{array}\right|
\sigma' = (18,24,9,13,1)
\]
\end{footnotesize}
\end{minipage}

\item Suppose $T_i\neq T_j$. Obtain $C_1', \ldots, C_k'$ from $C_j$ by removing the entries of $s_{B'}$ and following the procedure in (2)(b)(i)(A). Similarly, obtain $T_1', \ldots, T_k'$ with bars identical to $B$ as stated above. Obtain $\pi'$ from $\pi$ by removing $C_i$ and expressing  $C_1\ldots C_{i-1} C_{i+1}\ldots C_{j-1}\ldots$ $C_1'\ldots C_k'\ldots C_{j+1} \ldots C_a$ in canonical notation. Obtain $\T'$ from $\T$ by removing $T_i$ and $T_j$, and reordering the RBTs $T_1, \ldots, T_{i-1}, T_{i+1}, \ldots$ $T_{j-1}, T_1',\ldots, T_k', T_{j+1}, \ldots, T_a$ in accordance with $\pi'$.
Define $\sigma' = s_{B'}\sigma$.
For the example

\begin{minipage}{0.8\textwidth}
\begin{footnotesize}
\[\left.
\begin{array}{cccc}
\let\quad\thinspace
\T= \yt{{2\ast},2} & \yt{3{3\ast}} &\yt{4{4\ast}4} &\yt{{3\ast}3,33}\\[0.5cm]
\pi = (9,14)&(8,10)&(5,15,6)&(4,17|16,11)
\end{array}\right| 
\left.\begin{array}{cc}
\let\quad\thinspace
\U= \yt{{2\ast}} & \yt{{1\ast}1}\\[0.5cm]
\rho = (7)&(2,12)
\end{array}\right|
\sigma = (13,1,3)
\]
\end{footnotesize}
\end{minipage}

the cyc-index of $T_2 = B$ is 8 and is larger than the cyc-index of $U_1 = \yt{{2\ast}}$ which is 7. We have $s_B = (8,10)$ and $s_{B'} = (4,17)$. The action of $\Psi$ gives

\begin{minipage}{0.8\textwidth}
\begin{footnotesize}
\[\left.
\begin{array}{cccc}
\let\quad\thinspace
\T'= \yt{3{3\ast}} & \yt{{2\ast},2} &\yt{4{4\ast}4}\\[0.5cm]
\pi' = (11,16)&(9,14)&(5,15,6)
\end{array}\right| 
\left.\begin{array}{ccc}
\let\quad\thinspace
\U'= \yt{3{3\ast}} & \yt{{2\ast}} & \yt{{1\ast}1}\\[0.5cm]
\rho' = (8,10) &(7)&(2,12)
\end{array}\right|
\sigma' = (4,17,13,1,3)
\]
\end{footnotesize}
\end{minipage}
 \end{enumerate}
 \item Suppose the cyc-index of $B$ is smaller than the cyc-index of $A:= U_1$ which is $D_1(1)$. We apply the same procedure as in (2)(a)(ii). Note that in this case, $A'$ in layer 1 is the first bar in scanning order of $(\T', \U')$ with a matching bar as it has the largest cyc-index in layer 1.
 
\end{enumerate}
\item Suppose that each bar in layer 1 has a matching bar in layer 2, and possibly some matching bars in layer 1. Obtain $\U'$ by removing $A:= U_1$ from $\U$, and $\rho'$ be obtained by removing $D_1$ from $\rho$. Suppose, the cyc-index of $A$ is such that $T_{r_1}$, $T_{r_2}$, $\ldots$, $T_{r_k}$ (read in scanning order from left to right) are marked RBTs containing bars identical to $A$ with cyc-indices larger than the cyc-index of $A$ for $k\geq 0$. If $k =0$, then no such marked RBT exists.

\begin{enumerate}
\item Suppose for all $1\leq i \leq |A|$, $\sigma(i)$ is greater than the cyc-index of $A$. Define $\hat{T}$ to be the marked RBT with $\ell(T_{r_1}) + \ell(T_{r_2}) + \ell(T_{r_k}) +2$ bars identical to $A$ and the marked cell in the same column as $A$. The term $+2$ arises from two copies of bars identical to $A$ in the first and second row. For $1\leq i\leq k$, let $\hat{C}_i$ be the cyclic shift of $C_{r_i}$ where the minimal element is in the position equal to the index of the column of $T_{r_i}$ containing the marked cell.
Define $\hat{C} = (s_{A}, \sigma(1),\ldots \sigma(|A|), \hat{C}_1, \ldots, \hat{C}_k)$. Obtain $\hat{\pi}$ from $\pi$ by removing $C_{r_1}, \ldots, C_{r_k}$ from $\pi$. Let $\pi'$ be obtained by expressing $\hat{\pi} \cdot \hat{C}$ in canonical notation. Obtain $\T'$ from $\T$ by removing $T_{r_1}, \ldots, T_{r_k}$ and inserting $\hat{T}$ in the position corresponding to $\hat{C}$ in $\pi'$. Obtain $\sigma'$ by removing the entries corresponding to $s_A$. 

\begin{minipage}{0.8\textwidth}
\begin{footnotesize}
\[\left.
\begin{array}{cccc}
\let\quad\thinspace
\T= \yt{2{2\ast},22,22} & \yt{{2\ast}2} &\yt{{4\ast},4} &\yt{2{2\ast}}\\[0.5cm]
\pi = (9,18|10,12|16,13) & (8,14) & (5|17) & (1,7) 
\end{array}\right| 
\left.\begin{array}{cc}
\let\quad\thinspace
\U=  \yt{{2\ast}2} & \yt{{4\ast}}\\[0.5cm]
\rho = (4,11)&(2)
\end{array}\right|
\sigma = (6,15,3)
\]
\end{footnotesize}
\end{minipage}

We observe that $A = \yt{{2\ast}2}$. Both $\sigma(1) = 6$ and $\sigma(2) = 15$ are greater than 4. The RBTs $T_1$, $T_2$ and $T_4$ have bars identical to $A$ but only $T_1$ and $T_2$ have a cyc-index larger than $A$ while $T_4$ does not. We construct $\hat{T}$ with $\ell(T_1) + \ell(T_2)  + 2 = 6$ bars identical to $A$ with the marked cell in the first column. We cyclically shift $C_1$ once to obtain $\hat{C}_1$ so that 9 is in the second position to match with the marked cell. We do not shift $(8,14)$ as the marked cell is in the first column. 

\begin{minipage}{0.8\textwidth}
\begin{footnotesize}
\[\left.
\begin{array}{ccc}
\let\quad\thinspace
\T' =  \yt{{4\ast},4} & \yt{{2\ast}2,22,22,22,22,22}&\yt{2{2\ast}}\\[0.5cm]
\pi' = (5|17) & (4,11|\, 6,15|\, 13,9|\, 18,10|\, 12,16|\, 8,14) &(1,7)
\end{array}\middle| 
\begin{array}{cc}
\let\quad\thinspace
\U'=  \yt{{4\ast}}\\[0.5cm]
\rho' = (2)
\end{array}\right|
\sigma' = (3)
\]
\end{footnotesize}
\end{minipage}

Note that applying $\Psi$ to the above output results in case (2)(b), and we split the cycle $\hat{C}$ as $(4,11), (6,15), (9,18|10,12|16,13), (8,14)$ where the first split corresponds to $A$, the second corresponds to the entries we append to $\sigma'$, and the third and fourth correspond to the RBTs $T_1$ and $T_2$.

\item Suppose $\sigma(\hat{a})$ for some $1\leq \hat a \leq |A|$ is the least value such that $\sigma(\hat{a})$ is smaller than the cyc-index of $A$. Then define 
$\hat{s}$ to be the cyclic shift of $(\sigma(1),\ldots \sigma(|A|))$ such that the first element is $\sigma(\hat{a})$. For $1\leq i\leq k$, let $\hat{C}_i$ be the cyclic shift of $C_{r_i}$ where the minimal element is in the position equal to the index of the column of $T_{r_i}$ containing the marked cell. Define $\hat{C} = (\hat{s},\hat{C}_1, \ldots, \hat{C}_k)$ and $\hat{\pi}$ by removing $C_{r_1}\ldots C_{r_k}$ from $\pi$. Define $\pi'$ to be the permutation $\hat{\pi}\cdot s_A\cdot \hat{C}$ in canonical notation. Define $\hat{T}$ to be the marked RBT with $\ell(T_{r_1}) + \ell(T_{r_2}) +\ldots+ \ell(T_{r_k}) +1$ bars identical to $A$ with the cell in column $\hat{a}$ marked. Obtain $\T'$ from $\T$ by removing $T_{r_1},\ldots T_{r_k}$, inserting $A$ in the position corresponding to $(s_A)$ and $\hat{T}$ in the position corresponding to $\hat{C}$ with the other RBTs of $\T$ inserted according to $\hat{\pi}$. Obtain $\sigma'$ by removing the first $|A|$ entries from $\sigma$. In the following example, we choose $a = 13$ and $b = 5$ with $\alpha = (9,4)$ and $\beta = (3,2)$.

\begin{minipage}{0.8\textwidth}
\begin{footnotesize}
\[\left.
\begin{array}{cccc}
\let\quad\thinspace
\T= \yt{33{3\ast},333,333} & \yt{2{2\ast},22} \\[0.5cm]
\pi =(8,18,9|22,20,10|17,12,15) & (3,13|4,16) 
\end{array}\right| 
\left.\begin{array}{cc}
\let\quad\thinspace
\U=  \yt{{3\ast}33} & \yt{1{1\ast}}\\[0.5cm]
\rho = (5,19,23) & (2,7)
\end{array}\right|
\sigma = (14,21,1,11,6)
\]
\end{footnotesize}
\end{minipage}

Applying $\Psi$ on the above object yields the output:

\begin{minipage}{0.8\textwidth}
\begin{footnotesize}
\[\left.
\begin{array}{ccc}
\let\quad\thinspace
\T'= \yt{2{2\ast},22} & \yt{{3\ast}33} &\yt{33{3\ast},333,333,333}  \\[0.5cm]
\pi' = (3,13|4,16)  & (5,19,23) & (1,14,21|12,15,8|18,9,22|20,10,17)
\end{array}\right| 
\left.\begin{array}{c}
\let\quad\thinspace
\U'=  \yt{1{1\ast}}\\[0.5cm]
\rho' = (2,7)
\end{array}\right|
\sigma' = (11,6)
\]
\end{footnotesize}
\end{minipage}

\end{enumerate}
\end{enumerate}


\end{enumerate}
We remark that case (2)(b)(i) leads to (2)(c)(i), and vice versa. Similarly, (2)(b)(ii) leads to (2)(c)(ii).\qedhere
\end{proof}

\section{Expansions of $E$, $H$ and $P$ in $E^+$}\label{sec:inE+}
The combinatorial proofs of the $E^+$-expansions of $E_d$, $H_d$, and $P_d$ require us to deal with multiple layers indexed by powers of 2.
We call a polycomposition $\delta$ of $n$ \textit{dyadic} if all multiplicities are powers of 2, that is, $\delta|^{(i)} = \varnothing$ for all $u\neq 2^k$. For instance, $\delta = (2,1)^1(3,2)^2(1)^8(1,2,1)^{16}$ is a dyadic polycomposition of $85$. We denote the set of dyadic polycompositions of $n$ by $\PCom_\dya(n)$. The indexing polycompositions that appear in the previous sections are all dyadic polycompositions. In particular, square-free polycompositions that appear in Section \ref{sec:HEP} are dyadic with the only multiplicity being $2^0 = 1$. The polycompositions in Section \ref{sec:E+in} are dyadic with multiplicities restricted to 1 and 2. 
\subsection{$E^+$-Expansion of $H$}
The $E^+$-expansion of $H_d$ is indexed by a restricted set of dyadic polycompositions of $d$ where each $\delta|^i$ for $i\geq 1$ has at most one part. We denote this set by $\PCom_\dya'(d)$. For example, $(7)^2(3)^4(1)^{16}\in \PCom_{\dya}'(42)$. Note that each polycomposition in this set is also a type as a composition consisting of a single part is also a partition. In this case, owing to the appearance of powers of 2 as multiplicities, we keep track of them using layers indexed by powers of 2. We omit empty layers in our visualizations. Recall that a WBT $T$ in $\T$ appearing in layer $r$ contributes the factor $\x_T^r$ to $\x_{\T}$.
\begin{prop}\label{prop:HinE+}
For $d\geq 0$, ${H_d = \sum_{\delta\in \PCom_\dya'(d)} E^+_\delta}$.
\end{prop}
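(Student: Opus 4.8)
The strategy mirrors the $E$-expansion argument for $E^+$ (Proposition~\ref{prop:E+-HEP}(2)) and the recursive structure uncovered in Proposition~\ref{prop:HU-rec}: $H_d = \sum_{k\ge 0} H_{k^2} E^+_{d-2k}$. Iterating this recursion peels off one copy of an $E^+$-block at multiplicity $1$ and leaves $H$ at multiplicity $2$; re-applying the recursion to $H_{k^2}$ (which, after the Addams operation, becomes $H_k$ at doubled multiplicity) peels off an $E^+$-block at multiplicity $2$ and leaves $H$ at multiplicity $4$; and so on. Since $d$ is finite the iteration terminates, and the blocks accumulated live in layers $1, 2, 4, 8, \dots$ with at most one block per layer --- this is exactly the indexing set $\PCom_\dya'(d)$. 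So the cleanest route is: first establish the monomial identity via Lemmas~\ref{lem:btinterps} and \ref{lem:pbtinterps}, namely
\[
\sum_{T\in \wbt[d]} \x_T = \sum_{\delta\in \PCom_\dya'(d)} \sum_{\T\in \psbt(\delta)} \x_{\T},
\]
and then exhibit a weight-preserving bijection $\Phi$ from $\wbt[d]$ to $\bigcup_{\delta\in \PCom_\dya'(d)} \psbt(\delta)$ (note there are no signs, since every polycomposition in $\PCom_\dya'(d)$ is a type and $\ell(\delta|^i)\le 1$ forces the signs in $E^+_\delta$ all to be $+1$).

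\textbf{Construction of the bijection.} Given $T\in\wbt[d]$, I would process it bar by bar and build the layers greedily. Scan the rows of $T$ from top to bottom and group consecutive identical bars into maximal runs. A run of length $m$ decomposes uniquely in binary as $m = \sum_i \varepsilon_i 2^i$ with $\varepsilon_i\in\{0,1\}$; place $\varepsilon_i$ copies of that bar (well, a single bar, since $\ell(\delta|^{2^i})\le 1$ per factor, but runs of distinct bars land in the same layer as separate tableaux) into layer $2^i$. More precisely: for each bar-value (length--label pair) appearing with total multiplicity $m$ among the rows of $T$, write $m$ in binary and distribute one bar of that value into layer $2^i$ for each set bit $\varepsilon_i=1$. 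Within each layer, order the resulting bars so that sizes weakly decrease (or weakly increase --- whichever the definition of $\psbt$ demands) and labels behave correctly; because all bars in a layer that share a size are \emph{distinct} in label (a crucial point: inside a single layer each bar-value appears at most once, since distinct powers of $2$ absorb the repetitions), each layer is a legitimate SBT and $\PCom_\dya'(d)$ is the correct shape set. The weight is preserved because a bar of value $x_{ij}$ sitting in layer $2^k$ contributes $x_{ij}^{2^k}$, and summing $2^k$ over the set bits of $m$ recovers $x_{ij}^m$. The inverse map takes a polySBT, reads off for each bar-value the set of layers it occupies, reconstitutes the multiplicity $m=\sum 2^k$, and lays down $m$ identical bars in the (unique) correct position of a single WBT.

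\textbf{Main obstacle.} The delicate point is verifying that the image lands in $\PCom_\dya'(d)$ and not in a larger dyadic set --- i.e.\ that each layer receives \emph{at most one} bar of any given length and label, and hence each $\delta|^{2^k}$ is genuinely a partition with each part-size appearing with the structural constraints matching $\PCom_\dya'(d)$'s definition. This follows from binary-representation uniqueness: a fixed bar-value contributes to layer $2^k$ iff the $k$-th binary digit of its total multiplicity in $T$ is $1$, so it contributes either zero or one bar there. I also need to double-check that the ordering conventions inside each layer-SBT are forced (so the bijection is well-defined, not just a correspondence of multisets) --- this is where I would lean on the ``read right to left / top to bottom'' normalization already used in the proof of Proposition~\ref{prop:H-E,E-H}. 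A secondary check is that the shape $\delta$ is itself determined by $T$ (two different WBTs cannot map to polySBTs of the same shape with the same monomial unless they are equal), which again reduces to uniqueness of binary expansions. Once these bookkeeping points are settled the identity of generating functions is immediate, and specializing (and applying the Addams operation block-by-block, exactly as in the passage following Lemma~\ref{lem:pbtinterps}) upgrades the $F_d$ statement to the full $F_\tau$ statement if desired.
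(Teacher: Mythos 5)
Your proposal is correct and is essentially the paper's proof: both establish the monomial identity via Lemmas \ref{lem:btinterps} and \ref{lem:pbtinterps} and then use the same bijection, sending a bar value with total multiplicity $m$ in $T$ to one copy in layer $2^k$ for each set bit of the binary expansion of $m$, with the inverse reconstituting $2^k$ copies from each layer. One wording slip worth fixing: distinct bar values assigned to layer $2^k$ must all be collected into a \emph{single} SBT $T_k$ (which is legitimate since each value occurs at most once there), not placed as separate tableaux — otherwise the shape would violate the condition $\ell(\delta|^{2^k})\leq 1$ defining $\PCom_\dya'(d)$.
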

\begin{proof}
By the results of Lemma \ref{lem:btinterps} and \ref{lem:pbtinterps}, it is sufficient to prove the following identity on monomials:
\[
\sum\limits_{T\in \wbt[d]}\x_T = \sum\limits_{\delta\in \PCom_\dya'(d)}\sum\limits_{\T\in \psbt(\delta)} \x_{\T}.
\]
Define $\sigma:=\sigma^H_{E^+}: \wbt[d]\to \bigcup_{\delta\in \PCom_\dya'(d)} \psbt(\delta)$ as follows: write the output $\sigma(T)$ as $(T_0, T_1, \ldots)$ where $T_i$ is an SBT appearing in layer $2^i$. Let $n_{i,j}(T)$ be the number of copies of bars with length $i$ and label $j$ in $T$, and place a bar with length $i$ and label $j$ in $T_k$ if and only if $2^k$ appears in the binary expansion of $n_{i,j}(T)$. This map is a bijection as binary expansions are unique. To define the inverse bijection $\sigma^{-1}$, we include $2^k$ copies of each bar appearing in layer $2^k$ of $\T$ to form the WBT $\sigma^{-1}(\T)$.
\end{proof}
\begin{example}
The following example illustrates the action of $\sigma$ for $d = 25$.
\begin{center}
\begin{minipage}{0.3\textwidth}
\begin{multicols}{2}
\yt{222,222,222,333,33,33,33,33,33,33,1} $\xrightarrow{\quad\sigma\quad}$
\columnbreak

\begin{align*}
    1\quad & \yt{222,333,1}\\
    2 \quad& \yt{222,33}\\
    4 \quad& \yt{33}
\end{align*}
\end{multicols}
\end{minipage}
\end{center}
\end{example}

\subsection{$E^+$-Expansion of $E$}
We prove the following expansion.
\begin{prop}\label{prop:EinE+}
For $d\geq 0$, $\displaystyle{E_d = \sum_{\delta\in \PCom_\dya(d)} (-1)^{\ell(\delta)}E_\delta^+}$.
\end{prop}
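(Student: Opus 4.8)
The plan is to mimic the strategy used for the $E^+$-expansion of $H$ (Proposition \ref{prop:HinE+}), but since we must produce signs, a clean bijection will not suffice and we need a sign-reversing involution layered on top of a ``binary digit'' decomposition. Using Lemmas \ref{lem:btinterps} and \ref{lem:pbtinterps}, the statement to prove reduces to the monomial identity
\[
\sum\limits_{T\in \sbt[d]} \sgn(T)\x_T = \sum\limits_{\delta\in \PCom_\dya(d)}\sum\limits_{\T\in \psbt(\delta)} (-1)^{\ell(\delta)}\x_{\T}.
\]
First I would try to combine the $E$-in-$H$-via-$E^+$ philosophy: we already know $E_d=\sum_{\delta\in\PCom_\sqf(d)}(-1)^{\ell(\delta)}H_\delta$ (Proposition \ref{prop:H-E,E-H}(2)) and $H_n=\sum_{\gamma\in\PCom_\dya'(n)}E^+_\gamma$ (Proposition \ref{prop:HinE+}). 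Substituting the second into the first gives $E_d$ as a sum over pairs $(\delta,\gamma)$ with $\delta\in\PCom_\sqf(d)$ and a choice of $\gamma\in\PCom_\dya'$ for each block of $\delta$; collecting these, each block of $\delta$ (all multiplicity $1$) expands into single-part blocks with power-of-$2$ multiplicities, so the resulting index set is a set of dyadic polycompositions. The main bookkeeping step is to check that the total sign $(-1)^{\ell(\delta)}$ matches the claimed $(-1)^{\ell(\bar\delta)}$ on the collected dyadic polycomposition $\bar\delta$, and that every dyadic polycomposition arises with the correct (signed) multiplicity — i.e.\ that after collecting, the net coefficient of $E^+_{\bar\delta}$ is exactly $(-1)^{\ell(\bar\delta)}$ and not some other integer. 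This is the delicate point: a given $\bar\delta\in\PCom_\dya(d)$ can be obtained from several $\delta\in\PCom_\sqf(d)$ (re-grouping single-part blocks into compositions with larger parts), so I expect to need a cancellation argument showing all but one such contribution cancels, or else a direct combinatorial model.

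For a self-contained combinatorial proof I would instead build the involution directly on $\bigcup_{\delta\in\PCom_\dya(d)}\psbt(\delta)$. The idea: given $\T$, read its bars and, for each (length, label) pair, look at the multiset of multiplicities of the layers in which identical copies of that bar appear. Because layers are powers of $2$, a WBT/SBT with $n_{i,j}$ copies of the bar $x_{ij}$ corresponds uniquely to a binary expansion; the ``non-carrying'' configurations — where at most one bar of each (length,label) appears in each layer and no two layers contributing the same bar could be merged by binary addition — will be the fixed points, and these are in weight-preserving, sign-matching bijection with $\sbt[d]$ via the same right-to-left reading trick used in Propositions \ref{prop:H-E,E-H} and \ref{prop:P-H,P-E}. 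The non-fixed configurations admit a move: find the first (in scanning order) place where either two identical bars sit in the same layer — then merge them into one bar in the layer of double the multiplicity (a ``carry''), decreasing the number of bars by one — or, inside a single SBT, two identical bars in the same layer — same carry move; inversely, split a bar in layer $2^k$ into two identical bars in layer $2^{k-1}$ when $k\ge 1$ and the splitting is forced. Each such move changes $\ell(\T)$, hence the number of blocks, by exactly one while fixing $\x_\T$, so it is sign-reversing; verifying it is an involution amounts to checking the ``first instance'' of a mergeable/splittable bar is preserved, exactly as in the stack-or-slash analysis.

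The step I expect to be the main obstacle is pinning down the precise rule that makes the merge/split move a genuine involution while also guaranteeing the fixed points are exactly (signed copies of) $\sbt[d]$ with matching sign. The subtlety is that ``carrying'' in binary is not a local operation on a single pair of bars — carries can cascade — so the naive ``merge the first identical pair'' rule may fail to be invertible unless one is careful about which pair is chosen and how the layers are reindexed. I would resolve this by defining the move to always act on the \emph{smallest} multiplicity (lowest layer) at which a carry is possible, for the bar that is \emph{first in scanning order}, mirroring the ``first instance'' device of Section \ref{ssec:H-E}; then a fixed point has, for every (length,label) pair, at most one bar in each layer, with the layers occurring being exactly the powers of $2$ in the binary expansion of the total count — which is precisely the image of $\sigma^{-1}$ from Proposition \ref{prop:HinE+} restricted to square-free WBTs, i.e.\ an SBT. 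Finally, since a fixed point $\T$ has one bar per block, $\ell(\T)=B(\T)$ equals the number of rows of the corresponding SBT $T$, so $(-1)^{\ell(\T)}=(-1)^{\ell(T)}=\sgn(T)$, giving the correct sign.
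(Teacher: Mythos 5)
Your overall strategy coincides with the paper's: reduce to a monomial identity on $\bigcup_{\delta\in\PCom_\dya(d)}\psbt(\delta)$ and build a sign-reversing involution whose nontrivial moves implement binary carries between layers, with fixed points matching $\sbt[d]$. (Your first, substitution-based route is also workable in principle, but you correctly flag and do not carry out the required multiplicity count over the several square-free $\delta$ producing the same dyadic index, so it is not a proof.) However, the involution you describe has two genuine defects. First, every move you define acts on a pair of \emph{identical} bars, so nothing in your involution ever changes how \emph{distinct} bars are grouped into tableaux. Since the sign is $(-1)^{\ell(\delta)}$ --- the number of tableaux, not the number of bars --- the PSBT consisting of a single two-row SBT in layer $1$ and the PSBT consisting of the same two distinct bars as two separate one-row tableaux both meet your fixed-point condition, carry the same monomial, and have opposite signs; neither cancels. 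The paper disposes of these with a preliminary case: apply the strict stack-or-slash operation to the lowest layer that is not a sorted sequence of single bars. Without that component your fixed points are not in bijection with $\sbt[d]$ and the signs cannot match.

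Second, your fix for the carry-cascade problem --- act at the \emph{lowest} layer where a carry is possible --- does not produce an involution. Take a single bar type with two copies in layer $1$ and two copies in layer $2$ (monomial $x_{ij}^6$). Your rule merges the layer-$1$ pair, giving three copies in layer $2$; applying the rule to that output, the lowest available carry is now the layer-$2$ pair, which merges to give one copy in layer $2$ and one in layer $4$, so the orbit is not $2$-periodic. The paper's involution instead always works at the \emph{highest} occupied layer $2^k$: merge the rightmost identical pair of layer $2^k$ upward if one exists, and otherwise split the rightmost bar of layer $2^k$ downward into layer $2^{k-1}$ (with a comparison against the rightmost identical pair of layer $2^{k-1}$, when present, to decide which of the two moves applies); this pairs $\{1,1,2,2\}$ with $\{1,1,4\}$ and $\{2,2,2\}$ with $\{2,4\}$, as needed. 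Relatedly, your stated fixed-point characterization (layers occurring are exactly the binary digits of the total count) admits configurations such as one copy of a bar in layer $1$ and one in layer $2$, whose monomial $x_{ij}^3$ is not square-free and cannot occur on the left-hand side; the actual fixed points are the PSBTs supported entirely in layer $1$ with all bars distinct.
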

\begin{proof}
From the results of Lemma \ref{lem:btinterps} and \ref{lem:pbtinterps}, we interpret the above expansion in terms of monomials as
\[
\sum\limits_{T\in \sbt[d]} \sgn(T)\x_T = \sum\limits_{\delta\in\PCom_{\dya}(d)}\sum\limits_{\T\in \psbt(\delta)} (-1)^{\ell(\T)} \x_{\T}.
\]
We describe an involution $\sigma: = \sigma^{E}_{E^+}$ on the set $\bigcup\limits_{\delta\in\PCom_{\dya}(d)}\psbt(\delta)$ as follows. We consider a PSBT $\T$.
\begin{enumerate}
\item Suppose there exists a layer containing either (i) an SBT with more than one row, or (ii) two consecutive bars $(B,B')$ such that $B'$ has a strictly smaller length than $B$, or if they have the same length $B$ has a strictly smaller label than $B'$. Let $r$ be the smallest index of such a layer. Let $\sigma(\T)$ be the output of the {strict} stack-or-slash operation (cf. Section \ref{ssec:H-E}) acting on layer $r$. This operation changes the number of diagrams by one, that is, $\ell(\sigma(\T)) = \ell(\T)\pm 1$. We get $(-1)^{\ell(\sigma(\T))} \x_{\sigma(\T)} =  - (-1)^{\ell(\T)} \x_{\T}$ which allows us to pair the monomials corresponding to $\T$ and $\sigma(\T)$ for cancellation.
\item Suppose each SBT in $\T$ is a bar and within each layer, the bars weakly increase in size and the labels weakly decrease within bars of the same size. Note that such a $\T$ is fixed under the strict stack-or-slash operation. Let $2^k$ be the highest index of the non-empty layer in $\T$.
\begin{enumerate}
\item Suppose there exists a rightmost identical pair $(B,B')$ in layer $2^k$. Obtain $\sigma(\T)$ from $\T$ by removing $(B,B')$ from layer $2^k$ and inserting $T$ in layer $2^{k+1}$. In the following example, the rightmost pair $(B,B')=(\yt{22},\yt{22})$ in layer 4 is removed and $\yt{22}$ is inserted in layer 8.
\begin{multicols}{2}
\noindent
\begin{align*}
1\quad & \yt{2} \quad \yt{22} \quad \yt{11} \\
\T = 4\quad & \yt{1}\quad \yt{22}\quad \yt{22}\quad \yt{11}\\
8\quad &
\end{align*}
\columnbreak
\noindent
\begin{align*}
1\quad & \yt{2} \quad \yt{22} \quad \yt{11} \\
\sigma(\T) = 4\quad & \yt{1}\quad  \yt{11}\\
8\quad &\yt{22}
\end{align*}
\end{multicols}
\item Suppose layer $2^k$ does not contain an identical pair.
\begin{enumerate}
\item If $2^k > 1$, let $B$ be the rightmost bar in layer $2^k$. 
\begin{enumerate}
\item If layer $2^{k-1}$ contains no identical pair, then remove $B$ from layer 2 and insert the identical pair $(B,B')$ in layer $2^{k-1}$ while preserving the weakly increasing length and weakly decreasing label between bars of the same size conditions. Note that this covers the case when layer $2^{k-1}$ is empty. 
\begin{multicols}{2}
\noindent
\begin{align*}
1\quad & \yt{2} \quad \yt{22} \quad \yt{11} \\
\T = 4\quad & \yt{1}\quad  \yt{11}\\
8\quad &\yt{3}\quad \yt{11}
\end{align*}
\columnbreak
\noindent
\begin{align*}
1\quad & \yt{2} \quad \yt{22} \quad \yt{11} \\
\sigma(\T) = 4\quad & \yt{1}\quad \yt{11}\quad \yt{11}\quad \yt{11}\\
8\quad &\yt{3}
\end{align*}
\end{multicols}
\item If layer $2^{k-1}$ contains a rightmost identical pair $(U,U')$ such that $B$ has a larger length than $U$, or the same length but a weakly smaller label, then obtain $\sigma(\T)$ from $\T$ by removing $B$ from layer $2^k$ and inserting the identical pair $(B,B')$ in layer $2^{k-1}$ while preserving the weakly increasing length among bars and weakly decreasing label between bars of the same size conditions. Note that this results in $(B,B')$ being the rightmost bar in layer $2^{k-1}$ of $\sigma(\T)$. In the following example $B$ is $\yt{22}$ in layer 8 and $(U,U') = (\yt{22}, \yt{22})$ in layer 4. In this case, $B$ has the same length and a weakly smaller label than $U$.
\begin{multicols}{2}
\noindent
\begin{align*}
1\quad & \yt{2} \quad \yt{33} \quad \yt{11} \\
\T = 4\quad & \yt{22}\quad  \yt{22}\\
8\quad &\yt{3}\quad \yt{22}
\end{align*}
\columnbreak
\noindent
\begin{align*}
1\quad & \yt{2} \quad \yt{33} \quad \yt{11} \\
\sigma(\T) = 4\quad & \yt{22}\quad  \yt{22}\quad \yt{22}\quad \yt{22}\\
8\quad &\yt{3}
\end{align*}
\end{multicols}
\item If layer $2^{k-1}$ contains a rightmost identical pair $(U,U')$ such that $B$ has a smaller length than $U$, or the same length but a strictly larger label, then obtain $\sigma(\T)$ from $\T$ by removing $(U,U')$ from layer $2^{k-1}$ and inserting $U$ in layer $2^k$ while preserving the weakly increasing length and weakly decreasing label between bars of the same size conditions. Note that this results in $U$ being the last bar in the scanning order for $\sigma(\T)$. The illustration of this action can be seen by acting $\sigma$ on the output $\sigma(\T)$ in the previous example.
\end{enumerate}
\item If $2^k = 1$, then $\sigma(\T) = \T$. We can map such fixed points $\T$ consisting of distinct bars to an SBT $T$. The $i$th row from the top of $T$ is the $i$th bar from the right in layer 1 of $\T$. As the number of bars in $\T$ is the number of rows in $T$, we get $\ell(T) = \ell(\T)$ which matches the sign of the monomial on each side. The following example shows this correspondence
\[
\T = 1 \quad \yt{2} \quad \yt{1}\quad \yt{33}\quad \yt{11}\quad \yt{444} \mapsto \yt{444,11,33,1,2}.
\]
\end{enumerate}
\end{enumerate}
\end{enumerate}

Notice that in both in parts 1, 2(a), and 2(b)(i) the number of diagrams changes by one and we have $(-1)^{\ell(\sigma(\T))} \x_{\sigma(\T)} =  - (-1)^{\ell(\T)} \x_{\T}$  which pair up and cancel out. To see that $\sigma$ is an involution, we note that (1) is already an involution. Now, consider the action of $\sigma$ on the output $\sigma(\T)$ of 2(a). If $(B,B')$ was the only identical pair in layer $2^{k-1}$, then we find ourselves in case 2(b)(i)(A), which gives us back $\T$. If there was another identical pair $(C,C')$ in layer $2^{k-1}$ of $\T$, then $C$ either has a smaller length than $B$, or the same length but a weakly larger label. This case is handled by 2(b)(i)(B) which gives us back $\T$ under $\sigma$.

Now, consider the action of $\sigma$ on the output $\sigma(\T)$ of 2(b)(i)(A). The rightmost bar $C$ in layer $2^k$ must have a smaller length, or the same length but a strictly larger label than $B$. This is handled by 2(b)(i)(C) and gives us back $\T$. A similar argument works for the output of 2(b)(i)(B). The output $\sigma(\T)$ of 2(b)(i)(C) results in either case (A) or (B), and gives us back $\T$.

\end{proof}

\subsection{$E^+$-expansion of $P$}
The dyadic polycompositions that appear in this expansion have exactly one multiplicity, and we call them \textit{singular dyadic polycompositions}. The set of singular dyadic polycompositions of $n$ is denoted by $\PCom_\dya\ast(n)$. Recall that for $\delta = (d_1\ldots d_k)^{r} \in \PCom_\dya\ast(n)$, $L(\delta) = rd_k$. For $\T$ in $\psbt\ast(\delta)$, which is the set of SBTs such that one cell of the last SBT (in scanning order) is marked, recall that $\wt\ast(\T)$ is the layer index occupied by the tableau containing the marked bar.
\begin{prop}\label{prop:PinE+}
For $d\geq 0$, $\displaystyle{P_d=\sum_{\delta\in \PCom_\dya\ast(d)} (-1)^{\ell(\delta)-1} L(\delta) E_\delta^+}$.
\end{prop}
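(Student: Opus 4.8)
The plan is to pass, via Lemmas~\ref{lem:btinterps}(4) and~\ref{lem:markedinterps}(2), to a monomial identity on marked poly bar tableaux, cancel almost everything with a sign‑reversing involution adapted from the proof of Proposition~\ref{prop:P-H,P-E}(2), and then check that the surviving ``rigid'' fixed points reproduce $P_d$ after an elementary dyadic summation.

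Concretely, a singular dyadic polycomposition is $\delta=(\alpha)^{2^k}$ for a composition $\alpha$ with $2^k\cdot|\alpha|=d$; then $L(\delta)=2^k\alpha_{\ell(\alpha)}$ and every tableau of a $\T\in\psbt\ast(\delta)$ sits in the single layer $2^k$, so $\wt\ast(\T)=2^k$ and $\ell(\T)=\ell(\alpha)$. Using the two lemmas, the claim becomes
\[
\sum_{\T\in\rbt\ast[d]}\x_{\T}\;=\;\sum_{k\ge 0}\ \sum_{\substack{\alpha\in\Com\\ 2^k\cdot|\alpha|=d}}\ \sum_{\T\in\psbt\ast((\alpha)^{2^k})}(-1)^{\ell(\T)-1}\,2^k\,\x_{\T}.
\]
For fixed $k$ I would run, inside the active layer $2^k$, the marked‑bar‑guided involution $\mcI^P_E$ of Proposition~\ref{prop:P-H,P-E}(2): locate the marked bar $B^\ast$, take the least index $j$ for which either (i) the $j$th tableau has more than one row and contains a bar identical to $B^\ast$ (split off one such bar as a new tableau), or (ii) $j>1$, the $j$th tableau is a single bar identical to $B^\ast$, and its left neighbor contains no bar identical to $B^\ast$ (merge it into that neighbor). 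This changes $\ell(\T)$ by $1$ while preserving $\x_{\T}$ and the shape's being singular dyadic of multiplicity $2^k$, so it is a sign‑reversing involution for the displayed sum, fiberwise over $k$ (and, since there is no $\psgn$ factor here, with strictly lighter bookkeeping than in Proposition~\ref{prop:P-H,P-E}(2)). Its fixed points are exactly the $\T$ all of whose tableaux are copies of one bar $B^\ast$; if $|B^\ast|=q$, $B^\ast$ has label $j$, and there are $m$ tableaux, then $qm2^k=d$, $\x_{\T}=x_{qj}^{2^k m}$, and there are $q$ such fixed points (the $q$ positions of the marked cell in the last bar).

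It then remains to match fixed‑point contributions with $\sum_{\T\in\rbt\ast[d]}\x_{\T}=P_d$. A monomial $x_{qj}^{p}$ with $qp=d$ arises from a fixed point exactly when $p=2^k m$, i.e.\ $m=p/2^k$ for $k=0,1,\dots,v$ where $v$ is the exponent of the largest power of $2$ dividing $p$; the corresponding contribution is $q\cdot 2^k(-1)^{p/2^k-1}$. Writing $p=2^v\cdot(\text{odd})$ makes $p/2^k$ even for $k<v$ and odd for $k=v$, so
\[
\sum_{k=0}^{v}2^k(-1)^{p/2^k-1}\;=\;-(2^v-1)+2^v\;=\;1,
\]
whence the coefficient of $x_{qj}^{p}$ on the right is $q\cdot 1=q$, which is exactly its coefficient in $P_d$ (the $q$ markings of a length‑$q$ top row). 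This cancellation is precisely why only \emph{singular} dyadic polycompositions survive.

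The hard part will not be a new combinatorial idea but the organization: one must verify that $\mcI^P_E$ restricted to a single layer is an honest involution with the stated fixed points (the merge in case (ii) is always legal by minimality of $j$, exactly as in Proposition~\ref{prop:P-H,P-E}(2)), and that regrouping the right‑hand side by multiplicity and recognizing each inner sum as the $\x\mapsto\x^{2^k}$ twist of the square‑free case interacts correctly with the signs and with the dyadic identity above. No involution beyond those of Sections~\ref{sec:HEP} and~\ref{sec:inE+} should be needed.
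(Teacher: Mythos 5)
Your proposal is correct and follows essentially the same route as the paper: reduce to a monomial identity via Lemmas \ref{lem:btinterps}(4) and \ref{lem:markedinterps}(2), reuse the marked-bar involution $\mcI^P_E$ from Proposition \ref{prop:P-H,P-E}(2) within the single active layer, and evaluate the surviving fixed points by the dyadic telescoping sum $2^v-(2^v-1)=1$. The only cosmetic difference is that you organize the final count by the exponent $p$ and its $2$-adic valuation rather than by the divisor $k=2^xy$ of $d$ as the paper does; the computation is identical.
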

\begin{proof}
Using Lemmas \ref{lem:btinterps} and \ref{lem:markedinterps}, we must prove
\[
  \sum\limits_{T\in \rbt\ast[d]} \x_T = \sum\limits_{\delta\in \PCom_\dya\ast(d)}\sum\limits_{\T \in \psbt\ast(\delta)} (-1)^{\ell(\T)-1}\wt\ast(\T) \x_{\T}.
\]
Let the involution $\sigma$ on $\bigcup_{\delta\in \PCom_\dya\ast(d)}\psbt\ast(\delta)$ be $\sigma^P_E$ as defined in the proof of Proposition \ref{prop:P-H,P-E} part (2), but this time acting on layer $r$. We showed that $\sigma(\T)$ has one diagram more or less than $\T$ and preserves the layer in which the marked bar is located, thus $(-1)^{\ell(\sigma(\T))-1}\wt\ast(\sigma(\T))\x_{\sigma(\T)} = -(-1)^{\ell(\T)-1}\wt\ast(\T) \x_{\T}$. The fixed points under $\sigma$ are marked PSBTs $\T$ such that all SBTs in $\T$ are bars that are identical to each other. As the bars are identical, it follows that their lengths must divide $d$. Choose a divisor $k$ of $d$ such that $k = 2^xy$ for some non-negative integer $x$ and an odd positive integer $y$. Choose $j\geq 1$ and $1\leq c \leq d/k$. The marked RBT $T$ corresponding to a monomial term $x_{d/k,j}^{k}$ consists of $k$ identical bars of length $d/k$ with label $j$ and a marked cell in column $c$. Consider the set of marked PSBTs $\{\T_z\}_{0\leq z \leq x}$ where $\T_z$ consists of $2^{x-z}y$ bars of length $d/k$ with label $j$ in layer $2^z$ such that the marked cell is in column $c$. So, $\T_0$ consists of $2^xy$ bars in layer 1, $\T_1$ has $2^{x-1}y$ bars in layer 2, and so on, until $\T_x$ has $y$ bars in layer $2^x$. By factoring in all the contributions from the bars, we compute $\x_{\T_z} = ((x_{d/k,j})^{2^{x-z}y})^{2^z} = x_{d/k,j}^{k}$ for all $0\leq z\leq x$. We see that only layer $2^x$ has an odd number of bars, thus $(-1)^{\ell(\T_z)-1} = 1$ when $z = x$, and $(-1)^{\ell(\T_z)} = -1$ for all other values of $z$. This gives us 
\[
\sum\limits_{z=0}^x (-1)^{\ell(\T_z)-1}\wt\ast(\T_z) = 2^x - 2^{x-1} - \ldots - 2 -1 = 1
\]
which shows that the monomial $x_{d/k,j}^{k}$ arising from PSBTs with the marked cell in a certain column appears with coefficient 1 on both sides of the identity. Accounting for all possible columns that can be marked, we get that  $x_{d/k,j}^{k}$ appears with the coefficient $d/k$. As the above proof holds for all arbitrary divisors $k$ of $d$ and all labels $j$, we obtain the statement of the proposition. 
\end{proof}
\begin{example}
In the following example, we choose $d = 24$, $k = 12$, $j = 5$ and $c = 2$. We find $k = 2^2\cdot 3$, so $x = 2$ and $y = 3$. So, $\T_0$, $\T_1$ and $\T_2$ consist of identical bars $\yt{55}$ where the rightmost bar has a marked cell in the second column corresponding to $c = 2$. The monomials arising from $\T_0$, $\T_1$ and $\T_2$ respectively are $(-1)^{\ell(\T_0)-1}\x_{\T_0} = -x_{2,5}^{12}$, $(-1)^{\ell(\T_1)-1}\x_{\T_1} = -x_{2,5}^{12}$ and $(-1)^{\ell(\T_2)-1}\x_{\T_2} = x_{2,5}^{12}$. We have $\T_0$, $\T_1$ and $\T_2$ as follows
\[
\T_0: 1 \quad \yt{55}\; \yt{55}\; \yt{55}\;\yt{55}\; \yt{55}\;\yt{55}\;\yt{55}\;\yt{55}\;\yt{55}\;\yt{55}\;\yt{55}\;\yt{5{5\ast}}
\]
\[
\T_1: 2 \quad \yt{55}\;\yt{55}\;\yt{55}\;\yt{55}\;\yt{55}\;\yt{5{5\ast}}
\]
\[
\T_2: 4 \quad \yt{55}\;\yt{55}\;\yt{5{5\ast}}
\]
\end{example}
\section{Polybrick tabloids and expansions among plethystic bases}\label{sec:polybrick}
The expansions from the previous sections expressing a plethystic basis element $F_d$ in terms of another bass $G$ can be extended to obtain $G$-expansions of $F_\tau$ for an arbitrary type $\tau$. The objects that appear in these general expansions are polysymmetric analogs of {brick tabloids} introduced by Remmel and E\u{g}ecio\u{g}lu \cite{eg-rem}. We first recall brick tabloids, which appear when studying the transition matrices between pairs of bases of $\sym$ involving $h$, $e$, and $p$. 

\subsection{Brick tabloids}\label{ssec:bricktabs}
Define a \textit{brick} to be a collection of consecutive cells in a row of a partition diagram. We visualize a brick by placing a rectangle over the collection of cells, and we say that the brick \textit{spans} those cells. The \textit{length} of a brick is the number of cells it spans. A \textit{tiling} of $\lambda\in \Par$ is a decomposition of $\dg(\lambda)$ as a disjoint union of bricks. For partitions $\lambda$ and $\mu$ of $n$, define a \textit{brick tabloid of shape $\lambda$ and content $\mu$} to be a tiling of $\lambda$ such that the partition formed by the multiset of lengths of the bricks is $\mu$. A \textit{labeled brick with label $l$} is a brick with an associated natural number $l$ which we visualize by placing the label next to the bottom-right corner of the brick. An \textit{ordered brick tabloid of shape $\lambda$ and content $\mu$} is a decomposition of $\dg(\lambda)$ as disjoint union of $\ell(\mu)$ labeled bricks with unique labels in $\{1,\ldots, \ell(\mu)\}$ such that the labels increase left to right in each row.
\begin{example}
For $\lambda = (8,4,2)$ and $\mu = (3,3,2,2,2,1,1)$, shown below are a brick tabloid $T$ and an ordered brick tabloid $T'$ of shape $\lambda$ and content $\mu$.
\[
T =
\raisebox{16.0pt}{\begin{tikzpicture}[scale =0.400,baseline=(current bounding box.north)]
\draw (0,0) rectangle (1,-1);
\draw (1,0) rectangle (2,-1);
\draw (2,0) rectangle (3,-1);
\draw (3,0) rectangle (4,-1);
\draw (4,0) rectangle (5,-1);
\draw (5,0) rectangle (6,-1);
\draw (6,0) rectangle (7,-1);
\draw (7,0) rectangle (8,-1);
\draw (0,-1) rectangle (1,-2);
\draw (1,-1) rectangle (2,-2);
\draw (2,-1) rectangle (3,-2);
\draw (3,-1) rectangle (4,-2);
\draw (0,-2) rectangle (1,-3);
\draw (1,-2) rectangle (2,-3);
\draw (0.250,-0.250) rectangle ( 2.75 ,-0.750);
\draw (3.25,-0.250) rectangle ( 4.75 ,-0.750);
\draw (5.25,-0.250) rectangle ( 7.75 ,-0.750);
\draw (0.250,-1.25) rectangle ( 0.750 ,-1.75);
\draw (1.25,-1.25) rectangle ( 2.75 ,-1.75);
\draw (3.25,-1.25) rectangle ( 3.75 ,-1.75);
\draw (0.250,-2.25) rectangle ( 1.75 ,-2.75);
\end{tikzpicture}}\qquad T' = \raisebox{16.0pt}{\begin{tikzpicture}[scale =0.400,baseline=(current bounding box.north)]
\draw (0,0) rectangle (1,-1);
\draw (1,0) rectangle (2,-1);
\draw (2,0) rectangle (3,-1);
\draw (3,0) rectangle (4,-1);
\draw (4,0) rectangle (5,-1);
\draw (5,0) rectangle (6,-1);
\draw (6,0) rectangle (7,-1);
\draw (7,0) rectangle (8,-1);
\draw (0,-1) rectangle (1,-2);
\draw (1,-1) rectangle (2,-2);
\draw (2,-1) rectangle (3,-2);
\draw (3,-1) rectangle (4,-2);
\draw (0,-2) rectangle (1,-3);
\draw (1,-2) rectangle (2,-3);
\draw (0.350,-0.250) rectangle ( 2.55 ,-0.700);
\draw ( 2.80 ,-0.720) node[scale =0.600] {2};
\draw (3.35,-0.250) rectangle ( 4.55 ,-0.700);
\draw ( 4.80 ,-0.720) node[scale =0.600] {4};
\draw (5.35,-0.250) rectangle ( 6.55 ,-0.700);
\draw ( 6.80 ,-0.720) node[scale =0.600] {5};
\draw (7.25,-0.250) rectangle ( 7.65 ,-0.700);
\draw ( 7.84 ,-0.720) node[scale =0.600] {7};
\draw (0.350,-1.25) rectangle ( 2.55 ,-1.70);
\draw ( 2.80 ,-1.72) node[scale =0.600] {1};
\draw (3.25,-1.25) rectangle ( 3.65 ,-1.70);
\draw ( 3.84 ,-1.72) node[scale =0.600] {6};
\draw (0.350,-2.25) rectangle ( 1.55 ,-2.70);
\draw ( 1.80 ,-2.72) node[scale =0.600] {3};
\end{tikzpicture}}
\]
\end{example}
For a brick tabloid $T$, define $L(T)$ to be the product of the lengths of all bricks that appear at the end of each row. For $T$ in the above example, the first row ends with a brick of length 3, the second row ends with a brick of length 1, and the third row ends with a brick of length 2, giving us $L(T)= 3\cdot 1 \cdot 2 = 6$. Similarly, $L(T') = 1\cdot 1\cdot 2 = 2$.

\subsection{Simple Polybrick tabloids}
This section covers the analogs of brick tabloids that arise when computing the expansions between $F$ and $G$ for $F,G\in \{H,E,P\}$.

Let $\sigma$ and $\tau = d_1^{r_1}d_2^{r_2}\ldots d_s^{r_s}$ with $r_1 \leq r_2 \leq \ldots \leq r_s$ be types of $n$. Define a \textit{simple polybrick tabloid, $T$, of shape $\sigma$ and content $\tau$} to be the tensor product of brick tabloids $T_i$ of shape $\sigma|^i$ and content $\tau|^i$ for $i\geq 1$. In other words, the partition formed by the lengths of bricks in $\sigma|^i$ must be $\tau|^i$ for each $i\geq 1$. Denote the set of simple polybrick tabloids of shape $\sigma$ and content $\tau$ by $\pt^{\mathrm{simp}}(\sigma, \tau)$. Also, define an \textit{ordered simple polybrick tabloid of shape $\sigma$ and content $\tau$} to be a tensor product of ordered brick tabloids of shape $\sigma|^i$ and content $\tau|^i$ for $i\geq 1$ such that for each $1\leq l \leq s$, we have a labeled brick of length $d_l$ in the $r_l$th tensor factor with label $l$, and the labels increase from left to right in each row of each tensor factor. Denote the set of ordered simple polybrick tabloids of shape $\sigma$ and content $\tau$ by $\pt^{\mathrm{osimp}}(\sigma, \tau)$. Denote by $\L(T)$ for $T\in \pt^{\mathrm{simp}}(\sigma, \tau)$ the product of the sizes of all bricks that appear at the end of each row, i.e., $\L(T) = \prod_{i\geq 1} L(T_i)$.
\begin{example}
\boks{0.3}
Here are some examples of simple polybrick tabloids of shape $\sigma = (5,3)^1(2,1)^2(3,2)^3$ and content $\tau = (3,2,2,1)^1(2,1)^2(2,2,1)^3$:
\begin{footnotesize}
\[
T_1 = 
\raisebox{16.0pt}{\begin{tikzpicture}[scale =0.380,baseline=(current bounding box.north)]
\draw (0,0) rectangle (1,-1);
\draw (1,0) rectangle (2,-1);
\draw (2,0) rectangle (3,-1);
\draw (3,0) rectangle (4,-1);
\draw (4,0) rectangle (5,-1);
\draw (0,-1) rectangle (1,-2);
\draw (1,-1) rectangle (2,-2);
\draw (2,-1) rectangle (3,-2);
\draw (0.250,-0.250) rectangle ( 2.75 ,-0.750);
\draw (3.25,-0.250) rectangle ( 4.75 ,-0.750);
\draw (0.250,-1.25) rectangle ( 0.750 ,-1.75);
\draw (1.25,-1.25) rectangle ( 2.75 ,-1.75);
\end{tikzpicture}}
\otimes \raisebox{16.0pt}{\begin{tikzpicture}[scale =0.380,baseline=(current bounding box.north)]
\draw (0,0) rectangle (1,-1);
\draw (1,0) rectangle (2,-1);
\draw (0,-1) rectangle (1,-2);
\draw (0.250,-0.250) rectangle ( 1.75 ,-0.750);
\draw (0.250,-1.25) rectangle ( 0.750 ,-1.75);
\end{tikzpicture}}
\otimes \raisebox{16.0pt}{\begin{tikzpicture}[scale =0.380,baseline=(current bounding box.north)]
\draw (0,0) rectangle (1,-1);
\draw (1,0) rectangle (2,-1);
\draw (2,0) rectangle (3,-1);
\draw (0,-1) rectangle (1,-2);
\draw (1,-1) rectangle (2,-2);
\draw (0.250,-0.250) rectangle ( 0.750 ,-0.750);
\draw (1.25,-0.250) rectangle ( 2.75 ,-0.750);
\draw (0.250,-1.25) rectangle ( 1.75 ,-1.75);
\end{tikzpicture}}\quad \Bigg\vert\,
T_2 = 
\raisebox{16.0pt}{\begin{tikzpicture}[scale =0.380,baseline=(current bounding box.north)]
\draw (0,0) rectangle (1,-1);
\draw (1,0) rectangle (2,-1);
\draw (2,0) rectangle (3,-1);
\draw (3,0) rectangle (4,-1);
\draw (4,0) rectangle (5,-1);
\draw (0,-1) rectangle (1,-2);
\draw (1,-1) rectangle (2,-2);
\draw (2,-1) rectangle (3,-2);
\draw (0.250,-0.250) rectangle ( 2.75 ,-0.750);
\draw (3.25,-0.250) rectangle ( 4.75 ,-0.750);
\draw (0.250,-1.25) rectangle ( 1.75 ,-1.75);
\draw (2.25,-1.25) rectangle ( 2.75 ,-1.75);
\end{tikzpicture}}
\otimes \raisebox{16.0pt}{\begin{tikzpicture}[scale =0.380,baseline=(current bounding box.north)]
\draw (0,0) rectangle (1,-1);
\draw (1,0) rectangle (2,-1);
\draw (0,-1) rectangle (1,-2);
\draw (0.250,-0.250) rectangle ( 1.75 ,-0.750);
\draw (0.250,-1.25) rectangle ( 0.750 ,-1.75);
\end{tikzpicture}}
\otimes \raisebox{16.0pt}{\begin{tikzpicture}[scale =0.380,baseline=(current bounding box.north)]
\draw (0,0) rectangle (1,-1);
\draw (1,0) rectangle (2,-1);
\draw (2,0) rectangle (3,-1);
\draw (0,-1) rectangle (1,-2);
\draw (1,-1) rectangle (2,-2);
\draw (0.250,-0.250) rectangle ( 1.75 ,-0.750);
\draw (2.25,-0.250) rectangle ( 2.75 ,-0.750);
\draw (0.250,-1.25) rectangle ( 1.75 ,-1.75);
\end{tikzpicture}}
\quad \Bigg\vert\,
T_3 = 
\raisebox{16.0pt}{\begin{tikzpicture}[scale =0.380,baseline=(current bounding box.north)]
\draw (0,0) rectangle (1,-1);
\draw (1,0) rectangle (2,-1);
\draw (2,0) rectangle (3,-1);
\draw (3,0) rectangle (4,-1);
\draw (4,0) rectangle (5,-1);
\draw (0,-1) rectangle (1,-2);
\draw (1,-1) rectangle (2,-2);
\draw (2,-1) rectangle (3,-2);
\draw (0.250,-0.250) rectangle ( 1.75 ,-0.750);
\draw (2.25,-0.250) rectangle ( 4.75 ,-0.750);
\draw (0.250,-1.25) rectangle ( 1.75 ,-1.75);
\draw (2.25,-1.25) rectangle ( 2.75 ,-1.75);
\end{tikzpicture}}
\otimes \raisebox{16.0pt}{\begin{tikzpicture}[scale =0.380,baseline=(current bounding box.north)]
\draw (0,0) rectangle (1,-1);
\draw (1,0) rectangle (2,-1);
\draw (0,-1) rectangle (1,-2);
\draw (0.250,-0.250) rectangle ( 1.75 ,-0.750);
\draw (0.250,-1.25) rectangle ( 0.750 ,-1.75);
\end{tikzpicture}}
\otimes \raisebox{16.0pt}{\begin{tikzpicture}[scale =0.380,baseline=(current bounding box.north)]
\draw (0,0) rectangle (1,-1);
\draw (1,0) rectangle (2,-1);
\draw (2,0) rectangle (3,-1);
\draw (0,-1) rectangle (1,-2);
\draw (1,-1) rectangle (2,-2);
\draw (0.250,-0.250) rectangle ( 0.750 ,-0.750);
\draw (1.25,-0.250) rectangle ( 2.75 ,-0.750);
\draw (0.250,-1.25) rectangle ( 1.75 ,-1.75);
\end{tikzpicture}}
\]
\end{footnotesize}
We compute $\L(T_1) = 32$, $\L(T_2) = 8$ and $\L(T_3) =24$. The following are {all} six ordered simple polybrick tabloids of shape $\sigma$ and content $\tau$:
\begin{footnotesize}
\[
T_1' = 
\raisebox{16.0pt}{\begin{tikzpicture}[scale =0.450,baseline=(current bounding box.north)]
\draw (0,0) rectangle (1,-1);
\draw (1,0) rectangle (2,-1);
\draw (2,0) rectangle (3,-1);
\draw (3,0) rectangle (4,-1);
\draw (4,0) rectangle (5,-1);
\draw (0,-1) rectangle (1,-2);
\draw (1,-1) rectangle (2,-2);
\draw (2,-1) rectangle (3,-2);
\draw (0.350,-0.250) rectangle ( 2.55 ,-0.700);
\draw ( 2.80 ,-0.720) node[scale =0.600] {1};
\draw (3.35,-0.250) rectangle ( 4.55 ,-0.700);
\draw ( 4.80 ,-0.720) node[scale =0.600] {2};
\draw (0.350,-1.25) rectangle ( 1.55 ,-1.70);
\draw ( 1.80 ,-1.72) node[scale =0.600] {3};
\draw (2.25,-1.25) rectangle ( 2.65 ,-1.70);
\draw ( 2.84 ,-1.72) node[scale =0.600] {4};
\end{tikzpicture}}
\otimes \raisebox{16.0pt}{\begin{tikzpicture}[scale =0.45,baseline=(current bounding box.north)]
\draw (0,0) rectangle (1,-1);
\draw (1,0) rectangle (2,-1);
\draw (0,-1) rectangle (1,-2);
\draw (0.350,-0.250) rectangle ( 1.55 ,-0.700);
\draw ( 1.80 ,-0.720) node[scale =0.600] {5};
\draw (0.250,-1.25) rectangle ( 0.650 ,-1.70);
\draw ( 0.840 ,-1.72) node[scale =0.600] {6};
\end{tikzpicture}}
\otimes \raisebox{16.0pt}{\begin{tikzpicture}[scale =0.45,baseline=(current bounding box.north)]
\draw (0,0) rectangle (1,-1);
\draw (1,0) rectangle (2,-1);
\draw (2,0) rectangle (3,-1);
\draw (0,-1) rectangle (1,-2);
\draw (1,-1) rectangle (2,-2);
\draw (0.350,-0.250) rectangle ( 1.55 ,-0.700);
\draw ( 1.80 ,-0.720) node[scale =0.600] {7};
\draw (2.25,-0.250) rectangle ( 2.65 ,-0.700);
\draw ( 2.84 ,-0.720) node[scale =0.600] {9};
\draw (0.350,-1.25) rectangle ( 1.55 ,-1.70);
\draw ( 1.80 ,-1.72) node[scale =0.600] {8};
\end{tikzpicture}}
\quad \Bigg\vert\,
T_2' = \raisebox{16.0pt}{\begin{tikzpicture}[scale =0.45,baseline=(current bounding box.north)]
\draw (0,0) rectangle (1,-1);
\draw (1,0) rectangle (2,-1);
\draw (2,0) rectangle (3,-1);
\draw (3,0) rectangle (4,-1);
\draw (4,0) rectangle (5,-1);
\draw (0,-1) rectangle (1,-2);
\draw (1,-1) rectangle (2,-2);
\draw (2,-1) rectangle (3,-2);
\draw (0.350,-0.250) rectangle ( 2.55 ,-0.700);
\draw ( 2.80 ,-0.720) node[scale =0.600] {1};
\draw (3.35,-0.250) rectangle ( 4.55 ,-0.700);
\draw ( 4.80 ,-0.720) node[scale =0.600] {2};
\draw (0.350,-1.25) rectangle ( 1.55 ,-1.70);
\draw ( 1.80 ,-1.72) node[scale =0.600] {3};
\draw (2.25,-1.25) rectangle ( 2.65 ,-1.70);
\draw ( 2.84 ,-1.72) node[scale =0.600] {4};
\end{tikzpicture}}
\otimes \raisebox{16.0pt}{\begin{tikzpicture}[scale =0.45,baseline=(current bounding box.north)]
\draw (0,0) rectangle (1,-1);
\draw (1,0) rectangle (2,-1);
\draw (0,-1) rectangle (1,-2);
\draw (0.350,-0.250) rectangle ( 1.55 ,-0.700);
\draw ( 1.80 ,-0.720) node[scale =0.600] {5};
\draw (0.250,-1.25) rectangle ( 0.650 ,-1.70);
\draw ( 0.840 ,-1.72) node[scale =0.600] {6};
\end{tikzpicture}}
\otimes \raisebox{16.0pt}{\begin{tikzpicture}[scale =0.45,baseline=(current bounding box.north)]
\draw (0,0) rectangle (1,-1);
\draw (1,0) rectangle (2,-1);
\draw (2,0) rectangle (3,-1);
\draw (0,-1) rectangle (1,-2);
\draw (1,-1) rectangle (2,-2);
\draw (0.350,-0.250) rectangle ( 1.55 ,-0.700);
\draw ( 1.80 ,-0.720) node[scale =0.600] {8};
\draw (2.25,-0.250) rectangle ( 2.65 ,-0.700);
\draw ( 2.84 ,-0.720) node[scale =0.600] {9};
\draw (0.350,-1.25) rectangle ( 1.55 ,-1.70);
\draw ( 1.80 ,-1.72) node[scale =0.600] {7};
\end{tikzpicture}}
\]\[
T_3' = \raisebox{16.0pt}{\begin{tikzpicture}[scale =0.45,baseline=(current bounding box.north)]
\draw (0,0) rectangle (1,-1);
\draw (1,0) rectangle (2,-1);
\draw (2,0) rectangle (3,-1);
\draw (3,0) rectangle (4,-1);
\draw (4,0) rectangle (5,-1);
\draw (0,-1) rectangle (1,-2);
\draw (1,-1) rectangle (2,-2);
\draw (2,-1) rectangle (3,-2);
\draw (0.350,-0.250) rectangle ( 2.55 ,-0.700);
\draw ( 2.80 ,-0.720) node[scale =0.600] {1};
\draw (3.35,-0.250) rectangle ( 4.55 ,-0.700);
\draw ( 4.80 ,-0.720) node[scale =0.600] {3};
\draw (0.350,-1.25) rectangle ( 1.55 ,-1.70);
\draw ( 1.80 ,-1.72) node[scale =0.600] {2};
\draw (2.25,-1.25) rectangle ( 2.65 ,-1.70);
\draw ( 2.84 ,-1.72) node[scale =0.600] {4};
\end{tikzpicture}}
\otimes \raisebox{16.0pt}{\begin{tikzpicture}[scale =0.45,baseline=(current bounding box.north)]
\draw (0,0) rectangle (1,-1);
\draw (1,0) rectangle (2,-1);
\draw (0,-1) rectangle (1,-2);
\draw (0.350,-0.250) rectangle ( 1.55 ,-0.700);
\draw ( 1.80 ,-0.720) node[scale =0.600] {5};
\draw (0.250,-1.25) rectangle ( 0.650 ,-1.70);
\draw ( 0.840 ,-1.72) node[scale =0.600] {6};
\end{tikzpicture}}
\otimes \raisebox{16.0pt}{\begin{tikzpicture}[scale =0.45,baseline=(current bounding box.north)]
\draw (0,0) rectangle (1,-1);
\draw (1,0) rectangle (2,-1);
\draw (2,0) rectangle (3,-1);
\draw (0,-1) rectangle (1,-2);
\draw (1,-1) rectangle (2,-2);
\draw (0.350,-0.250) rectangle ( 1.55 ,-0.700);
\draw ( 1.80 ,-0.720) node[scale =0.600] {8};
\draw (2.25,-0.250) rectangle ( 2.65 ,-0.700);
\draw ( 2.84 ,-0.720) node[scale =0.600] {9};
\draw (0.350,-1.25) rectangle ( 1.55 ,-1.70);
\draw ( 1.80 ,-1.72) node[scale =0.600] {7};
\end{tikzpicture}}
\quad \Bigg\vert\,
T_4' = 
\raisebox{16.0pt}{\begin{tikzpicture}[scale =0.45,baseline=(current bounding box.north)]
\draw (0,0) rectangle (1,-1);
\draw (1,0) rectangle (2,-1);
\draw (2,0) rectangle (3,-1);
\draw (3,0) rectangle (4,-1);
\draw (4,0) rectangle (5,-1);
\draw (0,-1) rectangle (1,-2);
\draw (1,-1) rectangle (2,-2);
\draw (2,-1) rectangle (3,-2);
\draw (0.350,-0.250) rectangle ( 2.55 ,-0.700);
\draw ( 2.80 ,-0.720) node[scale =0.600] {1};
\draw (3.35,-0.250) rectangle ( 4.55 ,-0.700);
\draw ( 4.80 ,-0.720) node[scale =0.600] {3};
\draw (0.350,-1.25) rectangle ( 1.55 ,-1.70);
\draw ( 1.80 ,-1.72) node[scale =0.600] {2};
\draw (2.25,-1.25) rectangle ( 2.65 ,-1.70);
\draw ( 2.84 ,-1.72) node[scale =0.600] {4};
\end{tikzpicture}}
\otimes \raisebox{16.0pt}{\begin{tikzpicture}[scale =0.45,baseline=(current bounding box.north)]
\draw (0,0) rectangle (1,-1);
\draw (1,0) rectangle (2,-1);
\draw (0,-1) rectangle (1,-2);
\draw (0.350,-0.250) rectangle ( 1.55 ,-0.700);
\draw ( 1.80 ,-0.720) node[scale =0.600] {5};
\draw (0.250,-1.25) rectangle ( 0.650 ,-1.70);
\draw ( 0.840 ,-1.72) node[scale =0.600] {6};
\end{tikzpicture}}
\otimes \raisebox{16.0pt}{\begin{tikzpicture}[scale =0.45,baseline=(current bounding box.north)]
\draw (0,0) rectangle (1,-1);
\draw (1,0) rectangle (2,-1);
\draw (2,0) rectangle (3,-1);
\draw (0,-1) rectangle (1,-2);
\draw (1,-1) rectangle (2,-2);
\draw (0.350,-0.250) rectangle ( 1.55 ,-0.700);
\draw ( 1.80 ,-0.720) node[scale =0.600] {7};
\draw (2.25,-0.250) rectangle ( 2.65 ,-0.700);
\draw ( 2.84 ,-0.720) node[scale =0.600] {9};
\draw (0.350,-1.25) rectangle ( 1.55 ,-1.70);
\draw ( 1.80 ,-1.72) node[scale =0.600] {8};
\end{tikzpicture}}
\]
\[
T_5' = \raisebox{16.0pt}{\begin{tikzpicture}[scale =0.45,baseline=(current bounding box.north)]
\draw (0,0) rectangle (1,-1);
\draw (1,0) rectangle (2,-1);
\draw (2,0) rectangle (3,-1);
\draw (3,0) rectangle (4,-1);
\draw (4,0) rectangle (5,-1);
\draw (0,-1) rectangle (1,-2);
\draw (1,-1) rectangle (2,-2);
\draw (2,-1) rectangle (3,-2);
\draw (0.350,-0.250) rectangle ( 1.55 ,-0.700);
\draw ( 1.80 ,-0.720) node[scale =0.600] {2};
\draw (2.35,-0.250) rectangle ( 3.55 ,-0.700);
\draw ( 3.80 ,-0.720) node[scale =0.600] {3};
\draw (4.25,-0.250) rectangle ( 4.65 ,-0.700);
\draw ( 4.84 ,-0.720) node[scale =0.600] {4};
\draw (0.350,-1.25) rectangle ( 2.55 ,-1.70);
\draw ( 2.80 ,-1.72) node[scale =0.600] {1};
\end{tikzpicture}}
\otimes \raisebox{16.0pt}{\begin{tikzpicture}[scale =0.45,baseline=(current bounding box.north)]
\draw (0,0) rectangle (1,-1);
\draw (1,0) rectangle (2,-1);
\draw (0,-1) rectangle (1,-2);
\draw (0.350,-0.250) rectangle ( 1.55 ,-0.700);
\draw ( 1.80 ,-0.720) node[scale =0.600] {5};
\draw (0.250,-1.25) rectangle ( 0.650 ,-1.70);
\draw ( 0.840 ,-1.72) node[scale =0.600] {6};
\end{tikzpicture}}
\otimes \raisebox{16.0pt}{\begin{tikzpicture}[scale =0.45,baseline=(current bounding box.north)]
\draw (0,0) rectangle (1,-1);
\draw (1,0) rectangle (2,-1);
\draw (2,0) rectangle (3,-1);
\draw (0,-1) rectangle (1,-2);
\draw (1,-1) rectangle (2,-2);
\draw (0.350,-0.250) rectangle ( 1.55 ,-0.700);
\draw ( 1.80 ,-0.720) node[scale =0.600] {7};
\draw (2.25,-0.250) rectangle ( 2.65 ,-0.700);
\draw ( 2.84 ,-0.720) node[scale =0.600] {9};
\draw (0.350,-1.25) rectangle ( 1.55 ,-1.70);
\draw ( 1.80 ,-1.72) node[scale =0.600] {8};
\end{tikzpicture}}
\quad \Bigg\vert\,
T_6' = 
\raisebox{16.0pt}{\begin{tikzpicture}[scale =0.45,baseline=(current bounding box.north)]
\draw (0,0) rectangle (1,-1);
\draw (1,0) rectangle (2,-1);
\draw (2,0) rectangle (3,-1);
\draw (3,0) rectangle (4,-1);
\draw (4,0) rectangle (5,-1);
\draw (0,-1) rectangle (1,-2);
\draw (1,-1) rectangle (2,-2);
\draw (2,-1) rectangle (3,-2);
\draw (0.350,-0.250) rectangle ( 1.55 ,-0.700);
\draw ( 1.80 ,-0.720) node[scale =0.600] {2};
\draw (2.35,-0.250) rectangle ( 3.55 ,-0.700);
\draw ( 3.80 ,-0.720) node[scale =0.600] {3};
\draw (4.25,-0.250) rectangle ( 4.65 ,-0.700);
\draw ( 4.84 ,-0.720) node[scale =0.600] {4};
\draw (0.350,-1.25) rectangle ( 2.55 ,-1.70);
\draw ( 2.80 ,-1.72) node[scale =0.600] {1};
\end{tikzpicture}}
\otimes \raisebox{16.0pt}{\begin{tikzpicture}[scale =0.45,baseline=(current bounding box.north)]
\draw (0,0) rectangle (1,-1);
\draw (1,0) rectangle (2,-1);
\draw (0,-1) rectangle (1,-2);
\draw (0.350,-0.250) rectangle ( 1.55 ,-0.700);
\draw ( 1.80 ,-0.720) node[scale =0.600] {5};
\draw (0.250,-1.25) rectangle ( 0.650 ,-1.70);
\draw ( 0.840 ,-1.72) node[scale =0.600] {6};
\end{tikzpicture}}
\otimes \raisebox{16.0pt}{\begin{tikzpicture}[scale =0.45,baseline=(current bounding box.north)]
\draw (0,0) rectangle (1,-1);
\draw (1,0) rectangle (2,-1);
\draw (2,0) rectangle (3,-1);
\draw (0,-1) rectangle (1,-2);
\draw (1,-1) rectangle (2,-2);
\draw (0.350,-0.250) rectangle ( 1.55 ,-0.700);
\draw ( 1.80 ,-0.720) node[scale =0.600] {8};
\draw (2.25,-0.250) rectangle ( 2.65 ,-0.700);
\draw ( 2.84 ,-0.720) node[scale =0.600] {9};
\draw (0.350,-1.25) rectangle ( 1.55 ,-1.70);
\draw ( 1.80 ,-1.72) node[scale =0.600] {7};
\end{tikzpicture}}
\]
\[T_4' = \yt{11122,334}\otimes\yt{55,6}\otimes\yt{889,77}\quad \Bigg\vert\quad T_5' = \yt{11133,224}\otimes\yt{55,6}\otimes\yt{889,77} \quad \Bigg\vert\quad T_6' = \yt{22334,111}\otimes\yt{55,6}\otimes\yt{889,77}\]
\end{footnotesize}
\end{example} 
\begin{theorem}\label{thm:HE}
For $\sigma, \tau \in \Typ(n)$,
\begin{enumerate}
\item The coefficient of $E_{\tau}$ in $H_{\sigma}$ is $(-1)^{\ell(\tau)} |\pt^{\mathrm{simp}}(\sigma, \tau)|$.
\item The coefficient of $H_{\tau}$ in $E_{\sigma}$ is $(-1)^{\ell(\tau)} |\pt^{\mathrm{simp}}(\sigma, \tau)|$.
\end{enumerate}
\end{theorem}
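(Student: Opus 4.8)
The plan is to deduce Theorem~\ref{thm:HE} from the polycomposition-indexed expansions of Proposition~\ref{prop:H-E,E-H} by the ``lift and multiply'' mechanism outlined in the introduction, and to obtain statement (2) from statement (1) via the algebra involution $\Omega$ of Remark~\ref{rem:omega}.

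First I would rewrite Proposition~\ref{prop:H-E,E-H}(1) as $H_d = \sum_{\alpha \in \Com(d)} (-1)^{\ell(\alpha)} E_{(\alpha)^1}$, using $\PCom_\sqf(d) = \{(\alpha)^1 : \alpha \in \Com(d)\}$, and apply the Addams operation $x_{**}\mapsto x_{**}^r$ to get $H_{d^r} = \sum_{\alpha \in \Com(d)} (-1)^{\ell(\alpha)} E_{(\alpha)^r}$, where $(\alpha)^r$ denotes the polycomposition carrying the composition $\alpha$ at multiplicity $r$. Next, for a type $\sigma = d_1^{r_1}\cdots d_s^{r_s}$, I would multiply these expansions over all blocks of $\sigma$: $H_\sigma = \prod_{i=1}^s H_{d_i^{r_i}} = \sum_{(\alpha^{(1)},\ldots,\alpha^{(s)})} (-1)^{\sum_i \ell(\alpha^{(i)})} \prod_{i=1}^s E_{(\alpha^{(i)})^{r_i}}$, the outer sum running over all tuples with $\alpha^{(i)}\in\Com(d_i)$.

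The combinatorial heart of the argument is the identification of these tuples with simple polybrick tabloids. A tuple $(\alpha^{(i)})_i$ records, for each block $d_i^{r_i}$, an ordered sequence of brick lengths summing to $d_i$; reading the bricks of $\alpha^{(i)}$ left to right as a tiling of a row of length $d_i$, and grouping rows according to multiplicity, this is precisely a tensor product over $r$ of a brick tabloid of shape $\sigma|^r$ whose rows (one per block of $\sigma$ at multiplicity $r$) are tiled by the $\alpha^{(i)}$ with $r_i = r$. Since $E_\delta$ depends only on $\psort(\delta)$, and $E$ of a polycomposition is the product over its blocks, $\prod_i E_{(\alpha^{(i)})^{r_i}} = E_\tau$ where $\tau$ is the type whose part $\tau|^r$ is the sorted multiset of all brick lengths placed at multiplicity $r$ --- that is, $\tau$ is exactly the content of the associated simple polybrick tabloid, and $\pt^{\mathrm{simp}}(\sigma,\tau)$ is empty unless $\sigma$ and $\tau$ share the same multiplicity support (so the sum over $\tau$ matches the support of the $E$-expansion). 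Finally, $\sum_i \ell(\alpha^{(i)})$ is the total number of bricks, which equals $\ell(\tau)$; hence the sign $(-1)^{\sum_i \ell(\alpha^{(i)})} = (-1)^{\ell(\tau)}$ is constant over all tuples contributing to $E_\tau$. Collecting terms gives the coefficient of $E_\tau$ in $H_\sigma$ as $(-1)^{\ell(\tau)}\,|\pt^{\mathrm{simp}}(\sigma,\tau)|$, proving (1). For (2), I would apply $\Omega$: since $\Omega(H_{d^r}) = E_{d^r}$ and $\Omega$ is multiplicative, $\Omega(H_\sigma) = E_\sigma$ and $\Omega(E_\tau) = H_\tau$, so applying $\Omega$ to (1) yields (2). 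Alternatively, (2) follows by the identical argument starting from Proposition~\ref{prop:H-E,E-H}(2), since the two polycomposition formulas differ only by interchanging $H$ and $E$.

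The step I expect to demand the most care is verifying that the correspondence ``tuples of row-compositions $\leftrightarrow$ simple polybrick tabloids'' is exactly right: that rows of $\dg(\sigma|^r)$ must be matched with the blocks of $\sigma$ at multiplicity $r$, so that repeated blocks get counted by distinct position-labeled rows (matching the expansion of the commutative product $\prod_i H_{d_i^{r_i}}$), and that the content $\tau$ is recovered at each multiplicity precisely as the sorted multiset of brick lengths. Everything else --- the Addams operation, multiplicativity, and the sign computation --- is routine bookkeeping.
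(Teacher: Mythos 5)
Your proposal is correct and follows essentially the same route as the paper: both deduce the theorem by applying the Addams operation to the square-free expansion of Proposition~\ref{prop:H-E,E-H}, multiplying over the blocks of $\sigma$, and identifying each choice of row-compositions with a simple polybrick tabloid whose total brick count is $\ell(\tau)$. The paper handles (2) by the symmetric argument from Proposition~\ref{prop:H-E,E-H}(2), which is your stated alternative; your additional $\Omega$-based derivation of (2) is a valid but inessential variation.
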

\begin{proof}
We prove (1), and the proof of (2) is the same. Each $d^{r}\in \sigma$ (with repetition) corresponds to a row of length $d$ in the $r$th tensor factor $r$ of $\dg^\otimes(\sigma)$. We pick a square-free polycomposition $\delta = (\alpha)^1$ of $d$ and place bricks of lengths $\alpha_1$, $\alpha_2,\ldots$ from left to right in the row. This construction corresponds to the term $(-1)^{\ell(\delta)}E_{\delta^r}$ in the $E$-expansion of $H_{d^r}$, and the exponent of $-1$ is the number of bricks in the row. We perform the same procedure for all $d^r\in \sigma$. We construct the type $\tau$ defined by partitions $\tau|^r$ formed by the lengths of bricks in $\sigma|^r$ for $r\geq 1$. This gives us a simple polybrick tabloid $T$ of shape $\sigma$ and content $\tau$. With this construction we associate the term $(-1)^{\ell(\tau)}E_\tau$ where $\ell(\tau)$ is the total number of bricks in $T$. Constructing all simple polybrick tabloids of shape $\sigma$ and content $\tau$ gives us the intended coefficient.
\end{proof}
\begin{example}
The coefficient of $H_{(2,1,1,1)^1(2,1,1)^2}$ in $E_{(3,2)^1(2,2)^2}$ is $-6$ which can be computed using the following six simple polybrick tabloids and by observing that $\ell((2,1,1,1)^1(2,1,1)^2) = 7$:
\boks{0.32}
\begin{footnotesize}
\[T_1 = \raisebox{16.0pt}{\begin{tikzpicture}[scale =0.400,baseline=(current bounding box.north)]
\draw (0,0) rectangle (1,-1);
\draw (1,0) rectangle (2,-1);
\draw (2,0) rectangle (3,-1);
\draw (0,-1) rectangle (1,-2);
\draw (1,-1) rectangle (2,-2);
\draw (0.250,-0.250) rectangle ( 1.75 ,-0.750);
\draw (2.25,-0.250) rectangle ( 2.75 ,-0.750);
\draw (0.250,-1.25) rectangle ( 0.750 ,-1.75);
\draw (1.25,-1.25) rectangle ( 1.75 ,-1.75);
\end{tikzpicture}}
\otimes \raisebox{16.0pt}{\begin{tikzpicture}[scale =0.400,baseline=(current bounding box.north)]
\draw (0,0) rectangle (1,-1);
\draw (1,0) rectangle (2,-1);
\draw (0,-1) rectangle (1,-2);
\draw (1,-1) rectangle (2,-2);
\draw (0.250,-0.250) rectangle ( 1.75 ,-0.750);
\draw (0.250,-1.25) rectangle ( 0.750 ,-1.75);
\draw (1.25,-1.25) rectangle ( 1.75 ,-1.75);
\end{tikzpicture}}
\quad\Bigg\vert\,
T_2=\raisebox{16.0pt}{\begin{tikzpicture}[scale =0.400,baseline=(current bounding box.north)]
\draw (0,0) rectangle (1,-1);
\draw (1,0) rectangle (2,-1);
\draw (2,0) rectangle (3,-1);
\draw (0,-1) rectangle (1,-2);
\draw (1,-1) rectangle (2,-2);
\draw (0.250,-0.250) rectangle ( 0.750 ,-0.750);
\draw (1.25,-0.250) rectangle ( 2.75 ,-0.750);
\draw (0.250,-1.25) rectangle ( 0.750 ,-1.75);
\draw (1.25,-1.25) rectangle ( 1.75 ,-1.75);
\end{tikzpicture}}
\otimes \raisebox{16.0pt}{\begin{tikzpicture}[scale =0.400,baseline=(current bounding box.north)]
\draw (0,0) rectangle (1,-1);
\draw (1,0) rectangle (2,-1);
\draw (0,-1) rectangle (1,-2);
\draw (1,-1) rectangle (2,-2);
\draw (0.250,-0.250) rectangle ( 1.75 ,-0.750);
\draw (0.250,-1.25) rectangle ( 0.750 ,-1.75);
\draw (1.25,-1.25) rectangle ( 1.75 ,-1.75);
\end{tikzpicture}}
\quad\Bigg\vert\,
T_3 = \raisebox{16.0pt}{\begin{tikzpicture}[scale =0.400,baseline=(current bounding box.north)]
\draw (0,0) rectangle (1,-1);
\draw (1,0) rectangle (2,-1);
\draw (2,0) rectangle (3,-1);
\draw (0,-1) rectangle (1,-2);
\draw (1,-1) rectangle (2,-2);
\draw (0.250,-0.250) rectangle ( 1.75 ,-0.750);
\draw (2.25,-0.250) rectangle ( 2.75 ,-0.750);
\draw (0.250,-1.25) rectangle ( 0.750 ,-1.75);
\draw (1.25,-1.25) rectangle ( 1.75 ,-1.75);
\end{tikzpicture}}
\otimes \raisebox{16.0pt}{\begin{tikzpicture}[scale =0.400,baseline=(current bounding box.north)]
\draw (0,0) rectangle (1,-1);
\draw (1,0) rectangle (2,-1);
\draw (0,-1) rectangle (1,-2);
\draw (1,-1) rectangle (2,-2);
\draw (0.250,-0.250) rectangle ( 0.750 ,-0.750);
\draw (1.25,-0.250) rectangle ( 1.75 ,-0.750);
\draw (0.250,-1.25) rectangle ( 1.75 ,-1.75);
\end{tikzpicture}}
\]\[
T_4 =
\raisebox{16.0pt}{\begin{tikzpicture}[scale =0.400,baseline=(current bounding box.north)]
\draw (0,0) rectangle (1,-1);
\draw (1,0) rectangle (2,-1);
\draw (2,0) rectangle (3,-1);
\draw (0,-1) rectangle (1,-2);
\draw (1,-1) rectangle (2,-2);
\draw (0.250,-0.250) rectangle ( 0.750 ,-0.750);
\draw (1.25,-0.250) rectangle ( 2.75 ,-0.750);
\draw (0.250,-1.25) rectangle ( 0.750 ,-1.75);
\draw (1.25,-1.25) rectangle ( 1.75 ,-1.75);
\end{tikzpicture}}
\otimes \raisebox{16.0pt}{\begin{tikzpicture}[scale =0.400,baseline=(current bounding box.north)]
\draw (0,0) rectangle (1,-1);
\draw (1,0) rectangle (2,-1);
\draw (0,-1) rectangle (1,-2);
\draw (1,-1) rectangle (2,-2);
\draw (0.250,-0.250) rectangle ( 0.750 ,-0.750);
\draw (1.25,-0.250) rectangle ( 1.75 ,-0.750);
\draw (0.250,-1.25) rectangle ( 1.75 ,-1.75);
\end{tikzpicture}}
\quad\Bigg\vert\,
T_5=
\raisebox{16.0pt}{\begin{tikzpicture}[scale =0.400,baseline=(current bounding box.north)]
\draw (0,0) rectangle (1,-1);
\draw (1,0) rectangle (2,-1);
\draw (2,0) rectangle (3,-1);
\draw (0,-1) rectangle (1,-2);
\draw (1,-1) rectangle (2,-2);
\draw (0.250,-0.250) rectangle ( 0.750 ,-0.750);
\draw (1.25,-0.250) rectangle ( 1.75 ,-0.750);
\draw (2.25,-0.250) rectangle ( 2.75 ,-0.750);
\draw (0.250,-1.25) rectangle ( 1.75 ,-1.75);
\end{tikzpicture}}
\otimes \raisebox{16.0pt}{\begin{tikzpicture}[scale =0.400,baseline=(current bounding box.north)]
\draw (0,0) rectangle (1,-1);
\draw (1,0) rectangle (2,-1);
\draw (0,-1) rectangle (1,-2);
\draw (1,-1) rectangle (2,-2);
\draw (0.250,-0.250) rectangle ( 1.75 ,-0.750);
\draw (0.250,-1.25) rectangle ( 0.750 ,-1.75);
\draw (1.25,-1.25) rectangle ( 1.75 ,-1.75);
\end{tikzpicture}}
\quad \Bigg\vert\,
T_6 = 
\raisebox{16.0pt}{\begin{tikzpicture}[scale =0.400,baseline=(current bounding box.north)]
\draw (0,0) rectangle (1,-1);
\draw (1,0) rectangle (2,-1);
\draw (2,0) rectangle (3,-1);
\draw (0,-1) rectangle (1,-2);
\draw (1,-1) rectangle (2,-2);
\draw (0.250,-0.250) rectangle ( 0.750 ,-0.750);
\draw (1.25,-0.250) rectangle ( 1.75 ,-0.750);
\draw (2.25,-0.250) rectangle ( 2.75 ,-0.750);
\draw (0.250,-1.25) rectangle ( 1.75 ,-1.75);
\end{tikzpicture}}
\otimes \raisebox{16.0pt}{\begin{tikzpicture}[scale =0.400,baseline=(current bounding box.north)]
\draw (0,0) rectangle (1,-1);
\draw (1,0) rectangle (2,-1);
\draw (0,-1) rectangle (1,-2);
\draw (1,-1) rectangle (2,-2);
\draw (0.250,-0.250) rectangle ( 0.750 ,-0.750);
\draw (1.25,-0.250) rectangle ( 1.75 ,-0.750);
\draw (0.250,-1.25) rectangle ( 1.75 ,-1.75);
\end{tikzpicture}}
\]
\end{footnotesize}
\end{example}

\begin{theorem}\label{thm:P-HE}
For $\sigma, \tau \in \Typ(n)$,
\begin{enumerate}
\item The coefficient of $H_{\tau}$ in $P_{\sigma}$ is $\sum_{T\in \pt^{\mathrm{simp}}(\sigma, \tau)}(-1)^{\ell(\tau) - \ell(\sigma)} \L(T)$.
\item The coefficient of $E_{\tau}$ in $P_{\sigma}$ is $\sum_{T\in \pt^{\mathrm{simp}}(\sigma, \tau)}(-1)^{\ell(\tau)} \L(T)$.
\end{enumerate}
\end{theorem}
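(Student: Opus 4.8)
The plan is to follow the template of the proof of Theorem~\ref{thm:HE}: take the $P_d$-expansions of Proposition~\ref{prop:P-H,P-E} as the base case and lift them to an arbitrary type $\sigma$ using the multiplicativity $P_\sigma = \prod_{d^r\in\sigma}P_{d^r}$ together with the Adams operation. I would prove (1) in detail; part (2) is the same argument with a single sign changed.

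First I would record the one-block case: applying $x_{ij}\mapsto x_{ij}^r$ to Proposition~\ref{prop:P-H,P-E}(1) gives $P_{d^r} = \sum_{\delta\in\PCom_{\sqf}(d)}(-1)^{\ell(\delta)-1}L(\delta)H_{\delta^r}$, since $H_\delta(\x_{**}^r) = H_{\delta^r}$. Write $\sigma = d_1^{r_1}\cdots d_m^{r_m}$, so each block $d_j^{r_j}$ corresponds to a row of length $d_j$ in the $r_j$th tensor factor of $\dg^\otimes(\sigma)$. A square-free polycomposition $(\alpha)^1\in\PCom_{\sqf}(d_j)$ is the same datum as a tiling of that row by bricks of lengths $\alpha_1,\alpha_2,\ldots$ read left to right. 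Hence, expanding $P_\sigma = \prod_{j=1}^m P_{d_j^{r_j}}$ and distributing, the resulting terms are indexed by choices of a tiling of every row of $\dg^\otimes(\sigma)$, i.e.\ by simple polybrick tabloids $T$ of shape $\sigma$; with $\alpha^{(j)}$ denoting the tiling of the $j$th row, the term attached to $T$ is $\left(\prod_{j=1}^m(-1)^{\ell(\alpha^{(j)})-1}\,\alpha^{(j)}_{\ell(\alpha^{(j)})}\right)\prod_{j=1}^m H_{(\alpha^{(j)})^{r_j}}$.

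Next I would evaluate the three factors. The product of $H$'s equals $H_\tau$, where $\tau$ is the content of $T$ (its $i$th tensor factor is the partition of the brick lengths lying in the $i$th tensor factor of $T$), because the $H_{d^r}$ commute and $H_\bullet$ is defined multiplicatively over blocks. The weight $\prod_j\alpha^{(j)}_{\ell(\alpha^{(j)})}$ is exactly $\L(T)=\prod_{i\geq 1}L(T_i)$, the product over all rows of the length of the brick ending that row. The sign is $(-1)^{\sum_j(\ell(\alpha^{(j)})-1)} = (-1)^{\ell(\tau)-\ell(\sigma)}$, since $\sum_j\ell(\alpha^{(j)})$ is the total number of bricks $=\ell(\tau)$ and $m$ is the number of rows $=\ell(\sigma)$. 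Collecting the terms with a fixed content $\tau$ then gives the coefficient of $H_\tau$ in $P_\sigma$ as $\sum_{T\in\pt^{\mathrm{simp}}(\sigma,\tau)}(-1)^{\ell(\tau)-\ell(\sigma)}\L(T)$, as claimed. For (2) the identical argument applied to Proposition~\ref{prop:P-H,P-E}(2) replaces each row's sign $(-1)^{\ell(\alpha^{(j)})-1}$ by $(-1)^{\ell(\alpha^{(j)})}$, so the total sign becomes $(-1)^{\sum_j\ell(\alpha^{(j)})}=(-1)^{\ell(\tau)}$ while $\L(T)$ is unchanged.

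The argument is essentially bookkeeping, so the step I expect to require the most care is the collection step: confirming that simple polybrick tabloids of shape $\sigma$ and content $\tau$ biject with the term-indexing data (one square-free polycomposition per block of $\sigma$), that the product of the associated $H_{d^r}$'s is precisely $H_\tau$ (resp.\ $E_\tau$), and that the $\L$ statistic and the sign exponent transfer with no extra factors --- in particular that the coefficient appearing in Proposition~\ref{prop:P-H,P-E} is $L(\delta)$ for the \emph{unscaled} square-free $\delta$, which is what matches $L(T_i)$ being a product of brick lengths rather than scaled lengths. Once this bookkeeping is verified, the conclusion follows exactly as in Theorem~\ref{thm:HE}.
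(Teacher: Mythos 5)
Your proposal is correct and follows essentially the same route as the paper's proof: apply the Adams operation to the square-free expansions of Proposition~\ref{prop:P-H,P-E}, multiply over the blocks of $\sigma$, and reindex the resulting terms by simple polybrick tabloids, with the sign and the $\L$-statistic bookkeeping worked out exactly as the paper does. Your explicit check that the coefficient $L(\delta)$ is that of the unscaled square-free polycomposition (so it matches the brick lengths in $\L(T)$) is a point the paper passes over more quickly, but the argument is the same.
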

\begin{proof}
Each $d^r\in \sigma$ corresponds to a row of length $d$ in the $r$th tensor factor of $\dg^\otimes(\sigma)$. A choice of a square-free polycomposition $\delta = (\alpha)^1$ corresponds to placing bricks of lengths $\alpha_1$, $\alpha_2,\ldots$ from left to right. This corresponds to the term $(-1)^{\ell(\delta)-1}L(\delta) H_{\delta^r}$ arising from the $H$-expansion of $P_{d^r}$. Continue this process for all $d^r\in \sigma$ to obtain a simple polybrick tabloid $T$ where the shape is $\sigma$ and the content is the type $\tau$ such that $\tau|^i$ is the partition formed by the bricks placed in $\dg(\sigma|^i)$. Taking the product of the terms above gives us $(-1)^{\ell(\tau)-\ell(\sigma)} \L(T) H_\tau$. The factor $\L(T)$ arises by taking a product over all weights $L(\delta)$. Summing the coefficients over all simple polybrick tabloids of shape $\sigma$ and content $\tau$ proves (1). The proof for (2) follows similarly.
\end{proof}
\begin{example}
The coefficient of $H_{(3,2,1)^1(2,1)^2}$ in $P_{(3,3)^1(2,1)^2}$ is $-36$ which can be computed using the following simple polybrick tabloids with $\L(T_1) = \L(T_2) = 6$ and $\L(T_3) = \L(T_4) = 12$, and by observing $\ell(\tau) - \ell(\sigma) = 1$:
\begin{footnotesize}
\[
T_1 =
\raisebox{16.0pt}{\begin{tikzpicture}[scale =0.400,baseline=(current bounding box.north)]
\draw (0,0) rectangle (1,-1);
\draw (1,0) rectangle (2,-1);
\draw (2,0) rectangle (3,-1);
\draw (0,-1) rectangle (1,-2);
\draw (1,-1) rectangle (2,-2);
\draw (2,-1) rectangle (3,-2);
\draw (0.250,-0.250) rectangle ( 2.75 ,-0.750);
\draw (0.250,-1.25) rectangle ( 1.75 ,-1.75);
\draw (2.25,-1.25) rectangle ( 2.75 ,-1.75);
\end{tikzpicture}}
\otimes \raisebox{16.0pt}{\begin{tikzpicture}[scale =0.400,baseline=(current bounding box.north)]
\draw (0,0) rectangle (1,-1);
\draw (1,0) rectangle (2,-1);
\draw (0,-1) rectangle (1,-2);
\draw (0.250,-0.250) rectangle ( 1.75 ,-0.750);
\draw (0.250,-1.25) rectangle ( 0.750 ,-1.75);
\end{tikzpicture}}
\quad\Bigg\vert\,
T_2 =
\raisebox{16.0pt}{\begin{tikzpicture}[scale =0.400,baseline=(current bounding box.north)]
\draw (0,0) rectangle (1,-1);
\draw (1,0) rectangle (2,-1);
\draw (2,0) rectangle (3,-1);
\draw (0,-1) rectangle (1,-2);
\draw (1,-1) rectangle (2,-2);
\draw (2,-1) rectangle (3,-2);
\draw (0.250,-0.250) rectangle ( 1.75 ,-0.750);
\draw (2.25,-0.250) rectangle ( 2.75 ,-0.750);
\draw (0.250,-1.25) rectangle ( 2.75 ,-1.75);
\end{tikzpicture}}
\otimes \raisebox{16.0pt}{\begin{tikzpicture}[scale =0.400,baseline=(current bounding box.north)]
\draw (0,0) rectangle (1,-1);
\draw (1,0) rectangle (2,-1);
\draw (0,-1) rectangle (1,-2);
\draw (0.250,-0.250) rectangle ( 1.75 ,-0.750);
\draw (0.250,-1.25) rectangle ( 0.750 ,-1.75);
\end{tikzpicture}}
\quad\Bigg\vert\,
T_3 =
\raisebox{16.0pt}{\begin{tikzpicture}[scale =0.400,baseline=(current bounding box.north)]
\draw (0,0) rectangle (1,-1);
\draw (1,0) rectangle (2,-1);
\draw (2,0) rectangle (3,-1);
\draw (0,-1) rectangle (1,-2);
\draw (1,-1) rectangle (2,-2);
\draw (2,-1) rectangle (3,-2);
\draw (0.250,-0.250) rectangle ( 2.75 ,-0.750);
\draw (0.250,-1.25) rectangle ( 0.750 ,-1.75);
\draw (1.25,-1.25) rectangle ( 2.75 ,-1.75);
\end{tikzpicture}}
\otimes \raisebox{16.0pt}{\begin{tikzpicture}[scale =0.400,baseline=(current bounding box.north)]
\draw (0,0) rectangle (1,-1);
\draw (1,0) rectangle (2,-1);
\draw (0,-1) rectangle (1,-2);
\draw (0.250,-0.250) rectangle ( 1.75 ,-0.750);
\draw (0.250,-1.25) rectangle ( 0.750 ,-1.75);
\end{tikzpicture}}
\quad\Bigg\vert\,
T_4 =
\raisebox{16.0pt}{\begin{tikzpicture}[scale =0.400,baseline=(current bounding box.north)]
\draw (0,0) rectangle (1,-1);
\draw (1,0) rectangle (2,-1);
\draw (2,0) rectangle (3,-1);
\draw (0,-1) rectangle (1,-2);
\draw (1,-1) rectangle (2,-2);
\draw (2,-1) rectangle (3,-2);
\draw (0.250,-0.250) rectangle ( 0.750 ,-0.750);
\draw (1.25,-0.250) rectangle ( 2.75 ,-0.750);
\draw (0.250,-1.25) rectangle ( 2.75 ,-1.75);
\end{tikzpicture}}
\otimes \raisebox{16.0pt}{\begin{tikzpicture}[scale =0.400,baseline=(current bounding box.north)]
\draw (0,0) rectangle (1,-1);
\draw (1,0) rectangle (2,-1);
\draw (0,-1) rectangle (1,-2);
\draw (0.250,-0.250) rectangle ( 1.75 ,-0.750);
\draw (0.250,-1.25) rectangle ( 0.750 ,-1.75);
\end{tikzpicture}}
\]
\end{footnotesize}
\end{example}

Recall the definition $z_\lambda = \prod_{i\geq 1} i^{m_i(\lambda)}m_i(\lambda)!$ where $m_i(\lambda)$ is the number of times a part equal to $i$ appears in $\lambda$. We define its polysymmetric analog $z_\tau^\otimes = \prod_{i\geq 1} z_{\tau|^i}$ for any type $\tau$. Also recall for $\alpha = (\alpha_1, \alpha_2, \ldots, \alpha_k)$, we have 
\[
Z_\alpha = (\alpha_1)(\alpha_1+\alpha_2)\ldots (\alpha_1+\alpha_2+\ldots+\alpha_k).
\]
For a square-free polycompositions $\delta$, $Z_\delta = Z_\alpha$.
\begin{theorem}\label{thm:H-P,E-P}
For $\sigma, \tau\in \Typ(n)$,
\begin{enumerate}
\item The coefficient of $P_{\tau}$ in $H_{\sigma}$ is $|\pt^{\mathrm{osimp}}(\sigma,\tau)|/z^\otimes_\tau$.
\item The coefficient of $P_{\tau}$ in $E_{\sigma}$ is $(-1)^{\ell(\tau)}|\pt^{\mathrm{osimp}}(\sigma,\tau)|/z^\otimes_\tau$.
\end{enumerate}
\end{theorem}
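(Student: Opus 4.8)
The plan is to deduce the statement from the single-block expansions $H_d=\sum_{\delta\in\PCom_{\sqf}(d)}P_\delta/Z_\delta$ (Proposition~\ref{prop:H-P}) and $E_d=\sum_{\delta\in\PCom_{\sqf}(d)}(-1)^{\ell(\delta)}P_\delta/Z_\delta$ (Proposition~\ref{prop:E-P}), applied blockwise, and then to convert the resulting harmonic-mean expressions into the quantities $z_\tau^\otimes$ by a counting identity. I will describe the argument for (1); part (2) differs only by a single global sign, isolated at the end.

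First I would expand. Since $F_{d^r}$ arises from $F_d$ by the substitution $x_{**}\mapsto x_{**}^r$ (and likewise $P_{\delta^r}$ from $P_\delta$), Proposition~\ref{prop:H-P} gives $H_{d^i}=\sum_{\alpha\models d}P_{(\alpha)^i}/Z_\alpha$ for every block $d^i$. Because $H_\sigma=\prod_{d^i\in\sigma}H_{d^i}$ and $P$ of a polycomposition depends only on its underlying type, expanding the product amounts to choosing, for each row of $\dg^\otimes(\sigma)$ (i.e.\ each block $d^i\in\sigma$), a composition $\alpha$ of the row length $d$ and laying bricks of sizes $\alpha_1,\alpha_2,\dots$ from left to right; the resulting brick tiling $T$ of $\dg^\otimes(\sigma)$ contributes $\bigl(\prod_{\text{rows }r}1/Z_{\alpha^{(r)}}\bigr)P_\tau$, where $\tau|^i$ is the sorting of all brick lengths occurring in tensor factor $i$. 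Collecting terms, the coefficient of $P_\tau$ in $H_\sigma$ equals $\sum_T\prod_{\text{rows }r}1/Z_{\alpha^{(r)}}$, summed over brick tilings $T$ of $\dg^\otimes(\sigma)$ of content $\tau$; since every factor is attached to one tensor factor, this coefficient factors as $\prod_{i\ge1}c_i$, where $c_i=\sum\prod_j 1/Z_{\alpha^{(j)}}$ with $j$ ranging over the rows of $\mu:=\sigma|^i$ (of lengths $\mu_1,\dots,\mu_t$) and the sum over tuples $\alpha^{(j)}\models\mu_j$ whose combined multiset of parts is $\lambda:=\tau|^i$.

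The core step is then $c_i=N(\mu,\lambda)/z_\lambda$, where $N(\mu,\lambda)$ denotes the number of ordered brick tabloids of shape $\mu$ and content $\lambda$. I would argue as follows. By Lemma~\ref{lem:z-harmonic}(1), $1/Z_{\alpha^{(j)}}=|K_{\alpha^{(j)}}|/\mu_j!$, so $\mu_1!\cdots\mu_t!\,c_i=\sum\prod_j|K_{\alpha^{(j)}}|$ counts tuples $(\pi_1,\dots,\pi_t)$ with $\pi_j\in K_{\alpha^{(j)}}\subset\S_{\mu_j}$. Choosing in addition an ordered set partition $\{1,\dots,n_i\}=S_1\sqcup\cdots\sqcup S_t$ with $|S_j|=\mu_j$ (there are $n_i!/\prod_j\mu_j!$ of these, with $n_i:=|\mu|$) and transporting each $\pi_j$ to act on $S_j$ through the order-isomorphism produces a permutation $\pi=\pi_1\cdots\pi_t\in\S_{n_i}$ together with an ordered partition of $\{1,\dots,n_i\}$ into $\pi$-stable blocks of sizes $\mu_j$; running over all $\alpha^{(j)}$, the constraint ``combined content $\lambda$'' becomes exactly ``$\cycP(\pi)=\lambda$'', using $\cycP(\pi)=\sort\bigl(\bigsqcup_j\cycP(\pi_j)\bigr)=\sort\bigl(\bigsqcup_j\sort(\alpha^{(j)})\bigr)$ and Lemma~\ref{lem:z-harmonic}(2) to sum $|K_{\alpha^{(j)}}|$ over $\alpha^{(j)}$ of fixed sorted type. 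Hence $n_i!\,c_i$ is the number of pairs $(\pi,(S_j)_j)$ with $\cycP(\pi)=\lambda$ and $(S_j)_j$ an ordered partition into $\pi$-stable blocks of sizes $\mu_j$. Counting these in the other order, there are $n_i!/z_\lambda$ choices of $\pi$, and for each, a valid $(S_j)_j$ is the same data — with the cycles of $\pi$ playing the role of the labeled bricks, whose lengths form $\lambda$ — as an assignment of those bricks to rows $1,\dots,t$ with row-length sums $\mu_1,\dots,\mu_t$, i.e.\ an ordered brick tabloid of shape $\mu$ and content $\lambda$; there are $N(\mu,\lambda)$ of these. Thus $n_i!\,c_i=(n_i!/z_\lambda)N(\mu,\lambda)$, so $c_i=N(\mu,\lambda)/z_\lambda$.

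Putting the pieces together, the coefficient of $P_\tau$ in $H_\sigma$ is $\prod_i c_i=\prod_i N(\sigma|^i,\tau|^i)/z_{\tau|^i}=|\pt^{\mathrm{osimp}}(\sigma,\tau)|/z_\tau^\otimes$, since an ordered simple polybrick tabloid is by definition a tensor product of ordered brick tabloids, one per tensor factor, and $z_\tau^\otimes=\prod_i z_{\tau|^i}$. For part (2), replacing Proposition~\ref{prop:H-P} by Proposition~\ref{prop:E-P} attaches a factor $(-1)^{\ell(\alpha^{(r)})}$ to each row; over any tiling of content $\tau$ these multiply to $(-1)^{\sum_r\ell(\alpha^{(r)})}=(-1)^{\ell(\tau)}$, a constant that pulls out of the sum, giving $(-1)^{\ell(\tau)}|\pt^{\mathrm{osimp}}(\sigma,\tau)|/z_\tau^\otimes$. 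The main obstacle is the identity $c_i=N(\mu,\lambda)/z_\lambda$: the blockwise expansion, the factorization over tensor factors, and the sign in (2) are routine bookkeeping, but turning $\prod_j 1/Z_{\alpha^{(j)}}$ into $N(\mu,\lambda)/z_\lambda$ requires the double application of Lemma~\ref{lem:z-harmonic} together with the observation that distributing the cycles of a fixed permutation among the rows is precisely the data of an ordered brick tabloid.
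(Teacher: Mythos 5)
Your proposal is correct and follows the same overall strategy as the paper: expand each block via Propositions \ref{prop:H-P} and \ref{prop:E-P}, observe that the coefficient of $P_\tau$ factors over tensor factors, and convert the resulting $1/Z$-weights into $|\pt^{\mathrm{osimp}}(\sigma,\tau)|/z^\otimes_\tau$. Where you genuinely diverge is in how you establish the key identity $c_i=\sum\prod_j 1/Z_{\alpha^{(j)}}=N(\mu,\lambda)/z_\lambda$. The paper first collapses each row's sum over compositions to a sum over partitions via Lemma \ref{lem:z-harmonic}(2), writing $H_d=\sum_{\mu\in\Par(d)}P_{(\mu)^1}/z_\mu$, and then identifies $z_\lambda/\prod_j z_{\mu^{(j)}}$ with the number of ordered brick tabloids by a direct multinomial computation on the part-multiplicities (permuting the bricks of each length subject to the increasing-label condition in rows, starting from a canonical tabloid $T_{\lambda,\mu}$). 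You instead keep the composition-indexed weights, clear denominators using Lemma \ref{lem:z-harmonic}(1) to reinterpret $\prod_j|K_{\alpha^{(j)}}|$ as counting tuples of permutations, and double-count pairs consisting of a permutation of cycle type $\lambda$ together with an ordered partition into stable blocks of sizes $\mu_j$, recognizing the second coordinate as exactly an ordered brick tabloid with the cycles playing the role of labeled bricks. Your route is slightly longer but more transparently bijective and sidesteps the paper's explicit construction of the canonical tabloid; the paper's route is shorter once the harmonic-mean lemma has been applied rowwise. Your treatment of the sign in part (2) and of general (non-square-free) $\sigma$ via factorization over tensor factors matches the paper's.
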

\begin{proof}
We prove (1) for square-free types $\sigma = (d_1,\ldots, d_s)^1$. For each $d$ in $\sigma$, $H_{d} = \sum\limits_{\delta \in \PCom_{\sqf}(d)} \dfrac{P_{\delta}}{Z_\delta}$ from which it follows that $H_d = \sum\limits_{\mu\in \Par(d)} \sum\limits_{\substack{\alpha\in \Com(d)\\ \sort(\alpha) = \mu}} \dfrac{P_{(\alpha)^1}}{Z_{\alpha}}$. From Lemma \ref{lem:z-harmonic}, we obtain $H_{d} = \sum\limits_{\mu\in \Par(d)} \dfrac{P_{(\mu)^1}}{z_\mu}$. For each $d_i$ in $\sigma$, choose partitions $\mu^{(i)}\in \Par(d_i)$ and construct an ordered brick tabloid of shape $\lambda$ and content $\mu$ is the multiset union $\bigcup_{i=1}^{s}\mu^{(i)}$ such that for two bricks of the same length with labels $l>l'$, the brick with label $l$ appears weakly below the brick with label $l'$. Call this ordered brick tabloid $T_{\lambda,\mu}$. It is routine to check that it is unique. From $T_{\lambda, \mu}$, we can generate other ordered brick tabloids of shape $\lambda$ and content $\mu$. For each $i$, we permute the bricks of length $i$ under the constraint that the labels strictly increase in a given row. The bricks of length $i$ can be permuted in $m_i(\mu)!$ ways. We divide this by $m_i(\mu^{(1)})! m_i(\mu^{(2)})! \ldots$  to ensure that within each row the bricks are in strictly increasing order of their labels. We have 
\[
\prod\limits_{i\geq 1} \dfrac{m_i(\mu)!}{ m_i(\mu^{(1)})! m_i(\mu^{(2)})!\ldots} = \text{number of ordered brick tabloids of shape } \lambda \text{ and content } \mu.
\]
Multiplying and dividing by a factor of $\prod\limits_{i\geq 1} i^{m_i(\mu^{(1)})} i^{m_i(\mu^{(2)})} \ldots i^{m_i(\mu^{(s)})}$ gives
\[
\dfrac{z_\mu}{z_{\mu^{(1)}}z_{\mu^{(2)}}\ldots z_{\mu^{(s)}}} = \text{number of ordered brick tabloids of shape } \lambda \text{ and content } \mu.
\]
From the above expansion of $H_d$, we have 
\[
H_{(d_1, \ldots,d_s)^1} = \sum\limits_{\substack{\mu = \bigcup_{i=1}^{s}\mu^{(i)}\\ \mu^{(j)}\in \Par(d_j)}} \dfrac{P_{(\mu)^1}}{z_{\mu^{(1)}}z_{\mu^{(2)}}\ldots z_{\mu^{(s)}}}
\] 
From our above discussion, we deduce that for square-free types $\sigma$,  \[H_\sigma = \sum_{\tau \in \PCom_{\sqf}(|\sigma|)} |\pt^{\mathrm{osimp}}(\sigma,\tau)|/z^\otimes_\tau.\] 
Here, we also use that for a square-free permutation $\sigma = (\lambda)^1$, we have $z_\sigma^\otimes = z_{\lambda}$.
For a general type $\sigma$, we can construct an ordered brick tabloid of shape $\sigma|^r$ and content $\tau|^r$ for $r\geq 1$ by following the above discussion. This gives us (1). The quantity $\ell(\tau)$ is the number of bricks and the sign in (2) is obtained by assigning $-1$ to each brick used and multiplying the sign for each brick.
\end{proof}
\begin{example}
Let $\tau = (2,2,1)^1(1,1)^2$ and $\sigma = (3,2)^1(1,1)^2$. We first compute $\ell(\tau) = 5$ and $z_{\tau}^\otimes = z_{2,2,1}z_{1,1} = (2^2\cdot 2! \cdot 1\cdot 1!) \cdot (1^2\cdot 2!) = 16$. We find $|\pt^{\mathrm{osimp}}(\sigma,\tau)| = 4$ by listing the following four polybrick tabloids.
\[
T_1 =
\raisebox{16.0pt}{\begin{tikzpicture}[scale =0.400,baseline=(current bounding box.north)]
\draw (0,0) rectangle (1,-1);
\draw (1,0) rectangle (2,-1);
\draw (2,0) rectangle (3,-1);
\draw (0,-1) rectangle (1,-2);
\draw (1,-1) rectangle (2,-2);
\draw (0.350,-0.250) rectangle ( 1.55 ,-0.700);
\draw ( 1.80 ,-0.720) node[scale =0.600] {1};
\draw (2.25,-0.250) rectangle ( 2.65 ,-0.700);
\draw ( 2.84 ,-0.720) node[scale =0.600] {3};
\draw (0.350,-1.25) rectangle ( 1.55 ,-1.70);
\draw ( 1.80 ,-1.72) node[scale =0.600] {2};
\end{tikzpicture}}
\otimes \raisebox{16.0pt}{\begin{tikzpicture}[scale =0.400,baseline=(current bounding box.north)]
\draw (0,0) rectangle (1,-1);
\draw (0,-1) rectangle (1,-2);
\draw (0.250,-0.250) rectangle ( 0.650 ,-0.700);
\draw ( 0.840 ,-0.720) node[scale =0.600] {4};
\draw (0.250,-1.25) rectangle ( 0.650 ,-1.70);
\draw ( 0.840 ,-1.72) node[scale =0.600] {5};
\end{tikzpicture}}
\quad\Bigg\vert\,
T_2 =
\raisebox{16.0pt}{\begin{tikzpicture}[scale =0.400,baseline=(current bounding box.north)]
\draw (0,0) rectangle (1,-1);
\draw (1,0) rectangle (2,-1);
\draw (2,0) rectangle (3,-1);
\draw (0,-1) rectangle (1,-2);
\draw (1,-1) rectangle (2,-2);
\draw (0.350,-0.250) rectangle ( 1.55 ,-0.700);
\draw ( 1.80 ,-0.720) node[scale =0.600] {1};
\draw (2.25,-0.250) rectangle ( 2.65 ,-0.700);
\draw ( 2.84 ,-0.720) node[scale =0.600] {3};
\draw (0.350,-1.25) rectangle ( 1.55 ,-1.70);
\draw ( 1.80 ,-1.72) node[scale =0.600] {2};
\end{tikzpicture}}
\otimes \raisebox{16.0pt}{\begin{tikzpicture}[scale =0.400,baseline=(current bounding box.north)]
\draw (0,0) rectangle (1,-1);
\draw (0,-1) rectangle (1,-2);
\draw (0.250,-0.250) rectangle ( 0.650 ,-0.700);
\draw ( 0.840 ,-0.720) node[scale =0.600] {5};
\draw (0.250,-1.25) rectangle ( 0.650 ,-1.70);
\draw ( 0.840 ,-1.72) node[scale =0.600] {4};
\end{tikzpicture}}
\quad\Bigg\vert\,
T_3 =
\raisebox{16.0pt}{\begin{tikzpicture}[scale =0.400,baseline=(current bounding box.north)]
\draw (0,0) rectangle (1,-1);
\draw (1,0) rectangle (2,-1);
\draw (2,0) rectangle (3,-1);
\draw (0,-1) rectangle (1,-2);
\draw (1,-1) rectangle (2,-2);
\draw (0.350,-0.250) rectangle ( 1.55 ,-0.700);
\draw ( 1.80 ,-0.720) node[scale =0.600] {2};
\draw (2.25,-0.250) rectangle ( 2.65 ,-0.700);
\draw ( 2.84 ,-0.720) node[scale =0.600] {3};
\draw (0.350,-1.25) rectangle ( 1.55 ,-1.70);
\draw ( 1.80 ,-1.72) node[scale =0.600] {1};
\end{tikzpicture}}
\otimes \raisebox{16.0pt}{\begin{tikzpicture}[scale =0.400,baseline=(current bounding box.north)]
\draw (0,0) rectangle (1,-1);
\draw (0,-1) rectangle (1,-2);
\draw (0.250,-0.250) rectangle ( 0.650 ,-0.700);
\draw ( 0.840 ,-0.720) node[scale =0.600] {4};
\draw (0.250,-1.25) rectangle ( 0.650 ,-1.70);
\draw ( 0.840 ,-1.72) node[scale =0.600] {5};
\end{tikzpicture}}
\quad\Bigg\vert\,
T_2 =
\raisebox{16.0pt}{\begin{tikzpicture}[scale =0.400,baseline=(current bounding box.north)]
\draw (0,0) rectangle (1,-1);
\draw (1,0) rectangle (2,-1);
\draw (2,0) rectangle (3,-1);
\draw (0,-1) rectangle (1,-2);
\draw (1,-1) rectangle (2,-2);
\draw (0.350,-0.250) rectangle ( 1.55 ,-0.700);
\draw ( 1.80 ,-0.720) node[scale =0.600] {2};
\draw (2.25,-0.250) rectangle ( 2.65 ,-0.700);
\draw ( 2.84 ,-0.720) node[scale =0.600] {3};
\draw (0.350,-1.25) rectangle ( 1.55 ,-1.70);
\draw ( 1.80 ,-1.72) node[scale =0.600] {1};
\end{tikzpicture}}
\otimes \raisebox{16.0pt}{\begin{tikzpicture}[scale =0.400,baseline=(current bounding box.north)]
\draw (0,0) rectangle (1,-1);
\draw (0,-1) rectangle (1,-2);
\draw (0.250,-0.250) rectangle ( 0.650 ,-0.700);
\draw ( 0.840 ,-0.720) node[scale =0.600] {5};
\draw (0.250,-1.25) rectangle ( 0.650 ,-1.70);
\draw ( 0.840 ,-1.72) node[scale =0.600] {4};
\end{tikzpicture}}
\]

So, the coefficient of $P_\tau$ in $E_\sigma$ is $(-1)^5 4/16 = -1/4$.
\end{example}


\subsection{Double polybrick tabloids}
Now we deal with the $H$, $E$, and $P$ expansions of $E^+_\tau$ for $\tau\in \Typ$. The combinatorial objects that appear in these expansions are tiled by two kinds of bricks.
Define a \textit{doublebrick} to be a brick of even length that is marked with a $+$ sign in the superscript. A \textit{double polybrick tabloid of shape $\sigma$ and content $\tau = d_1^{r_1}\ldots d_s^{r_s}$} with $r_1\leq r_2\leq \ldots \leq r_s$ is a tiling of $\dg^\otimes(\sigma)$ using bricks and doublebricks such that
\begin{itemize}
\item  the doublebricks appear to the right of all bricks in a given row
\item  for each $1\leq i \leq s$, we either place a brick of length $d_i$ in the $r_i$th tensor factor, or a doublebrick of length $2d_i$ in the tensor factor $r_i/2$. The latter is only possible when $r$ is even.
\end{itemize}
We denote the set of double polybrick tabloids of shape $\sigma$ and content $\tau$ by $\pt^{\mathrm{doub}}(\sigma, \tau)$. For $T\in \pt^{\mathrm{doub}}(\sigma, \tau)$, define $\ell_1(T)$ to be the number of bricks in $T$ and $\ell_2(T)$ to be the number of doublebricks in $T$. 
\begin{example}
The following is an example of a double polybrick tabloid of shape $\sigma = (5,2)^1(5,3)^2$ and content $(2,2,1)^1(3,1,1)^2(2)^4$ with $\ell_1(T) = 5$ and $\ell_2(T) = 2$:
\[
T=\raisebox{16.0pt}{\begin{tikzpicture}[scale =0.400,baseline=(current bounding box.north)]
\draw (0,0) rectangle (1,-1);
\draw (1,0) rectangle (2,-1);
\draw (2,0) rectangle (3,-1);
\draw (3,0) rectangle (4,-1);
\draw (4,0) rectangle (5,-1);
\draw (0,-1) rectangle (1,-2);
\draw (1,-1) rectangle (2,-2);
\draw (0.250,-0.350) rectangle ( 0.650 ,-0.750);
\draw (1.35,-0.350) rectangle ( 2.55 ,-0.750);
\draw (3.35,-0.350) rectangle ( 4.55 ,-0.750);
\draw ( 4.75 ,-0.300) node {\tiny +};
\draw (0.350,-1.35) rectangle ( 1.55 ,-1.75);
\end{tikzpicture}}
\otimes \raisebox{16.0pt}{\begin{tikzpicture}[scale =0.400,baseline=(current bounding box.north)]
\draw (0,0) rectangle (1,-1);
\draw (1,0) rectangle (2,-1);
\draw (2,0) rectangle (3,-1);
\draw (3,0) rectangle (4,-1);
\draw (4,0) rectangle (5,-1);
\draw (0,-1) rectangle (1,-2);
\draw (1,-1) rectangle (2,-2);
\draw (2,-1) rectangle (3,-2);
\draw (0.250,-0.350) rectangle ( 0.650 ,-0.750);
\draw (1.35,-0.350) rectangle ( 4.55 ,-0.750);
\draw ( 4.75 ,-0.300) node {\tiny +};
\draw (0.350,-1.35) rectangle ( 2.55 ,-1.75);
\end{tikzpicture}}
\]
Note that the doublebrick in the first factor contributes the block $1^2$ to the content and the doublebrick in the second factor contributes the block $2^4$ to the content.
\end{example}
We can think of a simple polybrick tabloid as an element of $\pt^{\mathrm{doub}}(\sigma, \tau)$ with $\ell_2(T) = 0$. Define an \textit{$E$-double polybrick tabloid of shape $\sigma$ and content $\tau$} to be a $T\in\pt^{\mathrm{doub}}(\sigma, \tau)$ such that at most one doublebrick appears in each row. Similarly, an \textit{$H$-double polybrick tabloid of shape $\sigma$ and content $\tau$} is a $T\in\pt^{\mathrm{doub}}(\sigma, \tau)$ such that at most one brick appears in each row. Denote these sets by $\pt_E^{\mathrm{doub}}(\sigma, \tau)$ and $\pt_H^{\mathrm{doub}}(\sigma, \tau)$ respectively.
A \textit{labeled doublebrick with label $l$} is a doublebrick with an associated label $l$. An \textit{ordered double polybrick tabloid of shape $\sigma$ and content $\tau = d_1^{r_1}\ldots d_s^{r_s}$} is defined to be a tiling of $\dg^\otimes(\sigma)$ such that we either have a brick of label $i$ and length $d_i$ in the $r_i$th tensor factor, or a doublebrick with label $i$ of length $2d_i$ in the $r_i/2$th tensor factor, and the labels increase from left to right within a row. Denote the set of these by $\pt^{\mathrm{odoub}}(\sigma, \tau)$.
\begin{example}
The following is an example of an ordered double polybrick tabloid of shape $\sigma = (5,2)^1(5,3)^2$ and content $\tau = (2,1)^1(3,1,1,1)^2(2)^4$:
\[
\raisebox{20.0pt}{\begin{tikzpicture}[scale =0.500,baseline=(current bounding box.north)]
\draw (0,0) rectangle (1,-1);
\draw (1,0) rectangle (2,-1);
\draw (2,0) rectangle (3,-1);
\draw (3,0) rectangle (4,-1);
\draw (4,0) rectangle (5,-1);
\draw (0,-1) rectangle (1,-2);
\draw (1,-1) rectangle (2,-2);
\draw (0.250,-0.350) rectangle ( 0.650 ,-0.750);
\draw ( 0.850 ,-0.700) node[scale =0.600] {2};
\draw (1.35,-0.350) rectangle ( 2.55 ,-0.750);
\draw ( 2.75 ,-0.700) node[scale =0.600] {4};
\draw ( 2.75 ,-0.300) node[scale =0.600] {+};
\draw (3.35,-0.350) rectangle ( 4.55 ,-0.750);
\draw ( 4.75 ,-0.700) node[scale =0.600] {6};
\draw ( 4.75 ,-0.300) node[scale =0.600] {+};
\draw (0.350,-1.35) rectangle ( 1.55 ,-1.75);
\draw ( 1.75 ,-1.70) node[scale =0.600] {1};
\end{tikzpicture}}
\otimes \raisebox{20.0pt}{\begin{tikzpicture}[scale =0.500,baseline=(current bounding box.north)]
\draw (0,0) rectangle (1,-1);
\draw (1,0) rectangle (2,-1);
\draw (2,0) rectangle (3,-1);
\draw (3,0) rectangle (4,-1);
\draw (4,0) rectangle (5,-1);
\draw (0,-1) rectangle (1,-2);
\draw (1,-1) rectangle (2,-2);
\draw (2,-1) rectangle (3,-2);
\draw (0.250,-0.350) rectangle ( 0.650 ,-0.750);
\draw ( 0.850 ,-0.700) node[scale =0.600] {5};
\draw (1.35,-0.350) rectangle ( 4.55 ,-0.750);
\draw ( 4.75 ,-0.700) node[scale =0.600] {7};
\draw ( 4.75 ,-0.300) node[scale =0.600] {+};
\draw (0.350,-1.35) rectangle ( 2.55 ,-1.75);
\draw ( 2.75 ,-1.70) node[scale =0.600] {3};
\end{tikzpicture}}
\]
\end{example}
\begin{theorem}\label{thm:E+inE}
For $\sigma, \tau\in \Typ(n)$, the coefficient of $E_\tau$ in $E^+_\sigma$ is $\sum_{T\in \pt_E^{\mathrm{doub}}(\sigma, \tau)} (-1)^{\ell_1(T)}$.
\end{theorem}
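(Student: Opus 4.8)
The plan is to derive the type-level formula from the single-block expansion $E^+_d = \sum_{\delta = \alpha^1(b)^2 \in \PCom_E(d)}(-1)^{\ell(\alpha)}E_\delta$ of Proposition~\ref{prop:E+-HEP}(1), using exactly the product-expansion argument that proves Theorems~\ref{thm:HE} and \ref{thm:P-HE}. Applying the Addams operation to that formula gives, for each block $d^r\in\sigma$,
\[
E^+_{d^r} \;=\; \sum_{\substack{\alpha\in\Com,\ b\geq 0\\ |\alpha|+2b=d}} (-1)^{\ell(\alpha)}\,E_{(\alpha^1(b)^2)^r},
\]
where the polycomposition $(\alpha^1(b)^2)^r$ consists of one block $\alpha_j^{\,r}$ for each part $\alpha_j$ of $\alpha$, together with the single block $b^{\,2r}$ when $b>0$ (and nothing extra when $b=0$). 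The case $n=0$ is immediate, so we may assume $n>0$.

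First I would write $E^+_\sigma=\prod_{d^r\in\sigma}E^+_{d^r}$ and expand each factor by the displayed identity, so that a term of the resulting sum is indexed by a choice, for every block $d^r\in\sigma$, of a composition $\alpha_{d^r}$ and an integer $b_{d^r}\geq0$ with $|\alpha_{d^r}|+2b_{d^r}=d$. Next I would match such a family of choices with a double polybrick tabloid $T$ of shape $\sigma$: in the row of $\dg^\otimes(\sigma)$ which sits in the $r$th tensor factor and is associated with the block $d^r$, place bricks of lengths $(\alpha_{d^r})_1,(\alpha_{d^r})_2,\dots$ from left to right, and, if $b_{d^r}>0$, append one doublebrick of length $2b_{d^r}$ immediately to their right. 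Since at most one doublebrick occurs in each row, $T$ lies in $\pt_E^{\mathrm{doub}}(\sigma,\tau)$, and conversely every element of $\pt_E^{\mathrm{doub}}(\sigma,\tau)$ arises from exactly one such family (the row tilings recover the $\alpha_{d^r}$ as ordered sequences of brick lengths and the $b_{d^r}$ as half the doublebrick length). The content $\tau$ attached to the term is precisely the one read off $T$ by the rule defining $\pt^{\mathrm{doub}}(\sigma,\tau)$: a brick of length $\ell$ in the $r$th tensor factor contributes the block $\ell^{\,r}$, and a doublebrick of length $2\ell$ in the $r$th tensor factor contributes the block $\ell^{\,2r}$.

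Finally I would collect signs. The term coming from a family of choices is $\big(\prod_{d^r\in\sigma}(-1)^{\ell(\alpha_{d^r})}\big)E_\tau$, and since $\ell_1(T)=\sum_{d^r\in\sigma}\ell(\alpha_{d^r})$ is the total number of non-double bricks of $T$, this equals $(-1)^{\ell_1(T)}E_\tau$. Summing over all $T\in\pt_E^{\mathrm{doub}}(\sigma,\tau)$ gives the coefficient of $E_\tau$ in $E^+_\sigma$ as claimed. I do not expect a genuine obstacle here: the argument introduces no new involution, relying only on Proposition~\ref{prop:E+-HEP}(1) and the product structure $F_\sigma=\prod_{d^r\in\sigma}F_{d^r}$. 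The one point needing care is the bookkeeping that the multiplicity shift $r\mapsto r$ for bricks and $r\mapsto 2r$ for doublebricks produced by the Addams twist $\delta\mapsto\delta^r$ is exactly compatible with the ``$r_i$th tensor factor'' versus ``$r_i/2$th tensor factor'' placement rules in the definition of $\pt^{\mathrm{doub}}$, so that the content-to-tabloid correspondence is genuinely a bijection.
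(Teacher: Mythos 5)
Your proposal is correct and follows essentially the same route as the paper: expand each factor $E^+_{d^r}$ via Proposition \ref{prop:E+-HEP}(1) under the Addams twist, encode the choice of $\alpha^1(b)^2$ as a row tiling by bricks plus one terminal doublebrick, read off the content by the $\ell^{\,r}$ versus $\ell^{\,2r}$ rule, and observe that the product of signs is $(-1)^{\ell_1(T)}$. The paper's proof is just a terser version of this same argument, so no further comparison is needed.
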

\begin{proof}
Each $d^r\in \sigma$ corresponds to a row of length $d$ in the $r$th tensor factor of $\dg^\otimes(\sigma)$. Choosing a polycomposition $\delta = \alpha^1(b)^2$ of $d$ corresponds to tiling the row with bricks of length $\alpha_1$, $\alpha_2,\ldots$, along with one doublebrick of length $2b$. This tiling corresponds to the choice of the term $(-1)^{\ell(\alpha)} E_{\alpha (b)^2}$ appearing in the expansion of $E^+_{d^r}$. The exponent in the sign $(-1)^{\ell(\alpha)}$ is the number of bricks used. Doing this process for all blocks in $\sigma$, we construct a double polybrick tabloid $T$ on the shape $\sigma$ with an associated sign $(-1)^{\ell_1(T)}$. To find its content, we construct the blocks in $\tau$ as follows: for each brick of length $a$ in the $m$th tensor factor, we create a block $a^m$ in $\tau$ and for each doublebrick of length $2b$ in the tensor factor $m$, we create the block $(b)^{2m}$. Using the expansion proved in Proposition \ref{prop:E+-HEP}, the statement in the theorem follows.
\end{proof}
\begin{example}
The coefficient of $E_{(2,1)^1(2,1,1)^2}$ in $E^+_{(5,2)^1(2)^2}$ is $-1$ and can be computed using the following tabloids with $\ell_1(T_1) = \ell_1(T_2) = 3$ and $\ell_1(T_3) = 4$:
\[T_1 = 
\raisebox{16.0pt}{\begin{tikzpicture}[scale =0.400,baseline=(current bounding box.north)]
\draw (0,0) rectangle (1,-1);
\draw (1,0) rectangle (2,-1);
\draw (2,0) rectangle (3,-1);
\draw (3,0) rectangle (4,-1);
\draw (4,0) rectangle (5,-1);
\draw (0,-1) rectangle (1,-2);
\draw (1,-1) rectangle (2,-2);
\draw (0.350,-0.350) rectangle ( 1.55 ,-0.750);
\draw (2.25,-0.350) rectangle ( 2.65 ,-0.750);
\draw (3.35,-0.350) rectangle ( 4.55 ,-0.750);
\draw ( 4.75 ,-0.300) node[scale =0.520] {+};
\draw (0.350,-1.35) rectangle ( 1.55 ,-1.75);
\draw ( 1.75 ,-1.30) node[scale =0.520] {+};
\end{tikzpicture}}
\otimes \raisebox{16.0pt}{\begin{tikzpicture}[scale =0.400,baseline=(current bounding box.north)]
\draw (0,0) rectangle (1,-1);
\draw (1,0) rectangle (2,-1);
\draw (0.350,-0.350) rectangle ( 1.55 ,-0.750);
\end{tikzpicture}}
\quad\Bigg\vert\,
T_2 = 
\raisebox{16.0pt}{\begin{tikzpicture}[scale =0.400,baseline=(current bounding box.north)]
\draw (0,0) rectangle (1,-1);
\draw (1,0) rectangle (2,-1);
\draw (2,0) rectangle (3,-1);
\draw (3,0) rectangle (4,-1);
\draw (4,0) rectangle (5,-1);
\draw (0,-1) rectangle (1,-2);
\draw (1,-1) rectangle (2,-2);
\draw (0.250,-0.350) rectangle ( 0.650 ,-0.750);
\draw (1.35,-0.350) rectangle ( 2.55 ,-0.750);
\draw (3.35,-0.350) rectangle ( 4.55 ,-0.750);
\draw ( 4.75 ,-0.300) node[scale =0.520] {+};
\draw (0.350,-1.35) rectangle ( 1.55 ,-1.75);
\draw ( 1.75 ,-1.30) node[scale =0.520] {+};
\end{tikzpicture}}
\otimes \raisebox{16.0pt}{\begin{tikzpicture}[scale =0.400,baseline=(current bounding box.north)]
\draw (0,0) rectangle (1,-1);
\draw (1,0) rectangle (2,-1);
\draw (0.350,-0.350) rectangle ( 1.55 ,-0.750);
\end{tikzpicture}}
\quad\Bigg\vert\,
T_3 = 
\raisebox{16.0pt}{\begin{tikzpicture}[scale =0.400,baseline=(current bounding box.north)]
\draw (0,0) rectangle (1,-1);
\draw (1,0) rectangle (2,-1);
\draw (2,0) rectangle (3,-1);
\draw (3,0) rectangle (4,-1);
\draw (4,0) rectangle (5,-1);
\draw (0,-1) rectangle (1,-2);
\draw (1,-1) rectangle (2,-2);
\draw (0.250,-0.350) rectangle ( 0.650 ,-0.750);
\draw (1.35,-0.350) rectangle ( 4.55 ,-0.750);
\draw ( 4.75 ,-0.300) node[scale =0.520] {+};
\draw (0.350,-1.35) rectangle ( 1.55 ,-1.75);
\end{tikzpicture}}
\otimes \raisebox{16.0pt}{\begin{tikzpicture}[scale =0.400,baseline=(current bounding box.north)]
\draw (0,0) rectangle (1,-1);
\draw (1,0) rectangle (2,-1);
\draw (0.250,-0.350) rectangle ( 0.650 ,-0.750);
\draw (1.25,-0.350) rectangle ( 1.65 ,-0.750);
\end{tikzpicture}}
\]
\end{example}
\begin{theorem}
For $\sigma, \tau\in \Typ(n)$, the coefficient of $H_\tau$ in $E^+_\sigma$ is $\sum_{T\in \pt_H^{\mathrm{doub}}(\sigma, \tau)} (-1)^{\ell_2(T)}$.
\end{theorem}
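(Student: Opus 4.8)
The plan is to mirror the proof of Theorem~\ref{thm:E+inE}, but with the $E$-expansion of $E^+_d$ from Proposition~\ref{prop:E+-HEP}(1) replaced by the $H$-expansion from Proposition~\ref{prop:E+-HEP}(2), namely $E^+_d = \sum_{\delta = (a)^1\beta^2\in\PCom_H(d)} (-1)^{\ell(\beta)} H_\delta$. First I would lift this to blocks: applying the Adams operation gives $E^+_{d^r} = \sum_{\delta = (a)^1\beta^2\in\PCom_H(d)} (-1)^{\ell(\beta)} H_{\delta^r}$, since $E^+_{d^r}(\x_{**}) = E^+_d(\x_{**}^r)$ and $H_{\delta^r}$ is obtained from $H_\delta$ by the very same substitution. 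Because $\{H_{d^r}: d,r\geq 1\}$ is an algebraically independent generating set and $E^+_\sigma = \prod_{d^r\in\sigma} E^+_{d^r}$, expanding this product reduces the claim to a bookkeeping statement identifying which monomials $H_\tau$ occur and with what coefficient.

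Next I would set up the combinatorial dictionary exactly as in Theorem~\ref{thm:E+inE}. Each block $d^r\in\sigma$ (counted with multiplicity) is the row of length $d$ in the $r$th tensor factor of $\dg^\otimes(\sigma)$. A choice of $\delta = (a)^1\beta^2\in\PCom_H(d)$ in the expansion of $E^+_{d^r}$ corresponds to tiling this row with a single (possibly empty, when $a=0$) brick of length $a$ on the left, followed by doublebricks of lengths $2\beta_1, 2\beta_2,\ldots$ to its right; this is a legal piece of a double polybrick tabloid precisely because such tabloids require the doublebricks to lie to the right of all bricks and, here, allow at most one brick per row. The sign contributed is $(-1)^{\ell(\beta)}$, i.e. $(-1)$ raised to the number of doublebricks placed in that row. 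Performing this for every block of $\sigma$ yields an element $T\in\pt_H^{\mathrm{doub}}(\sigma,\cdot)$ with total sign $(-1)^{\ell_2(T)}$, and conversely every $H$-double polybrick tabloid of shape $\sigma$ arises in this way from a unique choice of the $\delta$'s, block by block.

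Finally I would identify the content. A brick of length $a$ in tensor factor $m$ records the block $a^m$ of $\tau$ (an empty brick records nothing), and a doublebrick of length $2b$ in tensor factor $m$ records the block $b^{2m}$; this is the content convention built into the definition of $\pt^{\mathrm{doub}}(\sigma,\tau)$, and it matches the indexing of $H_{\delta^r}$, since $\delta^r = (a)^r\beta^{2r}$ contributes $H_{a^r}\prod_i H_{\beta_i^{2r}}$. Hence the tilings $T$ producing the monomial $H_\tau$ are exactly the elements of $\pt_H^{\mathrm{doub}}(\sigma,\tau)$, and summing $(-1)^{\ell_2(T)}$ over them gives the stated coefficient. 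The one step that is not purely formal — and the place I would be most careful — is checking that assembling the multiset of blocks from the bricks and doublebricks of $T$ returns $\tau$ exactly once per $T$, so that the correspondence between tuples of choices $\{\delta\}$ over the blocks of $\sigma$ and tilings in $\pt_H^{\mathrm{doub}}(\sigma,\tau)$ is a genuine bijection with no overcounting when $\sigma$ has repeated blocks; this is the analogue of the corresponding verification in Theorems~\ref{thm:HE} and~\ref{thm:E+inE} and proceeds by the same template.
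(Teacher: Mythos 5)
Your proposal is correct and follows essentially the same route as the paper: the paper proves this theorem by direct analogy with its Theorem on the $E$-expansion of $E^+_\sigma$, tiling each row of $\dg^\otimes(\sigma)$ according to a choice of $\delta = (a)^1\beta^2 \in \PCom_H(d)$ with at most one brick followed by doublebricks, and letting the sign track the number of doublebricks. Your extra care about the block-by-block bijection and the content identification matches the (implicit) verification in the paper's argument.
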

\begin{proof}
The proof is similar to the proof of Theorem \ref{thm:E+inE} but with the sign now accounting for the number of doublebricks instead.
\end{proof}
\begin{example}
The coefficient of $H_{(2)^1(1,1,1)^2(1)^4}$ in $E^+_{(4,2)^1(2,1)^2}$ is $-2$ as $\ell_2(T_1) = \ell(T_2) = 3$:
\[T_1 = 
\raisebox{20.0pt}{\begin{tikzpicture}[scale =0.500,baseline=(current bounding box.north)]
\draw (0,0) rectangle (1,-1);
\draw (1,0) rectangle (2,-1);
\draw (2,0) rectangle (3,-1);
\draw (3,0) rectangle (4,-1);
\draw (0,-1) rectangle (1,-2);
\draw (1,-1) rectangle (2,-2);
\draw (0.350,-0.350) rectangle ( 1.55 ,-0.750);
\draw ( 1.75 ,-0.300) node[scale =0.650] {+};
\draw (2.35,-0.350) rectangle ( 3.55 ,-0.750);
\draw ( 3.75 ,-0.300) node[scale =0.650] {+};
\draw (0.350,-1.35) rectangle ( 1.55 ,-1.75);
\end{tikzpicture}}
\otimes \raisebox{20.0pt}{\begin{tikzpicture}[scale =0.500,baseline=(current bounding box.north)]
\draw (0,0) rectangle (1,-1);
\draw (1,0) rectangle (2,-1);
\draw (0,-1) rectangle (1,-2);
\draw (0.350,-0.350) rectangle ( 1.55 ,-0.750);
\draw ( 1.75 ,-0.300) node[scale =0.650] {+};
\draw (0.250,-1.35) rectangle ( 0.650 ,-1.75);
\end{tikzpicture}}
\quad\Bigg\vert\,
T_2 = 
\raisebox{20.0pt}{\begin{tikzpicture}[scale =0.500,baseline=(current bounding box.north)]
\draw (0,0) rectangle (1,-1);
\draw (1,0) rectangle (2,-1);
\draw (2,0) rectangle (3,-1);
\draw (3,0) rectangle (4,-1);
\draw (0,-1) rectangle (1,-2);
\draw (1,-1) rectangle (2,-2);
\draw (0.350,-0.350) rectangle ( 1.55 ,-0.750);
\draw (2.35,-0.350) rectangle ( 3.55 ,-0.750);
\draw ( 3.75 ,-0.300) node[scale =0.650] {+};
\draw (0.350,-1.35) rectangle ( 1.55 ,-1.75);
\draw ( 1.75 ,-1.30) node[scale =0.650] {+};
\end{tikzpicture}}
\otimes \raisebox{20.0pt}{\begin{tikzpicture}[scale =0.500,baseline=(current bounding box.north)]
\draw (0,0) rectangle (1,-1);
\draw (1,0) rectangle (2,-1);
\draw (0,-1) rectangle (1,-2);
\draw (0.350,-0.350) rectangle ( 1.55 ,-0.750);
\draw ( 1.75 ,-0.300) node[scale =0.650] {+};
\draw (0.250,-1.35) rectangle ( 0.650 ,-1.75);
\end{tikzpicture}}
\]
\end{example}
Recall that $z_\tau^\otimes$ is the product $\prod_{i\geq 1}z_{\tau|^i}$.
\begin{theorem}\label{thm:E+inP}
For $\sigma, \tau\in \Typ(n)$, the coefficient of $P_\tau$ in $E^+_\sigma$ is given by $\sum_T (-1)^{\ell_2(T)}/z_{\tau}^\otimes$ where the sum is over all $T\in \pt^{\mathrm{odoub}}(\sigma, \tau)$.
\end{theorem}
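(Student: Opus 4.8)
The plan is to follow the two-stage strategy used for Theorems~\ref{thm:H-P,E-P} and~\ref{thm:E+inE}: first convert the polycomposition-indexed expansion of Proposition~\ref{prop:E+-HEP}(3) into a type-indexed expansion of $E^+_d$ via Lemma~\ref{lem:z-harmonic}, then lift to blocks through the Addams operation, multiply over the blocks of $\sigma$, and recognize the answer as a signed count of ordered double polybrick tabloids.

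First I would take $E^+_d = \sum_{\delta=\alpha^1\beta^2\in\PCom_P(d)}(-1)^{\ell(\beta)}\frac{1}{Z_\alpha Z_\beta}P_\delta$ from Proposition~\ref{prop:E+-HEP}(3) and group the square-free parts by the partitions $\lambda=\sort(\alpha)$ and $\mu=\sort(\beta)$. Since $\ell(\beta)$ and $P_{\alpha^1\beta^2}$ depend on $\alpha,\beta$ only through $(\lambda,\mu)$ (the latter because $P$ of a polycomposition is a product over its blocks), applying Lemma~\ref{lem:z-harmonic}(2) to each of the inner sums, $\sum_{\sort(\alpha)=\lambda}Z_\alpha^{-1}=z_\lambda^{-1}$ and $\sum_{\sort(\beta)=\mu}Z_\beta^{-1}=z_\mu^{-1}$, gives
\[
E^+_d=\sum_{\substack{\lambda,\mu\in\Par\\ |\lambda|+2|\mu|=d}}(-1)^{\ell(\mu)}\,\frac{1}{z_\lambda z_\mu}\,P_{\lambda^1\mu^2},
\]
now indexed by types. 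The Addams operation then yields $E^+_{d^r}=\sum_{\lambda,\mu}(-1)^{\ell(\mu)}(z_\lambda z_\mu)^{-1}P_{\lambda^r\mu^{2r}}$.

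Next I would expand $E^+_\sigma=\prod_{d^r\in\sigma}E^+_{d^r}$. A choice of a pair $(\lambda,\mu)$ for each block $d^r\in\sigma$ corresponds to tiling the row of length $d$ in the $r$-th tensor factor of $\dg^\otimes(\sigma)$ with bricks of lengths $\lambda_1,\lambda_2,\dots$ followed by doublebricks of lengths $2\mu_1,2\mu_2,\dots$; doing this for every block produces a double polybrick tabloid $T$, whose content type $\tau$ is read off by recording a block $a^m$ for each brick of length $a$ in tensor factor $m$ and a block $b^{2m}$ for each doublebrick of length $2b$ in tensor factor $m$. The accumulated sign is $(-1)$ raised to the total number of doublebricks, namely $(-1)^{\ell_2(T)}$, and the accumulated coefficient is $\prod_b (z_{\lambda^{(b)}}z_{\mu^{(b)}})^{-1}$, the product running over the blocks $b$ of $\sigma$. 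Finally, imitating the count in the proof of Theorem~\ref{thm:H-P,E-P}, for a fixed type $\tau$ and a fixed block-wise choice of partitions $(\lambda^{(b)},\mu^{(b)})$ realizing content $\tau$, the number of ordered double polybrick tabloids of shape $\sigma$ and content $\tau$ refining this choice should be $z_\tau^\otimes/\prod_b z_{\lambda^{(b)}}z_{\mu^{(b)}}$: one multiplies and divides by suitable powers of the part-sizes and uses that the increasing-label requirement within each row forces a unique ordering of the bricks, respectively doublebricks, of each fixed length, the bricks-before-doublebricks convention playing no role in the count. Summing over all block-wise choices with content $\tau$ collapses the coefficient of $P_\tau$ in $E^+_\sigma$ to $\sum_{T\in\pt^{\mathrm{odoub}}(\sigma,\tau)}(-1)^{\ell_2(T)}/z_\tau^\otimes$.

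I expect the main obstacle to be the bookkeeping in that last step: confirming that the tensor-factor shift --- a doublebrick living physically in tensor factor $r$ of $\dg^\otimes(\sigma)$ but contributing a block of multiplicity $2r$ to $\tau$ --- meshes with the two simultaneous harmonic-mean collapses of the $\alpha$- and $\beta$-parts, so that $\prod_b z_{\lambda^{(b)}}z_{\mu^{(b)}}$ assembles into exactly $z_\tau^\otimes=\prod_m z_{\tau|^m}$. In particular one must check that a content block $i^m$ of $\tau$ produced by a brick of length $i$ in tensor factor $m$ and one produced by a doublebrick of length $2i$ in tensor factor $m/2$ are both counted in $m_i(\tau|^m)$, so that the ordered-tabloid count comes out correctly.
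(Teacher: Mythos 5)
Your proposal is correct and is essentially the argument the paper intends: the paper's proof of this theorem is a one-line reference to the method of Theorem \ref{thm:H-P,E-P} (harmonic-mean collapse via Lemma \ref{lem:z-harmonic} followed by the ordered-tabloid count) with the doublebrick sign handled as in Theorem \ref{thm:E+inE}, which is exactly what you have fleshed out. The bookkeeping worry you flag at the end does resolve: since $m_i(\tau|^m)$ is the sum of the brick contributions from rows in factor $m$ and the doublebrick contributions from rows in factor $m/2$, the powers of $i$ cancel in $z_\tau^\otimes/\prod_b z_{\lambda^{(b)}}z_{\mu^{(b)}}$ and the remaining factorials form precisely the multinomial coefficients counting label distributions.
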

\begin{proof}
The proof is similar to the proof of Theorem \ref{thm:H-P,E-P} with the sign accounting for doublebricks similar to the proof of Theorem \ref{thm:E+inE}.
\end{proof}
\begin{example}
The coefficient of $P_{(2,1)^1(2,1,1)^2}$ in $E^+_{(5,2)^1(2)^2}$ is computed using the following four ordered double polybrick tabloids:
\[T_1 = 
\raisebox{20.0pt}{\begin{tikzpicture}[scale =0.500,baseline=(current bounding box.north)]
\draw (0,0) rectangle (1,-1);
\draw (1,0) rectangle (2,-1);
\draw (2,0) rectangle (3,-1);
\draw (3,0) rectangle (4,-1);
\draw (4,0) rectangle (5,-1);
\draw (0,-1) rectangle (1,-2);
\draw (1,-1) rectangle (2,-2);
\draw (0.350,-0.350) rectangle ( 1.55 ,-0.750);
\draw ( 1.75 ,-0.700) node[scale =0.650] {1};
\draw (2.25,-0.350) rectangle ( 2.65 ,-0.750);
\draw ( 2.85 ,-0.700) node[scale =0.650] {2};
\draw (3.35,-0.350) rectangle ( 4.55 ,-0.750);
\draw ( 4.75 ,-0.700) node[scale =0.650] {4};
\draw ( 4.75 ,-0.300) node[scale =0.650] {+};
\draw (0.350,-1.35) rectangle ( 1.55 ,-1.75);
\draw ( 1.75 ,-1.70) node[scale =0.650] {5};
\draw ( 1.75 ,-1.30) node[scale =0.650] {+};
\end{tikzpicture}}
\otimes \raisebox{20.0pt}{\begin{tikzpicture}[scale =0.500,baseline=(current bounding box.north)]
\draw (0,0) rectangle (1,-1);
\draw (1,0) rectangle (2,-1);
\draw (0.350,-0.350) rectangle ( 1.55 ,-0.750);
\draw ( 1.75 ,-0.700) node[scale =0.650] {3};
\end{tikzpicture}}
\quad\Bigg\vert\,
T_2 = 
\raisebox{20.0pt}{\begin{tikzpicture}[scale =0.500,baseline=(current bounding box.north)]
\draw (0,0) rectangle (1,-1);
\draw (1,0) rectangle (2,-1);
\draw (2,0) rectangle (3,-1);
\draw (3,0) rectangle (4,-1);
\draw (4,0) rectangle (5,-1);
\draw (0,-1) rectangle (1,-2);
\draw (1,-1) rectangle (2,-2);
\draw (0.350,-0.350) rectangle ( 1.55 ,-0.750);
\draw ( 1.75 ,-0.700) node[scale =0.650] {1};
\draw (2.25,-0.350) rectangle ( 2.65 ,-0.750);
\draw ( 2.85 ,-0.700) node[scale =0.650] {2};
\draw (3.35,-0.350) rectangle ( 4.55 ,-0.750);
\draw ( 4.75 ,-0.700) node[scale =0.650] {5};
\draw ( 4.75 ,-0.300) node[scale =0.650] {+};
\draw (0.350,-1.35) rectangle ( 1.55 ,-1.75);
\draw ( 1.75 ,-1.70) node[scale =0.650] {4};
\draw ( 1.75 ,-1.30) node[scale =0.650] {+};
\end{tikzpicture}}
\otimes \raisebox{20.0pt}{\begin{tikzpicture}[scale =0.500,baseline=(current bounding box.north)]
\draw (0,0) rectangle (1,-1);
\draw (1,0) rectangle (2,-1);
\draw (0.350,-0.350) rectangle ( 1.55 ,-0.750);
\draw ( 1.75 ,-0.700) node[scale =0.650] {3};
\end{tikzpicture}}
\]
\[T_3 = 
\raisebox{20.0pt}{\begin{tikzpicture}[scale =0.500,baseline=(current bounding box.north)]
\draw (0,0) rectangle (1,-1);
\draw (1,0) rectangle (2,-1);
\draw (2,0) rectangle (3,-1);
\draw (3,0) rectangle (4,-1);
\draw (4,0) rectangle (5,-1);
\draw (0,-1) rectangle (1,-2);
\draw (1,-1) rectangle (2,-2);
\draw (0.250,-0.350) rectangle ( 0.650 ,-0.750);
\draw ( 0.850 ,-0.700) node[scale =0.650] {2};
\draw (1.35,-0.350) rectangle ( 2.55 ,-0.750);
\draw ( 2.75 ,-0.700) node[scale =0.650] {4};
\draw ( 2.75 ,-0.300) node[scale =0.650] {+};
\draw (3.35,-0.350) rectangle ( 4.55 ,-0.750);
\draw ( 4.75 ,-0.700) node[scale =0.650] {5};
\draw ( 4.75 ,-0.300) node[scale =0.650] {+};
\draw (0.350,-1.35) rectangle ( 1.55 ,-1.75);
\draw ( 1.75 ,-1.70) node[scale =0.650] {1};
\end{tikzpicture}}
\otimes \raisebox{20.0pt}{\begin{tikzpicture}[scale =0.500,baseline=(current bounding box.north)]
\draw (0,0) rectangle (1,-1);
\draw (1,0) rectangle (2,-1);
\draw (0.350,-0.350) rectangle ( 1.55 ,-0.750);
\draw ( 1.75 ,-0.700) node[scale =0.650] {3};
\end{tikzpicture}}
\quad\Bigg\vert\,
T_4 = 
\raisebox{20.0pt}{\begin{tikzpicture}[scale =0.500,baseline=(current bounding box.north)]
\draw (0,0) rectangle (1,-1);
\draw (1,0) rectangle (2,-1);
\draw (2,0) rectangle (3,-1);
\draw (3,0) rectangle (4,-1);
\draw (4,0) rectangle (5,-1);
\draw (0,-1) rectangle (1,-2);
\draw (1,-1) rectangle (2,-2);
\draw (0.250,-0.350) rectangle ( 0.650 ,-0.750);
\draw ( 0.850 ,-0.700) node[scale =0.650] {2};
\draw (1.35,-0.350) rectangle ( 4.55 ,-0.750);
\draw ( 4.75 ,-0.700) node[scale =0.650] {3};
\draw ( 4.75 ,-0.300) node[scale =0.650] {+};
\draw (0.350,-1.35) rectangle ( 1.55 ,-1.75);
\draw ( 1.75 ,-1.70) node[scale =0.650] {1};
\end{tikzpicture}}
\otimes \raisebox{20.0pt}{\begin{tikzpicture}[scale =0.500,baseline=(current bounding box.north)]
\draw (0,0) rectangle (1,-1);
\draw (1,0) rectangle (2,-1);
\draw (0.250,-0.350) rectangle ( 0.650 ,-0.750);
\draw ( 0.850 ,-0.700) node[scale =0.650] {4};
\draw (1.25,-0.350) rectangle ( 1.65 ,-0.750);
\draw ( 1.85 ,-0.700) node[scale =0.650] {5};
\end{tikzpicture}}
\]
We have $\ell_2(T_1) = \ell_2(T_2) = \ell_2(T_3) = 2$ and $\ell_2(T_4) = 1$. This gives us $\sum_{T}(-1)^{\ell_2(T)} = 2$ and we compute $z_{(2,1)(2,1,1)^2}^\otimes = 8$ which results in the coefficient $1/4$.
\end{example}
\subsection{Dyadic polybrick tabloids}
We now describe objects that appear in the $E^+$-expansions of $H_\sigma$, $E_\sigma$ and $P_\sigma$. 
Define a \textit{$k$-brick} to be a brick of length a multiple of $2^k$ which we draw with a $k$ on the top-right corner\footnote{We do so to avoid confusion with labels in the previous section which go in the subscript.}. 
For types $\sigma$ and $\tau$ of $n$ with $\tau = d_1^{r_1} \ldots d_s^{r_s}$ and $r_1 \leq r_2\leq \ldots \leq r_s$, define a \textit{dyadic polybrick tabloid of shape $\sigma$ and content $\tau$} to be a tiling of $\dg^\otimes(\sigma)$ with $k$-bricks constructed as follows: for each $1\leq i\leq s$, we choose $k_i$ such that $2^{k_i}$ divides $r_i$ and place a $k_i$-brick of length $2^{k_i}d_i$ in the $r_i/2^{k_i}$th tensor factor, with the added condition that for $k'>k$, in any given row a $k'$-brick appears to the right of a $k$-brick if both are present in that row. We add a marking $k_i$ to denote that it is a $k_i$-brick.  Denote the set of dyadic polybrick tabloids of shape $\sigma$ and content $\tau$ by $\pt^{\dya}(\sigma, \tau)$. Define a \textit{dyadic polybrick} to mean a $k$-brick for some $k\geq 0$.
\begin{example}
The following is an example of a dyadic polybrick tabloid of shape $\sigma = (7,4)^1(4,4)^2(3)^3(2)^4$ and content $\tau = (2,1)(4,1,1)^2(1)^3(2,2,1)^4(1)^6$.
\[
\raisebox{16.0pt}{\begin{tikzpicture}[scale =0.500,baseline=(current bounding box.north)]
\draw (0,0) rectangle (1,-1);
\draw (1,0) rectangle (2,-1);
\draw (2,0) rectangle (3,-1);
\draw (3,0) rectangle (4,-1);
\draw (4,0) rectangle (5,-1);
\draw (5,0) rectangle (6,-1);
\draw (6,0) rectangle (7,-1);
\draw (0,-1) rectangle (1,-2);
\draw (1,-1) rectangle (2,-2);
\draw (2,-1) rectangle (3,-2);
\draw (3,-1) rectangle (4,-2);
\draw (0.250,-0.350) rectangle ( 0.650 ,-0.750);
\draw ( 0.800 ,-0.300) node[scale =0.750] {0};
\draw (1.35,-0.350) rectangle ( 2.55 ,-0.750);
\draw ( 2.75 ,-0.300) node[scale =0.750] {0};
\draw (3.35,-0.350) rectangle ( 6.55 ,-0.750);
\draw ( 6.75 ,-0.300) node[scale =0.750] {2};
\draw (0.350,-1.35) rectangle ( 1.55 ,-1.75);
\draw ( 1.75 ,-1.30) node[scale =0.750] {1};
\draw (2.35,-1.35) rectangle ( 3.55 ,-1.75);
\draw ( 3.75 ,-1.30) node[scale =0.750] {1};
\end{tikzpicture}}
\otimes \raisebox{16.0pt}{\begin{tikzpicture}[scale =0.500,baseline=(current bounding box.north)]
\draw (0,0) rectangle (1,-1);
\draw (1,0) rectangle (2,-1);
\draw (2,0) rectangle (3,-1);
\draw (3,0) rectangle (4,-1);
\draw (0,-1) rectangle (1,-2);
\draw (1,-1) rectangle (2,-2);
\draw (2,-1) rectangle (3,-2);
\draw (3,-1) rectangle (4,-2);
\draw (0.350,-0.350) rectangle ( 3.55 ,-0.750);
\draw ( 3.75 ,-0.300) node[scale =0.750] {1};
\draw (0.350,-1.35) rectangle ( 3.55 ,-1.75);
\draw ( 3.75 ,-1.30) node[scale =0.750] {0};
\end{tikzpicture}}
\otimes \raisebox{16.0pt}{\begin{tikzpicture}[scale =0.500,baseline=(current bounding box.north)]
\draw (0,0) rectangle (1,-1);
\draw (1,0) rectangle (2,-1);
\draw (2,0) rectangle (3,-1);
\draw (0.250,-0.350) rectangle ( 0.650 ,-0.750);
\draw ( 0.800 ,-0.300) node[scale =0.750] {0};
\draw (1.35,-0.350) rectangle ( 2.55 ,-0.750);
\draw ( 2.75 ,-0.300) node[scale =0.750] {1};
\end{tikzpicture}}
\otimes \raisebox{16.0pt}{\begin{tikzpicture}[scale =0.500,baseline=(current bounding box.north)]
\draw (0,0) rectangle (1,-1);
\draw (1,0) rectangle (2,-1);
\draw (0.350,-0.350) rectangle ( 1.55 ,-0.750);
\draw ( 1.75 ,-0.300) node[scale =0.750] {0};
\end{tikzpicture}}
\]
In the first tensor factor, the $2$-brick corresponds to the block $1^4$  in $\tau$ and it has length $2^2\cdot 1 = 4$. In the second factor, the $1$-brick corresponds to the block $2^4$ and it has length $2^1\cdot 2 =4$.
\end{example}

\begin{theorem}\label{thm:EinE+}
For $\sigma, \tau\in \Typ(n)$, the coefficient of $E^+_\tau$ in $E_\sigma$ is $(-1)^{\ell(\tau)}|\pt^\dya(\sigma, \tau)|$.
\end{theorem}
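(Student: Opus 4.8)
The plan is to follow the same template as the proofs of Theorems~\ref{thm:HE} and~\ref{thm:E+inE}: read off the coefficient of $E^+_\tau$ in $E_\sigma = \prod_{d^m \in \sigma} E_{d^m}$ one block of $\sigma$ at a time, using the single-block expansion $E_d = \sum_{\delta \in \PCom_\dya(d)} (-1)^{\ell(\delta)} E^+_\delta$ from Proposition~\ref{prop:EinE+}. Raising all variables to the $m$th power turns this into $E_{d^m} = \sum_{\delta \in \PCom_\dya(d)} (-1)^{\ell(\delta)} E^+_{\delta^m}$, so each block $d^m \in \sigma$ — equivalently, each row of length $d$ in the $m$th tensor factor $\dg(\sigma|^m)$ of $\dg^\otimes(\sigma)$ — contributes a sum indexed by dyadic polycompositions $\delta$ of $d$. (I write $m$ for the multiplicity of the ambient $\sigma$-block to avoid clash with the multiplicities $r_i$ of the content blocks.)

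The key bookkeeping step is to identify a dyadic polycomposition $\delta$ of $d$ with a tiling of that row by dyadic polybricks. Writing $\delta$ in block form $e_1^{2^{a_1}} e_2^{2^{a_2}} \cdots e_t^{2^{a_t}}$ with $a_1 \le a_2 \le \cdots \le a_t$, the block $e_j^{2^{a_j}}$ has size $2^{a_j} e_j$, and I record it as an $a_j$-brick of length $2^{a_j} e_j$; the weak monotonicity $a_1 \le \cdots \le a_t$ becomes exactly the rule that a $k'$-brick lies to the right of a $k$-brick when $k' > k$, while within a fixed marking $k$ the bricks have lengths that are multiples of $2^k$ and, divided by $2^k$, spell out the composition $\delta|^{2^k}$. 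This is a bijection between $\PCom_\dya(d)$ and tilings of a row of length $d$ by dyadic polybricks. Passing to $\delta^m$, the block $e_j^{2^{a_j}}$ of $\delta$ becomes $e_j^{2^{a_j} m}$, so an $a_j$-brick of length $2^{a_j} e_j$ sitting in the $m$th tensor factor is precisely the data of a content block $e_j^{2^{a_j} m}$; rewriting a content block as $d_i^{r_i}$, this is a $k_i$-brick of length $2^{k_i} d_i$ in the $(r_i/2^{k_i})$th tensor factor, which is exactly the recipe defining $\pt^\dya(\sigma, \tau)$, divisibility condition $2^{k_i}\mid r_i$ included.

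Performing this choice for every block $d^m \in \sigma$ and collecting the resulting blocks into a type, each term of the expansion of $E_\sigma$ is indexed by a dyadic polybrick tabloid $T$ of shape $\sigma$ and some content $\tau$, and conversely each $T \in \pt^\dya(\sigma, \tau)$ arises this way from a unique tuple of dyadic polycompositions (one per row). The sign attached to $T$ is $\prod_{d^m \in \sigma} (-1)^{\ell(\delta)} = (-1)^{B(T)}$, where $B(T)$ is the total number of bricks of $T$; since each brick of $T$ contributes exactly one block to $\tau$, we have $B(T) = \ell(\tau)$, so every $T \in \pt^\dya(\sigma, \tau)$ contributes $(-1)^{\ell(\tau)} E^+_\tau$, and summing over $\pt^\dya(\sigma, \tau)$ yields the coefficient $(-1)^{\ell(\tau)} |\pt^\dya(\sigma, \tau)|$.

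I expect the main obstacle to be notational rather than conceptual: keeping straight the three relevant indices — the degree $e_j$ of a part of $\delta$, its multiplicity $2^{a_j}$ within $\delta$, and the multiplicity $m$ of the ambient $\sigma$-block — and checking that the length $2^{k_i} d_i$ and tensor-factor index $r_i/2^{k_i}$ prescribed in the definition of $\pt^\dya(\sigma,\tau)$ are exactly what the scaling $\delta \mapsto \delta^m$ produces. Once the dictionary ``$k$-brick of length $2^{k} e$ in the $m$th factor $\leftrightarrow$ content block $e^{2^{k} m}$'' is confirmed (e.g.\ against the example following the definition of $\pt^\dya$), the remainder is a routine product-and-collect computation identical in spirit to Theorems~\ref{thm:HE} and~\ref{thm:E+inE}.
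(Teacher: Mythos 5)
Your proposal is correct and follows essentially the same route as the paper: for each block $d^m\in\sigma$ choose a dyadic polycomposition of $d$ via Proposition~\ref{prop:EinE+}, encode it as a tiling of the corresponding row by $k$-bricks (with the multiplicity monotonicity giving the left-to-right ordering), read off the content block $e^{2^k m}$ from each $k$-brick of length $2^k e$ in the $m$th factor, and observe that the accumulated sign is $(-1)^{\text{total number of bricks}} = (-1)^{\ell(\tau)}$. The dictionary you set up between polycomposition blocks and dyadic polybricks is exactly the one the paper uses.
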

\begin{proof}
Each $d^r\in \sigma$ corresponds to a row with $d$ cells in the tensor factor $r$. Choose a dyadic polycomposition $\delta = \alpha^1\beta^2 \gamma^4\ldots$ of $d$ and place bricks $\alpha_1$, $\alpha_2,\ldots$ marked with 0, followed by bricks of length $2\beta_1$, $2\beta_2$ and so on marked with a 1, followed by bricks of length $4\gamma_1$, $4\gamma_2,\ldots$, and tile all the cells of the row in this manner. This tiling corresponds to the term $(-1)^{\ell(\delta)}E^+_{\alpha^1 \beta^2\gamma^4\ldots}$ in the expansion of $E_{d^r}$. The exponent of the sign is the number of dyadic polybricks used. Performing this tiling for all $d^r\in \sigma$, we obtain a dyadic polybrick tabloid $T$ on the shape $\sigma$. We compute the content $\tau$ as follows: for each $k$-brick in the factor $r$ of length $2^k b$, we create the block $b^{2^k r}$ in $\tau$. The term associated to $T$ is $(-1)^{\ell(\tau)}E^+_\tau$ and constructing all possible dyadic polybrick tabloids of shape $\sigma$ and content $\tau$ gives us the needed coefficient.
\end{proof}
\boks{0.42} 
\begin{example}
Let $\tau = (2,2)^1(1,1)^2(1)^4$ and $\sigma = (6,2)^1(2)^2$. The coefficient of $E^+_{\tau}$ in $E_{\sigma}$ is $-3$ as $\ell(\tau) = 5$ and we have the following three dyadic polybrick tabloids of shape $\sigma$ and content $\tau$:
\[
T_1 = 
\raisebox{20.0pt}{\begin{tikzpicture}[scale =0.480,baseline=(current bounding box.north)]
\draw (0,0) rectangle (1,-1);
\draw (1,0) rectangle (2,-1);
\draw (2,0) rectangle (3,-1);
\draw (3,0) rectangle (4,-1);
\draw (4,0) rectangle (5,-1);
\draw (5,0) rectangle (6,-1);
\draw (0,-1) rectangle (1,-2);
\draw (1,-1) rectangle (2,-2);
\draw (0.350,-0.350) rectangle ( 1.55 ,-0.750);
\draw ( 1.75 ,-0.300) node[scale =0.750] {0};
\draw (2.35,-0.350) rectangle ( 3.55 ,-0.750);
\draw ( 3.75 ,-0.300) node[scale =0.750] {0};
\draw (4.35,-0.350) rectangle ( 5.55 ,-0.750);
\draw ( 5.75 ,-0.300) node[scale =0.750] {1};
\draw (0.350,-1.35) rectangle ( 1.55 ,-1.75);
\draw ( 1.75 ,-1.30) node[scale =0.750] {1};
\end{tikzpicture}}
\otimes \raisebox{20.0pt}{\begin{tikzpicture}[scale =0.480,baseline=(current bounding box.north)]
\draw (0,0) rectangle (1,-1);
\draw (1,0) rectangle (2,-1);
\draw (0.350,-0.350) rectangle ( 1.55 ,-0.750);
\draw ( 1.75 ,-0.300) node[scale =0.750] {1};
\end{tikzpicture}}
\quad\Bigg\vert\,
T_2 = 
\raisebox{20.0pt}{\begin{tikzpicture}[scale =0.480,baseline=(current bounding box.north)]
\draw (0,0) rectangle (1,-1);
\draw (1,0) rectangle (2,-1);
\draw (2,0) rectangle (3,-1);
\draw (3,0) rectangle (4,-1);
\draw (4,0) rectangle (5,-1);
\draw (5,0) rectangle (6,-1);
\draw (0,-1) rectangle (1,-2);
\draw (1,-1) rectangle (2,-2);
\draw (0.350,-0.350) rectangle ( 1.55 ,-0.750);
\draw ( 1.75 ,-0.300) node[scale =0.750] {0};
\draw (2.35,-0.350) rectangle ( 3.55 ,-0.750);
\draw ( 3.75 ,-0.300) node[scale =0.750] {1};
\draw (4.35,-0.350) rectangle ( 5.55 ,-0.750);
\draw ( 5.75 ,-0.300) node[scale =0.750] {1};
\draw (0.350,-1.35) rectangle ( 1.55 ,-1.75);
\draw ( 1.75 ,-1.30) node[scale =0.750] {0};
\end{tikzpicture}}
\otimes \raisebox{20.0pt}{\begin{tikzpicture}[scale =0.480,baseline=(current bounding box.north)]
\draw (0,0) rectangle (1,-1);
\draw (1,0) rectangle (2,-1);
\draw (0.350,-0.350) rectangle ( 1.55 ,-0.750);
\draw ( 1.75 ,-0.300) node[scale =0.750] {1};
\end{tikzpicture}}
\quad\Bigg\vert\,
T_3 = 
\raisebox{20.0pt}{\begin{tikzpicture}[scale =0.480,baseline=(current bounding box.north)]
\draw (0,0) rectangle (1,-1);
\draw (1,0) rectangle (2,-1);
\draw (2,0) rectangle (3,-1);
\draw (3,0) rectangle (4,-1);
\draw (4,0) rectangle (5,-1);
\draw (5,0) rectangle (6,-1);
\draw (0,-1) rectangle (1,-2);
\draw (1,-1) rectangle (2,-2);
\draw (0.350,-0.350) rectangle ( 1.55 ,-0.750);
\draw ( 1.75 ,-0.300) node[scale =0.750] {0};
\draw (2.35,-0.350) rectangle ( 5.55 ,-0.750);
\draw ( 5.75 ,-0.300) node[scale =0.750] {2};
\draw (0.350,-1.35) rectangle ( 1.55 ,-1.75);
\draw ( 1.75 ,-1.30) node[scale =0.750] {0};
\end{tikzpicture}}
\otimes \raisebox{20.0pt}{\begin{tikzpicture}[scale =0.480,baseline=(current bounding box.north)]
\draw (0,0) rectangle (1,-1);
\draw (1,0) rectangle (2,-1);
\draw (0.250,-0.350) rectangle ( 0.650 ,-0.750);
\draw ( 0.800 ,-0.300) node[scale =0.750] {1};
\draw (1.25,-0.350) rectangle ( 1.65 ,-0.750);
\draw ( 1.80 ,-0.300) node[scale =0.750] {1};
\end{tikzpicture}}
\]
\end{example}
Call a dyadic polybrick tabloid of shape $\sigma$ and content $\tau$ \textit{distinct} if in each row of each diagram, each marking $k$ appears at most once. Denote this set by $\pt^{\dya}_{\mathrm{dis}}(\sigma, \tau)$.
\begin{example}
\boks{0.45}
The following is a distinct polybrick tabloid of shape $(4,3)^1(6,4)^2(2,2,1)^3$ and content $(2,1)^1(2,1,1)^2(1)^3(2)^4(1,1)^6(1)^8$:
\[
\raisebox{20.0pt}{\begin{tikzpicture}[scale =0.500,baseline=(current bounding box.north)]
\draw (0,0) rectangle (1,-1);
\draw (1,0) rectangle (2,-1);
\draw (2,0) rectangle (3,-1);
\draw (3,0) rectangle (4,-1);
\draw (0,-1) rectangle (1,-2);
\draw (1,-1) rectangle (2,-2);
\draw (2,-1) rectangle (3,-2);
\draw (0.350,-0.350) rectangle ( 1.55 ,-0.750);
\draw ( 1.75 ,-0.300) node[scale =0.750] {0};
\draw (2.35,-0.350) rectangle ( 3.55 ,-0.750);
\draw ( 3.75 ,-0.300) node[scale =0.750] {1};
\draw (0.250,-1.35) rectangle ( 0.650 ,-1.75);
\draw ( 0.800 ,-1.30) node[scale =0.750] {0};
\draw (1.35,-1.35) rectangle ( 2.55 ,-1.75);
\draw ( 2.75 ,-1.30) node[scale =0.750] {1};
\end{tikzpicture}}
\otimes \raisebox{20.0pt}{\begin{tikzpicture}[scale =0.500,baseline=(current bounding box.north)]
\draw (0,0) rectangle (1,-1);
\draw (1,0) rectangle (2,-1);
\draw (2,0) rectangle (3,-1);
\draw (3,0) rectangle (4,-1);
\draw (4,0) rectangle (5,-1);
\draw (5,0) rectangle (6,-1);
\draw (0,-1) rectangle (1,-2);
\draw (1,-1) rectangle (2,-2);
\draw (2,-1) rectangle (3,-2);
\draw (3,-1) rectangle (4,-2);
\draw (0.350,-0.350) rectangle ( 1.55 ,-0.750);
\draw ( 1.75 ,-0.300) node[scale =0.750] {0};
\draw (2.35,-0.350) rectangle ( 5.55 ,-0.750);
\draw ( 5.75 ,-0.300) node[scale =0.750] {1};
\draw (0.350,-1.35) rectangle ( 3.55 ,-1.75);
\draw ( 3.75 ,-1.30) node[scale =0.750] {2};
\end{tikzpicture}}
\otimes \raisebox{20.0pt}{\begin{tikzpicture}[scale =0.500,baseline=(current bounding box.north)]
\draw (0,0) rectangle (1,-1);
\draw (1,0) rectangle (2,-1);
\draw (0,-1) rectangle (1,-2);
\draw (1,-1) rectangle (2,-2);
\draw (0,-2) rectangle (1,-3);
\draw (0.350,-0.350) rectangle ( 1.55 ,-0.750);
\draw ( 1.75 ,-0.300) node[scale =0.750] {1};
\draw (0.350,-1.35) rectangle ( 1.55 ,-1.75);
\draw ( 1.75 ,-1.30) node[scale =0.750] {1};
\draw (0.250,-2.35) rectangle ( 0.650 ,-2.75);
\draw ( 0.800 ,-2.30) node[scale =0.750] {0};
\end{tikzpicture}}
\]
\end{example}
\begin{theorem}
Let $n\geq 0$. For $\tau, \sigma\in \Typ(n)$, the coefficient of $E^+_\tau$ in $H_\sigma$ is $|\pt^{\dya}_{\mathrm{dis}}(\sigma, \tau)|$.
\end{theorem}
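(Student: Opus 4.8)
The plan is to follow the template of Theorem~\ref{thm:EinE+} almost verbatim, replacing the $E^+$-expansion of $E_d$ (Proposition~\ref{prop:EinE+}) by the $E^+$-expansion of $H_d$ (Proposition~\ref{prop:HinE+}), and then checking that the extra hypothesis $\ell(\delta|^i)\le 1$ built into $\PCom'_\dya(d)$ corresponds exactly to the distinctness condition defining $\pt^{\dya}_{\mathrm{dis}}(\sigma,\tau)$. Since $H_\sigma=\prod_{d^r\in\sigma}H_{d^r}$ by definition of $F_\delta$, and applying the Addams operation to Proposition~\ref{prop:HinE+} (exactly as $H$ and $E$ are handled in Remark~\ref{rem:omega}) gives $H_{d^r}=\sum_{\delta\in\PCom'_\dya(d)}E^+_{\delta^r}$ with every coefficient equal to $1$, the first step is to expand $H_\sigma=\prod_{d^r\in\sigma}\bigl(\sum_{\delta\in\PCom'_\dya(d)}E^+_{\delta^r}\bigr)$ and identify the resulting terms.

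Next I would set up the local dictionary between polycompositions and tilings. Each block $d^r\in\sigma$ is a row of length $d$ in the $r$th tensor factor of $\dg^\otimes(\sigma)$. A polycomposition $\delta\in\PCom'_\dya(d)$ has the form $(e_0)^1(e_1)^2(e_2)^4\cdots$, where $\delta|^{2^k}$ is either empty or a single part $(e_k)$ and $\sum_k 2^k e_k=d$; I record the choice of $\delta$ by tiling the row with one $k$-brick of length $2^k e_k$ for each $k$ with $\delta|^{2^k}\ne\varnothing$, placing these bricks from left to right in increasing order of $k$. Reading the parts $e_k=(\text{brick length})/2^k$ back off shows this is a bijection between $\PCom'_\dya(d)$ and the tilings of a length-$d$ row by dyadic polybricks in which each marking $k$ occurs at most once; and under $\delta\mapsto\delta^r$ the $k$-brick of length $2^k e_k$ sitting in tensor factor $r$ produces precisely the block $(e_k)^{2^k r}$ of $\delta^r$. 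Carrying this out for every block of $\sigma$ and multiplying, a term in the expansion of $H_\sigma$ is indexed by such a tiling in every row --- that is, by an element of $\pt^{\dya}_{\mathrm{dis}}(\sigma,\cdot)$ --- and contributes $\prod_{d^r\in\sigma}E^+_{\delta_{d^r}^{\,r}}=E^+_\tau$, where $\tau$ is the type obtained by sorting the multiset union of all the blocks produced (using that $E^+_\delta$ depends only on the multiset of blocks of $\delta$). Collecting the terms with a fixed content $\tau$ gives the coefficient $|\pt^{\dya}_{\mathrm{dis}}(\sigma,\tau)|$, and no sign appears because every coefficient in Proposition~\ref{prop:HinE+} is $1$.

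The step requiring the most care --- though it is still pure bookkeeping --- is verifying that the two defining constraints of $\PCom'_\dya(d)$ (multiplicities are powers of $2$; each $\delta|^i$ has length at most $1$) match, block for block and row for row, the definition of a \emph{distinct} dyadic polybrick tabloid: in particular that the forced left-to-right ordering of $k$-bricks by increasing $k$ agrees with the weakly-increasing-multiplicity convention for writing $\delta^r$ as a sequence of blocks, and that the content rule sending a $k$-brick of length $2^k b$ in factor $r$ to the block $b^{2^k r}$ is the exact inverse of the passage $\delta\mapsto\delta^r$. Once this dictionary is nailed down, the argument is identical in form to Theorem~\ref{thm:EinE+}, the only new feature being the ``at most one $k$-brick per row'' restriction that encodes $\ell(\delta|^i)\le 1$, together with the absence of signs.
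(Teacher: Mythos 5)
Your proposal is correct and follows essentially the same route as the paper: the paper's proof of this theorem is a one-line reference to the argument for the $E^+$-expansion of $E_\sigma$ (Theorem~\ref{thm:EinE+}), with the observation that the constraint $\ell(\delta|^i)\le 1$ from Proposition~\ref{prop:HinE+} translates into placing at most one $k$-brick per row, i.e.\ the distinctness condition, and that all signs disappear. Your write-up simply makes explicit the row-by-row dictionary between $\PCom'_\dya(d)$ and distinct dyadic tilings that the paper leaves implicit.
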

\begin{proof}
The proof proceeds similarly to the proof of Theorem \ref{thm:EinE+} and here we may place at most one $k$-brick for each $k$ in each row.
\end{proof}
\begin{example}
The coefficient of $E^+_{(2)^1(2)^2(1,1)^4}$ in $H_{(4,2)^1(2,2)^2}$ is 3 and can be computed using the following polybrick tabloids:
\[
T_1 = 
\raisebox{19.2pt}{\begin{tikzpicture}[scale =0.480,baseline=(current bounding box.north)]
\draw (0,0) rectangle (1,-1);
\draw (1,0) rectangle (2,-1);
\draw (2,0) rectangle (3,-1);
\draw (3,0) rectangle (4,-1);
\draw (0,-1) rectangle (1,-2);
\draw (1,-1) rectangle (2,-2);
\draw (0.350,-0.350) rectangle ( 3.55 ,-0.750);
\draw ( 3.75 ,-0.300) node[scale =0.720] {1};
\draw (0.350,-1.35) rectangle ( 1.55 ,-1.75);
\draw ( 1.75 ,-1.30) node[scale =0.720] {0};
\end{tikzpicture}}
\otimes \raisebox{19.2pt}{\begin{tikzpicture}[scale =0.480,baseline=(current bounding box.north)]
\draw (0,0) rectangle (1,-1);
\draw (1,0) rectangle (2,-1);
\draw (0,-1) rectangle (1,-2);
\draw (1,-1) rectangle (2,-2);
\draw (0.350,-0.350) rectangle ( 1.55 ,-0.750);
\draw ( 1.75 ,-0.300) node[scale =0.720] {1};
\draw (0.350,-1.35) rectangle ( 1.55 ,-1.75);
\draw ( 1.75 ,-1.30) node[scale =0.720] {1};
\end{tikzpicture}}
\quad\Bigg\vert\,
T_2 = 
\raisebox{19.2pt}{\begin{tikzpicture}[scale =0.480,baseline=(current bounding box.north)]
\draw (0,0) rectangle (1,-1);
\draw (1,0) rectangle (2,-1);
\draw (2,0) rectangle (3,-1);
\draw (3,0) rectangle (4,-1);
\draw (0,-1) rectangle (1,-2);
\draw (1,-1) rectangle (2,-2);
\draw (0.350,-0.350) rectangle ( 3.55 ,-0.750);
\draw ( 3.75 ,-0.300) node[scale =0.720] {2};
\draw (0.350,-1.35) rectangle ( 1.55 ,-1.75);
\draw ( 1.75 ,-1.30) node[scale =0.720] {0};
\end{tikzpicture}}
\otimes \raisebox{19.2pt}{\begin{tikzpicture}[scale =0.480,baseline=(current bounding box.north)]
\draw (0,0) rectangle (1,-1);
\draw (1,0) rectangle (2,-1);
\draw (0,-1) rectangle (1,-2);
\draw (1,-1) rectangle (2,-2);
\draw (0.350,-0.350) rectangle ( 1.55 ,-0.750);
\draw ( 1.75 ,-0.300) node[scale =0.720] {0};
\draw (0.350,-1.35) rectangle ( 1.55 ,-1.75);
\draw ( 1.75 ,-1.30) node[scale =0.720] {1};
\end{tikzpicture}}
\quad\Bigg\vert\,
T_3 = 
\raisebox{19.2pt}{\begin{tikzpicture}[scale =0.480,baseline=(current bounding box.north)]
\draw (0,0) rectangle (1,-1);
\draw (1,0) rectangle (2,-1);
\draw (2,0) rectangle (3,-1);
\draw (3,0) rectangle (4,-1);
\draw (0,-1) rectangle (1,-2);
\draw (1,-1) rectangle (2,-2);
\draw (0.350,-0.350) rectangle ( 3.55 ,-0.750);
\draw ( 3.75 ,-0.300) node[scale =0.720] {2};
\draw (0.350,-1.35) rectangle ( 1.55 ,-1.75);
\draw ( 1.75 ,-1.30) node[scale =0.720] {0};
\end{tikzpicture}}
\otimes \raisebox{19.2pt}{\begin{tikzpicture}[scale =0.480,baseline=(current bounding box.north)]
\draw (0,0) rectangle (1,-1);
\draw (1,0) rectangle (2,-1);
\draw (0,-1) rectangle (1,-2);
\draw (1,-1) rectangle (2,-2);
\draw (0.350,-0.350) rectangle ( 1.55 ,-0.750);
\draw ( 1.75 ,-0.300) node[scale =0.720] {1};
\draw (0.350,-1.35) rectangle ( 1.55 ,-1.75);
\draw ( 1.75 ,-1.30) node[scale =0.720] {0};
\end{tikzpicture}}
\]
\end{example}

Let $\sigma, \tau\in \Typ(n)$.
Define a \textit{singular dyadic polybrick tabloid of shape $\sigma$ and content $\tau$} to be an element of $\pt^\dya(\sigma,\tau)$ such that all dyadic polybricks in a row are $k$-bricks for the same $k$ in a row. We denote the set of singular dyadic polybrick tabloid of shape $\sigma$ and content $\tau$ by $\pt^\dya_{\mathrm{sing}}(\sigma, \tau)$. For $T\in \pt^{\dya}_{\mathrm{sing}}(\sigma, \tau)$, define $\L\ast(T)$ to be the product of the lengths of dyadic polybricks that appear at the end of each row
\begin{theorem}
For $\sigma, \tau\in \Typ(n)$, the coefficient of $E^+_\tau$ in $P_\sigma$ is $\sum
\limits_{T\in \pt^{\dya}_{\mathrm{sing}}(\sigma, \tau)} (-1)^{\ell(\tau)-\ell(\sigma)} \L\ast(T)$.
\end{theorem}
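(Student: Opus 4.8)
The plan is to mirror the proofs of Theorem~\ref{thm:P-HE} and Theorem~\ref{thm:EinE+}: lift the single-block expansion of Proposition~\ref{prop:PinE+} to an arbitrary type $\sigma$ by tiling the tensor diagram $\dg^\otimes(\sigma)$ one row at a time, and then track signs and weights multiplicatively.

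First I would recall that, for a block $d^r\in\sigma$ (counted with repetition), applying the Addams operation to Proposition~\ref{prop:PinE+} gives $P_{d^r}=\sum_{\delta\in\PCom_\dya\ast(d)}(-1)^{\ell(\delta)-1}L(\delta)\,E^+_{\delta^r}$. A singular dyadic polycomposition $\delta=\gamma^{2^k}$ of $d$ (so $\gamma\in\Com(d/2^k)$) is recorded by tiling the length-$d$ row sitting in the $r$th tensor factor of $\dg^\otimes(\sigma)$ from left to right with $k$-bricks of lengths $2^k\gamma_1,2^k\gamma_2,\dots$; here $\ell(\delta)=\ell(\gamma)$ is the number of $k$-bricks in the row and $L(\delta)=2^k\gamma_{\ell(\gamma)}$ is the length of the last $k$-brick. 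Since $\delta^r=\gamma^{2^kr}=\prod_i\gamma_i^{2^kr}$, each block $\gamma_i^{2^kr}$ corresponds, under the content dictionary of Theorem~\ref{thm:EinE+} (a $k$-brick of length $2^kb$ in factor $r$ records the block $b^{2^kr}$), to the $k$-brick of length $2^k\gamma_i$ placed in factor $r$. Hence the chosen tiling of this row contributes precisely the factor $(-1)^{\ell(\gamma)-1}L(\delta)\,E^+_{\delta^r}$ to $P_{d^r}$.

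Next I would take the product of these choices over all $d^r\in\sigma$. The resulting range is exactly the set of tilings of $\dg^\otimes(\sigma)$ in which every row is tiled by $k$-bricks for one fixed value of $k$ — that is, $\pt^\dya_{\mathrm{sing}}(\sigma,\tau)$ — where the content $\tau$ is assembled block by block from the bricks via the dictionary above. The sign $\prod_{\text{rows}}(-1)^{\ell(\delta)-1}$ equals $(-1)^{\ell(\tau)-\ell(\sigma)}$ since $T$ has $\ell(\tau)$ bricks in total and $\ell(\sigma)$ rows, and the weight $\prod_{\text{rows}}L(\delta)$ is $\L\ast(T)$ by definition. Summing over all $T\in\pt^\dya_{\mathrm{sing}}(\sigma,\tau)$ then gives the coefficient of $E^+_\tau$ in $P_\sigma$ as $\sum_{T\in\pt^\dya_{\mathrm{sing}}(\sigma,\tau)}(-1)^{\ell(\tau)-\ell(\sigma)}\L\ast(T)$.

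The routine but delicate point to check carefully is the content bookkeeping together with the power-of-two shift: a polycomposition $\delta\in\PCom_\dya\ast(d)$ of multiplicity $2^k$ produces $\delta^r$ of multiplicity $2^kr$, which must be read off as $k$-bricks sitting in tensor factor $r$ (not $2^kr$); one must verify that every block of $\tau$ arises from exactly one brick of $T$, and that ``row tilings'' correspond bijectively to singular dyadic polybrick tabloids, so that no coefficient is over- or under-counted. Everything else is the same immediate product expansion used in Theorems~\ref{thm:P-HE} and \ref{thm:EinE+}.
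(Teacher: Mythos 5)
Your proposal is correct and follows essentially the same route as the paper: lift the single-block expansion $P_{d^r}=\sum_{\delta\in\PCom_\dya\ast(d)}(-1)^{\ell(\delta)-1}L(\delta)E^+_{\delta^r}$ row by row, record each singular dyadic polycomposition as a tiling by $k$-bricks with the content dictionary ``$k$-brick of length $2^kb$ in factor $r$ gives the block $b^{2^kr}$,'' and multiply signs and weights to get $(-1)^{\ell(\tau)-\ell(\sigma)}\L\ast(T)$. The bookkeeping point you flag (multiplicity $2^kr$ read off from factor $r$, not $2^kr$) is exactly the convention the paper uses, so there is no gap.
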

\begin{proof}
For each $d^r$ in $\sigma$, we choose a singular polycomposition $\delta = (\alpha)^{2^k}$, and  place $k$-bricks of length $2^k\alpha_1$, $2^k\alpha_2,\ldots$ in the row of length $d$ in the $r$th tensor factor. Such a tiling corresponds to the term $(-1)^{\ell(\delta)-1}L(\delta) E^+_\delta$ in the expansion of $P_{d^r}$. Here the exponent of the sign is one less than the number of $k$-bricks and $L(\delta)$ is the length of the rightmost brick. Tiling in a similar manner for all $d^r\in \sigma$ gives us a singular dyadic polybrick tabloid $T$ on the shape $\sigma$. To compute its content, for each $k$-brick of length $2^k b$ in the $r$th tensor factor, we create a block $b^{2^k r}$ in $\tau$. The associated sign can be computed by accounting for the total number of bricks minus the number of rows in $\sigma$ which is exactly $\ell(\tau)-\ell(\sigma)$, and the weight is computed by multiplying the lengths of the rightmost bricks which gives $\L\ast(T)$.
\end{proof}
\begin{example}
Let $\sigma = (4,2)^1(2,2)^2$ and $\tau = (2,2,1,1,1)^2$. The coefficient of $E^+_\tau$ in $P_\sigma$ is $-48$ and can be computed using the following three polybrick tabloids:
\[
\begin{array}{c|c|c}
T_1 = 
\raisebox{19.2pt}{\begin{tikzpicture}[scale =0.480,baseline=(current bounding box.north)]
\draw (0,0) rectangle (1,-1);
\draw (1,0) rectangle (2,-1);
\draw (2,0) rectangle (3,-1);
\draw (3,0) rectangle (4,-1);
\draw (0,-1) rectangle (1,-2);
\draw (1,-1) rectangle (2,-2);
\draw (0.350,-0.350) rectangle ( 3.55 ,-0.750);
\draw ( 3.75 ,-0.300) node[scale =0.720] {1};
\draw (0.350,-1.35) rectangle ( 1.55 ,-1.75);
\draw ( 1.75 ,-1.30) node[scale =0.720] {1};
\end{tikzpicture}}
\otimes \raisebox{19.2pt}{\begin{tikzpicture}[scale =0.480,baseline=(current bounding box.north)]
\draw (0,0) rectangle (1,-1);
\draw (1,0) rectangle (2,-1);
\draw (0,-1) rectangle (1,-2);
\draw (1,-1) rectangle (2,-2);
\draw (0.350,-0.350) rectangle ( 1.55 ,-0.750);
\draw ( 1.75 ,-0.300) node[scale =0.720] {0};
\draw (0.250,-1.35) rectangle ( 0.650 ,-1.75);
\draw ( 0.800 ,-1.30) node[scale =0.720] {0};
\draw (1.25,-1.35) rectangle ( 1.65 ,-1.75);
\draw ( 1.80 ,-1.30) node[scale =0.720] {0};
\end{tikzpicture}}&  T_2 = 
\raisebox{19.2pt}{\begin{tikzpicture}[scale =0.480,baseline=(current bounding box.north)]
\draw (0,0) rectangle (1,-1);
\draw (1,0) rectangle (2,-1);
\draw (2,0) rectangle (3,-1);
\draw (3,0) rectangle (4,-1);
\draw (0,-1) rectangle (1,-2);
\draw (1,-1) rectangle (2,-2);
\draw (0.350,-0.350) rectangle ( 3.55 ,-0.750);
\draw ( 3.75 ,-0.300) node[scale =0.720] {1};
\draw (0.350,-1.35) rectangle ( 1.55 ,-1.75);
\draw ( 1.75 ,-1.30) node[scale =0.720] {1};
\end{tikzpicture}}
\otimes \raisebox{19.2pt}{\begin{tikzpicture}[scale =0.480,baseline=(current bounding box.north)]
\draw (0,0) rectangle (1,-1);
\draw (1,0) rectangle (2,-1);
\draw (0,-1) rectangle (1,-2);
\draw (1,-1) rectangle (2,-2);
\draw (0.250,-0.350) rectangle ( 0.650 ,-0.750);
\draw ( 0.800 ,-0.300) node[scale =0.720] {0};
\draw (1.25,-0.350) rectangle ( 1.65 ,-0.750);
\draw ( 1.80 ,-0.300) node[scale =0.720] {0};
\draw (0.350,-1.35) rectangle ( 1.55 ,-1.75);
\draw ( 1.75 ,-1.30) node[scale =0.720] {0};
\end{tikzpicture}}&T_3 = 
\raisebox{19.2pt}{\begin{tikzpicture}[scale =0.480,baseline=(current bounding box.north)]
\draw (0,0) rectangle (1,-1);
\draw (1,0) rectangle (2,-1);
\draw (2,0) rectangle (3,-1);
\draw (3,0) rectangle (4,-1);
\draw (0,-1) rectangle (1,-2);
\draw (1,-1) rectangle (2,-2);
\draw (0.350,-0.350) rectangle ( 1.55 ,-0.750);
\draw ( 1.75 ,-0.300) node[scale =0.720] {1};
\draw (2.35,-0.350) rectangle ( 3.55 ,-0.750);
\draw ( 3.75 ,-0.300) node[scale =0.720] {1};
\draw (0.350,-1.35) rectangle ( 1.55 ,-1.75);
\draw ( 1.75 ,-1.30) node[scale =0.720] {1};
\end{tikzpicture}}
\otimes \raisebox{19.2pt}{\begin{tikzpicture}[scale =0.480,baseline=(current bounding box.north)]
\draw (0,0) rectangle (1,-1);
\draw (1,0) rectangle (2,-1);
\draw (0,-1) rectangle (1,-2);
\draw (1,-1) rectangle (2,-2);
\draw (0.350,-0.350) rectangle ( 1.55 ,-0.750);
\draw ( 1.75 ,-0.300) node[scale =0.720] {0};
\draw (0.350,-1.35) rectangle ( 1.55 ,-1.75);
\draw ( 1.75 ,-1.30) node[scale =0.720] {0};
\end{tikzpicture}}\\[0.4cm]
\L\ast(T_1) =4\cdot 2\cdot 2\cdot 1 = 16 & \L\ast(T_2) = 4\cdot 2\cdot 1\cdot 2 = 16 & \L\ast(T_3) = 2\cdot 2\cdot 2\cdot 2 = 16
\end{array}
\]
The values of $\L\ast$ need not all be the same. For instance, when $\sigma = (3,2)^1(1)^2$ and $\tau = (2,1,1,1)^1(1)^2$, the set $\pt^\dya_{\mathrm{sing}}(\sigma, \tau)$ consists of
\[
\begin{array}{c|c|c}
T_1 = 
\raisebox{19.2pt}{\begin{tikzpicture}[scale =0.480,baseline=(current bounding box.north)]
\draw (0,0) rectangle (1,-1);
\draw (1,0) rectangle (2,-1);
\draw (2,0) rectangle (3,-1);
\draw (0,-1) rectangle (1,-2);
\draw (1,-1) rectangle (2,-2);
\draw (0.350,-0.350) rectangle ( 1.55 ,-0.750);
\draw ( 1.75 ,-0.300) node[scale =0.720] {0};
\draw (2.25,-0.350) rectangle ( 2.65 ,-0.750);
\draw ( 2.80 ,-0.300) node[scale =0.720] {0};
\draw (0.250,-1.35) rectangle ( 0.650 ,-1.75);
\draw ( 0.800 ,-1.30) node[scale =0.720] {0};
\draw (1.25,-1.35) rectangle ( 1.65 ,-1.75);
\draw ( 1.80 ,-1.30) node[scale =0.720] {0};
\end{tikzpicture}}
\otimes \raisebox{19.2pt}{\begin{tikzpicture}[scale =0.480,baseline=(current bounding box.north)]
\draw (0,0) rectangle (1,-1);
\draw (0.250,-0.350) rectangle ( 0.650 ,-0.750);
\draw ( 0.800 ,-0.300) node[scale =0.720] {0};
\end{tikzpicture}} & T_2 = 
\raisebox{19.2pt}{\begin{tikzpicture}[scale =0.480,baseline=(current bounding box.north)]
\draw (0,0) rectangle (1,-1);
\draw (1,0) rectangle (2,-1);
\draw (2,0) rectangle (3,-1);
\draw (0,-1) rectangle (1,-2);
\draw (1,-1) rectangle (2,-2);
\draw (0.250,-0.350) rectangle ( 0.650 ,-0.750);
\draw ( 0.800 ,-0.300) node[scale =0.720] {0};
\draw (1.35,-0.350) rectangle ( 2.55 ,-0.750);
\draw ( 2.75 ,-0.300) node[scale =0.720] {0};
\draw (0.250,-1.35) rectangle ( 0.650 ,-1.75);
\draw ( 0.800 ,-1.30) node[scale =0.720] {0};
\draw (1.25,-1.35) rectangle ( 1.65 ,-1.75);
\draw ( 1.80 ,-1.30) node[scale =0.720] {0};
\end{tikzpicture}}
\otimes \raisebox{19.2pt}{\begin{tikzpicture}[scale =0.480,baseline=(current bounding box.north)]
\draw (0,0) rectangle (1,-1);
\draw (0.250,-0.350) rectangle ( 0.650 ,-0.750);
\draw ( 0.800 ,-0.300) node[scale =0.720] {0};
\end{tikzpicture}} & T_3 = 
\raisebox{19.2pt}{\begin{tikzpicture}[scale =0.480,baseline=(current bounding box.north)]
\draw (0,0) rectangle (1,-1);
\draw (1,0) rectangle (2,-1);
\draw (2,0) rectangle (3,-1);
\draw (0,-1) rectangle (1,-2);
\draw (1,-1) rectangle (2,-2);
\draw (0.250,-0.350) rectangle ( 0.650 ,-0.750);
\draw ( 0.800 ,-0.300) node[scale =0.720] {0};
\draw (1.25,-0.350) rectangle ( 1.65 ,-0.750);
\draw ( 1.80 ,-0.300) node[scale =0.720] {0};
\draw (2.25,-0.350) rectangle ( 2.65 ,-0.750);
\draw ( 2.80 ,-0.300) node[scale =0.720] {0};
\draw (0.350,-1.35) rectangle ( 1.55 ,-1.75);
\draw ( 1.75 ,-1.30) node[scale =0.720] {0};
\end{tikzpicture}}
\otimes \raisebox{19.2pt}{\begin{tikzpicture}[scale =0.480,baseline=(current bounding box.north)]
\draw (0,0) rectangle (1,-1);
\draw (0.250,-0.350) rectangle ( 0.650 ,-0.750);
\draw ( 0.800 ,-0.300) node[scale =0.720] {0};
\end{tikzpicture}}\\[0.4cm]
\L\ast(T_1) =1\cdot 1\cdot 1 = 1 & \L\ast(T_2) = 2\cdot 1 \cdot 1 = 2 & \L\ast(T_3) = 1\cdot 2\cdot 1 = 2
\end{array}
\]
Thus, the coefficient of $E^+_{(2,1,1,1)^1(1)^2}$ in $P_{(3,2)^1(1)^2}$ is $(-1)^{5-3}(1+2+2) =5$.
\end{example}

\section{Connections to Sequence in the Online Encyclopedia of Integer Sequences (OEIS)}\label{sec:oeis}
In this section, we describe some wonderful connections of our expansions involving $E^+$ to some OEIS \cite{oeis} entries. We not only prove the connection but in some cases also provide new formulas not listed on the entry's page.

Let $F$ and $G$ be a pair of plethystic bases chosen from $\{H, E, E^+, P\}$. Recall that for a polycompositions $\delta$, $\psort(\delta)$ is the type $\tau$ where each $\tau|^i$ is the partition formed by rearranging the entries of $\delta|^i$.
If we have a $G$-expansion of a plethystic basis element $F_d$ indexed by polycompositions, then we can find a $G$-expansion indexed by types by combining the terms $\delta$ which have the same $\psort(\delta)$.  Let $S^F_G$ be the set of polycompositions which index the non-zero terms in the $G$-expansion of $F$. We omit $E^+$ when it appears in the superscript or the subscript: for $F\in \{H,E,P\}$, let $S^F$ be the subset of polycompositions that appear in the $E^+$ expansion of $F$ and let $S_F$ be the set of polycompositions that appear in the $F$-expansion of $E^+$. For $d\geq 0$, define $\T^F_G(d) = \{\psort(\delta)\mid \delta\in S^F_G\}$ and $T^F_G(d) = |\T^F_G(d)|$. Explicitly, $T^F_G(d)$ counts the number of non-zero terms indexed by types that appear in the $G$-expansion of $F$. As in the notation of $S^F_G$, we omit $E^+$ in the superscript or the subscript in $\T^F_G(d)$ and $T^F_G(d)$. All of the sequences mentioned can be found on the OEIS \cite{oeis} by their sequence numbers such as \texttt{A006951}

Before we talk about the connections of the bases expansions to the OEIS, we discuss the relationship of OEIS to the counting of polycompositions.

\subsection{Number of polycompositions and A006951} The total number of polycompositions of $n$ can be computed using the formula $|\PCom(n)| = \sum_{\lambda\in \Par(n)} 2^{\ell(\lambda) - \text{dis}(\lambda)}$ where $\text{dis}(\lambda)$ is the number of {distinct} parts that occur in $\lambda$. To derive this formula, note that each polycomposition $\delta$ of $n$ can be associated with a partition $\lambda$ of $n$ such that $m_i(\lambda) = |\,\delta|^i\,|$. We may also start with such a partition $\lambda$ and find all possible polycompositions that we can generate from it. For each $i\geq 1$, we can choose composition of $m_i(\lambda)$ in $2^{m_i(\lambda) - 1}$ ways. Thus the partition $\lambda$ generates $\prod_{i\geq 1} 2^{m_i(\lambda) - 1} = 2^{\sum_i (m_i(\lambda) - 1)} = 2^{\ell(\lambda) - \text{dis}(\lambda)}$ polycompositions. To obtain all polycompositions of size $n$, we sum over all partitions of $n$. As an example, for $n = 4$, we have the partitions $(1,1,1,1)$, $(2,1,1)$, $(2,2)$, $(3,1)$ and $(4)$. We compute 
$|\PCom(4)| = 2^{4-1} + 2^{3-2} + 2^{2-1} + 2^{2-2} + 2^1 = 14$.
 The values of $|\PCom(n)|$ for $n\geq 0$ form the OEIS sequence \texttt{A006951} and count the number of conjugacy classes of $GL_n(\mathbb{F}_2)$. The first few values are \texttt{1, 1, 3, 6, 14, 27, 60, 117, 246, 490, 1002, $\ldots$} 
A related concept is known as a \textit{generalized composition} \cite{corteel} which in our terminology is a sequence of blocks.

\subsection{$E$-expansion of $E^+$ and A024786}
Each $\tau\in \T_E(d)$ is of the form $\lambda^1 (b)^2$ for some partition $\lambda$ and some non-negative integer $\lambda$. For each choice of $b\geq 0$, we can choose a partition $\lambda\in \Par(d-2b)$. This gives us 
\[
T_E(d) = |\Par(d)| + |\Par(d-2)| + |\Par(d-4)| + \ldots
\]
Let $a(n)$ for $n\geq 0$ be the $n$th term of the OEIS entry \texttt{A024786}. The description for the entry states that $a(n)$ is the number of copies of the part 2 in all integer partitions of $n$. A formula on the page computes $a(n)$ using $a(n) = \sum_{k\geq 1}|\Par(n-2k)|$.
We observe that $T_E(d) = |\Par(d)| + a(d) = a(d+2)$ for $d\geq 0$. The first few values of $T_E(d)$ starting at $d = 0$ are $\texttt{1, 3, 4, 8, 11, 19, 26, 41, 56, 83, 112, 160,\ldots}$.

\subsection{$H$-expansion of $E^+$ and A025065}
The term $a(n)$ for $n\geq 0$ of the OEIS entry \texttt{A025065} counts the number of palindromic partitions of $n$. These are partitions $\lambda$ for which there exists a composition $\alpha$ with $\sort(\alpha) = \lambda$ such that $\alpha_i = \alpha_{\ell(\alpha)-i+1}$ for all $1\leq i\leq \ell(\alpha)$. The nomenclature ``palindrome'' refers to the fact that one can rearrange the parts of the partition mirrored about a center. For instance, the partition $\lambda = (4,4,3,3,3,3,2)$ has the rearrangement $(3,3,4,2,4,3,3)$ and $\mu = (5,5,5,5,1,1)$ has the rearrangement $(5,5,1,1,5,5)$, making both $\lambda$ and $\mu$ palindromic partitions of 22. We claim that $T_H(d) = a(d)$.
To see this, consider any type $\tau = (a)^1(\lambda)^2\in \T_H(d)$ where $a\geq 0$ and $\lambda$ is a partition. Construct a unique corresponding palindromic partition $(\lambda, a, \lambda)$ where we remove any entries that are zero. This map establishes a bijection between $\T_H(d)$ and the palindromic partitions of $d$ for $d\geq 0$. Thus, we have $T_H(d) = \texttt{A025065}(d)$. The first few values of $T_H(d)$ starting at $d = 0$ are $\texttt{1, 1, 2, 2, 4, 4, 7, 7, 12, 12, 19, 19,\ldots}$.

\subsection{$P$-expansion of $E^+$ and A002513}
For $n\geq 0$, denote the $n$th term of the OEIS entry $\texttt{A002513}$ by $a(n)$. The description of the OEIS entry states that $a(n)$ is the number of \textit{cubic partitions of $n$}. In a cubic partition, even parts are of two types: marked and unmarked. It is implicit in the description that the position of the marked part does not matter, and we write our partitions such that the  marked part $i$ appears before the unmarked part $i$. For instance, $(4',4',3,2',2,1)$, $(4',4,3,2,2,1)$, and $(4',4',4',4)$ are partitions of 16 with marked parts. With a cubic partition $\rho$, associate a type $\tau = \lambda \mu^2$ where $\lambda$ is the partition formed by the unmarked parts of $\rho$ and $\mu$ is formed by halving each marked part (and removing the markings). So, the types obtained from the above examples are $(3,2,1)^1(2,2,1)^2$, $(4,3,2,2,1)^1(2)^2$ and $(4)^1(2,2,2)^2$ respectively. This map describes a bijection between $\T_P(d)$ and cubic partitions of $d$ for all $d\geq 0$. Thus, $T_P(d) = \texttt{A002513}(d)$. The first few values of $T_P(d)$ starting at $d = 0$ are $\texttt{1, 1, 3, 4, 9, 12, 23, 31, 54, 73, 118, 159,\ldots}$.

\subsection{$E^+$-expansion of $H$ and A018819}
Let $a(d)$ be the $d$th term of the OEIS entry \texttt{A018819}.
The term $a(d)$ counts the number of partitions $\lambda$ of $d$ where all the parts are powers of 2. The quantity $T^H(d)$ counts the number of types $\tau$ of $d$ where the multiplicities are powers of 2 and for each $i$, $\tau|^i \leq 1$. Recall $m_i(\lambda)$ is the number of times a part $i$ appears in $\lambda$.
If we have a partition $\lambda$ where all parts are powers of 2, then we construct a corresponding $\tau = (m_1(\lambda))^1(m_2(\lambda))^2(m_4(\lambda))^4\ldots (m_{2^k}(\lambda))^{2^k}\ldots$. This provides a bijection between $\T^H(d)$ and partitions of $d$ where all parts are powers of 2, thus giving us $T^H(d) = \texttt{A018819}(d)$. The first few values of $T_P(d)$ starting at $d = 0$ are $\texttt{1, 1, 2, 2, 4, 4, 6, 6, 10, 10, 14, 14,\ldots}$.

\subsection{$E^+$-expansion of $E$ and A092119}
The set $\T^E(d)$ is the number of dyadic types of size $d$, that is, types of $d$ where all multiplicities are powers of 2.
 We describe a formula to compute $T^E(n)$. To create a dyadic type $\tau $ of $n$, we start with a partition $\lambda$ consisting of parts which are powers of 2. For each part $2^k$, we find a partition $\lambda^{(k)}$ of $m_{2^k}(\lambda)$ (where $m_i(\lambda)$ is the number of times the part $i$ appears in $\lambda$). We then construct $\tau = (\lambda^{(0)})^{1}(\lambda^{(1)})^{2}\ldots (\lambda^{(k)})^{2^k}\ldots$.  This gives us the formula
\[
T^E(n) = \sum\limits_{\lambda} \prod\limits_{k\geq 0} |\Par(m_k(\lambda))|
\]
where the sum is over all partitions $\lambda$ of $n$ with parts equal to powers of 2. For instance, we have the following partitions (in standard as well as exponential notation) of 5 whose parts are powers of 2: $41 = (4,1), 2^21 = (2,2,1),21^3 = (2,1,1,1), 1^5 = (1,1,1,1,1)$. The partition $(2,1,1,1)$ generates the types $\{(3)^1(1)^2, (2,1)^1 (1)^2, (1,1,1)^1(1)^2\}$ which is enumerated by $|\Par(1)|\cdot |\Par(3)|$ owing to 2 appearing once and 1 appearing three times. We compute $a(5) = 13$ by
\[
|\Par(1)|\cdot |\Par(1)| + |\Par(2)|\cdot |\Par(1)| + |\Par(1)|\cdot |\Par(3)| + |\Par(5)| = 1 + 2 + 3 + 7  .
\]
Let $a(n)$ denote the $n$th term of the OEIS entry $ \texttt{A092119}$. If $A(x) = \sum_{d\geq 0 } a(d) x^d$ and $\pi(x) = \sum_{n\geq 0} |\Par(n)|x^n$, then the entry states that $A(x) = \prod_{i\geq 0} \pi(x^{2^i})$. So, $A(x)= \prod_{i\geq 0} \sum_{j\geq 0} |\Par(j)| x^{2^ij}$. If $\lambda$ is a partition with parts that are powers of 2, we interpret the index $j$ in the above sum over $j$ as using the part $2^z$ $j$ times in $\lambda$, namely, $m_{2^z}(\lambda)$. Expanding the product over $i$ using this interpretation recovers the formula for $T^E(n)$. The first few values of $T^E(d)$ starting at $d = 0$ are \texttt{1, 1, 3, 4, 10, 13, 26, 35, 66, 88, 150, 202...}.

\subsection{$E^+$-expansion of $P$ and A305841}
The set $\T^P(d)$ is the number of types with a single multiplicity that is a power of 2 and denotes its count by $A(d) = T^P(d)$. To compute $A(d)$, we choose a multiplicity $2^k$ dividing $d$ and consider all partitions of $d/2^k$ to form the degrees. Let $p(n)$ denote $|\Par(n)|$ and $p(n) = 0$ whenever $n$ is not a non-negative integer. Our discussion yields the formula 
\[
A(d) = p(d) + p(d/2) + p(d/4) + \ldots
\]
Let $a(n)$ be the $n$th term of the OEIS entry \texttt{A305841} which is defined in relation to the generating function of ``partitions of partitions'' as
\[
\prod\limits_{n \geq 1}(1+x^n)^{a(n)} = \prod\limits_{n\geq 1} (1-x^n)^{-p(n)}.
\]
We now prove that this relation also holds true for $A(n)$.
\begin{prop}
We have
\[
\prod\limits_{n \geq 1}(1+x^n)^{A(n)} = \prod\limits_{n\geq 1} (1-x^n)^{-p(n)}.
\]
\end{prop}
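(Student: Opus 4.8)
The plan is to derive the identity from the formula $A(n) = \sum_{k \geq 0} p(n/2^k)$ (with $p$ supported on nonnegative integers) established just above, together with the defining relation $\prod_{n \geq 1}(1-x^n)^{-p(n)} = \sum_{m \geq 0} P_2(m) x^m$ where $P_2(m)$ is the number of partitions of partitions of $m$; indeed the whole point is that $a(n)$ and $A(n)$ satisfy the \emph{same} recursive relation to the same right-hand side, so it suffices to show $\prod_{n \geq 1}(1+x^n)^{A(n)}$ equals the target product. I would take the logarithm of both sides and compare coefficients, which converts the problem into an identity among power sums of the relevant sequences.

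First I would write $\log \prod_{n \geq 1}(1+x^n)^{A(n)} = \sum_{n \geq 1} A(n) \log(1+x^n) = \sum_{n \geq 1} A(n) \sum_{j \geq 1} \frac{(-1)^{j-1}}{j} x^{nj}$ and similarly $\log \prod_{n \geq 1}(1-x^n)^{-p(n)} = \sum_{n \geq 1} p(n) \sum_{j \geq 1} \frac{1}{j} x^{nj}$. Substituting $A(n) = \sum_{k \geq 0} p(n/2^k)$ into the left side and reindexing (replace $n$ by $2^k n$ in the $k$-th summand so that the inner $p$ becomes $p(n)$), the left side becomes $\sum_{n \geq 1} p(n) \sum_{k \geq 0} \sum_{j \geq 1} \frac{(-1)^{j-1}}{j} x^{2^k n j}$. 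Thus the whole identity reduces to the purely numerical statement that for every fixed positive integer $m$,
\[
\sum_{k \geq 0} \sum_{j \geq 1} \frac{(-1)^{j-1}}{j} [2^k j = m] \;=\; \sum_{j \geq 1} \frac{1}{j}[j = m],
\]
i.e. writing $m = 2^a b$ with $b$ odd, that $\sum_{2^k j = m} \frac{(-1)^{j-1}}{j} = \frac{1}{m}$.

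To verify this, I would parametrize the solutions of $2^k j = m$: for each $0 \leq k \leq a$ we get $j = 2^{a-k}b$, so $j$ is odd exactly when $k = a$, contributing $+\frac{1}{b}$, while for $k < a$ the value $j = 2^{a-k}b$ is even, contributing $-\frac{1}{2^{a-k}b}$. Hence the sum equals $\frac{1}{b}\left(1 - \sum_{i=1}^{a} \frac{1}{2^i}\right) = \frac{1}{b} \cdot \frac{1}{2^a} = \frac{1}{m}$, which is exactly the coefficient on the right-hand side. Comparing coefficients of $x^m$ in the logarithms for all $m$ gives the equality of the two products (both sides being power series with constant term $1$, equality of logarithms is equivalent to equality of the series).

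The main obstacle is essentially bookkeeping rather than conceptual: one must be careful with the reindexing of the double-indexed sum over $(n,k)$ versus the single index of the target, and with the sign $(-1)^{j-1}$ which is what makes the telescoping geometric series $1 - 1/2 - 1/4 - \cdots - 1/2^a$ collapse to $1/2^a$. I would also remark that this is the same mechanism by which $a(n)$ (OEIS \texttt{A305841}) satisfies the defining relation, so in fact $A(n) = a(n)$ for all $n$, giving a combinatorial interpretation of \texttt{A305841} as the number of singular dyadic types; but the proposition as stated only requires the generating-function identity, which the coefficient comparison above establishes directly.
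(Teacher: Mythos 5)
Your proof is correct, but it takes a different route from the paper's. The paper argues directly at the level of the infinite product: it writes $1+x^n = (1-x^{2n})/(1-x^n)$, splits the resulting product over odd and even indices, and uses the recursion $A(2k) = A(k) + p(2k)$ together with $A(n) = p(n)$ for odd $n$ to telescope everything into $\prod_{n\geq 1}(1-x^n)^{-p(n)}$ in two lines. You instead take logarithms, substitute the closed form $A(n) = \sum_{k\geq 0} p(n/2^k)$, reindex, and reduce the claim to the per-exponent identity $\sum_{2^k j = m} (-1)^{j-1}/j = 1/m$, which you verify by writing $m = 2^a b$ and summing the geometric series. After your reindexing, what you are really proving is the classical identity $\prod_{k\geq 0}(1+y^{2^k}) = (1-y)^{-1}$ (applied with $y = x^{n}$), which also follows in one line from uniqueness of binary expansions; invoking that would shorten your argument considerably. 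Both proofs rest on the same structural facts about $A$, but the paper's product manipulation is slicker here, while your logarithmic coefficient-comparison is the same machinery the paper deploys in the two propositions that immediately follow this one (proving $A(n) = a(n)$ and the divisor-sum relation), so your method has the virtue of unifying all three arguments. One small point of care that you handle correctly: the interchange and reindexing of the double sum over $(n,k)$ is legitimate because each coefficient of $x^m$ receives only finitely many contributions, and equality of formal logarithms of series with constant term $1$ does give equality of the series.
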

\begin{proof}
    We rewrite the left-hand side as \[
    \prod\limits_{n\geq 1}\dfrac{(1-x^{2n})^{A(n)}}{(1-x^n)^{A(n)}} = \prod\limits_{\text{odd } n} \dfrac{1}{(1-x^n)^{A(n)}} \prod\limits_{k\geq 1} \dfrac{(1-x^{2k})^{A(k)}}{(1-x^{2k})^{A(2k)}}.
    \]
From the explicit formula for $A(n)$, we can deduce $A(2k) = p(2k) + A(k)$. So the second factor becomes \[\prod\limits_{k\geq 1} \dfrac{(1-x^{2k})^{A(k)}}{(1-x^{2k})^{A(k) + p(2k)}}=\prod_{\text{even } n}\dfrac{1}{(1-x^n)^{p(n)}}.\] For an odd integer $n$, $A(n) = p(n)$ and so $\prod_{\text{odd } n} \dfrac{1}{(1-x^n)^{A(n)}}$ is equal to $\prod_{ \text{odd } n} \dfrac{1}{(1-x^n)^{p(n)}}$. Combining these two gives us the right hand side in the statement.
\end{proof}
We now show that the sequences given by $A(n)$ and $a(n)$ are the same.
\begin{prop}
For $n\geq 1$, $A(n) = a(n)$.
\end{prop}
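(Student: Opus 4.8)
The plan is to show that both sequences satisfy the same defining identity of formal power series and agree at enough initial terms (or, better, that the identity determines the sequence uniquely). We have just proved in the previous proposition that
\[
\prod_{n\geq 1}(1+x^n)^{A(n)} = \prod_{n\geq 1}(1-x^n)^{-p(n)},
\]
and the OEIS entry \texttt{A305841} defines $a(n)$ by exactly the relation $\prod_{n\geq 1}(1+x^n)^{a(n)} = \prod_{n\geq 1}(1-x^n)^{-p(n)}$. So the two infinite products $\prod_{n\geq 1}(1+x^n)^{A(n)}$ and $\prod_{n\geq 1}(1+x^n)^{a(n)}$ are equal as formal power series in $\Z[[x]]$.

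The key step is then the uniqueness lemma: if $\prod_{n\geq 1}(1+x^n)^{b(n)} = \prod_{n\geq 1}(1+x^n)^{c(n)}$ as formal power series, then $b(n) = c(n)$ for all $n\geq 1$. I would prove this by induction on $n$. Taking the formal logarithm of both sides gives $\sum_{n\geq 1} b(n)\log(1+x^n) = \sum_{n\geq 1} c(n)\log(1+x^n)$, i.e. $\sum_{n\geq 1}(b(n)-c(n))\log(1+x^n) = 0$. Writing $d(n) = b(n)-c(n)$ and expanding $\log(1+x^n) = \sum_{k\geq 1}\frac{(-1)^{k-1}}{k}x^{nk}$, the coefficient of $x^m$ on the left is $\sum_{nk = m}\frac{(-1)^{k-1}}{k}d(n)$; the term with $k=1$ is $d(m)$, and all other terms involve $d(n)$ only for proper divisors $n < m$ (equivalently $n \mid m$ with $k = m/n \geq 2$). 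By the inductive hypothesis those $d(n)$ all vanish, so $d(m) = 0$. (Alternatively, one can argue more elementarily without logarithms: match coefficients of $x^m$ directly in the products, isolating $b(m)$ versus $c(m)$ as the leading new contribution; but the logarithm route is cleanest.) Applying this lemma with $b = A$ and $c = a$ yields $A(n) = a(n)$ for all $n\geq 1$.

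I expect the main (and really only) obstacle to be stating and verifying the uniqueness lemma cleanly: one must confirm that in the coefficient of $x^m$ in $\prod(1+x^n)^{b(n)}$, the dependence on $b(m)$ is exactly linear with coefficient $1$ and all remaining dependence is on $b(n)$ with $n < m$, so that the recursion is well-founded. This is routine but needs the logarithmic derivative bookkeeping to be written carefully. No genuinely hard estimate or construction is involved; it is a formal-power-series argument. I would close by remarking that the first few values of $A(n) = T^P(d)$ agree with \texttt{A305841}, which also matches the formula $A(d) = p(d) + p(d/2) + p(d/4) + \cdots$ established above.
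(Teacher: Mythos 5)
Your proposal is correct and takes essentially the same route as the paper: both reduce to the equality of the two infinite products, take the formal logarithm, expand $\log(1+x^n)$, and extract from the coefficient of $x^m$ a divisor-sum identity in which the $k=1$ term isolates $d(m)$ while all other terms involve $d(n)$ for proper divisors $n<m$. The only cosmetic difference is that you run a strong induction on $m$ directly, whereas the paper organizes the same well-founded recursion as an induction on the number of prime factors of $m$; the content is identical.
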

\begin{proof}
We have the equality of functional identities
\[
\prod\limits_{n \geq 1}(1+x^n)^{A(n)} = \prod\limits_{n \geq 1}(1+x^n)^{a(n)}
\]
We rewrite this as get $\prod_{n \geq 1}(1+x^n)^{A(n)-a(n)} = 1$.
We take formal $\log$ on both sides to obtain
\[
\sum\limits_{n\geq 1} (A(n)-a(n)) \log(1+x^n) = 0
\]
Using $\log(1+x) = \sum_{i\geq 1} (-1)^{i-1}\frac{x^i}{i}$, we rewrite the equality as
\[
\sum\limits_{n\geq 1}\sum\limits_{i\geq 1} (-1)^{i-1} (A(n)-a(n))\dfrac{x^{ni}}{i} = 0.
\]
We can collect the terms of the same degree and this gives us a formal power series with a double summation
\[
 \sum\limits_{n\geq 1} \left(\sum\limits_{k\mid n}\dfrac{(-1)^{n/k-1}(A(k)-a(k))}{n/k}\right) x^n = 0.
\]
As the formal power series is zero, each coefficient must be zero. This shows that for all $n\geq 1$, $\sum_{k\mid n}(-1)^{n/k-1}k(A(k)-a(k)) = 0$. Define $g(k) = k(A(k) - a(k))$. Let $n = 1$. As 1 is the only divisor of 1, we get $g(1) = 0$. For any prime $p$, we have $g(p) + (-1)^{p-1}g(1) = 0$ which shows $g(p) = 0$. We now induct on the number of prime factors (counted with multiplicity). Assume that $g(n) = 0$ for all $n$ with $l$ prime factors. Let $N$ have $l+1$ prime factors. Then, $\sum_{k\mid N} (-1)^{N/k-1} g(k) = 0$ can be rewritten as $\sum_{\substack{k \mid N\\ k\neq N}} (-1)^{N/k-1}g(k) + g(N) = 0$. The first sum is over all $g(k)$ where $k$ contains at most $l$ prime factors and thus is zero by our induction hypothesis. This shows that $g(N) = 0$ and $g(n) = 0$ for all $n\geq 1$. From this we obtain $A(n) = a(n)$ for all $n\geq 1$.
\end{proof}

A similar manipulation of the functional identities leads to another relation between $a(n) = A(n)$ and $p(n)$.
\begin{prop}
    For all natural numbers $m$,
    \[
    \sum\limits_{k\mid m} k\left( p(k) + (-1)^{m/k}a(k)\right) = 0.
    \]
\end{prop}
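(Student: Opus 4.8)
The plan is to mimic the logarithmic-differentiation argument already used in the preceding propositions (the one establishing $A(n) = a(n)$), but to track coefficients of $x^m$ directly rather than passing to a divisor-recursion. I would start from the functional identity
\[
\prod_{n\geq 1}(1+x^n)^{a(n)} = \prod_{n\geq 1}(1-x^n)^{-p(n)},
\]
which holds for $a(n) = A(n)$ by the previous proposition, and take a formal $\log$ of both sides. Using $\log(1+y) = \sum_{i\geq 1}(-1)^{i-1}y^i/i$ on the left and $-\log(1-y) = \sum_{i\geq 1}y^i/i$ on the right, this becomes
\[
\sum_{n\geq 1}\sum_{i\geq 1}(-1)^{i-1}\,a(n)\,\frac{x^{ni}}{i} = \sum_{n\geq 1}\sum_{i\geq 1} p(n)\,\frac{x^{ni}}{i}.
\]

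Next I would extract the coefficient of $x^m$ on each side. On both sides a term contributes to $x^m$ exactly when $ni = m$, i.e.\ when $n = k$ runs over the divisors of $m$ and $i = m/k$. Hence the coefficient of $x^m$ on the left is $\sum_{k\mid m}(-1)^{m/k-1}\,a(k)\,\frac{k}{m}$ and on the right is $\sum_{k\mid m} p(k)\,\frac{k}{m}$. Since the two formal power series are equal, these coefficients agree; multiplying through by $m$ gives
\[
\sum_{k\mid m}(-1)^{m/k-1}\,k\,a(k) = \sum_{k\mid m} k\,p(k).
\]
Moving everything to one side and using $-(-1)^{m/k-1} = (-1)^{m/k}$ yields $\sum_{k\mid m} k\bigl(p(k) + (-1)^{m/k}a(k)\bigr) = 0$, which is the claim.

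The argument is essentially a bookkeeping exercise, so there is no serious obstacle; the one point that merits care is the justification that taking $\log$ and rearranging the resulting double sums is legitimate in the ring of formal power series. Each factor $(1+x^n)^{a(n)}$ and $(1-x^n)^{-p(n)}$ has constant term $1$, so the infinite products converge $x$-adically and their logarithms are well-defined formal power series; moreover, for each fixed $m$ only finitely many pairs $(n,i)$ satisfy $ni \le m$, so collecting the coefficient of $x^m$ is a finite operation and the interchange of summation is valid. I would state this convergence remark briefly and then carry out the coefficient comparison above. Note also that the identity as stated is equivalent, after the substitution just performed, to $\sum_{k\mid m} k\,p(k) = \sum_{k\mid m}(-1)^{m/k-1}k\,a(k)$, which records that the ``$p$-weighted divisor sum'' and the alternating ``$a$-weighted divisor sum'' coincide for every $m$.
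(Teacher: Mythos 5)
Your proposal is correct and follows essentially the same route as the paper: take the formal logarithm of the functional identity $\prod_{n\geq 1}(1+x^n)^{a(n)} = \prod_{n\geq 1}(1-x^n)^{-p(n)}$, expand both logarithms, and equate the coefficient of $x^m$ to obtain $\sum_{k\mid m}(-1)^{m/k-1}k\,a(k) = \sum_{k\mid m}k\,p(k)$, which rearranges to the claim. Your added remark on $x$-adic convergence and the finiteness of the coefficient extraction is a reasonable bit of extra care that the paper leaves implicit.
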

\begin{proof}
    We take the log of both sides of the functional identity to get
    \[
    \sum\limits_{n\geq 1} a(n) \log(1+x^n) = \sum\limits_{n\geq 1} -p(n) \log(1-x^n).
    \]
    We use the expansions $\log(1+x) =\sum_{i\geq 1} (-1)^{i-1} x^i/i$ and $-\log(1-x) = \sum_{i\geq 1} x^i/i$ to obtain
    \[
    \sum\limits_{n\geq 1} \sum\limits_{i\geq 1} (-1)^{i-1} a(n)\dfrac{x^{ni}}{i} = \sum\limits_{n\geq 1} \sum\limits_{i\geq 1}p(n)\dfrac{x^{ni}}{i}.
    \] 
    Now collect the coefficients for each $x^m$, we get
    \[
    \sum\limits_{m\geq 1} \left(\sum\limits_{k\mid m}\dfrac{(-1)^{k-1}a(k)}{m/k}\right) x^m = \sum\limits_{m\geq 1} \left(\sum\limits_{k\mid m}\dfrac{p(k)}{m/k}\right) x^m
    \]
and equate the coefficient of $x^m$ on both sides to find
    \[
    \sum\limits_{k\mid m} (-1)^{m/k - 1} ka(k) = \sum\limits_{k \mid m} kp(k)
    \]
    which can be rearranged to give the statement.
\end{proof}
\begin{example}
    Let $m = 12$. We list the divisors and their corresponding $a$ and $p$ values below.
    \begin{table}[H]
        \centering
        \begin{tabular}{c|c|c|c|c|c|c}
            Divisors, $k$ &  1 & 2 & 3 & 4 & 6 & 12\\
            \hline
           $a(k)$ &  1 & 3 & 3 & 8 & 14 & 91\\
           \hline
           $p(k)$ &  1 & 2 & 3 & 5 & 11 & 77
        \end{tabular}
    \end{table}
Plugging this into the above equation gives 
\[
1(1+1) + 2(2+3) + 3(3+3) + 4(5-8) + 6(11+14) + 12(77 - 91) = 180 - 180 = 0.
 \]
\end{example}

\section{Acknowledgments}
I would like to thank my advisor Dr Nicholas Loehr for his extremely detailed and helpful suggestions for this manuscript.
\newpage

\section{Appendix Sample Transition Matrices for $n = 4$}
In this section, we present the transition matrices between the 12 pairs of bases discuss above. We label the matrices by $\mcM(F,G)$ wherein the entry in column $\sigma$ and row $\tau$ is the coefficient of $E_\tau$ in $H_\sigma$. So, for instance, in $\mcM(H,E)$, we find the $E$-expansion of $H_\sigma$ by reading down the column as follows:
\[
H_{(3,1)} = E_{(1,1,1,1)^1} - 2E_{(2,1,1)^1} + E_{(3,1)^1}.
\]
\renewcommand{\quad}{\hspace{0.8em}}

\begin{footnotesize} 
{
\begin{align*}
&\mcM(H,E) &\mcM(E,H)\\[1cm]
&\bbmatrix{
~ & \rot{(1)^4} & \rot{(1)^1(1)^3} & \rot{(1,1)^2} & \rot{(1,1)^1(1)^2} & \rot{(1,1,1,1)^1} & \rot{(2)^1(1)^2} & \rot{(2,1,1)^1} & \rot{(3,1)^1} & \rot{(2)^2} & \rot{(2,2)^1} & \rot{(4)^1} \cr
(1)^4 & -1 & 0 & 0 & 0 & 0 & 0 & 0 & 0 & 0 & 0 & 0 \cr
(1)^1(1)^3 & 0 & 1 & 0 & 0 & 0 & 0 & 0 & 0 & 0 & 0 & 0 \cr
(1,1)^2 & 0 & 0 & 1 & 0 & 0 & 0 & 0 & 0 & 1 & 0 & 0 \cr
(1,1)^1(1)^2 & 0 & 0 & 0 & -1 & 0 & -1 & 0 & 0 & 0 & 0 & 0 \cr
(1,1,1,1)^1 & 0 & 0 & 0 & 0 & 1 & 0 & 1 & 1 & 0 & 1 & 1 \cr
(2)^1(1)^2 & 0 & 0 & 0 & 0 & 0 & 1 & 0 & 0 & 0 & 0 & 0 \cr
(2,1,1)^1 & 0 & 0 & 0 & 0 & 0 & 0 & -1 & -2 & 0 & -2 & -3 \cr
(3,1)^1 & 0 & 0 & 0 & 0 & 0 & 0 & 0 & 1 & 0 & 0 & 2 \cr
(2)^2 & 0 & 0 & 0 & 0 & 0 & 0 & 0 & 0 & -1 & 0 & 0 \cr
(2,2)^1 & 0 & 0 & 0 & 0 & 0 & 0 & 0 & 0 & 0 & 1 & 1 \cr
(4)^1 & 0 & 0 & 0 & 0 & 0 & 0 & 0 & 0 & 0 & 0 & -1 \cr
}
&\qquad
\bbmatrix{
~ & \rot{(1)^4} & \rot{(1)^1(1)^3} & \rot{(1,1)^2} & \rot{(1,1)^1(1)^2} & \rot{(1,1,1,1)^1} & \rot{(2)^1(1)^2} & \rot{(2,1,1)^1} & \rot{(3,1)^1} & \rot{(2)^2} & \rot{(2,2)^1} & \rot{(4)^1} \cr
(1)^4 & -1 & 0 & 0 & 0 & 0 & 0 & 0 & 0 & 0 & 0 & 0 \cr
(1)^1(1)^3 & 0 & 1 & 0 & 0 & 0 & 0 & 0 & 0 & 0 & 0 & 0 \cr
(1,1)^2 & 0 & 0 & 1 & 0 & 0 & 0 & 0 & 0 & 1 & 0 & 0 \cr
(1,1)^1(1)^2 & 0 & 0 & 0 & -1 & 0 & -1 & 0 & 0 & 0 & 0 & 0 \cr
(1,1,1,1)^1 & 0 & 0 & 0 & 0 & 1 & 0 & 1 & 1 & 0 & 1 & 1 \cr
(2)^1(1)^2 & 0 & 0 & 0 & 0 & 0 & 1 & 0 & 0 & 0 & 0 & 0 \cr
(2,1,1)^1 & 0 & 0 & 0 & 0 & 0 & 0 & -1 & -2 & 0 & -2 & -3 \cr
(3,1)^1 & 0 & 0 & 0 & 0 & 0 & 0 & 0 & 1 & 0 & 0 & 2 \cr
(2)^2 & 0 & 0 & 0 & 0 & 0 & 0 & 0 & 0 & -1 & 0 & 0 \cr
(2,2)^1 & 0 & 0 & 0 & 0 & 0 & 0 & 0 & 0 & 0 & 1 & 1 \cr
(4)^1 & 0 & 0 & 0 & 0 & 0 & 0 & 0 & 0 & 0 & 0 & -1 \cr
}
\end{align*} }
\end{footnotesize}

\begin{footnotesize} 
{
\begin{align*}
&\mcM(P,H) &\mcM(P,E)\\[1cm]
&\bbmatrix{
~ & \rot{(1)^4} & \rot{(1)^1(1)^3} & \rot{(1,1)^2} & \rot{(1,1)^1(1)^2} & \rot{(1,1,1,1)^1} & \rot{(2)^1(1)^2} & \rot{(2,1,1)^1} & \rot{(3,1)^1} & \rot{(2)^2} & \rot{(2,2)^1} & \rot{(4)^1} \cr
(1)^4 & 1 & 0 & 0 & 0 & 0 & 0 & 0 & 0 & 0 & 0 & 0 \cr
(1)^1(1)^3 & 0 & 1 & 0 & 0 & 0 & 0 & 0 & 0 & 0 & 0 & 0 \cr
(1,1)^2 & 0 & 0 & 1 & 0 & 0 & 0 & 0 & 0 & -1 & 0 & 0 \cr
(1,1)^1(1)^2 & 0 & 0 & 0 & 1 & 0 & -1 & 0 & 0 & 0 & 0 & 0 \cr
(1,1,1,1)^1 & 0 & 0 & 0 & 0 & 1 & 0 & -1 & 1 & 0 & 1 & -1 \cr
(2)^1(1)^2 & 0 & 0 & 0 & 0 & 0 & 2 & 0 & 0 & 0 & 0 & 0 \cr
(2,1,1)^1 & 0 & 0 & 0 & 0 & 0 & 0 & 2 & -3 & 0 & -4 & 4 \cr
(3,1)^1 & 0 & 0 & 0 & 0 & 0 & 0 & 0 & 3 & 0 & 0 & -4 \cr
(2)^2 & 0 & 0 & 0 & 0 & 0 & 0 & 0 & 0 & 2 & 0 & 0 \cr
(2,2)^1 & 0 & 0 & 0 & 0 & 0 & 0 & 0 & 0 & 0 & 4 & -2 \cr
(4)^1 & 0 & 0 & 0 & 0 & 0 & 0 & 0 & 0 & 0 & 0 & 4 \cr
}
&\qquad
\bbmatrix{
~ & \rot{(1)^4} & \rot{(1)^1(1)^3} & \rot{(1,1)^2} & \rot{(1,1)^1(1)^2} & \rot{(1,1,1,1)^1} & \rot{(2)^1(1)^2} & \rot{(2,1,1)^1} & \rot{(3,1)^1} & \rot{(2)^2} & \rot{(2,2)^1} & \rot{(4)^1} \cr
(1)^4 & -1 & 0 & 0 & 0 & 0 & 0 & 0 & 0 & 0 & 0 & 0 \cr
(1)^1(1)^3 & 0 & 1 & 0 & 0 & 0 & 0 & 0 & 0 & 0 & 0 & 0 \cr
(1,1)^2 & 0 & 0 & 1 & 0 & 0 & 0 & 0 & 0 & 1 & 0 & 0 \cr
(1,1)^1(1)^2 & 0 & 0 & 0 & -1 & 0 & -1 & 0 & 0 & 0 & 0 & 0 \cr
(1,1,1,1)^1 & 0 & 0 & 0 & 0 & 1 & 0 & 1 & 1 & 0 & 1 & 1 \cr
(2)^1(1)^2 & 0 & 0 & 0 & 0 & 0 & 2 & 0 & 0 & 0 & 0 & 0 \cr
(2,1,1)^1 & 0 & 0 & 0 & 0 & 0 & 0 & -2 & -3 & 0 & -4 & -4 \cr
(3,1)^1 & 0 & 0 & 0 & 0 & 0 & 0 & 0 & 3 & 0 & 0 & 4 \cr
(2)^2 & 0 & 0 & 0 & 0 & 0 & 0 & 0 & 0 & -2 & 0 & 0 \cr
(2,2)^1 & 0 & 0 & 0 & 0 & 0 & 0 & 0 & 0 & 0 & 4 & 2 \cr
(4)^1 & 0 & 0 & 0 & 0 & 0 & 0 & 0 & 0 & 0 & 0 & -4 \cr
}
\end{align*} }
\end{footnotesize}

\begin{footnotesize} 
{
\begin{align*}
&\mcM(H,P) &\mcM(E,P)\\[1cm]
&\bbmatrix{
~ & \rot{(1)^4} & \rot{(1)^1(1)^3} & \rot{(1,1)^2} & \rot{(1,1)^1(1)^2} & \rot{(1,1,1,1)^1} & \rot{(2)^1(1)^2} & \rot{(2,1,1)^1} & \rot{(3,1)^1} & \rot{(2)^2} & \rot{(2,2)^1} & \rot{(4)^1} \cr
(1)^4 & 1 & 0 & 0 & 0 & 0 & 0 & 0 & 0 & 0 & 0 & 0 \cr
(1)^1(1)^3 & 0 & 1 & 0 & 0 & 0 & 0 & 0 & 0 & 0 & 0 & 0 \cr
(1,1)^2 & 0 & 0 & 1 & 0 & 0 & 0 & 0 & 0 & \frac{1}{2} & 0 & 0 \cr
(1,1)^1(1)^2 & 0 & 0 & 0 & 1 & 0 & \frac{1}{2} & 0 & 0 & 0 & 0 & 0 \cr
(1,1,1,1)^1 & 0 & 0 & 0 & 0 & 1 & 0 & \frac{1}{2} & \frac{1}{6} & 0 & \frac{1}{4} & \frac{1}{24} \cr
(2)^1(1)^2 & 0 & 0 & 0 & 0 & 0 & \frac{1}{2} & 0 & 0 & 0 & 0 & 0 \cr
(2,1,1)^1 & 0 & 0 & 0 & 0 & 0 & 0 & \frac{1}{2} & \frac{1}{2} & 0 & \frac{1}{2} & \frac{1}{4} \cr
(3,1)^1 & 0 & 0 & 0 & 0 & 0 & 0 & 0 & \frac{1}{3} & 0 & 0 & \frac{1}{3} \cr
(2)^2 & 0 & 0 & 0 & 0 & 0 & 0 & 0 & 0 & \frac{1}{2} & 0 & 0 \cr
(2,2)^1 & 0 & 0 & 0 & 0 & 0 & 0 & 0 & 0 & 0 & \frac{1}{4} & \frac{1}{8} \cr
(4)^1 & 0 & 0 & 0 & 0 & 0 & 0 & 0 & 0 & 0 & 0 & \frac{1}{4} \cr
}
&\qquad
\bbmatrix{
~ & \rot{(1)^4} & \rot{(1)^1(1)^3} & \rot{(1,1)^2} & \rot{(1,1)^1(1)^2} & \rot{(1,1,1,1)^1} & \rot{(2)^1(1)^2} & \rot{(2,1,1)^1} & \rot{(3,1)^1} & \rot{(2)^2} & \rot{(2,2)^1} & \rot{(4)^1} \cr
(1)^4 & -1 & 0 & 0 & 0 & 0 & 0 & 0 & 0 & 0 & 0 & 0 \cr
(1)^1(1)^3 & 0 & 1 & 0 & 0 & 0 & 0 & 0 & 0 & 0 & 0 & 0 \cr
(1,1)^2 & 0 & 0 & 1 & 0 & 0 & 0 & 0 & 0 & \frac{1}{2} & 0 & 0 \cr
(1,1)^1(1)^2 & 0 & 0 & 0 & -1 & 0 & -\frac{1}{2} & 0 & 0 & 0 & 0 & 0 \cr
(1,1,1,1)^1 & 0 & 0 & 0 & 0 & 1 & 0 & \frac{1}{2} & \frac{1}{6} & 0 & \frac{1}{4} & \frac{1}{24} \cr
(2)^1(1)^2 & 0 & 0 & 0 & 0 & 0 & \frac{1}{2} & 0 & 0 & 0 & 0 & 0 \cr
(2,1,1)^1 & 0 & 0 & 0 & 0 & 0 & 0 & -\frac{1}{2} & -\frac{1}{2} & 0 & -\frac{1}{2} & -\frac{1}{4} \cr
(3,1)^1 & 0 & 0 & 0 & 0 & 0 & 0 & 0 & \frac{1}{3} & 0 & 0 & \frac{1}{3} \cr
(2)^2 & 0 & 0 & 0 & 0 & 0 & 0 & 0 & 0 & -\frac{1}{2} & 0 & 0 \cr
(2,2)^1 & 0 & 0 & 0 & 0 & 0 & 0 & 0 & 0 & 0 & \frac{1}{4} & \frac{1}{8} \cr
(4)^1 & 0 & 0 & 0 & 0 & 0 & 0 & 0 & 0 & 0 & 0 & -\frac{1}{4} \cr
}
\end{align*} }
\end{footnotesize}

\begin{footnotesize} 
{
\begin{align*}
&\mcM(E^+,E) &\mcM(E^+,H)\\[1cm]
&\bbmatrix{
~ & \rot{(1)^4} & \rot{(1)^1(1)^3} & \rot{(1,1)^2} & \rot{(1,1)^1(1)^2} & \rot{(1,1,1,1)^1} & \rot{(2)^1(1)^2} & \rot{(2,1,1)^1} & \rot{(3,1)^1} & \rot{(2)^2} & \rot{(2,2)^1} & \rot{(4)^1} \cr
(1)^4 & -1 & 0 & 0 & 0 & 0 & 0 & 0 & 0 & 1 & 0 & 0 \cr
(1)^1(1)^3 & 0 & 1 & 0 & 0 & 0 & 0 & 0 & 0 & 0 & 0 & 0 \cr
(1,1)^2 & 0 & 0 & 1 & 0 & 0 & -1 & 0 & 0 & 1 & 1 & 0 \cr
(1,1)^1(1)^2 & 0 & 0 & 0 & -1 & 0 & -1 & 1 & 1 & 0 & 2 & 1 \cr
(1,1,1,1)^1 & 0 & 0 & 0 & 0 & 1 & 0 & 1 & 1 & 0 & 1 & 1 \cr
(2)^1(1)^2 & 0 & 0 & 0 & 0 & 0 & 1 & 0 & 0 & 0 & -2 & -1 \cr
(2,1,1)^1 & 0 & 0 & 0 & 0 & 0 & 0 & -1 & -2 & 0 & -2 & -3 \cr
(3,1)^1 & 0 & 0 & 0 & 0 & 0 & 0 & 0 & 1 & 0 & 0 & 2 \cr
(2)^2 & 0 & 0 & 0 & 0 & 0 & 0 & 0 & 0 & -1 & 0 & 1 \cr
(2,2)^1 & 0 & 0 & 0 & 0 & 0 & 0 & 0 & 0 & 0 & 1 & 1 \cr
(4)^1 & 0 & 0 & 0 & 0 & 0 & 0 & 0 & 0 & 0 & 0 & -1 \cr
}
&\qquad
\bbmatrix{
~ & \rot{(1)^4} & \rot{(1)^1(1)^3} & \rot{(1,1)^2} & \rot{(1,1)^1(1)^2} & \rot{(1,1,1,1)^1} & \rot{(2)^1(1)^2} & \rot{(2,1,1)^1} & \rot{(3,1)^1} & \rot{(2)^2} & \rot{(2,2)^1} & \rot{(4)^1} \cr
(1)^4 & 1 & 0 & 0 & 0 & 0 & 0 & 0 & 0 & -1 & 0 & 0 \cr
(1)^1(1)^3 & 0 & 1 & 0 & 0 & 0 & 0 & 0 & 0 & 0 & 0 & 0 \cr
(1,1)^2 & 0 & 0 & 1 & 0 & 0 & -1 & 0 & 0 & 0 & 1 & 1 \cr
(1,1)^1(1)^2 & 0 & 0 & 0 & 1 & 0 & 0 & -1 & -1 & 0 & 0 & 0 \cr
(1,1,1,1)^1 & 0 & 0 & 0 & 0 & 1 & 0 & 0 & 0 & 0 & 0 & 0 \cr
(2)^1(1)^2 & 0 & 0 & 0 & 0 & 0 & 1 & 0 & 0 & 0 & -2 & -1 \cr
(2,1,1)^1 & 0 & 0 & 0 & 0 & 0 & 0 & 1 & 0 & 0 & 0 & 0 \cr
(3,1)^1 & 0 & 0 & 0 & 0 & 0 & 0 & 0 & 1 & 0 & 0 & 0 \cr
(2)^2 & 0 & 0 & 0 & 0 & 0 & 0 & 0 & 0 & 1 & 0 & -1 \cr
(2,2)^1 & 0 & 0 & 0 & 0 & 0 & 0 & 0 & 0 & 0 & 1 & 0 \cr
(4)^1 & 0 & 0 & 0 & 0 & 0 & 0 & 0 & 0 & 0 & 0 & 1 \cr
}
\end{align*} }
\end{footnotesize}

\begin{footnotesize} 
{
\begin{align*}
&\mcM(E^+,P) &\mcM(E,E^+)\\[1cm]
&\bbmatrix{
~ & \rot{(1)^4} & \rot{(1)^1(1)^3} & \rot{(1,1)^2} & \rot{(1,1)^1(1)^2} & \rot{(1,1,1,1)^1} & \rot{(2)^1(1)^2} & \rot{(2,1,1)^1} & \rot{(3,1)^1} & \rot{(2)^2} & \rot{(2,2)^1} & \rot{(4)^1} \cr
(1)^4 & 1 & 0 & 0 & 0 & 0 & 0 & 0 & 0 & -1 & 0 & 0 \cr
(1)^1(1)^3 & 0 & 1 & 0 & 0 & 0 & 0 & 0 & 0 & 0 & 0 & 0 \cr
(1,1)^2 & 0 & 0 & 1 & 0 & 0 & -1 & 0 & 0 & \frac{1}{2} & 1 & \frac{1}{2} \cr
(1,1)^1(1)^2 & 0 & 0 & 0 & 1 & 0 & \frac{1}{2} & -1 & -1 & 0 & -1 & -\frac{1}{2} \cr
(1,1,1,1)^1 & 0 & 0 & 0 & 0 & 1 & 0 & \frac{1}{2} & \frac{1}{6} & 0 & \frac{1}{4} & \frac{1}{24} \cr
(2)^1(1)^2 & 0 & 0 & 0 & 0 & 0 & \frac{1}{2} & 0 & 0 & 0 & -1 & -\frac{1}{2} \cr
(2,1,1)^1 & 0 & 0 & 0 & 0 & 0 & 0 & \frac{1}{2} & \frac{1}{2} & 0 & \frac{1}{2} & \frac{1}{4} \cr
(3,1)^1 & 0 & 0 & 0 & 0 & 0 & 0 & 0 & \frac{1}{3} & 0 & 0 & \frac{1}{3} \cr
(2)^2 & 0 & 0 & 0 & 0 & 0 & 0 & 0 & 0 & \frac{1}{2} & 0 & -\frac{1}{2} \cr
(2,2)^1 & 0 & 0 & 0 & 0 & 0 & 0 & 0 & 0 & 0 & \frac{1}{4} & \frac{1}{8} \cr
(4)^1 & 0 & 0 & 0 & 0 & 0 & 0 & 0 & 0 & 0 & 0 & \frac{1}{4} \cr
}
&\qquad
\bbmatrix{
~ & \rot{(1)^4} & \rot{(1)^1(1)^3} & \rot{(1,1)^2} & \rot{(1,1)^1(1)^2} & \rot{(1,1,1,1)^1} & \rot{(2)^1(1)^2} & \rot{(2,1,1)^1} & \rot{(3,1)^1} & \rot{(2)^2} & \rot{(2,2)^1} & \rot{(4)^1} \cr
(1)^4 & -1 & 0 & 0 & 0 & 0 & 0 & 0 & 0 & -1 & 0 & -1 \cr
(1)^1(1)^3 & 0 & 1 & 0 & 0 & 0 & 0 & 0 & 0 & 0 & 0 & 0 \cr
(1,1)^2 & 0 & 0 & 1 & 0 & 0 & 1 & 0 & 0 & 1 & 1 & 1 \cr
(1,1)^1(1)^2 & 0 & 0 & 0 & -1 & 0 & -1 & -1 & -1 & 0 & -2 & -1 \cr
(1,1,1,1)^1 & 0 & 0 & 0 & 0 & 1 & 0 & 1 & 1 & 0 & 1 & 1 \cr
(2)^1(1)^2 & 0 & 0 & 0 & 0 & 0 & 1 & 0 & 0 & 0 & 2 & 1 \cr
(2,1,1)^1 & 0 & 0 & 0 & 0 & 0 & 0 & -1 & -2 & 0 & -2 & -3 \cr
(3,1)^1 & 0 & 0 & 0 & 0 & 0 & 0 & 0 & 1 & 0 & 0 & 2 \cr
(2)^2 & 0 & 0 & 0 & 0 & 0 & 0 & 0 & 0 & -1 & 0 & -1 \cr
(2,2)^1 & 0 & 0 & 0 & 0 & 0 & 0 & 0 & 0 & 0 & 1 & 1 \cr
(4)^1 & 0 & 0 & 0 & 0 & 0 & 0 & 0 & 0 & 0 & 0 & -1 \cr
}
\end{align*} }
\end{footnotesize}

\begin{footnotesize} 
{
\begin{align*}
&\mcM(H,E^+) &\mcM(P,E^+)\\[1cm]
&\bbmatrix{
~ & \rot{(1)^4} & \rot{(1)^1(1)^3} & \rot{(1,1)^2} & \rot{(1,1)^1(1)^2} & \rot{(1,1,1,1)^1} & \rot{(2)^1(1)^2} & \rot{(2,1,1)^1} & \rot{(3,1)^1} & \rot{(2)^2} & \rot{(2,2)^1} & \rot{(4)^1} \cr
(1)^4 & 1 & 0 & 0 & 0 & 0 & 0 & 0 & 0 & 1 & 0 & 1 \cr
(1)^1(1)^3 & 0 & 1 & 0 & 0 & 0 & 0 & 0 & 0 & 0 & 0 & 0 \cr
(1,1)^2 & 0 & 0 & 1 & 0 & 0 & 1 & 0 & 0 & 0 & 1 & 0 \cr
(1,1)^1(1)^2 & 0 & 0 & 0 & 1 & 0 & 0 & 1 & 1 & 0 & 0 & 0 \cr
(1,1,1,1)^1 & 0 & 0 & 0 & 0 & 1 & 0 & 0 & 0 & 0 & 0 & 0 \cr
(2)^1(1)^2 & 0 & 0 & 0 & 0 & 0 & 1 & 0 & 0 & 0 & 2 & 1 \cr
(2,1,1)^1 & 0 & 0 & 0 & 0 & 0 & 0 & 1 & 0 & 0 & 0 & 0 \cr
(3,1)^1 & 0 & 0 & 0 & 0 & 0 & 0 & 0 & 1 & 0 & 0 & 0 \cr
(2)^2 & 0 & 0 & 0 & 0 & 0 & 0 & 0 & 0 & 1 & 0 & 1 \cr
(2,2)^1 & 0 & 0 & 0 & 0 & 0 & 0 & 0 & 0 & 0 & 1 & 0 \cr
(4)^1 & 0 & 0 & 0 & 0 & 0 & 0 & 0 & 0 & 0 & 0 & 1 \cr
}
&\qquad
\bbmatrix{
~ & \rot{(1)^4} & \rot{(1)^1(1)^3} & \rot{(1,1)^2} & \rot{(1,1)^1(1)^2} & \rot{(1,1,1,1)^1} & \rot{(2)^1(1)^2} & \rot{(2,1,1)^1} & \rot{(3,1)^1} & \rot{(2)^2} & \rot{(2,2)^1} & \rot{(4)^1} \cr
(1)^4 & 1 & 0 & 0 & 0 & 0 & 0 & 0 & 0 & 2 & 0 & 4 \cr
(1)^1(1)^3 & 0 & 1 & 0 & 0 & 0 & 0 & 0 & 0 & 0 & 0 & 0 \cr
(1,1)^2 & 0 & 0 & 1 & 0 & 0 & 2 & 0 & 0 & -1 & 4 & -2 \cr
(1,1)^1(1)^2 & 0 & 0 & 0 & 1 & 0 & -1 & 2 & 0 & 0 & -4 & 0 \cr
(1,1,1,1)^1 & 0 & 0 & 0 & 0 & 1 & 0 & -1 & 1 & 0 & 1 & -1 \cr
(2)^1(1)^2 & 0 & 0 & 0 & 0 & 0 & 2 & 0 & 0 & 0 & 8 & 0 \cr
(2,1,1)^1 & 0 & 0 & 0 & 0 & 0 & 0 & 2 & -3 & 0 & -4 & 4 \cr
(3,1)^1 & 0 & 0 & 0 & 0 & 0 & 0 & 0 & 3 & 0 & 0 & -4 \cr
(2)^2 & 0 & 0 & 0 & 0 & 0 & 0 & 0 & 0 & 2 & 0 & 4 \cr
(2,2)^1 & 0 & 0 & 0 & 0 & 0 & 0 & 0 & 0 & 0 & 4 & -2 \cr
(4)^1 & 0 & 0 & 0 & 0 & 0 & 0 & 0 & 0 & 0 & 0 & 4 \cr
}
\end{align*} }
\end{footnotesize}
 
\end{document}